\DeclarePairedDelimiter\floor{\lfloor}{\rfloor}
\title{Exponentially Mixing SRB Measures are Bernoulli}
\author{Amadeus Maldonado}
\date{}
\numberwithin{equation}{section}
\newtheorem{theorem}{Theorem}[section]
\newtheorem{proposition}[theorem]{Proposition}
\newtheorem{lemma}[theorem]{Lemma}
\newtheorem{corollary}[theorem]{Corollary}
\theoremstyle{remark}
\theoremstyle{definition}
\newtheorem{definition}[theorem]{Definition}
\newtheorem{mainthm}{Theorem}
\crefname{lemma}{Lemma}{Lemmas}
\crefname{definition}{Definition}{Definitions}
\crefname{corollary}{Corollary}{Corollaries}
\crefname{theorem}{Theorem}{Theorems}
\newcommand{\R}{\mathbb{R}}
\newcommand{\Z}{\mathbb{Z}}
\newcommand{\ep}{\varepsilon}
\begin{document}
	
	\maketitle

	\begin{abstract}
		We prove two results for $C^{1+\alpha}$ diffeomorphisms of a compact manifold preserving an SRB measure $\mu$.
		First, if $\mu$ is exponentially mixing, then it is Bernoulli.
		Second, if $\mu$ is an exponential volume limit constructed in \cite{ExpLim}, then it is also Bernoulli.
    \end{abstract}

	\tableofcontents
	
	\section{Introduction}

    \subsection{Main theorems}

    Let $f \colon M \to M$ be a $C^{1+\alpha}$ diffeomorphism of a compact manifold preserving a Borel probability measure $\mu$.
     We say that $(f,\mu)$ is \emph{exponentially mixing} if there exist $r>0$, $\beta>0$, $C>0$ such that for all $g,h \in C^r(M)$ and for all $n >0$,
    \begin{equation}
    \label{H1}
        \left|\int g\circ f^n h d\mu - \int g d\mu \int h d\mu \right|
        \leq C \|g\|_{C^r} \|h\|_{C^r} e^{-\beta n}.
    \end{equation}
    In this case, we say that $(f,\mu)$ is exponentially mixing for $C^r$ observables. By a standard interpolation argument, which can be found in Appendix B of \cite{ExpMix}, it implies that $(f,\mu)$ is exponentially mixing for $C^{r'}$ observables, for any $r'>0$.
    The drawback to making $r'>0$ small is that $\beta$ will potentially decrease.
    For the rest of the paper, when saying exponential mixing, we mean with respect to Lipschitz observables.

In \cite{ExpMix}, D. Dolgopyat, A. Kanigowski and F. Rodriguez Hertz prove that exponentially mixing smooth measures are Bernoulli. Exponential mixing is initially used to show the existence of at least one positive Lyapunov exponent, applicable to any nonatomic Borel probability measure. Adding the smoothness assumption, exponential mixing enables them to show the equidistribution of unstable manifolds on exponentially small disjoint balls covering most of the space. Following this, they construct fake center-stable foliations on these balls, with absolutely continuous holonomies between unstable manifolds. This construction allows them to apply the tools developed in \cite{OrnWeiss} to conclude that the measure is Bernoulli.

The smoothness of $\mu$ plays a crucial role in their arguments due to its compatibility with geometric structures. For instance, for non-smooth measures, it is challenging to $L^1$-approximate the indicator function of an exponentially small ball by a Lipschitz function with controlled norm, since mass might be concentrated near the boundary.

For general Borel measures, additional challenges arise with the constructed fake center-stable foliations, which offer some kind of local product structure with the Lebesgue measure on unstable manifolds. Relating this to the measure $\mu$ suggests a natural consideration of SRB measures, which are absolutely continuous with respect to Lebesgue along unstable manifolds. In this paper, we extend the main result of \cite{ExpMix} to SRB measures:

    \begin{mainthm}\label{thmA}
		Let $f \colon M \to M$ be a $C^{1+\alpha}$ diffeomorphism of a compact manifold and $\mu$ a $f$-invariant, exponentially mixing SRB measure.
		Then $(M,\mu,f)$ is Bernoulli.
	\end{mainthm}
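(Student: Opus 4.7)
The plan is to follow the four-step architecture of \cite{ExpMix} (positive Lyapunov exponent, equidistribution of unstable plaques, fake center-stable foliations with absolutely continuous holonomies, Ornstein--Weiss criterion), but at every point where that paper uses smoothness of $\mu$ to compare $\mu$ with Lebesgue, I substitute the SRB structure: the Rokhlin disintegration of $\mu$ along unstable manifolds has Hölder densities with respect to the induced Riemannian volumes, and this is the only regularity that smoothness was genuinely providing in the earlier proof.

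Step one, producing at least one positive Lyapunov exponent, carries over with essentially no modification, since the argument in \cite{ExpMix} only uses non-atomicity and exponential mixing against Lipschitz observables, both of which are hypotheses here. Step two, the equidistribution of iterates of unstable plaques on exponentially small disjoint balls, is where the smoothness was most visibly used to $L^1$-approximate indicators of small sets by Lipschitz functions of controlled norm. My replacement is to work leaf by leaf: disintegrate $\mu$ along the unstable lamination and, on each Pesin block, write $d\mu = \rho\, d\mathrm{Leb}^u \otimes d\nu$ in local product coordinates, where $\rho$ is Hölder and uniformly bounded above and below. The approximation of indicators of exponentially small balls is then performed only in the unstable direction, where Lebesgue behaves well, and the transverse direction is handled by choosing the fake center-stable plaques as the measurable partition. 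The exponential mixing estimate \eqref{H1} is applied to the resulting Lipschitz function against a Lipschitz test function supported on the iterated unstable plaque, and the error terms are absorbed using the Hölder bounds on $\rho$.

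Next, I would reconstruct the fake center-stable foliation of \cite{ExpMix} on a Pesin block of almost full $\mu$-measure and study its holonomy between nearby unstable leaves. The holonomy has a Lebesgue Jacobian which is Hölder and bounded in the usual Pesin-theoretic way; composing with the ratio of the SRB densities $\rho$ on the two leaves, which is also Hölder and bounded, gives the Jacobian of the holonomy with respect to the conditional measures of $\mu$. This is precisely the absolute continuity statement needed as input to the Ornstein--Weiss machinery in \cite{OrnWeiss}, which then upgrades the Kolmogorov property (already implied by exponential mixing together with positive exponents) to Bernoullicity.

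The step I expect to be the real obstacle is the equidistribution on exponentially small scales adapted to the SRB conditionals. The difficulty is quantitative: one must simultaneously control the Hölder oscillation of $\rho$ across exponentially small balls, the non-invariance and only-compactness of Pesin blocks, and the dependence of the Lipschitz norm of the approximating function on the scale, all while keeping the mixing error $C e^{-\beta n}$ dominant. Getting the exponents to line up — i.e., choosing the ball scale, the Pesin parameters, and the iteration time in a way that makes the mixing estimate beat every error — is where the bulk of the technical work will go, and it is the place where the SRB hypothesis is doing genuine work that the smoothness hypothesis in \cite{ExpMix} was doing for free.
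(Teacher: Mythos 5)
Your overall architecture is right, and you have correctly located the load-bearing difficulty: $L^1(\mu)$-approximating indicators of exponentially small sets by Lipschitz functions of controlled norm. But the fix you propose does not close the gap. The mixing hypothesis \eqref{H1} requires test functions that are Lipschitz on all of $M$, so a smoothing of $\mathbbm{1}_C$ must be performed in every direction, and the resulting $L^1(\mu)$-error is governed by $\mu(\partial_\ell C)$ for $\ell$ the smoothing scale. The transverse factor $\nu$ in your local product decomposition $d\mu = \rho\, d\mathrm{Leb}^u \otimes d\nu$ is an arbitrary Borel measure (typically singular for attractors), and the H\"older bounds on the unstable density $\rho$ give no control on how $\nu$ charges a small neighborhood of a hypersurface: mass can concentrate arbitrarily near $\partial C$, which is exactly the obstruction the paper isolates in its introduction. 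The actual solution is the shifting trick of \cref{shift} together with property (P4) of \cref{goodbox}: a Markov-inequality pigeonhole over translates of a grid selects disjoint boxes $C$ of scale $\xi_n$ covering most of $M$ with $\mu(\partial_{\ell_n} C) \leq \varepsilon\,\mu(C)$, while the scales are arranged so that $e^{-\beta n} \ll \xi_n^{2d}\ell_n$ and the mixing error term still dominates. This is purely measure-theoretic, needs no regularity of $\mu$, and is the genuine new ingredient your plan is missing.

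There is a second, related gap in your plan to ``reconstruct the fake center-stable foliation of \cite{ExpMix} on a Pesin block.'' Their construction glues pulled-back foliations from a net of reference charts using buffer regions, and proving the buffers have small $\mu$-measure again comes down to bounding $\mu$ of hypersurface neighborhoods; smoothness of $\mu$ is doing real work there, and for a general SRB measure the gluing would not obviously cover most of the space. The paper sidesteps this in \cref{fakecsconstruction} by defining the fake cs-leaf through $x$ as $f^{-n}$ of a vertical line inside a good box at time $n$, intersected with a good box at time $0$; since good boxes cover most of $M$ at both times and the construction is box-by-box rather than by gluing, the resulting partial foliation automatically covers most of $M$. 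Your intuition about the holonomy Jacobian composed with the ratio of SRB densities is essentially correct and corresponds to (P8) of \cref{goodbox} and (F2) of \cref{fakecsconstruction}, but it presupposes having the good boxes in hand.
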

    Let $m$ be the normalized Riemannian volume measure on $M$, not necessarily $f$-invariant. We say that \emph{volume is almost exponentially mixing} for $C^r$ observables if there exists $r>0,\beta>0,C>0$ such that for all $g,h \in C^r(M)$ with $\int h dm = 0$ and for all $n>0$,
    \begin{equation*}
        \left|\int g \circ f^n h dm\right| \leq C \|g\|_{C^r}\|h\|_{C^r}e^{-\beta n}.
    \end{equation*}
    S. Ben Ovadia and F. Rodriguez Hertz prove in \cite{ExpLim} that, if volume is almost exponentially mixing, there exists an $f$-invariant Borel measure $\mu$ such that for all $g,h \in C^r(M)$,
    \begin{equation}
        \label{H2}
        \left|\int g \circ f^n h dm - \int g d\mu \int h dm\right| \leq C \|g\|_{C^r}\|h\|_{C^r}e^{-\beta n}.
    \end{equation}
    Moreover, if $\mu$ is nonatomic, then it is an ergodic SRB measure.
    In this case, we will call $\mu$ the \emph{limit SRB measure}. They also show that $\mu$ has some equidistribution of unstables property (see Proposition 4.2 in \cite{ExpLim}).
    This led them to conjecture that the limit SRB measure $\mu$ is Bernoulli.
    In fact, we prove that:
    \begin{mainthm}\label{thmB}
		Let $f \colon M \to M$ be a $C^{1+\alpha}$ diffeomorphism of a compact manifold and suppose that volume is almost exponentially mixing. Let $\mu$ be the limit SRB measure.
		Then $(M,\mu,f)$ is Bernoulli.
	\end{mainthm}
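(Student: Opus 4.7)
The plan is to run the proof of Theorem \ref{thmA} with one substitution: the equidistribution of unstable plaques, which in Theorem \ref{thmA} is derived from (H1), will be supplied directly by Proposition 4.2 of \cite{ExpLim}, which derives it from (H2). As described in the introduction, the proof of Theorem \ref{thmA} uses the mixing hypothesis only to establish (a) positivity of a Lyapunov exponent $\mu$-a.e., and (b) equidistribution of local unstable manifolds on exponentially small balls covering a positive proportion of $M$; the subsequent construction of fake center-stable foliations, the verification of absolute continuity of their holonomies, and the application of the Ornstein--Weiss criterion all rely only on Pesin theory and the SRB property. The strategy is therefore to supply (a) and (b) under hypothesis (H2) and then invoke the remainder of the machinery of Theorem \ref{thmA}.

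For (a), the fact that $\mu$ is the limit SRB measure of \cite{ExpLim} suffices: when $\mu$ is nonatomic, \cite{ExpLim} shows it is an ergodic SRB measure, hence it has absolutely continuous conditionals along nontrivial local unstable manifolds, which forces at least one positive Lyapunov exponent $\mu$-a.e. For (b), Proposition 4.2 of \cite{ExpLim} delivers the analog of the equidistribution produced in \cite{ExpMix} from (H1), now obtained from (H2) together with the SRB property. Note that a naive attempt to deduce (H1) from (H2) by the $f$-invariance of $\mu$ and the weak convergence $f^N_* m \to \mu$ fails because the $C^r$-norm of $g \circ f^n$ blows up in $n$; one therefore really does need a separate, geometric input at this point, which is exactly what \cite{ExpLim} provides.

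With (a) and (b) available, I would run the remainder of the proof of Theorem \ref{thmA} essentially verbatim. Using the Pesin block structure for $(f,\mu)$, construct fake center-stable foliations on small Pesin boxes via the graph transform, and show their holonomies between unstable manifolds are absolutely continuous with uniformly bounded Jacobians. Cover a definite proportion of $M$ by such boxes with exponentially small transversal diameter, and combine the equidistribution of unstables with the absolutely continuous unstable conditionals of the SRB measure to verify that the associated $u$-partition satisfies the Very Weak Bernoulli criterion of \cite{OrnWeiss}. This yields that $(M,\mu,f)$ is Bernoulli.

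The main obstacle is ensuring that the precise quantitative form of equidistribution provided by Proposition 4.2 of \cite{ExpLim} matches the form required by the argument of Theorem \ref{thmA} uniformly on a Pesin block of definite $\mu$-measure: the scales of the covering balls, the regularity class of test sets, and the rate of exponential decay must all align, and if they do not, one has to rework the covering argument of Theorem \ref{thmA} by hand. A secondary technicality is ruling out the atomic case from hypothesis (H2) alone---without nonatomicity \cite{ExpLim} does not supply the SRB structure one needs---but this should be straightforward, since the equidistribution of Lebesgue mass against $\mu$ precludes concentration on atoms.
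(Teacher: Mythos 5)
Your high-level strategy---extract equidistribution of local unstables from (H2) and then run the Ornstein--Weiss machinery exactly as in Theorem~\ref{thmA}---is the right one, and you have correctly flagged the danger spot. But the route you chart, citing Proposition~4.2 of \cite{ExpLim} as the equidistribution input, is not the paper's route, and your own obstacle is exactly why: the argument needs equidistribution of unstable plaques against a prescribed family of \emph{exponentially small} good boxes (at scales $\xi_n$ with $e^{-\beta n/(4d+2)} \ll \xi_n$, see \eqref{scales52}), with explicit control on the approximation constant, the covered fraction, and the indicator-function approximation. Proposition~4.2 of \cite{ExpLim} is invoked in this paper only as motivation (``some equidistribution of unstables property''), not as a lemma, and is not in the quantitative form the box argument needs. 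The paper instead re-derives the needed statement directly from (H2): Proposition~\ref{mixtubesB} shows that for the good boxes of Lemma~\ref{goodbox}, $m(C_j \cap f^{-n}\overline{C}_i) \sim_{1+\varepsilon} m(C_j)\,\mu(\overline{C}_i)$.

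Note the asymmetry: volume $m$ on the left, $\mu$ only on the right, because (H2) mixes $m$ against $\mu$, not $\mu$ against $\mu$. This is precisely where ``run Theorem~\ref{thmA} verbatim'' breaks down, and it is a genuine extra step your sketch glosses over. Lemma~\ref{MainLemma} contains a separate branch for the volume-mixing case in which the second clause of (P8) of Lemma~\ref{goodbox} (the statement $\frac{m(A)}{m(C_i)} \sim_{1+\varepsilon} \mu_{W(x)}(A)$ for $\mathcal{F}^{cs}$-saturated $A$) is used to translate the $m_{C_j}$-mixing delivered by Proposition~\ref{mixtubesB} into the $\mu_{W(x)}$-equidistribution required by (E1). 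So the actual proof of Theorem~\ref{thmB} is the same chain of lemmas proved in parallel branches, with \ref{mixtubesB} replacing \ref{mixtubes} and the $m$-to-$\mu$ bridge supplied by (P8). If you push your plan, you will land exactly in the ``rework the covering argument by hand'' contingency you anticipated, and that rework essentially is Proposition~\ref{mixtubesB} plus the volume branch of Lemma~\ref{MainLemma}. (Your secondary concern about ruling out atoms is handled at the level of terminology: in this paper ``the limit SRB measure'' already presupposes $\mu$ is nonatomic, which is what makes it an ergodic SRB measure by \cite{ExpLim}.)
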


    We remark that \cref{thmA} and \cref{thmB} are, a priori, different Theorems.
    This is because we do not know if the limit measure constructed in \cite{ExpLim} is exponentially mixing.

    \subsection{Historical background}

    Let $p = (p_1,..,p_k)$ be a probability vector.
    The measure preserving system $ (\{1,..,k\}^{\mathbb{Z}},\sigma, p^{\mathbb{Z}})$ where $\sigma$ is the shift map is called a \emph{Bernoulli shift}.
    They are a model for `randomness' in dynamical systems.
    Given a measure preserving system $(X,\mu,T)$, we say that it is \emph{Bernoulli} if it is measurably isomorphic to some Bernoulli shift.
    
    A. Kolmogorov defined entropy in \cite{Kolmogorov} as a dynamical invariant in order to distinguish different measure preserving systems.
    It is well known that two systems with the same entropy may be different.
    However, D. Ornstein showed in \cite{OrnEnt} that two Bernoulli shifts with the same entropy are measurably isomorphic.
    Therefore, showing that some class of systems is Bernoulli would also show that entropy is a full invariant in such class of systems.
    On top of that, the Bernoulli property is the strongest ergodic property since it implies the Kolmogorov property (K property), mixing of all orders, mixing, weak mixing and ergodic.

    Let $M$ be a compact Riemannian manifold and $f \colon M \to M$ a $C^{1+\alpha}$ diffeomorphism.
	Y. Pesin showed in \cite{Pes} that, if $f$ preserves a weakly mixing, hyperbolic smooth measure, then it is Bernoulli.
    For measures with some Lyapunov exponents equal to zero, not even the K property is enough to show Bernoulli. Indeed, in \cite{KnotBernKatok} A. Katok constructs a volume preserving diffeomorphism in an $8$-dimensional manifold which satisfies the K property, but is not Bernoulli.
    In \cite{KnotBern4}, A. Kanigowski, F. Rodriguez Hertz and K. Vinhage construct an example in dimension $4$. In dimension $2$, the K property implies hyperbolicity (no zero exponents), and Pesin's Theorem applies. It is unknown whether the K property implies Bernoulli in dimension $3$.

	For systems not preserving volume, one looks for other natural invariant measures.
	An important class of measures are the so called Sinai Ruelle Bowen (SRB) measures.
    Their precise definition is given in \cref{SRBsection}.
    They were first constructed and studied for Axiom A attractors
    (see \cite{S2}, \cite{R1}, \cite{R2}, \cite{B2}, \cite{K1} and, for an overview, \cite{YoungSurvey})
    and later verified to exist in various settings.
    F. Ledrappier showed in \cite{Led} that hyperbolic, weak mixing, SRB measures are Bernoulli, extending Y. Pesin's result.
    
    We have so far talked about qualitative properties of measure preserving systems.
    Exponential mixing is a quantitative property that requires a smooth structure. 
    It obviously implies mixing, but its connection with stronger ergodic properties was not known until recently.
    In \cite{KanigBern}, A. Kanigowski shows that homogeneous systems with the K property are Bernoulli (see Theorem 1.1 in \cite{KanigBern} for a detailed statement).
    He uses the fact that such systems are exponentially mixing (see \cite{Howe}, \cite{Moore} and \cite{Oh}) and, to the best of our knowledge, it is the first instance that exponential mixing is used to show the Bernoulli property.
    In \cite{ExpMix}, they show that exponential mixing of a smooth measure by itself is enough to the Bernoulli property.
    \cref{thmA} reinforces that connection, extending it to SRB measures.
    
    Section 10 of \cite{ExpMix} provides examples of Bernoulli systems arising from several semisimple extensions of volume preserving Anosov diffeomorphisms.    
    Theorem 4.1 of \cite{DDKN}, which does not require smoothness of the measures considered, guarantees that the resulting skew product systems will be exponentially mixing, provided that a certain large deviations estimate holds. 
    Considering an SRB measure on the base instead, the same construction provides new examples of Bernoulli systems by \cref{thmA}.
    This is because the product measure is an SRB measure for the skew product system, as we verify in \cref{SRBskew}.

    One may also study statistical properties of continuous time systems.
    Let $f^t \colon M \to M$ be a $C^{1+\alpha}$ flow preserving a Borel probability measure $\mu$. 
    It is \emph{exponentially mixing} for $C^r$ observables if there exist $r>0$, $\beta>0$, $C>0$ such that for all $g,h \in C^r(M)$ and for all $t >0$,
    \begin{equation}
    \label{H3}
        \left|\int g\circ f^t h d\mu - \int g d\mu \int h d\mu \right|
        \leq C \|g\|_{C^r} \|h\|_{C^r} e^{-\beta t}.
    \end{equation}
    We say that the flow $(M,\mu,f^t)$ is \emph{Bernoulli} if for each fixed $t_0>0$, $(M,\mu,f^{t_0})$ is Bernoulli.
    If \eqref{H3} holds for all $t>0$, it in particular holds for $t \in t_0\mathbb{N}$.
    In particular, if $(M,\mu,f^t)$ is an exponentially mixing flow, then for each $t_0>0$, $(M,\mu,f^{t_0})$ is exponentially mixing.
    If $\mu$ is an SRB measure for the flow $f^t$, then it is also an SRB measure for the diffeomorphism $f^{t_0}$ for each $t_0>0$, since they share the same unstable manifolds.
    Therefore, the following is a Corollary of \cref{thmA}.
    \begin{mainthm}
		Let $f^t \colon M \to M$ be a $C^{1+\alpha}$ flow of a compact manifold and $\mu$ a $f$-invariant, exponentially mixing SRB measure.
		Then $(f^t,\mu)$ is Bernoulli.
	\end{mainthm}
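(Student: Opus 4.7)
The plan is to reduce to \cref{thmA}. By definition, $(f^t,\mu)$ is Bernoulli means that $(M,\mu,f^{t_0})$ is Bernoulli for every fixed $t_0>0$, so it suffices to verify the hypotheses of \cref{thmA} for the diffeomorphism $f^{t_0}$ and its invariant measure $\mu$, for arbitrary $t_0>0$.

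Three checks are needed. First, $f^{t_0}$ is a $C^{1+\alpha}$ diffeomorphism of the compact manifold $M$, which is immediate from the regularity of the flow, and $\mu$ is $f^{t_0}$-invariant since it is $f^t$-invariant for all $t$. Second, specializing the flow mixing bound \eqref{H3} to times $t=nt_0$ for $n\in\N$ gives
\begin{equation*}
\left|\int g\circ f^{nt_0}\,h\,d\mu-\int g\,d\mu\int h\,d\mu\right|\leq C\|g\|_{C^r}\|h\|_{C^r}e^{-(\beta t_0)n},
\end{equation*}
which is precisely \eqref{H1} for $f^{t_0}$ with rate $\beta t_0$ and the same constant $C$. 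Third, $\mu$ is SRB for $f^{t_0}$ because the strong unstable manifolds for the flow coincide with the unstable manifolds of the time-$t_0$ map (they are the points contracted by $f^{-nt_0}$ as $n\to\infty$ under the hyperbolicity of the flow), so the conditionals of $\mu$ along these leaves are absolutely continuous with respect to Lebesgue in both settings.

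With the three hypotheses verified, \cref{thmA} delivers the Bernoulli property of $(M,\mu,f^{t_0})$, and since $t_0>0$ was arbitrary, $(f^t,\mu)$ is Bernoulli as a flow. There is essentially no substantive obstacle: the argument is a clean transfer of hypotheses from the flow to each time-$t_0$ map. The only point worth double-checking is the identification of the unstable manifolds and the transfer of the SRB property, but this is a standard fact about hyperbolic flows versus their time-$t_0$ maps, as the paper itself remarks in the paragraph preceding the statement.
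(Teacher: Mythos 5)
Your argument is correct and is essentially identical to the paper's: specialize the flow mixing estimate \eqref{H3} to times $nt_0$ to obtain \eqref{H1} for $f^{t_0}$, observe that $\mu$ remains SRB because the unstable manifolds of the flow and of the time-$t_0$ map coincide, and invoke \cref{thmA}. No differences in substance.
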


	\subsection{Overview of the proof}
	
	The proof heavily relies on Pesin theory.
	We therefore set up notation and state many of the necessary results needed.
	In particular, we prove specific estimates not easily found in the literature that may be useful in other contexts dealing with Pesin theory.
	Some of them are change of coordinates for Lyapunov charts in \cref{changeofcoordinateslyap} and explicit log H\"older continuity of the unstable Jacobain on Pesin sets.
	
	We apply the machinery developed by D. Ornstein and B. Weiss in \cite{OrnWeiss} to verify the Bernoulli property.
	For our context, we reformulate it in terms of most unstable manifolds  equidistributing at time $\floor{\varepsilon n}$ along disjoint sets $B_i$ that cover most of $M$ and have the property that any two points $x,y \in B_i$ stay $\varepsilon$-close until time $n$. This is the content of \cref{EBB}.
	
	We therefore need to pass from \eqref{H1} or \eqref{H2} to equidistribution of unstable manifolds.
	This is an argument that goes back to G. Margulis (see \cite{Margulis2004}) and consists of two steps:
	We first thicken an unstable manifold $W$ by an exponentially narrow tube $C$ and $L^1$-approximate the indicator function $\mathbbm{1}_C$ by a Lipschitz function $\varphi$ with controlled Lipschitz norm $\|\varphi\|_{\text{Lip}}$.
	Secondly, we use some form of local product structure to approximate the measure conditioned on the tube $C$ to the measure conditioned on  the unstable $W$.
	
	The first step is standard for smooth measures.
	However, for a general Borel measure $\mu$, it is unclear how to well $L^1(\mu)$-approximate $\mathbbm{1}_C$ by $\varphi$, since $C$ may have mass accumulated around its boundary.
	To overcome this difficulty, we construct in \cref{goodbox} a disjoint family of exponentially small boxes that cover most of $M$ and have little measure accumulated on a quantified neighborhood of their boundaries.
	They are also mostly covered by pieces of unstable manifolds.
	This part is done for any Borel measure $\mu$.
	Similar problems of covering $M$ by boxes with dynamical meaning and good properties near their boundaries have also been present in works on symbolic dynamics such as \cite{Sarig},\cite{LimaSarig} and \cite{AraujoLimaPoletti}.
	
	For the second step, we use the previously constructed boxes to build fake center stable manifolds on them, similar to what was done in \cite{ExpMix}.
	The main difference is that following their construction, showing that these foliations cover most of $M$ is challenging if the measure is not smooth.
	Our construction is done by pulling back good foliated boxes at time $n$ and intersecting them with good boxes at time $0$.
	Since good boxes cover most of $M$, the fake center stable foliation constructed will also cover most of $M$. This is done in \cref{fakecsconstruction}.
	
	Given two unstable manifolds $W_1,W_2$ in a good box, the fake center stable holonomy $\pi^{cs} \colon W_1 \to W_2$ is a $C^{1+\alpha}$ map with Jacobian close to $1$.
	This gives us an approximate local product structure for center stable saturated sets with respect to the unstable Lebesgue measure.
	This is where the SRB property is crucial, since we can approximate the measure conditioned on an unstable by the normalized unstable Lebesgue measure (see (P8) of \cref{goodbox}).
	
	All these allow us to show equidistribution of most unstable manifolds along exponentially small boxes $C$ in \cref{MainLemma}, as long as they're not too small.
	This is still not sufficient because the sets $B_i$ are too thin along the unstable direction, despite covering the entirety of the center stable direction. 
	To overcome this, in \cref{MainLemma2} we break up $C$ by sets $B_i$ thin along the unstable direction and show that equidistribution of an unstable along the box implies equidistribution on most of the $B_i$.
	This is because $f^{\floor{\varepsilon n}}W$ is really long along the unstable direction.
	Therefore, a typical connected component of $f^{\floor{\varepsilon n}}W \cap C$ crosses some $B_{i_0}$ inside the box $C$ if, and only if, it crosses all of the $B_i$.

 	\subsection{Notation}
	
	Given two sequences $a_n$,$b_n$, the notation $a_n \ll b_n$ means $\lim \frac{a_n}{b_n} = 0$.

	For a subset  $A \subset X$ of a metric space with metric $d$, we denote by $\mathcal{N}_t(A) = \{x \in X \ | \ d(x,A) < t\}$ and $\partial_t A = \{ x \in X \ | \ d(x,\partial A) < t\}$. 
	
	If $X,Y$ are two metric spaces and $A \subset X \times Y$ with $d_{X\times Y} = \max \{d_X, d_Y\}$, then $\mathcal{N}_{(s,t)}(x',y') = \{(x,y) \in X\times Y \ | \ d_X(x',x) < s \ \text{and} \ d_Y(y',y) < t\}$.

	For $x,y$ nonzero real numbers and $a>1$, we denote by $x \sim_a y$ to mean $\frac{x}{y} \in (a^{-1},a)$.

	\subsection{Acknowledgments}
	
	I would like to thank my PhD advisor Aaron Brown for his continued guidance throughout the making of this paper.
	I am also grateful to Snir Ben Ovadia, Dmitry Dolgopyat, Adam Kanigowski and Federico Rodriguez Hertz for kindly explaining many relevant arguments in their papers and for the many conversations regarding this paper.
	 In particular, I thank Ben Ovadia for introducing me to this project, and Dolgopyat and Kanigowski for their valuable comments on an earlier version of this paper.
	 
	I am also thankful for the fruitful conversations I had regarding this paper with Seljon Akhmedli, Keith Burns, Solly Coles and Kurt Vinhage.
	
	This material is based upon work supported by the National Science Foundation under Grant
No. DMS-2020013 and DMS-2400191.

	\section{Preliminaries}
	
	Here we write down preliminaries and fix notation for the paper.
	We will closely follow the notation and exposition from section 2 of \cite{ExpMix}.

	\subsection{Measure theory}

	Let $(X,\mathcal{B},\mu)$ be a probability space.
	A \emph{measurable finite partition} of $X$ is a collection of pairwise disjoint measurable sets $\mathcal{P} = \{P_1,...,P_k\}$ such that $\mu(\bigcup_{i=1}^k P_i) = 1$.
	We say that a property holds for \emph{$\varepsilon$-almost every atom} if the union of the atoms such that the property does not hold has measure less than $\varepsilon$.
	
	Given two finite measurable partitions
	$\mathcal{P} = \{P_1,...,P_k\}$ and $\mathcal{Q} = \{Q_1,...,Q_k\}$ of $X$, 
	we introduce the following distance between them
	\begin{equation*}
		\overline{d}(\mathcal{P},\mathcal{Q})
		= \sum_{i=1}^k \mu(P_i \Delta Q_i).
	\end{equation*}
	For $(\mathcal{P}^s)_{s=1}^S = (\{P^s_1,...,P^s_k\})_{s=1}^S$ and 
	$(\mathcal{Q}^s)_{s=1}^S = (\{\mathcal{Q}^s_1,...,\mathcal{Q}^s_k\})_{s=1}^S$
	sequences of measurable finite partitions of $X$, we define their distances to be
	\begin{equation*}
		\overline{d}\left((\mathcal{P}^s)_{s=1}^S,(\mathcal{Q}^s)_{s=1}^S\right)
		= \frac{1}{S} \sum_{s=1}^S \overline{d}(\mathcal{P}^s,\mathcal{Q}^s).
	\end{equation*}
	If $\mathcal{P} = \{P_1,...,P_k\}$ is a measurable partition of $(X,\mu)$ 
	and $\mathcal{Q} = \{Q_1,...,Q_k\}$ is a measurable partition of $(Y,\nu)$,
	then we denote by $\mathcal{P} \sim \mathcal{Q}$ if $\mu(P_i) = \nu(Q_i)$, for $i=1,...,k$.
	For $(\mathcal{P}^s)_{s=1}^S = (\{P^s_1,...,P^s_k\})_{s=1}^S$ and 
	$(\mathcal{Q}^s)_{s=1}^S = (\{\mathcal{Q}^s_1,...,\mathcal{Q}^s_k\})_{s=1}^S$
	sequences of measurable finite partitions on $(X,\mu)$ and $(Y,\nu)$ respectively,
	we define their distance to be
	\begin{equation*}
		\overline{d}\left((\mathcal{P}^s)_{s=1}^S,(\mathcal{Q}^s)_{s=1}^S\right)
		= \underset{\overline{\mathcal{Q}}^1 \sim \mathcal{Q}^1,...,
		\overline{\mathcal{Q}}^S \sim \mathcal{Q}^S}{\inf}
		\overline{d}\left((\mathcal{P}^s)_{s=1}^S,(\overline{\mathcal{Q}}^s)_{s=1}^S\right).
	\end{equation*}	
	For $A \subset X$ of positive measure, we denote the \emph{conditional measure on $A$} by $\mu_A$. It is defined to be
	\begin{equation*}
		\mu_A(B) = \frac{\mu(A\cap B)}{\mu(A)} \ \text{for} \ B \subset X.
	\end{equation*}
	If $\mathcal{P} = \{P_1,...,P_k\}$ is a measurable partition on $(X,\mu)$,
	we denote by $\mathcal{P}_A = \{P_1 \cap A,...,P_k \cap A\}$ 
	the induced partition on $(A,\mu_A)$.
	
	Now let $T \colon X \to X$ be a measurable map preserving the measure $\mu$.
	\begin{definition}
		\label{vwB}
		A finite partition $\mathcal{P} = \{P_1,...,P_k\}$ 
		is said to be \textit{very weak Bernoulli}
		if for every $ \varepsilon>0$, there exists $N_0 \in \mathbb{N}$ such that
		for $N_0\leq N_1 \leq N_2$, $N_0 \leq S$ and $ \varepsilon$-almost every atom
		$A$ of $\bigvee_{i = N_1}^{N_2}T^i(\mathcal{P})$, we have
		\begin{equation*}
			\overline{d}\left(
				(T^{-s}\mathcal{P})_{s = 0}^S,(T^{-s}\mathcal{P}_A)_{s=0}^S\right)
				< \varepsilon
		\end{equation*}
	\end{definition}
	Let $(\mathcal{P}_n)_{n \geq 1}$ be a sequence of finite measurable partitions of $X$.
	We say that $\mathcal{P}_n$ \emph{converges to the point partition} if for every measurable set $A \subset X$ and $\varepsilon>0$,
	for each $n$ large enough, there exists a  $\mathcal{P}_n$-measurable set $A_n$ such that $\mu(A\Delta A_n) < \varepsilon$.
	The following standard theorem can be found as Theorem A and Theorem B in \cite{OrnWeiss}.
	\begin{theorem}
		\label{vwBimpliesB}
		Let $(\mathcal{P}_n)_{n \geq 1}$ be a sequence of finite measurable partitions of $X$ converging to the point partition.
		If $\mathcal{P}_n$ is  very weak Bernoulli for each $n \geq 1$, then $(X,\mu,T)$ is Bernoulli.
	\end{theorem}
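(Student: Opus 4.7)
The plan is to deduce \cref{vwBimpliesB} from the Ornstein--Weiss criterion for a single partition together with Ornstein's theorem that an inverse limit of Bernoulli factors is Bernoulli. The first step is to invoke Theorem A of \cite{OrnWeiss}: if $\mathcal{P}$ is a very weak Bernoulli partition for $T$, then the factor $(X, \mathcal{A}(\mathcal{P}), \mu, T)$ generated by $\mathcal{P}$ — where $\mathcal{A}(\mathcal{P})$ is the smallest $T$-invariant complete sub-$\sigma$-algebra containing $\mathcal{P}$ — is measurably isomorphic to a Bernoulli shift. Applying this for each $n$, I obtain a sequence of $T$-invariant sub-$\sigma$-algebras $\mathcal{A}_n := \mathcal{A}(\mathcal{P}_n)$, each of which defines a Bernoulli factor of $(X,\mu,T)$.

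Next I would translate the hypothesis that $\mathcal{P}_n$ converges to the point partition into the statement that these $\sigma$-algebras asymptotically generate the whole space. Concretely, for every measurable $A \subset X$ and every $\varepsilon > 0$, for all sufficiently large $n$ there is a $\mathcal{P}_n$-measurable (hence $\mathcal{A}_n$-measurable) set that $\varepsilon$-approximates $A$ in $\mu$-measure. Passing to the nested sequence $\mathcal{B}_n := \mathcal{A}_1 \vee \cdots \vee \mathcal{A}_n$ yields a monotone increasing family with $\bigvee_n \mathcal{B}_n = \mathcal{B}$ modulo $\mu$-null sets, so that $(X,\mu,T)$ is the inverse limit of the factors associated to the $\mathcal{B}_n$.

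The final step is to conclude that $(X,\mu,T)$ itself is Bernoulli. Two facts are needed. First, finite joins of Bernoulli factors inside a common measure-preserving system are again Bernoulli; this follows from Ornstein's characterization of Bernoulli via the finitely determined (equivalently very weak Bernoulli) property and from the fact that a join of two such factors retains the property. Second, an inverse limit of Bernoulli factors is Bernoulli, which is the content of Theorem B of \cite{OrnWeiss}. Together these two facts yield that the limit factor, namely $(X,\mu,T)$, is Bernoulli.

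The main obstacle in this outline is the first assertion in the last paragraph: showing that the nested joins $\mathcal{B}_n$ themselves define Bernoulli factors. This is not immediate from the very weak Bernoulli property of the individual $\mathcal{P}_n$ and requires the full strength of Ornstein's isomorphism machinery — in particular, the equivalence between very weak Bernoulli and isomorphism to a Bernoulli shift for the generated factor. Once this equivalence is granted, the rest of the argument is a standard density-and-limits manipulation encapsulated in Theorem B of \cite{OrnWeiss}, and this is precisely why the author is content to cite the two results rather than reprove them here.
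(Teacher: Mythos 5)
The paper does not actually supply a proof here; it simply cites Theorems A and B of Ornstein--Weiss (\cite{OrnWeiss}) as the content of the statement. Your outline is a sensible attempt to flesh out that citation, but the route you choose introduces a step that is itself a non-trivial theorem and is not a routine consequence of Theorems A or B alone. Because the partitions $\mathcal{P}_n$ are only assumed to converge to the point partition --- not to refine one another --- the factor $\sigma$-algebras $\mathcal{A}_n$ need not be nested, so you pass to the joins $\mathcal{B}_n = \mathcal{A}_1 \vee \cdots \vee \mathcal{A}_n$ and must then argue that each $\mathcal{B}_n$ is a Bernoulli factor. Your justification, that ``a join of two such factors retains the property,'' is asserted but not proved, and it is not obvious: for two very weak Bernoulli partitions $\mathcal{P}$ and $\mathcal{Q}$, the joint process $(T, \mathcal{P}\vee\mathcal{Q})$ is some joining of two Bernoulli processes, and joinings of Bernoulli processes are most certainly not Bernoulli in general. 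The extra structure one must exploit is that both processes arise as factors of the same ergodic $T$, and showing the join is then Bernoulli requires more of Ornstein's relative-isomorphism machinery than the two cited theorems.

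You correctly identify this as ``the main obstacle,'' which is the right instinct, but the sketch does not close it. As a practical matter, the usual way this issue is sidestepped is to arrange the hypotheses so that the factors are automatically nested (for instance, by taking $\mathcal{P}_n$ increasing under refinement) or by invoking a formulation of the inverse-limit theorem that is already stated for a sequence of vwB partitions converging to the point partition rather than for a nested sequence of $\sigma$-algebras --- which is presumably what the cited Theorem B of \cite{OrnWeiss} actually provides, making the detour through joins unnecessary. As written, there is a genuine gap at the join step; either supply a reference for the join theorem or rephrase the argument so that it matches the form of the Ornstein--Weiss result being cited.
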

	
	A map $\theta\colon (X,\mu) \to (Y,\nu)$ is \emph{$\varepsilon$-almost measure preserving} if there exists $E \subset X$ with $\mu(E)< \varepsilon$ such that for any $A \subset X\setminus E$,
	\begin{equation*}
		\nu(\theta A) \sim_{1+\varepsilon} \mu(A).
	\end{equation*}
	
	We say that a probability space $(X,\mu,\mathcal{B})$ is a \emph{Lebesgue space} if $X$ is a Polish space with $\mathcal{B}$ the Borel sigma algebra.
	The following standard result can be found in \cite{Rohlin} and will be helpful for constructing $\varepsilon$-almost measure preserving maps.
	\begin{lemma}
		Any two atomless Lebesgue spaces are measurably isomorphic.
	\end{lemma}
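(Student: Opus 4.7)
The plan is to show that any atomless Lebesgue space $(X,\mathcal{B},\mu)$ is measurably isomorphic to the standard model $([0,1],\text{Leb})$; by transitivity of measurable isomorphism this yields the lemma. The strategy is to construct a nested sequence of finite measurable partitions $\mathcal{P}_n$ of $X$ with $2^n$ atoms, each of measure $2^{-n}$, that simultaneously (i) refine each other and (ii) generate $\mathcal{B}$ modulo $\mu$-null sets. Once this is in place, the obvious map sending $x$ to the binary sequence recording which half of each nested atom contains $x$ produces the isomorphism with the dyadic model of the unit interval.

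To build the $\mathcal{P}_n$, I first use that $X$ is Polish to fix a countable algebra $\{A_k\}_{k\geq 1}$ generating $\mathcal{B}$. Starting from $\mathcal{P}_0=\{X\}$, at stage $n+1$ I first refine $\mathcal{P}_n$ by intersecting each of its atoms with $A_{n+1}$ and $A_{n+1}^c$, then further subdivide the resulting sets to achieve the dyadic mass constraint $\mu=2^{-(n+1)}$ on every atom. The subdivision step relies on atomlessness via Sierpi\'nski's theorem: for any measurable $P$ with $\mu(P)>0$ and any $t\in[0,\mu(P)]$, there exists a measurable $P'\subset P$ with $\mu(P')=t$. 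This lets me slice arbitrary atoms into pieces of any prescribed measure.

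With the $\mathcal{P}_n$ in hand, define $\phi\colon X\to[0,1]$ by letting $\phi(x)$ have $n$-th binary digit equal to $0$ or $1$ according to which half of the $\mathcal{P}_{n-1}$-atom of $x$ (under the chosen splitting) contains $x$. Because the atoms have measure $2^{-n}$, the pushforward $\phi_*\mu$ agrees with Lebesgue measure on every dyadic interval, hence on all Borel sets. Because the $\mathcal{P}_n$ generate $\mathcal{B}$ modulo null sets, $\phi$ is injective off a $\mu$-null set, and one can verify that $\phi$ restricted to a co-null set has a measurable inverse (after also removing the Lebesgue-null preimage of dyadic rationals, where the binary code is ambiguous). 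This gives a measurable isomorphism $(X,\mu)\to([0,1],\text{Leb})$.

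The main obstacle is the bookkeeping in the construction of the $\mathcal{P}_n$: one must simultaneously ensure the dyadic mass condition and that the partitions eventually separate points in the generating algebra, while keeping the refinement structure. This is handled by interleaving the two requirements — first intersect with the $n$-th generator, then rebalance to dyadic mass — rather than by any subtle analytic step, so the rest of the proof is largely formal.
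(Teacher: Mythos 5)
The paper does not actually prove this lemma; it cites it to Rokhlin as a standard fact, so there is no in-paper argument to compare against. Your overall strategy (reduce to $([0,1],\mathrm{Leb})$ via a dyadic coding, using Sierpi\'nski's theorem to slice sets to prescribed measure) is the right idea, but the construction of the partitions $\mathcal{P}_n$ as you describe it has a genuine gap.

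The problem is in the step ``first refine $\mathcal{P}_n$ by intersecting each of its atoms with $A_{n+1}$ and $A_{n+1}^c$, then further subdivide the resulting sets to achieve the dyadic mass constraint $\mu = 2^{-(n+1)}$.'' Once you intersect an atom $P$ of $\mathcal{P}_n$ (which has $\mu(P) = 2^{-n}$) with $A_{n+1}$, the piece $P \cap A_{n+1}$ has arbitrary measure in $[0, 2^{-n}]$, typically not a multiple of $2^{-(n+1)}$ and possibly irrational. No amount of \emph{subdividing} can then produce atoms of measure exactly $2^{-(n+1)}$: subdivision only shrinks measures, and there is no way to partition a set of measure, say, $\frac{1}{3}\cdot 2^{-n}$ into pieces of measure $2^{-(n+1)}$. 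The two requirements---that $\mathcal{P}_{n+1}$ refine the trace of $A_{n+1}$ exactly, and that every atom have dyadic measure $2^{-(n+1)}$---are incompatible in general, and ``interleaving'' them as stated does not resolve the conflict.

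The fix requires an approximation step that you do not supply. One option: at stage $n$, do not intersect with $A_{n+1}$ exactly; instead choose, inside each atom of $\mathcal{P}_n$, a dyadic-measure subset that approximates $P\cap A_{n+1}$ using Sierpi\'nski, and arrange the bookkeeping (e.g.\ first refine to a much finer dyadic partition before approximating $A_{k}$, revisiting each $A_k$ infinitely often with error $\to 0$) so that the resulting $\bigvee_n \mathcal{P}_n$ still generates $\mathcal{B}$ mod $\mu$-null sets. A cleaner and more standard route avoids the conflict entirely: code $x\mapsto(\mathbbm{1}_{A_n}(x))_n \in \{0,1\}^{\mathbb{N}}$ to get an a.e.\ injective map into a compact metric space, push $\mu$ forward to an atomless Borel probability on $\{0,1\}^{\mathbb{N}}$ (or, after composing with a homeomorphism onto a subset of $[0,1]$, onto $[0,1]$), and then use the distribution function $F(t) = \phi_*\mu([0,t])$ as the final measure-preserving change of variables to Lebesgue. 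Either way, as written your construction step fails, and this is precisely the point that needs to be repaired.
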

	Let $d$ be a metric on $X$. For a finite partition $\mathcal{P} = \{P_1,...,P_k\}$ of $X$, we denote by $\partial \mathcal{P} = \bigcup_{i =1}^{k} \partial P_i$.
	We say that $\mathcal{P}$ is a \emph{regular partition} if for every $\varepsilon>0$, there exists $\delta>0$ such that $\mu(\mathcal{N}_{\delta}(\partial \mathcal{P}))< \varepsilon$.
	The following statement is Corollary 2.21 of \cite{ExpMix}, which is proven in Lemma 2.4 of \cite{OrnWeiss}.
	\begin{corollary}
		\label{vwBnice}
		Let $f \colon (X, \mathcal{B}, \mu, d) \to (X, \mathcal{B}, \mu, d)$ be ergodic and $\mathcal{P}$ be a
		regular partition of $X$. Suppose that for every $ \varepsilon >0$, there exists
		$N, \tilde{N} \in \mathbb{N}$ such that for every
		$N'\geq N, \varepsilon$-almost every atom $A \in \bigvee_N^{N'} f^i \mathcal{P}$ and every $S \geq \tilde{N}$,
		there exists an $ \varepsilon$-measure 
		preserving map $\theta = \theta(N,S,A) \colon (A,\mu_A) \to (X,\mu)$
		satisfying for $ \varepsilon$-almost every $x \in (A,\mu_A)$,
		\begin{equation}
			\label{close}
			\frac{1}{S}\#\{ i \in \{0,...,S-1\} \ : \ d(f^i(x), f^i(\theta(x))) < \varepsilon \} > 1 - \varepsilon.
		\end{equation}
		Then, $\mathcal{P}$ is very weak Bernoulli.
	\end{corollary}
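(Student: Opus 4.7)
The plan is to translate the hypothesis into a $\bar d$-bound on the partition sequences by using $\theta$ to build a coupling between $(A,\mu_A)$ and $(X,\mu)$, and then using the regularity of $\mathcal{P}$ to force $\varepsilon$-close orbits to visit the same atoms of $\mathcal{P}$ almost always. Fix $\varepsilon>0$. By regularity of $\mathcal{P}$, pick $\eta>0$ with $\mu(\mathcal{N}_\eta(\partial\mathcal{P})) < \varepsilon^{4}$, set $\varepsilon' = \min(\eta,\varepsilon^{4})$, and apply the hypothesis with this $\varepsilon'$ to obtain $N,\tilde N$. By Birkhoff's theorem applied to $\mathbbm{1}_{\mathcal{N}_\eta(\partial\mathcal{P})}$, for $S$ large enough the set
\[
G = \left\{x \in X : \tfrac{1}{S}\#\{s \in [0,S) : f^s x \in \mathcal{N}_\eta(\partial\mathcal{P})\} < 2\varepsilon^{4}\right\}
\]
satisfies $\mu(G) > 1 - \varepsilon^{4}$. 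A Markov-type argument, using $\sum_A \mu(A)\mu_A(G^c) = \mu(G^c)$, shows that for $\varepsilon^{2}$-almost every atom $A$ of $\bigvee_N^{N'} f^i\mathcal{P}$ one has $\mu_A(G^c)<\varepsilon^{2}$; intersecting with the $\varepsilon'$-almost-every-atom condition of the hypothesis yields a set of \emph{good} atoms of total $\mu$-measure $>1-2\varepsilon$, each equipped with its own $\theta$.

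For such a good atom $A$, invoking that $(A,\mu_A)$ and $(X,\mu)$ are isomorphic as atomless Lebesgue spaces, I would modify $\theta$ on a $\mu_A$-set of measure $O(\varepsilon')$ to obtain a genuine measure-preserving isomorphism $\tilde\theta\colon(A,\mu_A)\to(X,\mu)$ still satisfying the trajectory-closeness on $1-O(\varepsilon')$ of $A$; since $\tilde\theta_*\mu_A=\mu$, also $\mu_A(\tilde\theta^{-1}G^c)<\varepsilon^{4}$. As candidate partitions on $X$, take $\bar{\mathcal{Q}}^s := \tilde\theta(f^{-s}\mathcal{P}_A)$; these are equivalent to $f^{-s}\mathcal{P}_A$, and unpacking the definition gives
\[
\bar d(f^{-s}\mathcal{P},\bar{\mathcal{Q}}^s) = 2\mu_A\left\{y \in A : f^s y \text{ and } f^s\tilde\theta(y) \text{ lie in different atoms of }\mathcal{P}\right\}.
\]
Averaging over $s$ and swapping sum and integral reduces the task to bounding $\int_A \Phi(y)\,d\mu_A(y)$, where $\Phi(y)$ is the fraction of times $s\in[0,S)$ on which $f^s y$ and $f^s\tilde\theta(y)$ lie in different atoms. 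For $y$ in the intersection of $G$, $\tilde\theta^{-1}G$, and the trajectory-closeness set (of $\mu_A$-measure $>1-3\varepsilon^{2}$), the only bad $s$ are those where $f^s y$ or $f^s\tilde\theta(y)$ enters $\mathcal{N}_\eta(\partial\mathcal{P})$, or where $d(f^s y, f^s\tilde\theta(y))\geq\varepsilon'$; since $\varepsilon'<\eta$, outside these times a short connectedness argument puts the two points in the same atom of $\mathcal{P}$. This bounds $\Phi(y)<5\varepsilon^{4}$, and integration yields $\bar d<O(\varepsilon^{2})$, well below $\varepsilon$.

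Two subtleties remain. First, the hypothesis provides $\theta$ only for atoms of $\bigvee_N^{N'}f^i\mathcal{P}$ (first index fixed at $N$), whereas \cref{vwB} demands the bound for atoms of $\bigvee_{N_1}^{N_2}f^i\mathcal{P}$ with arbitrary $N_1\geq N_0$; since such an atom is a union of atoms of the finer partition $\bigvee_N^{N_2}f^i\mathcal{P}$, convexity of $\bar d$ in the coupling transfers the bound with at most a bounded deterioration. Second, the modification of $\theta$ into $\tilde\theta$ must be arranged so that the small disagreement set is disjoint from most of the trajectory-closeness good set, which is routine. The main obstacle, if any, is bookkeeping: juggling the small parameters $\varepsilon,\varepsilon',\eta,\varepsilon^{4}$ so that every error term stays comfortably below the target budget.
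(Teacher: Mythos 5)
The paper does not prove this corollary; it is cited from Corollary~2.21 of \cite{ExpMix}, which in turn points to Lemma~2.4 of \cite{OrnWeiss}, so there is no in-paper proof to compare against. Your sketch follows the standard Ornstein--Weiss route and is correct in outline: the identity $\bar d(f^{-s}\mathcal{P},\overline{\mathcal{Q}}^s)=2\mu_A\{y: f^sy,\ f^s\tilde\theta(y)\text{ in different atoms}\}$ is exactly right for the candidate $\overline{\mathcal{Q}}^s=\tilde\theta(f^{-s}\mathcal{P}_A)$ (and hence an upper bound for the infimum in the definition of $\bar d$, which is all you need); Egorov plus Birkhoff gives uniformity of $G$ over $S\ge\tilde N$; the surgery turning the $\varepsilon'$-almost measure preserving $\theta$ into a genuine isomorphism $\tilde\theta$ while keeping the trajectory-closeness set is standard Lebesgue-space bookkeeping; and your passage from atoms of $\bigvee_N^{N_2}f^i\mathcal{P}$ to those of $\bigvee_{N_1}^{N_2}f^i\mathcal{P}$, by gluing the per-atom isomorphisms and a Markov bound, is the right mechanism (gluing the $\tilde\theta_j$ over the subatoms $A_j$ of $A'$ yields a map pushing $\mu_{A'}$ to $\mu$). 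One caveat you should make explicit: the step asserting that two $\varepsilon'$-close points, one at distance $\ge\eta>\varepsilon'$ from $\partial\mathcal{P}$, lie in the same atom is a topological, not measure-theoretic, fact and requires the ambient space to have connected small balls (e.g.\ a length space or manifold), which holds in every application in this paper but is not automatic for a general metric space $X$ as in the literal statement.
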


	Now let $\mathcal{P}$ be a (possibly infinite) partition of $X$.
	It is said to be \textit{$\mu$-measurable} if
	there exists $p \colon X \to [0,1]$ measurable with respect to the sigma algebra $\mathcal{B}$ completed by $\mu$ such that
	for $\mu$-almost every $x \in X$,
	there exists $t_x \in [0,1]$ satisfying $\mathcal{P}(x) = p^{-1}(y_x)$.

	The next lemma is an immediate  application of Markov's inequality and is used multiple times in the paper,
	 which is why we decided to state it separately with the appropriate quantifiers.
	\begin{lemma}
		\label{AppendixLemma}
		Let $(X,\mathcal{B},\mu)$ be a probability space,
		$\varepsilon_A, \varepsilon_B > 0$ and
		$A,B \in \mathcal{B}$ with 
		\begin{equation*}
			\mu(A)< \varepsilon_A \ \text{and}  \ \mu(B) > 1- \varepsilon_B.
		\end{equation*}
		Let $\{B_i\}_{i \in I}$ be a measurable partition of $B$, with $I$ some indexing set
		and $\{\mu_{B_i}\}_{i \in I} \cup \{\mu_{B^c}\}$ be a disintegration of $\mu$
		subordinate to the measurable partition $\mathcal{P} = \{B_i\}_{i \in I} \cup \{B^c\}$.
		Given $ \varepsilon>0$, let 
		\begin{equation*}
			I' = \{ i \in I \ | \ \mu_{B_i}(A) \leq \varepsilon \}
		\end{equation*}
		then 
		\begin{equation}
			\label{trashout}
			1 - \varepsilon_B - \frac{ \varepsilon_A}{ \varepsilon} \leq \mu\left(\bigcup_{i \in I'} B_i \right).
		\end{equation}
	\end{lemma}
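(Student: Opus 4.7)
The plan is a direct application of Markov's inequality to the function $i \mapsto \mu_{B_i}(A)$ on the factor space associated to $\mathcal{P}$, followed by a complementation argument using the hypothesis $\mu(B) > 1 - \varepsilon_B$.

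First, I would introduce the factor measure $\bar{\mu}$ on $I$, namely the pushforward of $\mu|_B$ under the quotient map $x \mapsto i(x)$ sending each $x \in B_i$ to $i$; it has total mass $\bar{\mu}(I) = \mu(B)$, and the disintegration gives $\mu(C \cap B) = \int_I \mu_{B_i}(C)\, d\bar{\mu}(i)$ for any measurable $C \subset X$. Taking $C = A$ and using $\mu(A \cap B) \leq \mu(A) < \varepsilon_A$ yields the key estimate $\int_I \mu_{B_i}(A)\, d\bar{\mu}(i) < \varepsilon_A$. For $I'' := I \setminus I' = \{i \in I : \mu_{B_i}(A) > \varepsilon\}$, Markov's inequality applied to the nonnegative function $i \mapsto \mu_{B_i}(A)$ on $(I, \bar{\mu})$ then gives $\bar{\mu}(I'') < \varepsilon_A/\varepsilon$. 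By the defining property of the factor measure this equals $\mu\bigl(\bigcup_{i \in I''} B_i\bigr)$, and subtracting from $\mu(B) > 1 - \varepsilon_B$ produces the desired inequality \eqref{trashout}.

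There is essentially no obstacle in this argument, which is exactly why the lemma is singled out as a simple Markov-type input to be invoked later. The only minor point to track is that $\bar{\mu}$ should be taken as the non-normalized factor measure of total mass $\mu(B) \leq 1$, so that $\mu\bigl(\bigcup_{i \in I''} B_i\bigr)$ and $\bar{\mu}(I'')$ coincide without any rescaling and the bound $\varepsilon_A/\varepsilon$ from Markov enters directly, with no spurious factor of $\mu(B)$.
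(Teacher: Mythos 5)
Your proof is correct and takes essentially the same approach as the paper: both are a single application of Markov's inequality to the conditional measure of $A$. The only cosmetic difference is that the paper applies Markov to the function $x \mapsto \mu_{\mathcal{P}(x)}(A)$ directly on $(X,\mu)$, while you transfer everything to the factor space $(I,\bar\mu)$ first; these are the same computation.
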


	\begin{proof}
	By the definition of disintegration of measure, we may write the measure of $A$ as the integral of its conditional measures
	\begin{equation*}
		\mu(A) = \int \mu_{\mathcal{P}(x)}(A) d\mu(x).
	\end{equation*}
	Applying Markov's inequality to $x \mapsto \mu_{\mathcal{P}(x)}(A)$,
	\begin{equation*}
		\mu(x \in X \ | \ \mu_{\mathcal{P}(x)}(A) > \varepsilon) <\frac{1}{ \varepsilon} \mu(A). 
	\end{equation*}
	For $i \in I \setminus I'$, we have $B_i \subset \{x \in X \ | \ \mu_{\mathcal{P}(x)}(A) > \varepsilon\}$. Therefore
	\begin{equation*}
		\mu\left(\bigcup_{i \in I \setminus I'}B_i\right) \leq \frac{1}{ \varepsilon}\mu(A).
	\end{equation*}
	Thus, we may conclude the lemma, since
	\begin{equation*}
		1 - \varepsilon_B < \mu(B) = \mu\left(\bigcup_{i \in I'}B_i\right) +
		\mu\left(\bigcup_{i \in I \setminus I'} B_i\right) \leq
		\mu\left(\bigcup_{i \in I'} B_i\right) + \frac{ \varepsilon_A}{ \varepsilon}.
	\end{equation*}

	\end{proof}

	\subsection{Pesin theory}

	For completeness, we will include Pesin theory results in this subsection.
	We closely follow the exposition in Section 2 of \cite{ExpMix}.
	Most results can be found in 
	\cite{BarPes} and \cite{Brown2016}.

	Let $M$ be a compact, connected Riemannian manifold of dimension $d$ and $f \colon M \to M$ a $C^{1+ \alpha}$ diffeomorphism.
	The following definition is stated as Definition 2.1 in \cite{ExpMix}.
	
	\begin{definition}
	\label{regular}
		Let $\lambda,\delta>0$. A point $x \in M$ is \emph{$(\lambda,\delta)$-Lyapunov regular} if for $k \in \mathbb{Z}$, there exists a splitting
		\begin{equation*}
			T_{f^kx}M =E^u(f^kx) \oplus E^{cs}(f^kx)
		\end{equation*}
		and numbers $\mathfrak{R}_{\delta}(f^kx)$ satisfying for $n \in \mathbb{Z}$,
		\begin{enumerate}
			\item $\mathfrak{R}_{\delta}(f^{k+n}x) \leq e^{\delta |k|} \mathfrak{R}_{\delta}(f^nx)$;
			\item $Df^kE^i(x) = E^i(f^kx)$, for $i \in \{cs,u\}$;
			\item if $v \in E^{cs}(f^kx)$, then
				\begin{equation*}
					\|Df^nv\| \leq e^{\delta n}\mathfrak{R}_{\delta}(f^kx) \|v\|;
				\end{equation*}
			\item if $v \in E^u(f^kx)$ and $n \geq 0$, then
				\begin{equation*}
					\|Df^{-n}v\| \leq \mathfrak{R}_{\delta}(f^{k-n}x) e^{(-\lambda+\delta) n}\|v\|;
				\end{equation*}
			\item $\angle(E^{cs}(f^kx),E^u(f^kx)) \geq \mathfrak{R}_{\delta}(f^kx)^{-1}$.
		\end{enumerate}
	\end{definition}
	Let $\mu$ be an ergodic $f$ invariant Borel probability measure on $M$. The following theorem is stated as Theorem 2.2 in \cite{ExpMix}.
	\begin{theorem}
		Suppose that the system $(M,f,\mu)$ has positive Lyapunov exponents and let $\lambda>0$ be the smallest positive Lyapunov exponent.
		For any $\delta>0$, the set of $(\lambda,\delta)$-Lyapunov regular points has full $\mu$ measure.
		Moreover, $\mathfrak{R}_{\delta}$ defined on that set can be chosen to be measurable.
		
	\end{theorem}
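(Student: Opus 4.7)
The plan is to deduce the statement from the Oseledets multiplicative ergodic theorem applied to the derivative cocycle $Df$ over $(M,f,\mu)$. First I would invoke Oseledets: since $f$ is $C^{1+\alpha}$ on a compact manifold, $\log\|Df^{\pm 1}\|$ is bounded, so on an $f$-invariant Borel set $\Lambda$ of full $\mu$-measure one obtains a measurable, $Df$-equivariant splitting $T_xM = \bigoplus_{i=1}^{\ell} E^{\chi_i}(x)$, where $\chi_1 < \cdots < \chi_\ell$ are the distinct Lyapunov exponents. Since $\lambda>0$ is by hypothesis the smallest positive exponent, I would group these as
\begin{equation*}
    E^u(x) := \bigoplus_{\chi_i \geq \lambda} E^{\chi_i}(x), \qquad E^{cs}(x) := \bigoplus_{\chi_i \leq 0} E^{\chi_i}(x),
\end{equation*}
both $Df$-invariant and measurable in $x$, yielding condition (2) of \cref{regular}.

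Next I would construct a preliminary measurable function $\mathfrak{R}^0_\delta \colon \Lambda \to [1,\infty)$ encoding (3)--(5). Define
\begin{align*}
    A_{cs}(x) &= \sup_{\substack{n\geq 0 \\ v \in E^{cs}(x)\setminus\{0\}}} e^{-\delta n}\frac{\|Df^n v\|}{\|v\|}, \\
    A_u(x) &= \sup_{\substack{n\geq 0 \\ v \in E^u(x)\setminus\{0\}}} e^{(\lambda-\delta)n}\frac{\|Df^{-n} v\|}{\|v\|},
\end{align*}
and set
\begin{equation*}
    \mathfrak{R}^0_\delta(x) := \max\bigl\{ A_{cs}(x),\ A_u(x),\ \angle(E^{cs}(x),E^u(x))^{-1},\ 1 \bigr\}.
\end{equation*}
Oseledets guarantees that each supremum is a.e.\ finite because the growth rate of $\|Df^n v\|/\|v\|$ is at most $0$ on $E^{cs}$ and the decay rate of $\|Df^{-n}v\|/\|v\|$ is at least $\lambda$ on $E^u$; measurability of $\mathfrak{R}^0_\delta$ is inherited from that of the splitting. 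By construction $\mathfrak{R}^0_\delta$ realizes conditions (3), (4), and (5) at the point at which it is evaluated.

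The main obstacle is upgrading $\mathfrak{R}^0_\delta$ to a function satisfying the tempering estimate (1) along entire orbits, which need not hold for $\mathfrak{R}^0_\delta$ itself. For this I would apply the standard Pesin tempering trick:
\begin{equation*}
    \mathfrak{R}_\delta(x) := \sup_{n \in \mathbb{Z}} e^{-\delta|n|}\, \mathfrak{R}^0_\delta(f^n x).
\end{equation*}
A direct manipulation then yields $\mathfrak{R}_\delta(f^{k+n}x) \leq e^{\delta|k|}\mathfrak{R}_\delta(f^n x)$ for all $k,n \in \mathbb{Z}$. Finiteness $\mu$-a.e.\ reduces to showing $\limsup_{|n|\to\infty} \tfrac{1}{|n|} \log \mathfrak{R}^0_\delta(f^n x) \leq 0$ for a.e.\ $x$, which follows from a Borel--Cantelli argument using $f$-invariance of $\mu$ together with integrability of $\log^+ \mathfrak{R}^0_\delta$ (itself a consequence of the Oseledets estimates). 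Measurability of $\mathfrak{R}_\delta$ is preserved since the supremum is over a countable set, and since $\mathfrak{R}_\delta \geq \mathfrak{R}^0_\delta$ pointwise the bounds in (3)--(5) are only strengthened. Hence all five conditions of \cref{regular} hold on a set of full $\mu$-measure, completing the proof.
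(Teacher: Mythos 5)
The paper does not prove this theorem; it cites it as Theorem 2.2 of \cite{ExpMix} and the references therein (it is a packaged form of the classical Pesin tempering lemma applied to the Oseledets constants), so there is no in-paper proof to compare against. Your proposal reconstructs the standard argument, and its overall architecture — Oseledets splitting, a pointwise constant $\mathfrak{R}^0_\delta$ capturing (3)--(5), followed by the tempering supremum $\mathfrak{R}_\delta(x)=\sup_n e^{-\delta|n|}\mathfrak{R}^0_\delta(f^n x)$ to secure (1) — is the right one.

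There is, however, a genuine gap in the finiteness step. You assert that $\log^+\mathfrak{R}^0_\delta$ is integrable and that this is ``a consequence of the Oseledets estimates.'' This does not follow. Oseledets controls the \emph{asymptotic exponential rates} along orbits, not the tail behavior of the preamble constants $A_{cs}, A_u$. Even for a bounded derivative cocycle, a quantity of the form $\sup_{n\geq 0}\bigl(\log\|Df^n|_{E^{cs}(x)}\| - \delta n\bigr)$ is finite a.e.\ by Kingman, but integrability depends on the rate of a.e.\ convergence in Kingman/Oseledets, which has no general $L^1$-rate and can fail in ergodic systems with slowly decaying return times. The Borel--Cantelli route you propose therefore does not close. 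The correct input for the tempering supremum to be finite a.e.\ is the \emph{weak temperedness} statement $\lim_{|n|\to\infty}\tfrac{1}{|n|}\log\mathfrak{R}^0_\delta(f^n x)=0$ a.e., and that has to be established directly from Lyapunov--Perron regularity (the two-sided coherence of forward and backward exponents, including the exponent of the angle and of the volume on each Oseledets block). This is precisely the content of the refined multiplicative ergodic theorem in Barreira--Pesin, and it is a theorem in its own right rather than a corollary of integrability.

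A second, more cosmetic, issue: in condition (4) of \cref{regular} the constant is evaluated at $f^{k-n}x$, the \emph{target} of $Df^{-n}$, whereas your $A_u(x)$ produces a bound with the constant at the source point $x$. Converting source to target via (1) costs an extra factor $e^{\delta n}$, so as written you prove the estimate with $2\delta$ rather than $\delta$. This is easily repaired (run the argument with $\delta/2$, or define the analogue of $A_u$ with the supremum anchored at the target point), but it is worth flagging because the same care is needed for (3) if one reads it for $n\in\mathbb{Z}$ rather than $n\geq 0$; the latter reading is clearly the intended one, since backward iterates of $E^{cs}$ vectors expand and could not satisfy the bound for $n<0$.
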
	
	From now on, $\lambda>0$ is fixed to be the smallest Lyapunov exponent of the ergodic system $(M,f,\mu)$.
	We will denote the set of $(\lambda,\frac{\delta}{4})$-Lyapunov regular points by $LyapReg(\delta)$ and by $LyapReg = \bigcup_{\delta >0} LyapReg(\delta)$.
	
	We will now define the Lyapunov norms on $T_xM$ for $x \in LyapReg(\delta)$. 
	For $u \in E^u(x)$, let
	\begin{equation*}
		|u|'^2_{x,\delta} = \sum_{m \leq 0} \|D_xf^mu\|^2 e^{-2 \lambda m - 2\delta |m|}
	\end{equation*}
	and for $v \in E^{cs}(x)$,
	\begin{equation*}
		|v|'^2_{x,\delta} = \sum_{m \geq 0} \|D_xf^mv\|^2 e^{-2 \delta |m|}.
	\end{equation*}
	They define inner products on $E^u(x),E^{cs}(x)$ and are extended to $T_xM = E^{cs}(x) \oplus E^u(x)$ by defining these spaces to be orthogonal.
	
	Let $C_1>0$ be such that, for all $p \in M$, $\hat{f}_p = \text{exp}_{fp} \circ f \circ \text{exp}_p$ is defined on the ball of radius $\frac{1}{C_1}$ in $T_pM$ and
	\begin{equation*}
		\|(D_{\hat{z}_1}\hat{f}_p)^{-1} - (D_{\hat{z}_2}\hat{f}_p)^{-1}\| \leq C_1|\hat{z}_1-\hat{z}_2|^{\alpha}, 
		\ \text{if} \ |\hat{z}_1|,|\hat{z}_2|\leq \frac{1}{C_1}.
	\end{equation*}
	For $p \in LyapReg(\delta)$, let
	\begin{equation*}
		\mathfrak{r}_{\delta}(p) = \left( (\alpha\delta)^{-1} \frac{(\mathfrak{R}_{\alpha\frac{\delta}{4}}(p))^2}{\sqrt{1-e^{-\alpha\delta}}}2^{\frac{1-\alpha}{2}}C_1\right)^{\frac{-2}{\alpha}}.
	\end{equation*}
	We may assume that $\mathfrak{r}_{\delta}$ is bounded above by $\frac{1}{C_1}$ (see Lemma A.2 in \cite{ExpMix} and the comment below it).
	By (1) of \cref{regular}, for $x \in LyapReg(\delta)$,
	\begin{equation}
	\label{SlowExp}
		e^{-\delta} \mathfrak{r}_{\delta}(x) \leq \mathfrak{r}_{\delta}(fx) \leq e^{\delta} \mathfrak{r}_{\delta}(x).
	\end{equation}
	For $\tau>0$, let 
	\begin{equation}
		\label{pesinset}
		{\mathcal{P}}_{\tau} = {\mathcal{P}}^{\delta}_{\tau} = \{ x \in M \ | \ \tau \leq \mathfrak{r}_{\delta}(x) \}.
	\end{equation}
	We call $\mathcal{P}_{\tau}$ a \emph{Pesin set}.
	Note that $\mu(\mathcal{P}_{\tau}) \rightarrow 1$ as $\tau \rightarrow 0$.

	\subsubsection{Lyapunov Charts}

	The following statement is Lemma 2.4 of \cite{ExpMix}
	\begin{lemma}
		\label{LyapCharts}
		There are $\alpha_2,\delta_0>0$ such that for every $\delta<\delta_0$ and $x \in LyapReg(\delta)$,
		there exists a linear map $L_{x,\delta} \colon \mathbb{R}^d \to T_xM$ satisfying
		\begin{enumerate}
			\item[(L1)] $L_{x,\delta}$ is an isometry between the standard metric on $\mathbb{R}^d$ and $T_xM$ with the Lyapunov norm $| \cdot |'_{x,\delta}$.
            Moreover, $L_{x,\delta}(\mathbb{R}^i) = E^i(x)$, for $i=u,cs$;
			\item[(L2)] for $\tau>0$ small enough, the map $x \mapsto L_{x,\delta}$ is $\alpha_2$-H\"older continuous on $\mathcal{P}_{\tau}$;
			\item[(L3)] $\max\{\|L_{x,\delta}\|,\|L^{-1}_{x,\delta}\|\} \leq \mathfrak{r}_{\delta}^{-1}(x)$.
		\end{enumerate}
		Define the Lyapunov chart at $x$ by
		\begin{equation*}
			h_x = h_{x,\delta} = exp_x \circ L_{x,\delta}
		\end{equation*}
		and let
		\begin{equation*}
		 \tilde{f}_x = \tilde{f}_{x,\delta} = h^{-1}_{fx} \circ f \circ h_x.
		\end{equation*}
		Then,
		\begin{enumerate}
			\item $h_{x}(0) = x$;
			\item $\max\{\text{Lip}(h_{x}),\text{Lip}(h_x^{-1})\} \leq \mathfrak{r}_{\delta}^{-1}(x)$;
			\item $\mathcal{N}_{\mathfrak{r}_{\delta}(x)}(0) \subset domain(\tilde{f}_x)$ 
				and $\mathcal{N}_{\mathfrak{r}_{\delta}(x)}(0) \subset domain(\tilde{f}^{-1}_x)$;
			\item $e^{\lambda - \delta}|v| \leq |D_0 \tilde{f}_{x,\delta}v|$ for $v \in \mathbb{R}^u$ 
				and $|D_0\tilde{f}_{x,\delta}v| \leq e^{\delta} |v|$ for $v \in \mathbb{R}^{cs}$;
			\item $\text{H\"ol}_{\alpha_2}(D\tilde{f}_{x,\delta}) \leq \delta$,
				$Lip(\tilde{f}_{x,\delta} - D_0\tilde{f}_{x,\delta}) \leq \delta$
				and $Lip(\tilde{f}^{-1}_{x, \delta} - D_0 \tilde{f}^{-1}_{x,\delta}) \leq \delta$.
		\end{enumerate}
	\end{lemma}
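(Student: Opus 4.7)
This is a standard Pesin-theory package and I would assemble it in two stages: first build $L_{x,\delta}$ and verify (L1)--(L3), then derive properties (1)--(5) of $h_x,\tilde f_x$ by a pullback computation in the Lyapunov norm. For the linear map, I take a $|\cdot|'_{x,\delta}$-orthonormal basis of $E^u(x)$ and of $E^{cs}(x)$ (using Gram--Schmidt applied to some measurably chosen frame) and let $L_{x,\delta}\colon \R^d=\R^u\oplus\R^{cs}\to T_xM$ be the resulting linear isometry. Property (L1) is then tautological. For (L3), note that by the definition of $|\cdot|'_{x,\delta}$ the Lyapunov norm dominates the Riemannian one (take the $m=0$ term), so $\|L_{x,\delta}\|\le 1$; conversely, using Definition \ref{regular}(3)--(4) to sum the defining geometric series, one gets $|w|'_{x,\delta}\le C\,\mathfrak R_\delta(x)\,\|w\|$, hence $\|L_{x,\delta}^{-1}\|\le C\mathfrak R_\delta(x)$. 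Absorbing the constant and passing to the smaller $\mathfrak r_\delta^{-1}(x)$ (which is what (L3) actually demands and is defined precisely so that the subsequent estimates close up) is bookkeeping. For (L2), I invoke the classical fact that on Pesin sets $\mathcal P_\tau$ the splitting $x\mapsto (E^u(x),E^{cs}(x))$ and the Lyapunov inner product vary H\"older-continuously (see \cite{BarPes} or \cite{Brown2016}); the H\"older continuity of $x\mapsto L_{x,\delta}$ then reduces to H\"older continuity of Gram--Schmidt applied to a H\"older frame, which is routine.

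\textbf{Properties of $h_x$ and $\tilde f_x$.} Items (1) and (2) are immediate from $\exp_x(0)=x$ and the bound $\mathrm{Lip}(\exp_x)\le 2$ on a uniformly small ball combined with (L3). For (3), one uses \eqref{SlowExp} and the fact that $\mathfrak r_\delta\le 1/C_1$ to conclude that $h_x$ maps $\mathcal N_{\mathfrak r_\delta(x)}(0)\subset\R^d$ into the domain where $\hat f_x$ is defined, and similarly for $f^{-1}$; here one just chases the Lipschitz bounds for $h_x$ and $h_{fx}^{-1}$ through the composition $\tilde f_x=h_{fx}^{-1}\circ f\circ h_x$. For (4), because $L_{x,\delta}$ and $L_{fx,\delta}$ are Lyapunov isometries, $D_0\tilde f_x$ is the conjugate of $D_xf$ from Lyapunov to Lyapunov norm. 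A direct telescoping of the series defining $|\cdot|'$ then yields the clean one-step estimates
\begin{equation*}
|D_xf v|'^{\,2}_{fx,\delta}=e^{2\lambda-2\delta}|v|'^{\,2}_{x,\delta}+\|D_xfv\|^2\quad(v\in E^u(x)),\qquad |D_xfv|'^{\,2}_{fx,\delta}\le e^{2\delta}|v|'^{\,2}_{x,\delta}\quad(v\in E^{cs}(x)),
\end{equation*}
which give the asserted $e^{\lambda-\delta}$ and $e^\delta$ bounds.

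\textbf{The main technical point.} Property (5) is the only step that really uses the $C^{1+\alpha}$ hypothesis and is where the very specific form of $\mathfrak r_\delta(x)$ pays off. Unwinding the chart, on $\mathcal N_{\mathfrak r_\delta(x)}(0)$ the H\"older seminorm of $D\tilde f_x$ and the Lipschitz seminorm of $\tilde f_x-D_0\tilde f_x$ can be bounded by $\|L_{fx,\delta}^{-1}\|\cdot\|L_{x,\delta}\|^{1+\alpha}$ times $C_1$ times $\mathrm{diam}(h_x(\mathcal N_{\mathfrak r_\delta(x)}(0)))^\alpha$, using the hypothesis $\|(D\hat f)^{-1}\|_{C^\alpha}\le C_1$ and (L3). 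Plugging in the $\mathfrak R_{\alpha\delta/4}$-bounds and raising to the $\alpha$th power, the resulting quantity is $\le\big((\alpha\delta)^{-1}(1-e^{-\alpha\delta})^{-1/2}2^{(1-\alpha)/2}C_1\mathfrak R_{\alpha\delta/4}(x)^2\big)\cdot\mathfrak r_\delta(x)^{\alpha/2}$, which collapses to $\delta$ by the very definition of $\mathfrak r_\delta(x)$. The inverse estimate is identical with $f$ replaced by $f^{-1}$. The main obstacle is really just choosing $\alpha_2$ small enough (any $\alpha_2\le\alpha$ works for the trivial part, but matching with (L2) may force $\alpha_2$ smaller) and keeping the constants in (L2), (L3) consistent with the slow-variation inequality \eqref{SlowExp}; everything else is either combinatorial or already packaged in \cite{BarPes,Brown2016}.
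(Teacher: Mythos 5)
The paper does not give a proof of this lemma at all: it quotes it verbatim as Lemma 2.4 of \cite{ExpMix} and moves on, so there is no internal proof to compare your argument against. That said, your sketch is the standard route for building these charts, and the main ingredients are where they should be: an isometric identification via a Lyapunov-orthonormal basis makes (L1) tautological; the closed-form one-step recursion $|D_xf\,v|'^{\,2}_{fx,\delta}=e^{2\lambda-2\delta}|v|'^{\,2}_{x,\delta}+\|D_xfv\|^2$ for $v\in E^u(x)$ (and the companion $\le e^{2\delta}$ estimate on $E^{cs}$) is exactly the calculation that gives (4); and the normalization of $\mathfrak r_\delta$ is indeed engineered so that the $C^{1+\alpha}$ estimate in (5) closes up once you unwind the chart.

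One step you should repair. In your treatment of (L3) you conclude $\|L_{x,\delta}\|\le 1$ from the $m=0$ term. That observation gives $\|u\|\le|u|'_{x,\delta}$ separately for $u\in E^u(x)$ and for $u\in E^{cs}(x)$, but the Lyapunov inner product makes $E^u$ and $E^{cs}$ orthogonal by fiat, whereas in the Riemannian metric their angle can be as small as $\mathfrak R_\delta(x)^{-1}$. The cross term in $\|u^u+u^{cs}\|^2$ is therefore not controlled by $|u^u|'^{\,2}_{x,\delta}+|u^{cs}|'^{\,2}_{x,\delta}$, and the honest conclusion from the $m=0$ observation alone is only $\|L_{x,\delta}\|\le\sqrt 2$. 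This is harmless for (L3) because $\mathfrak r_\delta\le 1/C_1$ can be taken smaller than $1/\sqrt 2$, but the factor recurs whenever one compares the Lyapunov and Riemannian metrics, and it is the sort of constant that needs to be visibly absorbed rather than asserted away. With that adjustment, your outline matches the argument carried out in Appendix A of \cite{ExpMix} (and in the references you cite for the H\"older continuity in (L2)).
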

	The following definition is similar to definition 6.2 of \cite{ExpMix}.

\begin{definition}
\label{horizontal}
    Let $C = h_z(\mathcal{N}^u_{\xi}(c^u)\times \mathcal{N}^{cs}_r(c^{cs}))$,
    with $\mathcal{N}^u_{\xi}(c^u)\times \mathcal{N}^{cs}_r(c^{cs}) \subset \mathcal{N}_{\mathfrak{r}_{\delta}(x)}(0)$.
    Let $W \subset C$ be a submanifold and $\tilde{W} = h_z^{-1}(W)$.
    We say that $W$ is \emph{$(\gamma,A)$-mostly vertical} 
    if $\tilde{W}$ is the graph of a $C^{1+\gamma}$ function $\eta \colon \mathcal{N}^{cs}_{r}(c^{cs}) \to \mathcal{N}^u_{\xi}(c^u)$
    with 
    \begin{equation*}
        \| D \eta\|_{C^{1+\gamma}} \leq A.
    \end{equation*}
    Similarly, it is \emph{$(\gamma,A)$-mostly horizontal} if it's
    the graph of a $C^{1 + \gamma}$ function $\eta \colon \mathcal{N}_{\xi}^u(c^u) \to \mathcal{N}_r^{cs}(c^{cs})$ satisfying
    \begin{equation*}
        \| D \eta\|_{C^{1+\gamma}} \leq A.
    \end{equation*}
\end{definition}

\subsubsection{Change of coordinates for Lyapunov charts}
\label{changeofcoordinateslyap}
	It will also be important for this paper to have estimates for the change of Lyapunov coordinates.
	\begin{lemma}
		\label{changecoordinates}
		For $\tau>0$ small enough, there exists $C = C(\tau)>0$ such that
		for $p,q \in \mathcal{P}_{\tau}$ and $z \in \mathcal{N}_{\tau}(0) \subset \mathbb{R}^d$,
		\begin{equation}
			\label{changecoordinateseq}
			\|D_z(h_q^{-1} \circ h_p) - Id\| \leq C(d_M(p,q)^{\alpha_2} + |z|)
		\end{equation}	
	\end{lemma}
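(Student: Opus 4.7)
The plan is to decompose
\[
h_q^{-1}\circ h_p = L_{q,\delta}^{-1}\circ(\exp_q^{-1}\circ\exp_p)\circ L_{p,\delta}
\]
and compare $D_z(h_q^{-1}\circ h_p)$ to the identity factor by factor. Working in a fixed smooth coordinate chart around $p$ (valid when $d_M(p,q)$ is small enough, else the bound is trivial by choosing $C(\tau)$ large), we may treat each $L_{x,\delta}$ as an element of $\mathbb{R}^{d\times d}$. Property (L2) then yields $\|L_{p,\delta}-L_{q,\delta}\|\leq C_1 d_M(p,q)^{\alpha_2}$, while (L3) and $p,q\in\mathcal{P}_\tau$ give $\max(\|L_{p,\delta}\|,\|L_{q,\delta}^{-1}\|)\leq \tau^{-1}$.

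The chain rule gives
\[
D_z(h_q^{-1}\circ h_p) = L_{q,\delta}^{-1}\cdot D_{\exp_p(L_{p,\delta}z)}\exp_q^{-1}\cdot D_{L_{p,\delta}z}\exp_p\cdot L_{p,\delta}.
\]
Since $D_0\exp_x=\mathrm{Id}$ and $\exp$ is smooth uniformly in the base point (by compactness of $M$), a Taylor expansion yields $\|D_{L_{p,\delta}z}\exp_p-\mathrm{Id}\|\leq C_2|L_{p,\delta}z|\leq C_2\tau^{-1}|z|$, and similarly $\|D_y\exp_q^{-1}-\mathrm{Id}\|\leq C_2|y-q|$. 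Evaluating at $y=\exp_p(L_{p,\delta}z)$ and using $|y-q|\leq|y-p|+d_M(p,q)\leq C_2\tau^{-1}|z|+d_M(p,q)$, together with $d_M(p,q)\leq\mathrm{diam}(M)^{1-\alpha_2}d_M(p,q)^{\alpha_2}$, both $\exp$-derivative factors differ from $\mathrm{Id}$ by at most $C_3(\tau)(|z|+d_M(p,q)^{\alpha_2})$.

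Multiplying these out, we can write $D_z(h_q^{-1}\circ h_p)=L_{q,\delta}^{-1}(\mathrm{Id}+E)L_{p,\delta}$ with $\|E\|\leq C_4(\tau)(|z|+d_M(p,q)^{\alpha_2})$. Then
\[
D_z(h_q^{-1}\circ h_p) - \mathrm{Id} = L_{q,\delta}^{-1}(L_{p,\delta}-L_{q,\delta}) + L_{q,\delta}^{-1}\,E\,L_{p,\delta},
\]
and each summand is bounded by $C(\tau)(d_M(p,q)^{\alpha_2}+|z|)$, which is the desired estimate. The one subtle point is the implicit identification of $T_pM$ and $T_qM$ needed to make sense of (L2) and of the chain-rule product as matrix multiplication; once a smooth local trivialization is fixed this is unambiguous, and the rest is a routine Taylor expansion of $\exp_q^{-1}\circ\exp_p$ near $p$.
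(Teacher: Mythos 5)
Your proof is correct and follows essentially the same route as the paper's: decompose $h_q^{-1}\circ h_p$ through the linear maps and exponential maps, Taylor-expand the exponential factors, and use (L2)/(L3) for the Lyapunov isometries. The only cosmetic difference is that you bound $D\exp_p$ and $D\exp_q^{-1}$ separately and multiply, whereas the paper bounds the composition $(D_u\exp_q)^{-1}\circ D_v\exp_p$ in one step via the smoothness of $(x,v,y,u)\mapsto (D_u\exp_y)^{-1}\circ D_v\exp_x$ into $Gl_d(\mathbb{R})$; and you split off $L_q^{-1}(L_p-L_q)$ while the paper bounds $L_q^{-1}L_p - Id$ directly, which are algebraically the same thing.
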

	\begin{proof}
		The map $h_q^{-1} \circ h_p$ is given by composing the following maps
		\begin{equation*}
			\label{changeofcoordproof}
			\mathbb{R}^d \xrightarrow{L_p} T_pM \xrightarrow{\text{exp}_p} M \xrightarrow{\text{exp}_q^{-1}} T_qM \xrightarrow{L_q^{-1}} \mathbb{R}^d.
		\end{equation*}
		Differentiating we get
		\begin{equation}
			\label{changeofcoordinates1}
			D_z(h_q^{-1} \circ h_p) = L_q^{-1}\circ (D_{\exp_q^{-1}\circ \text{ exp}_p\circ L_p(z)}\text{exp}_q)^{-1}\circ D_{L_p(z)}\text{exp}_p\circ L_p.
		\end{equation}
		Fix an orthonormal local frame in a neighborhood $U$ of $p$ 
		so that 
		\begin{align*}
			U \times \mathbb{R}^d &\to Gl_d(\mathbb{R})\\
			(x,v) & \mapsto D_v \text{exp}_x
		\end{align*}
		is a well defined map.
		Since $M$ is a smooth Riemannian manifold, the above map is smooth.
		Since $Gl_d(\mathbb{R})$ is a smooth Lie group, then
		\begin{align*}
			U \times \mathbb{R}^d \times U \times \mathbb{R}^d &\to Gl_d(\mathbb{R})\\
			(x,v,y,u) &\mapsto (D_u \text{exp}_y)^{-1} \circ D_v \text{exp}_x
		\end{align*}
		is also smooth.
		Now since the above function maps to the identity if $x = y$ and $u = v = 0$, then there exists $C_1>0$ such that
		\begin{equation}
			\label{expholder}
			\|(D_u \text{exp}_y)^{-1} \circ D_v \text{exp}_x - Id\| \leq C_1 (d_M(x,y)+d_{\mathbb{R}^d}(v,0)+d_{\mathbb{R}^d}(u,0)).
		\end{equation}
		Therefore, writing $v = L_p(z)$ and $u=  \text{exp}_q^{-1}\circ \text{exp}_p(v)$, we may rewrite \eqref{changeofcoordinates1} as
		\begin{equation}
			\label{changeofcoordinates2}
			L_q^{-1}\circ (Id + E(p,q,z))\circ L_p = L_q^{-1} \circ L_p + L_q^{-1} \circ E(p,q,z) \circ L_p,
		\end{equation}
		with $\|E(p,q,z)\| \leq C_1(d_M(p,q) +|v|+|u|)$ by \eqref{expholder}.
		Now by (L3) of \cref{LyapCharts} and once again using the fact that $Gl_d(\mathbb{R})$ is a smooth Lie group,
		there exists $C_2>0$ such that for $x,y \in \mathcal{P}_{\tau}$,
		\begin{equation}
			\label{LHolder}
			\|L_y^{-1} \circ L_x - Id\| \leq C_2 d_M(x,y)^{\alpha_2}.
		\end{equation}
		By (L3) of \cref{LyapCharts},
		\begin{equation}
			\label{normv}
			|v| \leq \tau^{-1} |z|.
		\end{equation}
		Therefore,
		\begin{equation}
			\label{normu}
			|u| = d_M(q,\text{exp}_p(v)) \leq d_M(p,q) +d_M(p,\text{exp}_p(v)) \leq d_M(p,q) + \tau^{-1}|z|.
		\end{equation}
		Putting together \eqref{changeofcoordinates2},\eqref{LHolder},\eqref{normv},\eqref{normu} and (L3) of \cref{LyapCharts}, we get
		\begin{align*}
			\|D_z(h_q^{-1} \circ h_p) - Id\| \leq &
			C_2d_M(p,q)^{\alpha_2} \\
			&+ \tau^{-2}C_1(d_M(p,q)+\tau^{-1}|z| + d_M(p,q)+\tau^{-1}|z|),
		\end{align*}
		which implies the Lemma if $p,q \in U$.
		The Lemma follows by taking a finite cover of $M$ by open sets with orthonormal frames defined on them.
	\end{proof}
	
	\begin{corollary}
	\label{graphid}
		Let $\tau>0$ be small and $A>0$. 
		There exists $\varepsilon = \varepsilon(\tau,A)>0$ satisfying the following.
		Let $p,q \in \mathcal{P}_{\tau}$ with $d(p,q)<\varepsilon$, $0<\xi,r<\varepsilon$ and
		$\eta_p \colon \mathcal{N}^u_{\xi} \to \mathcal{N}^{cs}_r$ be a $C^{1+\gamma}$ function satisfying $\|D\eta_p\| \leq A$.
		Let $\eta_q \colon \mathcal{N}^u_{\xi'}(h_q^{-1}\circ h_p(p)) \to \mathbb{R}^{cs}$ be the $C^{1+\gamma}$ function whose graph is $h_q^{-1}\circ h_p(\text{graph}(\eta_p))$, for a possibly smaller $\xi'$.
		Then, there exists $K = K(A,\tau)$ such that
		\begin{equation*}
			\|D\eta_q\| \leq \|D\eta_p\| +K(d(p,q)^{\alpha_2}+\xi+r).
		\end{equation*}
		The same result holds for vertical submanifolds.
	\end{corollary}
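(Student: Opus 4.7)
The plan is to apply Lemma \ref{changecoordinates} directly: the change of coordinates $\Phi = h_q^{-1} \circ h_p$ is $C^1$-close to the identity near $0$, so a horizontal graph in the $p$-chart transfers to a horizontal graph in the $q$-chart with a controlled change in the slope bound. The main work is bookkeeping with the block decomposition $\mathbb{R}^d = \mathbb{R}^u \oplus \mathbb{R}^{cs}$ and verifying that all error terms are bounded linearly in $d(p,q)^{\alpha_2} + \xi + r$.

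First I would set up the reparametrization. Writing $\Phi = (\Phi^u, \Phi^{cs})$ and $\gamma(x) = (x,\eta_p(x))$ for $x \in \mathcal{N}^u_\xi$, I define
\begin{equation*}
    \psi(x) = \Phi^u(\gamma(x)), \qquad \eta_q = \Phi^{cs} \circ \gamma \circ \psi^{-1},
\end{equation*}
so that the new graph equals $\Phi(\mathrm{graph}(\eta_p))$ after verifying that $\psi$ is a diffeomorphism onto a neighborhood of $\psi(0) = \Phi^u(0,\eta_p(0))$, which gives the parameter $\xi'$.

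Next, I would use Lemma \ref{changecoordinates} to bound the block entries of $D\Phi$ along the graph. Since $|\gamma(x)| \leq \xi + r$ for $x \in \mathcal{N}^u_\xi$, writing $D\Phi$ in the block form with entries $A_{uu}, A_{u,cs}, A_{cs,u}, A_{cs,cs}$, the estimate \eqref{changecoordinateseq} yields
\begin{equation*}
    \|A_{uu} - \mathrm{Id}\|, \ \|A_{cs,cs} - \mathrm{Id}\|, \ \|A_{u,cs}\|, \ \|A_{cs,u}\| \leq \varepsilon_0 := C(\tau)\bigl(d(p,q)^{\alpha_2} + \xi + r\bigr).
\end{equation*}
From this I compute $D\psi = A_{uu} \circ \gamma + (A_{u,cs}\circ\gamma)\,D\eta_p$ and $D(\Phi^{cs}\circ\gamma) = A_{cs,u}\circ\gamma + (A_{cs,cs}\circ\gamma)\,D\eta_p$. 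The assumption $\|D\eta_p\|\leq A$ gives $\|D\psi - \mathrm{Id}\| \leq \varepsilon_0(1+A)$, so for $\varepsilon$ small enough in terms of $A$ and $\tau$, $D\psi$ is invertible with $\|(D\psi)^{-1} - \mathrm{Id}\| \leq 2\varepsilon_0(1+A)$. Then
\begin{equation*}
    D\eta_q = \bigl(D\eta_p + O(\varepsilon_0(1+A))\bigr)\bigl(\mathrm{Id} + O(\varepsilon_0(1+A))\bigr),
\end{equation*}
which, after distributing and absorbing the $A$-factor into the constant, gives $\|D\eta_q\| \leq \|D\eta_p\| + K(d(p,q)^{\alpha_2} + \xi + r)$ for some $K = K(A,\tau)$.

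The only real obstacle is choosing $\varepsilon$ small enough so that $\psi$ is a diffeomorphism and its image contains a round ball $\mathcal{N}^u_{\xi'}(\psi(0))$; this is a standard inverse function argument once $\|D\psi - \mathrm{Id}\|$ is less than $1/2$. For the vertical case, the roles of $\mathbb{R}^u$ and $\mathbb{R}^{cs}$ are swapped and the identical argument applies with the blocks $A_{cs,cs}, A_{u,u}$ playing symmetric roles, yielding the same bound.
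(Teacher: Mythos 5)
Your proof is correct and reaches the same bound by the same key input (\cref{changecoordinates}), but the organization differs from the paper's in a minor but real way. The paper sets $\psi = h_p^{-1}\circ h_q = \varphi^{-1}$ and derives the implicit functional equation
\begin{equation*}
  \eta_q(x) = \eta_p\bigl(x + E^u_{\psi}(x,\eta_q x)\bigr) + E^{cs}_{\varphi}\bigl(\psi(x,\eta_q x)\bigr),
\end{equation*}
then differentiates and estimates; this uses the full $d$-dimensional inverse chart $\varphi^{-1}$. You instead give an explicit parametrization $\eta_q = \Phi^{cs}\circ\gamma\circ\psi^{-1}$ with $\psi = \Phi^u\circ\gamma$, so the only inversion is of the $u$-dimensional reparametrization map $\psi$; from there it is a chain-rule computation against the block entries of $D\Phi$. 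Both invert something whose derivative is $\mathrm{Id}+O(\varepsilon_0(1+A))$, and both land on $\|D\eta_q\| \leq \|D\eta_p\| + O((1+A)^2)\varepsilon_0$. Your version is arguably a bit more direct since $\eta_q$ is written down explicitly rather than characterized by a fixed-point identity, and it only needs control of $\Phi$, not $\Phi^{-1}$ (which the paper obtains from \cref{changecoordinates} anyway by symmetry). One small point worth making explicit: you should note the domain of $\eta_q$ can be taken as $\mathcal{N}^u_{\xi'}$ around the appropriate center because $\psi$ maps $\mathcal{N}^u_\xi$ onto a set containing a ball of radius $\xi(1-O(\varepsilon_0(1+A)))$ around $\psi(0)$, which is the Lipschitz-perturbation-of-identity argument you gesture at; this is the same inverse function step the paper buries in the claim ``$\eta_q$ is well defined.''
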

	\begin{proof}
		The proof follows a standard graph transform argument.
		Let $\varphi = h_q^{-1} \circ h_p$,
		$\psi = h_p^{-1} \circ h_q$. 
		Write $\varphi = Id + E_{\varphi}$ and $\psi = Id + E_{\psi}$
		 with $E_{\varphi} = (E^u_{\varphi},E^{cs}_{\varphi})$ and $E_{\psi} = (E^u_{\psi},E^{cs}_{\psi})$.
		 
		 For fixed $A>0$, there exists $\varepsilon_0>0$ small enough such that if $\|D_zE_{\varphi}\|<\varepsilon_0$ for all $z \in \mathcal{N}^u_{\xi}\times\mathcal{N}^{cs}_r$, then $\eta_q$ is well defined.
		 Moreover, there exists $K'$ such that 
		 \begin{equation}
		 \label{boundKp}
		 	\|D\eta_q\| \leq K'.
		 \end{equation}
		 By \cref{changecoordinates}, there exists $\varepsilon>0$ such that if $d(p,q),r,\xi<\varepsilon$, then the above is satisfied. For $x \in \mathcal{N}_{\xi'}^u(\varphi(p))$,
		 $(x,\eta_q(x)) \in \varphi(\text{graph}(\eta_p))$.
		 Therefore,
		 \begin{equation}
		 \label{graphtransform}
		 	\psi(x,\eta_qx) =
		 	(x+E^u(x,\eta_qx),\eta_qx+E^{cs}_{\psi}(x,\eta_qx))
		 	\in \text{graph}(\eta_p).
		 \end{equation}
		 This means that
		 \begin{equation*}
		 	\eta_p(x+E^u_{\psi}(x,\eta_qx)) 
		 	= \eta_qx +E^{cs}_{\psi}(x,\eta_qx).
		 \end{equation*}
		 Substituting in \eqref{graphtransform}, applying $\varphi$ and looking at the second coordinate,
		 \begin{equation*}
		 	\eta_qx
		 	= \eta_p(x+E^u_{\psi}(x,\eta_qx))+E^{cs}_{\varphi}(\psi(x,\eta_qx)).
		 \end{equation*}
		 By \cref{changecoordinates} and \eqref{boundKp},
		 \begin{equation*}
		 \begin{split}
		 	\|D\eta_q\|
		 	\leq & \|D\eta_p\|(1+\|D E^{u}_{\psi}\|(1+K'))
		 	+ \|D E^{cs}_{\varphi}\|\|D\psi\|(1+K')\\
		 	\leq & \|D\eta_p\| + (1+K')(\|D\eta_p\|+\|D\psi\|)\|D E_{\varphi}^{cs}\|.
		 \end{split}
		 \end{equation*}
		 By \cref{changecoordinates}, $\|D\psi\| \leq 1+C(\varepsilon^{\alpha_2}+2\varepsilon)$.
		 Letting $K = (1+K')(1+C(\varepsilon^{\alpha_2}+2\varepsilon)+A)$ and using \cref{changecoordinates} proves the Corollary.
	\end{proof}

	For $x \in LyapReg(\delta)$,
	we extend $\tilde{f}_{x,\delta}$ to all $\mathbb{R}^d$
	by making it linear 
	outside of $\mathcal{N}_{2\mathfrak{r}_{\delta}(0)}$.
	For $n >0$, denote by 
	\begin{equation*}
		\tilde{f}^{(n)}_{x,\delta} = \tilde{f}_{f^{n-1}x,\delta} \circ ... \circ \tilde{f}_{x,\delta} \ \text{and} \  
		\tilde{f}^{(-n)}_{x,\delta} = (\tilde{f}_{f^{-n}x,\delta})^{-1} \circ ... \circ (\tilde{f}_{f^{-1}x,\delta})^{-1}.
	\end{equation*}

	\subsubsection{Unstable Manifolds}

	We now construct the unstable foliation on Lyapunov charts.
	Given $x \in M$, define
	\begin{equation*}
		\tilde{W}^u(x) = \left\{ y \in \mathbb{R}^d \ | \ \underset{n \rightarrow +\infty}{\limsup} \frac{1}{n} \log |\tilde{f}^{(-n)}_{x,\delta}(y)| < 0 \right\}.
	\end{equation*}
	The following standard Theorem is stated as in Lemma 2.8 of \cite{ExpMix}.
	\begin{theorem}[Unstable Manifold Theorem]
		\label{unstable}
		There are $C_0,\alpha_4>0$ such that, for $x \in LyapReg(\delta)$, 
		$\tilde{W}^u(x)$ is the graph of a $C^{1+\alpha_4}$ function
		$\eta^u = \eta^u_x \colon \mathbb{R}^u \to \mathbb{R}^{cs}$ such that
		\begin{enumerate}
			\label{unstablemanifold}
			\item $\eta^u(0) = 0$;
			\item $D_0\eta^u = 0$;
			\item $\|\eta^u\|_{C^{1+\alpha_4}} \leq C_0$.
		\end{enumerate}
		Moreover, for $z_1,z_2 \in \tilde{W}^u(x)$ and $n > 0 $,
		\begin{equation*}
			|\tilde{f}^{(-n)}_x(z_1) - \tilde{f}^{(-n)}_x(z_2)| 
			\leq e^{(-\lambda+\sqrt{\delta})n}|z_1-z_2|.
		\end{equation*}
		
	\end{theorem}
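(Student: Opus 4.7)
The plan is to apply the Hadamard--Perron graph transform in Lyapunov charts. The hyperbolic splitting of $D_0\tilde{f}_{x,\delta}$ (expansion $\geq e^{\lambda-\delta}$ on $\mathbb{R}^u$, norm $\leq e^{\delta}$ on $\mathbb{R}^{cs}$) together with the Lipschitz smallness $\text{Lip}(\tilde{f}_{x,\delta} - D_0\tilde{f}_{x,\delta}) \leq \delta$ from (5) of \cref{LyapCharts} puts us in the classical nonuniformly hyperbolic setting, and the extension of $\tilde{f}_{x,\delta}$ to be linear outside a bounded ball lets the transform act on graphs defined globally on $\mathbb{R}^u$.

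Concretely, let $\mathcal{H}_y$ be the space of Lipschitz maps $\eta \colon \mathbb{R}^u \to \mathbb{R}^{cs}$ with $\eta(0)=0$ and $\text{Lip}(\eta) \leq 1$, attached to each $y \in LyapReg(\delta)$. For each $y$, the pushforward by $\tilde{f}_y$ of the graph of $\eta \in \mathcal{H}_y$ is again the graph of some $\mathcal{F}_y \eta \in \mathcal{H}_{fy}$, and a direct derivative computation using the hyperbolic splitting together with the Lipschitz closeness of $\tilde{f}_y$ to its derivative shows that $\text{Lip}(\mathcal{F}_y \eta) \leq e^{-\lambda + 2\delta}\text{Lip}(\eta) + O(\delta)$, so $\mathcal{F}_y$ is a contraction on $\mathcal{H}_y$ for $\delta$ small. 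Iterating $\mathcal{F}_{f^{-1}x} \circ \cdots \circ \mathcal{F}_{f^{-n}x}$ applied to the zero section at $f^{-n}x$ produces a $C^0$-Cauchy sequence $\eta^{(n)} \in \mathcal{H}_x$; its limit $\eta^u$ satisfies $\eta^u(0)=0$ since $\tilde{f}_y(0)=0$, and $D_0\eta^u=0$ since $D_0\tilde{f}_y$ preserves the splitting $\mathbb{R}^u \oplus \mathbb{R}^{cs}$.

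For the $C^{1+\alpha_4}$ regularity, I would run a second invariant-section argument on the bundle of $\alpha_4$-H\"older tangent plane fields along the graph. This uses the $\alpha_2$-H\"older control of $L_{x,\delta}$ from (L2) and of $D\tilde{f}_{x,\delta}$ from (5) of \cref{LyapCharts}, and requires a bunching condition of the form $e^{-\lambda+2\delta}(e^{\delta})^{\alpha_4(1+\alpha_2)} < 1$, which can be solved for some $\alpha_4>0$ uniformly in $x$ once $\delta$ is small. The uniform bound $\|\eta^u\|_{C^{1+\alpha_4}} \leq C_0$ is then the norm of the fixed point of this contracting bundle map. For the backward contraction, once $\eta^u$ is $C^1$-small any $z_1,z_2 \in \tilde{W}^u(x)$ differ almost entirely in the $\mathbb{R}^u$ coordinate, and $\tilde{f}_x^{-1}$ restricted to $\tilde{W}^u$ acts as $D_0\tilde{f}_x^{-1}|_{\mathbb{R}^u}$ plus an $O(\delta)$-Lipschitz perturbation, yielding contraction at rate $e^{-\lambda+\delta} + O(\delta)$ per step, which is absorbed into $e^{(-\lambda+\sqrt{\delta})n}$ for $\delta$ small.

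The main obstacle is the H\"older step: setting up the Banach bundle of $\alpha_4$-H\"older tangent plane fields, verifying the bunching inequality with constants independent of $x$ (using (L2) and the slow-exponential estimate \eqref{SlowExp}), and showing that the resulting fixed point agrees with $D\eta^u$ obtained in the $C^1$ theory. The $C^1$ existence and the exponential contraction estimate are comparatively routine once the Pesin chart properties of \cref{LyapCharts} are in hand; the sharp H\"older exponent $\alpha_4$ and its uniformity are what make this a genuinely nonuniformly hyperbolic statement rather than a uniform one.
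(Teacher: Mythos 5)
The paper does not prove this theorem: it is stated verbatim as Lemma 2.8 of \cite{ExpMix} and labelled as standard, with the underlying references being \cite{BarPes} and \cite{Brown2016}. So there is no ``paper's own proof'' to compare against. That said, your graph-transform sketch is the right shape for the standard Pesin argument and the key ingredients you cite (the hyperbolic splitting of $D_0\tilde{f}_{x,\delta}$, the $\delta$-Lipschitz closeness of $\tilde{f}_{x,\delta}$ to $D_0\tilde{f}_{x,\delta}$ from item (5) of \cref{LyapCharts}, and the global extension of $\tilde{f}_{x,\delta}$ to $\mathbb{R}^d$) are exactly the ones that make the graph transform work at the chart level, with the $C^0$ limit giving $\eta^u$, and the invariance of the splitting under $D_0\tilde{f}$ giving $\eta^u(0)=0$, $D_0\eta^u=0$.

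One concrete issue: your bunching inequality $e^{-\lambda+2\delta}(e^{\delta})^{\alpha_4(1+\alpha_2)} < 1$ is not the correct condition, and the appearance of $e^{\delta}$ (an expansion factor) in it suggests a bookkeeping slip. In the H\"older step you compare tangent plane slopes $L(z_1)$ and $L(z_2)$ at two nearby points of $\tilde{W}^u(x)$ by running the invariance relation backward: setting $z_i^{[-j]} = \tilde{f}^{(-j)}_x(z_i)$ and using the unstable slope contraction $e^{-\lambda+2\delta}$ per step together with the H\"older modulus $\delta|\cdot|^{\alpha_2}$ of $D\tilde{f}$, one arrives at a bound by the series
\begin{equation*}
\sum_{j\geq 1} e^{(-\lambda+2\delta)(j-1)}\,\delta\,|z_1^{[-j]}-z_2^{[-j]}|^{\alpha_2}
\leq \delta\,|z_1-z_2|^{\alpha_2}\sum_{j\geq 1} e^{(-\lambda+2\delta)(j-1)} e^{(-\lambda+\sqrt{\delta})\alpha_2 j},
\end{equation*}
using the backward contraction along the unstable manifold. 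Both exponential rates are negative for $\delta$ small, so the series converges \emph{unconditionally}; there is no genuine bunching obstruction and one may take $\alpha_4 = \alpha_2$. The uniformity of $C_0$ in (3) then follows from the uniformity of the hyperbolicity constants in items (4) and (5) of \cref{LyapCharts}. The rest of your outline (the $C^0$ contraction on the space of Lipschitz graphs with the rate $e^{-\lambda+2\delta}+O(\delta)$, and the backward contraction estimate $e^{-\lambda+\delta}+O(\delta) \leq e^{-\lambda+\sqrt{\delta}}$) is correct.
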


	Given $R>0$, we define
	\begin{equation*}
		\tilde{W}^u_{R}(x) = \text{graph}(\eta^u_x|_{\mathcal{N}^u_R(0)})
		\ \text{and} \ W^u_{R}(x) = h_x(\tilde{W}^u_R(x)).
	\end{equation*}
	We define the \textit{unstable manifold at x} by
	\begin{equation*}
		W^u(x) = W^u_{\mathfrak{r}_{\delta}(x)}(x),
	\end{equation*}
	which is often called the \textit{local} unstable manifold at $x$ in the literature.
	But, since we don't use the global unstable manifolds,
	we'll drop the \textit{local} in this paper.
	For $x \in \text{LyapReg}(\delta)$ and $y \in W^u(x)$ and $n \geq 0$, we have
	\begin{equation}
		\label{UnstableContraction}
		d(f^{-n}x,f^{-n}y) \leq \mathfrak{r}_{\delta}(x)^{-1}e^{(-\lambda+\sqrt{\delta})n}d(x,y).
	\end{equation}
	See equation (2.13) in \cite{ExpMix} and the paragraph right before it for more details, recalling that our unstable manifolds always have size at least $\mathfrak{r}_{\delta}(x)$.
	
	Now for $\tau>0$ small enough,
	if $x,y \in \mathcal{P}_{\tau}$ are close sufficiently close to each other,
	we can view $W^u(y)$ as the graph of some function in the Lyapunov chart at $x$.
	The following Lemma is a consequence \cref{graphid} and H\"older continuity of the unstable distribution on Pesin sets.

	\begin{lemma}
		\label{unstablegraph}
		Let $\tau>0$ be small. 
		There exists $\alpha_5>0, \varepsilon = \varepsilon(\tau)>0, C = C(\tau)>0$
		such that for $x,y \in \mathcal{P}_{\tau}$ with $d(x,y) < \varepsilon$,
		$h_x^{-1}(W^u(y))$ is the graph of a $C^{1+\alpha_5}$ function
		$\eta^u_{x,y} \colon \mathcal{N}^u_{\frac{\tau}{2}}(0)\subset \mathbb{R}^u \to \mathbb{R}^{cs}$ such that,
		\begin{equation*}
			\|\eta^u_{x,y}\|_{C^{1+\alpha_5}} \leq C.
		\end{equation*}
		Moreover if $y_1,y_2 \in \mathcal{P}_{\tau}$ are such that $d(x,y_1),d(x,y_2)< \varepsilon$
		and $z_1,z_2 \in \mathcal{N}^u_{\frac{\tau}{2}}(0)$, then
		\begin{equation*}
			\label{unstablefol}
			\|D_{z_1}\eta^u_{x,y_1} -D_{z_2}\eta^u_{x,y_2}\| 
			\leq C|(z_1,\eta^u_{x,y_1}(z_1))-(z_2,\eta^u_{x,y_2}(z_2))|^{\alpha_5}.
		\end{equation*}
	\end{lemma}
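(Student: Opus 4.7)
The plan is to obtain $\eta^u_{x,y}$ by taking the graph description of $W^u(y)$ in its own Lyapunov chart, provided by \cref{unstable}, and transporting it to the chart at $x$ via the change of coordinates $h_x^{-1}\circ h_y$, whose behavior is controlled by \cref{changecoordinates} and \cref{graphid}.

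First, by \cref{unstable}, $\tilde W^u(y) = h_y^{-1}(W^u(y))$ is the graph of a $C^{1+\alpha_4}$ function $\eta^u_y \colon \mathbb{R}^u \to \mathbb{R}^{cs}$ with $\eta^u_y(0) = 0$, $D_0\eta^u_y = 0$, and $\|\eta^u_y\|_{C^{1+\alpha_4}} \leq C_0$. In particular, $\text{graph}(\eta^u_y|_{\mathcal{N}^u_\tau(0)})$ is $(\alpha_4, C_0)$-mostly horizontal since $\mathfrak{r}_\delta(y) \geq \tau$ for $y \in \mathcal{P}_\tau$.

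Second, apply the horizontal version of \cref{graphid} with $p = y$, $q = x$, $\xi = \tau$, and $r$ controlled by the $C^0$-distortion of $h_x^{-1}\circ h_y$. For $\varepsilon$ small enough depending on $\tau$, \cref{changecoordinates} guarantees that $h_x^{-1}\circ h_y$ is a small $C^1$-perturbation of an isometry, so the image graph is still a graph over at least $\mathcal{N}^u_{\tau/2}(0)$ (after shrinking the domain to leave a safety margin against the horizontal displacement $h_x^{-1}(y)$, which is of order $d(x,y)^{\alpha_2}$ by (L2)--(L3) of \cref{LyapCharts}). Its $C^1$-norm is bounded by $C_0 + K(d(x,y)^{\alpha_2} + \tau)$. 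To upgrade this to a $C^{1+\alpha_5}$ bound, one combines the $\alpha_4$-H\"older regularity of $D\eta^u_y$ with the smoothness of $h_x^{-1}\circ h_y$ as a map between Euclidean balls (its Jacobian is uniformly Lipschitz in the base variable by differentiating \eqref{changeofcoordinates1}). Setting $\alpha_5 = \alpha_4$ and absorbing all constants into $C = C(\tau)$ gives the first claim.

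For the ``moreover'' clause, the key observation is that at any point $\tilde z = (z, \eta^u_{x,y_i}(z))$, the derivative $D_z\eta^u_{x,y_i}$ records, via the isometry $L_{x,\delta}$ and the exponential map, the unstable tangent plane $E^u(h_x(\tilde z))$ at the corresponding point of $W^u(y_i)$. The Hausdorff distance between $W^u(y_1)$ and $W^u(y_2)$ in the chart at $x$ is controlled by the graph distance, so two such footpoints on different unstable manifolds are at distance comparable to $|(z_1,\eta^u_{x,y_1}(z_1)) - (z_2,\eta^u_{x,y_2}(z_2))|$ in $M$. Invoking the classical H\"older continuity of the unstable distribution on the Pesin set $\mathcal{P}_\tau$ (a consequence of the Hadamard--Perron construction and the uniform hyperbolic estimates on $\mathcal{P}_\tau$), together with the $\alpha_2$-H\"older continuity of $y \mapsto L_{y,\delta}$ from (L2) of \cref{LyapCharts}, yields the stated H\"older comparison, possibly after replacing $\alpha_5$ by a smaller exponent.

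The main obstacle is to ensure the \emph{uniform} constants: the domain must be $\mathcal{N}^u_{\tau/2}(0)$ independently of $x, y$, and the H\"older exponent $\alpha_5$ must be chosen uniformly in $\tau$. This is achieved by taking $\varepsilon = \varepsilon(\tau)$ small enough that the perturbative estimate of \cref{changecoordinates} only eats a fixed fraction of the domain, and by taking $\alpha_5 = \min(\alpha_2, \alpha_4)$; the remaining constants are then absorbed into $C(\tau)$. The rest of the argument is a routine, if bookkeeping-heavy, composition of the above ingredients.
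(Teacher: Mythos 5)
Your proposal is correct and follows the same path that the paper indicates: it explicitly flags that \cref{unstablegraph} ``is a consequence of \cref{graphid} and H\"older continuity of the unstable distribution on Pesin sets,'' and gives no further proof. Your fleshed-out version — starting from \cref{unstable} to get $\tilde W^u(y)$ as a controlled graph in the chart at $y$, transporting it to the chart at $x$ via \cref{changecoordinates}/\cref{graphid}, and then deriving the H\"older comparison between $D\eta^u_{x,y_1}$ and $D\eta^u_{x,y_2}$ from the H\"older continuity of $E^u$ on $\mathcal{P}_\tau$ combined with (L2) of \cref{LyapCharts} — is exactly the intended argument, with the same ingredients and no substantive gaps.
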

\subsubsection{Log H\"older continuity of the unstable Jacobian on Pesin sets}
\label{logholder}
	
	For $x \in \text{LyapReg}$ and $w \in W^u(x)$, let $J^u(w) = |\text{Jac}(D_wf|_{E^u(w)})|$.
	\begin{lemma}
	\label{JuHol}
	There exists $C>0$ and $\alpha_5>0$ such that for $\delta>0$ small enough, $x \in \text{LyapReg}(\delta)$ and $z \in W^u(x)$,
		\begin{equation*}
			|\log J^u(x)-\log J^u(z)|
			\leq C\mathfrak{r}_{\delta}(x)^{-2}d(x,z)^{\alpha_5}.
		\end{equation*}
	\end{lemma}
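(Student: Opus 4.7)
The plan is to combine three ingredients: (i) $C^{1+\alpha}$ regularity of $f$, which gives uniform $\alpha$-Hölder continuity of $Df$ on the compact manifold $M$; (ii) Hölder continuity of the unstable distribution along $W^u(x)$, coming from \cref{unstable}, which asserts that inside the Lyapunov chart $h_x$ the manifold $\tilde{W}^u(x)$ is a graph of $\eta^u_x$ with $\|\eta^u_x\|_{C^{1+\alpha_4}}\leq C_0$; and (iii) smoothness of the Jacobian map $(A,V)\mapsto |\det(A|_V)|$ together with the fact that $J^u$ is uniformly bounded above and below on $M$, so $\log$ is Lipschitz on its range.

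First I would estimate $\mathrm{dist}_{Gr}(E^u(x),E^u(z))$. In the chart $h_x$, the tangent spaces to $\tilde W^u(x)$ along the graph of $\eta^u_x$ are $\alpha_4$-Hölder continuous with constant $C_0$, by (3) of \cref{unstable}. Pushing forward to $T_zM$ via $Dh_x$, and then comparing subspaces back on $M$, the chart distortions from (L3) of \cref{LyapCharts} contribute factors of $\mathfrak{r}_\delta^{-1}(x)$: one from the bilipschitz constant of $h_x$ (converting distance in the chart to distance on $M$) and one from $\|Dh_x\|\cdot\|Dh_x^{-1}\|$ (converting Grassmannian distance in the chart to Grassmannian distance on $M$). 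This produces the bound
\begin{equation*}
    \mathrm{dist}_{Gr}(E^u(x),E^u(z)) \leq C\,\mathfrak{r}_\delta^{-2}(x)\, d(x,z)^{\alpha_4}.
\end{equation*}

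Next, since $f$ is $C^{1+\alpha}$, there is a uniform constant $C'>0$ with $\|D_xf - D_zf\|\leq C' d(x,z)^\alpha$. The Jacobian $J^u(w) = |\det(D_w f|_{E^u(w)})|$ is a smooth function of the pair $(D_wf, E^u(w))$, bounded above and below uniformly on $M$ (upper bound from continuity of $Df$, lower bound from $|\det Df|\geq c>0$ and boundedness of $\|Df\|$). Setting $\alpha_5 = \min(\alpha, \alpha_4)$, composing the two Hölder estimates yields
\begin{equation*}
    |J^u(x) - J^u(z)| \leq C''\,\mathfrak{r}_\delta^{-2}(x)\, d(x,z)^{\alpha_5},
\end{equation*}
and since $\log$ is Lipschitz on the relevant range of $J^u$, the stated inequality for $|\log J^u(x)-\log J^u(z)|$ follows.

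The main obstacle is the bookkeeping in the first step: extracting precisely a factor of $\mathfrak{r}_\delta^{-2}(x)$ requires carefully separating the contribution of the chart's metric distortion from that of its derivative distortion, using \cref{changecoordinates} as needed. Once the Grassmannian Hölder estimate along $W^u(x)$ is secured with the correct power of $\mathfrak{r}_\delta(x)$, the remainder is a routine combination of uniform Hölder estimates with the smoothness of the determinant functional.
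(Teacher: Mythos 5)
Your proposal follows essentially the same strategy as the paper: both derive H\"older continuity of $E^u$ along $W^u(x)$ from the unstable graph bound in \cref{unstable} together with the chart distortion in (L3) of \cref{LyapCharts}, and then appeal to H\"older/Lipschitz regularity of the Jacobian. The only organizational difference is that the paper packages the Jacobian regularity as a single $\alpha$-H\"older estimate for $E\mapsto |\text{Jac}(Df|_E)|$ on the Grassmannian bundle $\text{Gr}_u(TM)$ (coming from $Df$ acting $\alpha$-H\"older on $\bigwedge^u TM$) and applies it directly to $d(E^u(x),E^u(z))$, whereas you decouple the contributions of $\|D_xf - D_zf\|$ and $d(E^u(x),E^u(z))$ and use Lipschitz dependence of the smooth determinant functional on the subspace; the two formulations are equivalent once one fixes a local trivialization of $TM$. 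Your route nominally yields the exponent $\alpha_5=\min(\alpha,\alpha_4)$ rather than the paper's implicit composed exponent $\alpha\alpha_4$. One small caveat: your allocation of ``one factor $\mathfrak{r}_\delta^{-1}$ from $\|Dh_x\|\cdot\|Dh_x^{-1}\|$'' is loose, since that condition number is controlled by $\mathfrak{r}_\delta^{-2}$; the paper's own (JacHol3) is similarly schematic. Either way, one may shrink $\alpha$ at no cost, and the downstream use in \cref{JacHolderCor} tolerates any fixed power of $\mathfrak{r}_\delta^{-1}$, so the conclusion stands.
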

	\begin{proof}	
		Let $d$ be a metric on the Grassmanian bundle of $u$-dimensional subspaces $\text{Gr}_u(TM)$.
		Since $f$ is $C^{1+\alpha}$, then $Df$ induces an $\alpha$-H\"older map on $\bigwedge^uTM$.
		Therefore, the map $v_1\wedge ... \wedge v_u \mapsto |Dfv_1\wedge ... \wedge Dfv_u |$ is also H\"older continuous.
		 When restricted to norm $1$ elements, it defines a H\"older continuous map $\text{Gr}_u(TM) \to \mathbb{R}$ by $E \mapsto  |\text{Jac}(Df|_E)|$.
		Since $\log$ is Lipschitz when restricted to compact sets, there exists $C_1>0$ such that
		\begin{equation}
		\label{JacHol}
			|\log|\text{Jac}(Df|_E)| - \log|\text{Jac}(Df|_F)|| \leq C_1d(E,F)^{\alpha}.
		\end{equation}
		For $x \in \text{LyapReg}(\delta)$ and $z \in W^u(x)$, let $h_x^{-1}z = \overline{z} = (\overline{z}^u,\overline{z}^{cs}) $ and $\overline{E}^u(\overline{z}) = D_zh^{-1}_x(E^u(z))$.
		Then $\overline{E}^u(\overline{z}) = D_{\overline{z}^u}\eta^u_x(\mathbb{R}^u)$. By (2) and (3) of \cref{unstable},
		\begin{equation*}
			\|D_{\overline{z}^u}\eta^u_x\| \leq C_0|\overline{z}^u|^{\alpha_4}.
		\end{equation*}
		This implies that, when measuring distance between subspaces of $\mathbb{R}^d$ by their angle,
		\begin{equation}
		\label{JacHol2}
			d(\mathbb{R}^u,\overline{E}^u(z))
			\leq C_0|\overline{z}|^{\alpha_4}.
		\end{equation}
		Recall that $h_x = \text{exp}_x \circ L_{x,\delta}$ with $\text{max}\{\|L_{x,\delta}\|,\|L_{x,\delta}^{-1}\|\} \leq \mathfrak{r}^{-1}_{\delta}(x)$. In particular, $Dh_x$ induces a smooth map on $\text{domain}(h_x) \times \mathbb{R}^d \to TM$ which is $C_2 \mathfrak{r}_{\delta}^{-1}$-bi-Lipschitz on its image, where $C_2>0$ only depends on $M$. Therefore, there exists $C_3 >0$ depending only on $M$ and our choice of metric $d$ satisfying
		\begin{equation}
			\label{JacHol3}
			d(E^u(x),E^u(z)) \leq C_3\mathfrak{r}_{\delta}(x)^{-1} (|\overline{z}| + d(\mathbb{R}^u,\overline{E}^u(\overline{z}))).
		\end{equation}
		The Lemma now follows from $\text{Lip}(h^{-1}_x) \leq \mathfrak{r}_{\delta}(x)^{-1}$ together with \eqref{JacHol},\eqref{JacHol2} and \eqref{JacHol3}.
	\end{proof}		
	\begin{corollary}
	\label{JacHolderCor}
	For $\delta>0$ small enough, there exists $C' = C'(\delta)$ such that for $x \in \text{LyapReg}(\delta)$ and $z \in W^u(x)$,
		\begin{equation}
			\label{JacHolder}
			|\log \text{Jac}(D_xf^{-n}|_{E^u(x)})
			-\log \text{Jac}(D_zf^{-n}|_{E^u(z)})|
			\leq C' \mathfrak{r}_{\delta}(x)^{-(2+\alpha_5)}d(x,z)^{\alpha_5}.
		\end{equation}
		In particular, given $\varepsilon>0$ and $\tau>0$, there exists $\xi>0$ such that if $x \in \mathcal{P}_{\tau}$ and $z \in W^u(x)$ with $d(x,z)<\xi$, then
		\begin{equation}
		\label{JacHolder2}
			\text{Jac}(D_xf^{-n}|_{E^u(x)})
			\sim_{1+\varepsilon} \text{Jac}(D_zf^{-n}|_{E^u(z)}).
		\end{equation}
	\end{corollary}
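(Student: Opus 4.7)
The approach is a telescoping argument. Writing $\text{Jac}(D_xf^{-n}|_{E^u(x)}) = \prod_{k=1}^n J^u(f^{-k}x)^{-1}$ (and analogously for $z$), the triangle inequality gives
\begin{equation*}
|\log \text{Jac}(D_xf^{-n}|_{E^u(x)}) - \log \text{Jac}(D_zf^{-n}|_{E^u(z)})|
\leq \sum_{k=1}^n |\log J^u(f^{-k}x) - \log J^u(f^{-k}z)|.
\end{equation*}
For each $k \geq 1$, the unstable foliation is invariant, so $f^{-k}z$ lies on $W^u(f^{-k}x)$, and for $\delta$ small the contraction estimate \eqref{UnstableContraction} keeps the backward iterates within the local unstable of Pesin radius $\mathfrak{r}_\delta(f^{-k}x)$. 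Thus \cref{JuHol} applies and yields
\begin{equation*}
|\log J^u(f^{-k}x) - \log J^u(f^{-k}z)| \leq C\,\mathfrak{r}_\delta(f^{-k}x)^{-2}\,d(f^{-k}x,f^{-k}z)^{\alpha_5}.
\end{equation*}

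The next step is to replace the quantities at $f^{-k}x$ by their analogues at $x$, at a geometric cost in $k$. The slow-exponential estimate \eqref{SlowExp} gives $\mathfrak{r}_\delta(f^{-k}x)^{-2} \leq e^{2\delta k}\mathfrak{r}_\delta(x)^{-2}$, and \eqref{UnstableContraction} gives $d(f^{-k}x,f^{-k}z)^{\alpha_5} \leq \mathfrak{r}_\delta(x)^{-\alpha_5} e^{\alpha_5(-\lambda+\sqrt\delta)k} d(x,z)^{\alpha_5}$. Summing the resulting geometric series,
\begin{equation*}
\sum_{k=1}^n |\log J^u(f^{-k}x) - \log J^u(f^{-k}z)|
\leq C\,\mathfrak{r}_\delta(x)^{-(2+\alpha_5)}\,d(x,z)^{\alpha_5}\sum_{k=1}^\infty e^{(2\delta+\alpha_5(-\lambda+\sqrt\delta))k}.
\end{equation*}
For $\delta$ small enough that $2\delta + \alpha_5(-\lambda+\sqrt\delta) < 0$, this sum is finite and independent of $n$, yielding \eqref{JacHolder} with $C' = C'(\delta)$. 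The in-particular statement \eqref{JacHolder2} follows immediately: on $\mathcal{P}_\tau$ we have $\mathfrak{r}_\delta(x)^{-1} \leq \tau^{-1}$, so choosing $\xi = \xi(\varepsilon,\tau)>0$ with $C'(\delta)\tau^{-(2+\alpha_5)}\xi^{\alpha_5} \leq \log(1+\varepsilon)$ and exponentiating gives the claim.

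The only delicate point is justifying the use of \cref{JuHol} at every backward iterate, which requires $f^{-k}z$ to lie inside the local unstable $W^u(f^{-k}x)$ of radius $\mathfrak{r}_\delta(f^{-k}x)$. For $k$ large the unstable contraction rate $\lambda - \sqrt\delta$ dominates the exponential decay rate $\delta$ of the Pesin radius along the orbit (once $\delta$ is chosen small relative to $\lambda$), so the inclusion holds automatically; for the finitely many small $k$ where it might fail one applies \cref{JuHol} on a slightly larger unstable piece $W^u_R(f^{-k}x)$ with $R = R(\delta)$ uniform, paying only a bounded multiplicative cost that is absorbed into $C'(\delta)$.
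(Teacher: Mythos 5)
Your proof is correct and follows essentially the same line as the paper: the chain-rule decomposition $\text{Jac}(D_xf^{-n}|_{E^u(x)}) = \prod_{k=1}^n J^u(f^{-k}x)^{-1}$, the term-by-term bound via \cref{JuHol}, and then replacing $\mathfrak{r}_\delta(f^{-k}x)^{-2}$ and $d(f^{-k}x,f^{-k}z)^{\alpha_5}$ using \eqref{SlowExp} and \eqref{UnstableContraction} to sum a geometric series that converges once $\alpha_5\lambda > 2\delta + \alpha_5\sqrt\delta$. The ``delicate point'' you flag at the end is actually handled more simply than you suggest: since $|\tilde f^{(-k)}_x(\bar z)| \leq e^{(-\lambda+\sqrt\delta)k}|\bar z|$ while $\mathfrak r_\delta(f^{-k}x) \geq e^{-\delta k}\mathfrak r_\delta(x)$, the ratio improves monotonically in $k$ for $\delta$ small, so the binding constraint is already at $k=0$ (where it holds because $z\in W^u(x)$); there is no exceptional finite range of $k$ requiring a separate argument.
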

	\begin{proof}
	By the chain rule,
		\begin{equation*}
			\text{Jac}(D_xf^{-n}|_{E^u(x)})
			= \prod_{k=1}^{n}\frac{1}{J^u(f^{-k}x)},
		\end{equation*}
		and similarly for $z$.
		Therefore, we should estimate
		\begin{equation*}
		\sum_{k=1}^n |\log J^u(f^{-k}x) - \log J^u(f^{-k}z)|.
		\end{equation*}
		By \cref{JuHol}, the above is bounded by
		\begin{equation*}
			C\sum_{k=1}^n \mathfrak{r}_{\delta}^{-2}(f^{-k}x)d(f^{-k}x,f^{-k}z)^{\alpha_5}.
		\end{equation*}
		Finally, we use \eqref{SlowExp} and \eqref{UnstableContraction} to bound the above by
		\begin{align*}
			C \sum_{k=1}^n \mathfrak{r}_{\delta}(x)^{-2}e^{2\delta k}
			\mathfrak{r}_{\delta}(x)^{-\alpha_5}e^{\alpha_5(-\lambda+\sqrt{\delta})k}d(x,z)^{\alpha_5} \\
			\leq C \left(\frac{e^{-\alpha_5\lambda+2\delta+\alpha_5\sqrt{\delta}}}{1-e^{-\alpha_5\lambda+2\delta+\alpha_5\sqrt{\delta}}}\right)
			\mathfrak{r}_{\delta}(x)^{-(2+\alpha_5)} d(x,z)^{\alpha_5},
		\end{align*}
		which shows \eqref{JacHolder}.
		\eqref{JacHolder2} is an immediate consequence of \eqref{JacHolder}.
	\end{proof}

	\subsubsection{Fake Center Stable Foliations}

	Here we construct fake center stable foliations similar to those in \cite{ExpMix}.
	In their paper, they pull back the $\mathbb{R}^{cs}$ foliation from time $n$
	through the maps $\tilde{f}^{(n)}$.
	In this paper we'll have to pull back a foliation which is close to $\mathbb{R}^{cs}$,
	but it's not exactly equal to it. Therefore, we'll redo their construction
	while controlling how the properties depend on the initial foliation that we're pulling back.

	Let $x \in LyapReg$, $n>0$ 
	and $\mathcal{F}$ a smooth foliation on $\mathbb{R}^d$ such that each leaf $\mathcal{F}(y)$ is the graph of a smooth function
	$\phi^{\mathcal{F}}_{y} \colon \mathbb{R}^{cs} \to \mathbb{R}^u$.
	The following definition is analogous to Definition 2.5 of \cite{ExpMix},
	with the differencce being that they take $\mathcal{F}$ to be the foliation by translations of $\mathbb{R}^{cs}$.
	\begin{definition}[Fake cs-foliation]
		Given $0<\delta<\delta_0$, $x \in LyapReg(\delta)$ and $n \geq 0$, define the foliation $\tilde{W}^{cs,n,\delta}_{\mathcal{F},x}$ on $\mathbb{R}^d$
		by pulling back $\mathcal{F}$ via $\tilde{f}^{(n)}_{x,\delta}$, that is,
		\begin{equation*}
			\tilde{W}^{cs,n,\delta}_{\mathcal{F},x}(y) = (\tilde{f}^{(n)}_{x,\delta})^{-1}\mathcal{F}(\tilde{f}^{(n)}_{x,\delta}(y)).
		\end{equation*}
		
	\end{definition}

	The following lemma, similar to Lemma 2.6 of \cite{ExpMix}, allows us to view the leaves of the above foliation as graphs of well behaved functions:
	\begin{lemma}
		\label{fakecs}
		Suppose that $\underset{w\in \mathbb{R}^{cs},y \in \mathbb{R}^d}{\sup}\|D_z\phi^{\mathcal{F}}_y\|\leq 1$.
		There exists $\alpha_3,\delta_f>0$ such that 
		for $0<\delta<\delta_f$, $x \in LyapReg(\delta)$ and $y \in \mathbb{R}^d$,
		there exists a $C^{1+\alpha_3}$ function $\eta^{cs,n,\delta}_{\mathcal{F},x,y} \colon \mathbb{R}^{cs} \to \mathbb{R}^u$ satisfying
		\begin{enumerate}
			\item $\tilde{W}^{cs,n,\delta}_{\mathcal{F},x}(y) = \text{graph}(\eta^{cs,n,\delta}_{\mathcal{F},x,y})$;
			\item $\underset{w \in \mathbb{R}^{cs}}{\sup}\|D_w\eta^{cs,n,\delta}_{\mathcal{F},x,y}\| \leq \frac{2\delta}{1-e^{-\lambda+\sqrt{\delta}}}
				+ e^{(-\lambda+\sqrt{\delta})n}\|D\phi^{\mathcal{F}}_{\tilde{f}^{(n)}_{x,\delta}(y)}\|_{C^0}$.
		\end{enumerate}
		In particular,
	there exists $n_{\delta} >0$ such that
	if $\mathcal{F}$ is $\left(\alpha_3,\frac{3\delta}{1 - e^{-\lambda + \sqrt{\delta}}}\right)$-mostly vertical
	and $n>n_{\delta}$, then $\tilde{W}^{cs,n,\delta}_{\mathcal{F},x}$ is also $\left(\alpha_3,\frac{3\delta}{1 - e^{-\lambda + \sqrt{\delta}}}\right)$-mostly vertical.
	\end{lemma}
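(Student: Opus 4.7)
The proof goes by a graph-transform argument, adapting Lemma 2.6 of \cite{ExpMix} from the translate-of-$\mathbb{R}^{cs}$ foliation to a nearly-vertical initial foliation. The plan is to show that one pullback step $g_j^{-1} := \tilde{f}^{-1}_{f^j x, \delta}$ contracts the slope of a vertical graph via a controlled affine estimate, then iterate $n$ times. By (4)--(5) of \cref{LyapCharts}, $D_0 g_j$ expands $\mathbb{R}^u$ by at least $e^{\lambda - \delta}$, has $\|D_0 g_j|_{\mathbb{R}^{cs}}\| \leq e^{\delta}$, and both $\text{Lip}(g_j - D_0 g_j)$ and $\text{Lip}(g_j^{-1} - D_0 g_j^{-1})$ are at most $\delta$. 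A standard cone argument then shows: if $\phi \colon \mathbb{R}^{cs} \to \mathbb{R}^u$ is $C^1$ with $\|D\phi\|_{C^0} \leq A$, then $g_j^{-1}(\text{graph}(\phi))$ is the graph of a $C^1$ function $\phi'$ with
\begin{equation*}
\|D\phi'\|_{C^0} \leq e^{-\lambda + \sqrt{\delta}} A + 2\delta,
\end{equation*}
provided $\delta < \delta_f$ is small enough that the $\sqrt{\delta}$ margin absorbs the perturbation terms.

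Iterating this one-step estimate $n$ times yields the main conclusion. Set $\phi_0 := \phi^{\mathcal{F}}_{\tilde{f}^{(n)}_{x,\delta}(y)}$ and apply the one-step estimate successively through $g_{n-1}^{-1}, g_{n-2}^{-1}, \dots, g_0^{-1}$ to produce $\phi_1, \dots, \phi_n$; by construction $\text{graph}(\phi_n) = \tilde{W}^{cs,n,\delta}_{\mathcal{F},x}(y)$, so I set $\eta^{cs,n,\delta}_{\mathcal{F},x,y} := \phi_n$. Writing $A_k := \|D\phi_k\|_{C^0}$, the recursion $A_k \leq e^{-\lambda + \sqrt{\delta}} A_{k-1} + 2\delta$ summed geometrically gives
\begin{equation*}
A_n \leq e^{(-\lambda + \sqrt{\delta}) n} \|D \phi^{\mathcal{F}}_{\tilde{f}^{(n)}_{x,\delta}(y)}\|_{C^0} + \frac{2\delta}{1 - e^{-\lambda + \sqrt{\delta}}},
\end{equation*}
which is conclusion (2). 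For the ``in particular'' clause, the mostly-vertical hypothesis gives $\|D\phi^{\mathcal{F}}_{\tilde{f}^{(n)}_{x,\delta}(y)}\|_{C^0} \leq \frac{3\delta}{1 - e^{-\lambda + \sqrt{\delta}}}$ uniformly in $n,y$, and choosing $n_\delta$ with $e^{(-\lambda + \sqrt{\delta}) n_\delta} \cdot \frac{3\delta}{1 - e^{-\lambda + \sqrt{\delta}}} \leq \frac{\delta}{1 - e^{-\lambda + \sqrt{\delta}}}$ makes the right-hand side at most $\frac{3\delta}{1 - e^{-\lambda + \sqrt{\delta}}}$ for $n \geq n_\delta$.

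The $C^{1+\alpha_3}$ regularity follows from an analogous iteration for the $\alpha_3$-Hölder seminorm of $D\phi_k$, using $\text{H\"ol}_{\alpha_2}(Dg_j) \leq \delta$ from (5) of \cref{LyapCharts} and picking $\alpha_3 < \alpha_2$ small enough that the unstable-direction contraction rate $e^{\alpha_3(-\lambda + \sqrt{\delta})}$ dominates the cross terms coming from the chain rule applied to a composition of a graph with $g_j^{-1}$. This is where the main technical bookkeeping sits: one must verify that the $\alpha_3$-Hölder seminorm satisfies its own contracting affine recursion with a limit independent of $n$, which forces both the choice $\alpha_3 < \alpha_2$ and a further upper bound on $\delta_f$. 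Once these are arranged, the construction is a standard Pesin-theoretic $C^{1+\alpha}$ graph transform.
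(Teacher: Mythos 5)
Your proposal is correct and follows essentially the same route as the paper: an iterated one-step graph transform giving the affine recursion $A_k \leq e^{-\lambda+\sqrt{\delta}}A_{k-1}+2\delta$, summed geometrically to obtain conclusion (2). The paper packages your ``standard cone argument'' for the one-step slope estimate as Lemma A.5 of \cite{ExpMix} (quoted as \cref{lemmaA5}), and like you it only sketches the H\"older part, relying on the second estimate of that lemma; the one detail worth making explicit, which the paper does and you leave implicit, is that the recursion must be shown to keep $A_k \leq 1$ throughout (requiring $\frac{2\delta}{1-e^{-\lambda+\sqrt{\delta}}} < 1-e^{-\lambda+\sqrt{\delta}}$) so the graph-transform step remains applicable at each stage.
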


	The proof of this lemma relies on the following Lemma (Lemma A.5 of \cite{ExpMix}):
	\begin{lemma}
		\label{lemmaA5}
		Let $x \in LyapReg(\delta)$, with $\delta$ sufficiently small.
		Let $z \in \mathbb{R}^d$ and $L \colon \mathbb{R}^{cs} \to \mathbb{R}^u$ be a linear map with $\|L\| \leq 1$.
		Define the linear map $\Gamma^{cs}_{x,z}(L) \colon \mathbb{R}^{cs} \to \mathbb{R}^u$ by
		\begin{equation*}
			\text{graph}(\Gamma^{cs}_{x,z}(L)) = (D_z\tilde{f}_{x,\delta})^{-1} \text{graph}(L).
		\end{equation*}
		Then,
		\begin{enumerate}
			\item $\|\Gamma^{cs}_{x,z}(L)\| \leq e^{-\lambda + \sqrt{\delta}}\|L\| + 2\delta\min\{1,|z|^{\alpha_2}\}$;
			\item $\|\Gamma^{cs}_{x,z_1}(L_1)-\Gamma^{cs}_{x,z_2}(L_2)\| \leq e^{-\lambda + \sqrt{\delta}}\|L_1-L_2\|
				+ 6\delta|z_1-z_2|^{\alpha_2}$.
		\end{enumerate}
	\end{lemma}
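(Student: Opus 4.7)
My plan is to derive an explicit formula for $\Gamma^{cs}_{x,z}(L)$ from the block decomposition of $D_z\tilde{f}_{x,\delta}$, then estimate each factor using the Pesin data from \cref{LyapCharts}. Write
\begin{equation*}
D_z\tilde{f}_{x,\delta} = \begin{pmatrix} A_{uu}(z) & A_{u,cs}(z) \\ A_{cs,u}(z) & A_{cs,cs}(z) \end{pmatrix}
\end{equation*}
relative to $\mathbb{R}^d = \mathbb{R}^u \oplus \mathbb{R}^{cs}$. Property (4) of \cref{LyapCharts} gives $A_{u,cs}(0) = 0 = A_{cs,u}(0)$, $\|A_{uu}(0)^{-1}\| \leq e^{-\lambda+\delta}$, and $\|A_{cs,cs}(0)\| \leq e^{\delta}$. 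Property (5), together with the linear extension outside a bounded region, shows each block is $\alpha_2$-H\"older with constant $\delta$: $\|A_{\bullet\bullet}(z) - A_{\bullet\bullet}(0)\| \leq \delta\min\{1,|z|^{\alpha_2}\}$ and $\|A_{\bullet\bullet}(z_1) - A_{\bullet\bullet}(z_2)\| \leq \delta|z_1-z_2|^{\alpha_2}$. Solving $D_z\tilde{f}_{x,\delta}(\text{graph}(M)) = \text{graph}(L)$ for $M$ by equating components yields the explicit formula
\begin{equation*}
\Gamma^{cs}_{x,z}(L) = (A_{uu}(z) - L A_{cs,u}(z))^{-1}(L A_{cs,cs}(z) - A_{u,cs}(z)).
\end{equation*}

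For statement (1), I would bound each factor separately. Writing $A_{uu}(z) - LA_{cs,u}(z) = A_{uu}(0) + R$ with $\|R\| \leq (1+\|L\|)\delta\min\{1,|z|^{\alpha_2}\} \leq 2\delta\min\{1,|z|^{\alpha_2}\}$, a Neumann expansion gives $\|(A_{uu}(z) - LA_{cs,u}(z))^{-1}\| \leq e^{-\lambda+\delta}(1 + O(\delta))$ for $\delta$ small. The numerator is bounded by $e^{\delta}\|L\| + 2\delta\min\{1,|z|^{\alpha_2}\}$. Multiplying and using $\sqrt{\delta} > 2\delta$ for small $\delta$ to absorb the $O(\delta)$ factor and the constant $e^{2\delta}$ into the exponent yields the claimed bound $e^{-\lambda+\sqrt{\delta}}\|L\| + 2\delta\min\{1,|z|^{\alpha_2}\}$.

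For statement (2), I would split
\begin{equation*}
\Gamma^{cs}_{x,z_1}(L_1) - \Gamma^{cs}_{x,z_2}(L_2) = [\Gamma^{cs}_{x,z_1}(L_1) - \Gamma^{cs}_{x,z_1}(L_2)] + [\Gamma^{cs}_{x,z_1}(L_2) - \Gamma^{cs}_{x,z_2}(L_2)].
\end{equation*}
The first bracket depends linearly on $L_1 - L_2$; its dominant contribution is $A_{uu}(0)^{-1}(L_1-L_2)A_{cs,cs}(0)$, bounded by $e^{-\lambda+2\delta}\|L_1-L_2\|$, with lower-order corrections of order $O(\delta|z_1|^{\alpha_2})\|L_1-L_2\|$ that get absorbed into the exponent to yield $e^{-\lambda+\sqrt{\delta}}\|L_1-L_2\|$. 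For the second bracket, write $F(z) = A_{uu}(z) - L_2 A_{cs,u}(z)$ and $G(z) = L_2 A_{cs,cs}(z) - A_{u,cs}(z)$ and apply the identity $F(z_1)^{-1}G(z_1) - F(z_2)^{-1}G(z_2) = F(z_1)^{-1}(G(z_1) - G(z_2)) + F(z_1)^{-1}(F(z_2) - F(z_1))F(z_2)^{-1}G(z_2)$. The H\"older bounds on the four blocks give $\|G(z_1) - G(z_2)\|, \|F(z_1) - F(z_2)\| \leq 2\delta|z_1-z_2|^{\alpha_2}$, and combined with $\|F(z_i)^{-1}\| \leq e^{-\lambda+\delta}(1+O(\delta))$ and $\|G(z_2)\| \leq e^\delta + 2\delta$, the total is bounded above by $6\delta|z_1-z_2|^{\alpha_2}$ for $\delta$ small (the factor $e^{-\lambda}$ from $\|F^{-1}\|$ times the leading $2$, plus the much smaller contribution of order $e^{-2\lambda}$ from the second summand, fit comfortably under $6$).

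The main obstacle is quantitative bookkeeping: one must choose $\delta$ small enough so that, after compounding the Neumann series, the product rule, and the four block H\"older estimates, all $1 + O(\delta)$ multiplicative factors can be absorbed into the replacement of $e^{-\lambda+2\delta}$ by $e^{-\lambda+\sqrt{\delta}}$, and so that the accumulated prefactors on the H\"older terms remain below the clean constants $2$ and $6$. The choice of exponent $\sqrt{\delta}$ (rather than $2\delta$) is precisely what provides the slack needed to absorb these lower-order errors.
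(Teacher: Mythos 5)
Your proposal is correct: the explicit block formula $\Gamma^{cs}_{x,z}(L) = (A_{uu}(z) - L A_{cs,u}(z))^{-1}(L A_{cs,cs}(z) - A_{u,cs}(z))$, the Neumann-series control of the inverse, and the H\"older/Lipschitz block estimates from (4)--(5) of \cref{LyapCharts} give both items, with the slack $\sqrt{\delta} \gg 2\delta$ absorbing all $1+O(\delta)$ factors exactly as you indicate. Note that the paper itself does not prove this statement but imports it as Lemma A.5 of \cite{ExpMix}; your graph-transform computation is the standard argument behind that lemma, so there is nothing to contrast beyond the minor remark that the vanishing of the off-diagonal blocks at $0$ comes from the invariance of the splitting $E^u \oplus E^{cs}$ (via (L1) of \cref{LyapCharts}) rather than from item (4) alone.
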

	\begin{proof}[Proof of \cref{fakecs}]
		The proof will closely follow the proof of Lemma 2.6 of \cite{ExpMix}.
		Fix $x \in LyapReg(\delta)$, $z \in \mathbb{R}^{cs}$, $y \in \mathbb{R}^d$ and $n\geq 0$.
		Define inductively
		\begin{equation*}
			\eta_0 = \phi^{\mathcal{F}}_{\tilde{f}^{(n)}_{x,\delta}(y)}
		\end{equation*}
		and for $0<k \leq n$,
		\begin{equation*}
			\text{graph}(\eta_k) = (\tilde{f}^{(k)}_{f^{n-k}x,\delta})^{-1}\text{graph}(\eta_{k-1}).
		\end{equation*}
		We need to show that the right hand side is always transverse to the $\mathbb{R}^{cs}$ direction to make it a well defined graph of a function.
		We want to therefore estimate $D_w\eta_n$, which will prove both items in the Lemma.
		Let
		\begin{equation*}
			z_k = \tilde{f}^{(n-k)}_{x,\delta}(\eta_n(w),w) \ \text{and} \
			w_k = \pi^{cs}(z_k).
		\end{equation*}
		Let 
		\begin{equation*}
			L_k = D_{w_k} \eta_k \colon \mathbb{R}^{cs} \to \mathbb{R}^u.
		\end{equation*}
		Following the notation from \cref{lemmaA5},
		\begin{equation*}
			L_{k+1} = \Gamma^{cs}_{f^{n-(k+1)}x,z_{k+1}}(L_k).
		\end{equation*}
		Applying (1) of \cref{lemmaA5} inductively,
		\begin{equation*}
			\begin{split}
				\|L_k\| \leq& 2\delta\sum_{i=0}^{k-1} e^{(-\lambda+\sqrt{\delta})i}
				+ \|L_0\|e^{(-\lambda+\sqrt{\delta})k} \\
				& \leq \frac{2\delta}{1-e^{-\lambda+\sqrt{\delta}}} 
				+ e^{(-\lambda+\sqrt{\delta})k}\|L_0\|
			\end{split}.
		\end{equation*}
		In particular, if $\frac{2\delta}{1-e^{-\lambda + \sqrt{\delta}}}< 1-e^{-\lambda+\sqrt{\delta}}$
		and $\|L_0\| < 1$,
		then $\|L_k\| <1$.
		Therefore, we may still inductively apply \cref{lemmaA5} to get the above for all $k\leq n$.
		By construction, $L_n = D_w\eta_n$ 
		and $L_0 = D_{w_0}\phi^{\mathcal{F}}_{\tilde{f}^{(n)}_{x,\delta}(y)}$.
		These prove the Lemma since $w$ was chosen arbitrarily.
	\end{proof}

	A crucial property for the fake cs-foliations is their subexponential growth up to time $n$,
	which follows from the following (Lemma A.6 of \cite{ExpMix}).
	\begin{lemma}
		\label{subexpchart}
		Let $x \in LyapReg(\delta)$ and $z_1,z_2 \in \mathbb{R}^d$.
		Let $K = 2 \max\{\|Df\|_{C^0},\|Df^{-1}\|_{C^0}\}$.
		Then
		\begin{equation*}
			|\tilde{f}_{x,\delta}(z_1)-\tilde{f}_{x,\delta}(z_2)|
			\leq 2K|z_1^u - z_2^u|+ e^{2\delta}|z_1^{cs}- z_2^{cs}|.
		\end{equation*}
		Moreover, if $|z_1^u-z_2^u| \leq \frac{3\delta}{1-e^{-\lambda + \sqrt{\delta}}}|z_1^{cs}-z_2^{cs}|$,
		then
		\begin{equation}
			\label{subexp}
			|\tilde{f}_{x,\delta}(z_1) - \tilde{f}_{x,\delta}(z_2)|
			\leq e^{\sqrt{\delta}}|z_1-z_2|.
		\end{equation}
	\end{lemma}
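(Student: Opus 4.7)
The plan is to combine the splitting property of the linearization $D_0\tilde f_{x,\delta}$ guaranteed by (L1) of \cref{LyapCharts} with the first-order estimate
\begin{equation*}
|\tilde f_{x,\delta}(z_1)-\tilde f_{x,\delta}(z_2)|\le |D_0\tilde f_{x,\delta}(z_1-z_2)|+\delta|z_1-z_2|,
\end{equation*}
which is immediate from $\mathrm{Lip}(\tilde f_{x,\delta}-D_0\tilde f_{x,\delta})\le\delta$ (property (5) of \cref{LyapCharts}). Using the orthogonal decomposition $|D_0\tilde f_{x,\delta}w|^2=|D_0\tilde f_{x,\delta}w^u|^2+|D_0\tilde f_{x,\delta}w^{cs}|^2$, the task reduces to bounding $D_0\tilde f_{x,\delta}$ on each block directly from the defining series of the Lyapunov norm.

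For $v^{cs}\in\mathbb{R}^{cs}$ I would expand
\begin{equation*}
|D_0\tilde f_{x,\delta}v^{cs}|^2 = \sum_{m\ge 0}\|D_xf^{m+1}L_{x,\delta}v^{cs}\|^2 e^{-2\delta m},
\end{equation*}
reindex by $n=m+1$, and bound the result by $e^{2\delta}|v^{cs}|'^2_{x,\delta}=e^{2\delta}|v^{cs}|^2$, giving $|D_0\tilde f_{x,\delta}v^{cs}|\le e^\delta|v^{cs}|$. For $v^u\in\mathbb{R}^u$ the analogous series runs over $m\le 0$; peeling off the $m=0$ term yields $\|D_xf\,L_{x,\delta}v^u\|^2\le\|Df\|_{C^0}^2|v^u|^2$ (using that the Riemannian norm is dominated by the Lyapunov norm, so $\|L_{x,\delta}v^u\|\le|v^u|$), while the tail $m\le -1$ gives, after an analogous reindexing, $e^{2\lambda-2\delta}|v^u|^2$. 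Since $e^\lambda\le\|Df\|_{C^0}\le K/2$, this yields $|D_0\tilde f_{x,\delta}v^u|^2\le (K^2/2)|v^u|^2$, and hence $|D_0\tilde f_{x,\delta}v^u|\le K|v^u|$. Combining via $\sqrt{a^2+b^2}\le a+b$ and then using $|z_1-z_2|\le|z_1^u-z_2^u|+|z_1^{cs}-z_2^{cs}|$ together with $e^\delta+\delta\le e^{2\delta}$ gives the first inequality with coefficients $K+\delta\le 2K$ on the unstable side and $e^{2\delta}$ on the center-stable side.

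For the refined inequality I would plug the hypothesis $|z_1^u-z_2^u|\le\frac{3\delta}{1-e^{-\lambda+\sqrt\delta}}|z_1^{cs}-z_2^{cs}|$ directly into the first inequality. The unstable contribution becomes $\frac{6K\delta}{1-e^{-\lambda+\sqrt\delta}}|z_1^{cs}-z_2^{cs}|=O(\delta)|z_1^{cs}-z_2^{cs}|$, while the stable contribution is $e^{2\delta}|z_1^{cs}-z_2^{cs}|$; summing gives $(1+O(\delta))|z_1^{cs}-z_2^{cs}|\le(1+O(\delta))|z_1-z_2|$, which for $\delta$ small is dominated by $e^{\sqrt\delta}|z_1-z_2|$ since $\sqrt\delta\gg\delta$. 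I expect the main subtle point to be the $u$-direction operator bound: one must carefully extract a constant independent of the regularity function $\mathfrak r_\delta(x)$, and this is precisely why one peels off the $m=0$ term of the series separately and absorbs the geometric tail $e^{2\lambda-2\delta}$ into $K$ via $e^\lambda\le\|Df\|_{C^0}$, rather than attempting a naive operator-norm estimate on $L_{fx,\delta}^{-1}\circ D_xf\circ L_{x,\delta}$.
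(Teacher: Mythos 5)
The paper does not prove this lemma; it is cited verbatim as Lemma A.6 of \cite{ExpMix}, so there is no internal proof to compare against. Your argument is correct and is the standard derivation: split $\tilde f_{x,\delta}$ into $D_0\tilde f_{x,\delta}$ plus a $\delta$-Lipschitz remainder via (5) of \cref{LyapCharts}, then bound $D_0\tilde f_{x,\delta}$ blockwise by unwinding the defining series of the Lyapunov inner product. The $cs$-block reindexing and the $u$-block trick of peeling the $m=0$ term and absorbing the geometric tail via $e^{\lambda}\le \|Df\|_{C^0}\le K/2$ are exactly the right moves, and they give coefficients $K/\sqrt2\le K$ and $e^{\delta}$, after which $K+\delta\le 2K$ and $e^{\delta}+\delta\le e^{2\delta}$ yield the first inequality, and the second follows for $\delta$ small because the unstable contribution is $O(\delta)$ while $e^{\sqrt\delta}-e^{2\delta}=\sqrt\delta+O(\delta)$. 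Two small points worth making explicit: $e^{\lambda}\le \|Df\|_{C^0}$ is justified since $\lambda>0$ is a Lyapunov exponent of $f$ and hence $\lambda\le\log\|Df\|_{C^0}$; and both the inequalities $K+\delta\le 2K$ (using $K\ge 2$) and the final absorption of $6K\delta/(1-e^{-\lambda+\sqrt\delta})$ into $e^{\sqrt\delta}-e^{2\delta}$ require $\delta$ sufficiently small, which is consistent with the hypotheses imported from \cref{LyapCharts} and \cref{lemmaA5} and with how the lemma is used throughout the paper.
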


	\begin{lemma}
		\label{subexpM}
		Suppose that $x \in \mathcal{P}_{\tau}$ and $\mathcal{F}$ is such that
		\begin{equation*}
			\underset{w \in \mathbb{R}^{cs}, y \in \mathbb{R}^d}{\sup} \|D_w \phi^{\mathcal{F}}_y\|
			<\frac{\delta}{e^{-\lambda+\sqrt{\delta}}(1-e^{-\lambda+\sqrt{\delta}})}.
		\end{equation*}
		Suppose that $w_1,w_2 \in \mathbb{R}^{cs}$ and $\overline{z} \in \tilde{W}^u(x)$
		satisfy
		\begin{equation}
			\label{fakecscondition}
			\begin{split}
				|w_1|,|w_2| < r_n \ \text{and} \ 
				|\tilde{f}^{(n)}_{x,\delta}\overline{z}| < \xi_n,\\
				\text{with} \ r_n \ll e^{-n(\delta+\sqrt{\delta})} \
				\text{and} \ \xi_n \ll e^{-\delta n}.
			\end{split}
		\end{equation}
		Let $\overline{z}_i = \eta^{cs,n,\delta}_{\mathcal{F},x,\overline{z}}(w_i)$,
		and $z_i = h_{x,\delta}(\overline{z}_i)$, for $i=1,2$.
		Then for $0 \leq k \leq n$ and $i=1,2$,
        \begin{equation}
            \label{fcommute}
            \tilde{f}^{(k)}_{x,\delta}(\overline{z}_i,w_i) 
            =  h_{f^kx,\delta}^{-1} \circ f^k \circ h_{x,\delta}(\overline{z}_i,w_i).
        \end{equation}
        In particular,
		\begin{equation}
			\label{subexpM2}
			d(f^k(z_1),f^k(z_2)) \leq \tau^{-2}e^{2\sqrt{\delta}k}d(z_1,z_2).
		\end{equation}
	\end{lemma}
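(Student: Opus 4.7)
The plan is to exploit the fact that $(\bar{z}_1,w_1)$, $(\bar{z}_2,w_2)$ and $\bar{z}$ all lie on the same fake center stable leaf $\tilde{W}^{cs,n,\delta}_{\mathcal{F},x}(\bar{z})$; by construction this leaf is $\tilde{f}^{(n)}_{x,\delta}$-invariant, so under $\tilde{f}^{(k)}_{x,\delta}$ it maps to $\tilde{W}^{cs,n-k,\delta}_{\mathcal{F},f^kx}(\tilde{f}^{(k)}_{x,\delta}\bar{z})$, and \cref{fakecs}(2) combined with the hypothesis on $\|D\phi^{\mathcal{F}}\|$ shows that each such iterated leaf is $(\alpha_3,\frac{3\delta}{1-e^{-\lambda+\sqrt{\delta}}})$-mostly vertical for every $0\leq k\leq n-1$. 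Consequently, the difference between any two points on such a leaf lies inside the mostly vertical cone of \cref{subexpchart}, so \eqref{subexp} can be iterated without losing this structure.

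First I would estimate initial sizes. Writing the leaf as a graph with derivative bounded by $\kappa:=\frac{3\delta}{1-e^{-\lambda+\sqrt{\delta}}}$ and using $|w_i|<r_n$, one has $|(\bar{z}_i,w_i)-\bar{z}|\leq(1+\kappa)(r_n+|\bar{z}|)$. Because $\bar{z}\in\tilde{W}^u(x)$ with $|\tilde{f}^{(n)}_{x,\delta}\bar{z}|<\xi_n$, iterating the unstable manifold contraction \eqref{UnstableContraction} backward through \cref{unstable} gives $|\tilde{f}^{(k)}_{x,\delta}\bar{z}|\leq e^{(-\lambda+\sqrt{\delta})(n-k)}\xi_n$ for every $0\leq k\leq n$; in particular $|\bar{z}|\leq e^{(-\lambda+\sqrt{\delta})n}\xi_n$.

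The heart of the proof is an induction on $k$ whose hypothesis is that $\tilde{f}^{(j)}_{x,\delta}(\bar{z}_i,w_i)\in\mathcal{N}_{\mathfrak{r}_\delta(f^jx)}(0)$ for every $0\leq j\leq k$; by item (3) of \cref{LyapCharts} this is precisely what is needed for $\tilde{f}_{f^jx,\delta}$ to agree with $h^{-1}_{f^{j+1}x,\delta}\circ f\circ h_{f^jx,\delta}$ on that iterate, making the linear extension irrelevant. Under the hypothesis, applying \eqref{subexp} successively to the pairs on the leaf through $\bar{z}$ yields $|\tilde{f}^{(k)}_{x,\delta}(\bar{z}_i,w_i)-\tilde{f}^{(k)}_{x,\delta}\bar{z}|\leq e^{\sqrt{\delta}k}(1+\kappa)(r_n+e^{(-\lambda+\sqrt{\delta})n}\xi_n)$; combining with the estimate for $|\tilde{f}^{(k)}_{x,\delta}\bar{z}|$ and invoking the smallness hypotheses $r_n\ll e^{-n(\delta+\sqrt{\delta})}$ and $\xi_n\ll e^{-\delta n}$ shows that $|\tilde{f}^{(k)}_{x,\delta}(\bar{z}_i,w_i)|=o(e^{-\delta n})$ uniformly in $0\leq k\leq n$. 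Since $\mathfrak{r}_\delta(f^kx)\geq\tau e^{-\delta k}\geq\tau e^{-\delta n}$ by \eqref{SlowExp}, the inductive hypothesis survives for $n$ large, closing the induction and establishing \eqref{fcommute}. Equation \eqref{subexpM2} then follows formally: the same iteration of \eqref{subexp} applied now to the pair $(\bar{z}_1,w_1),(\bar{z}_2,w_2)$ gives $|\tilde{f}^{(k)}_{x,\delta}(\bar{z}_1,w_1)-\tilde{f}^{(k)}_{x,\delta}(\bar{z}_2,w_2)|\leq e^{\sqrt{\delta}k}|(\bar{z}_1,w_1)-(\bar{z}_2,w_2)|$, and pushing through the Lipschitz bounds $\mathrm{Lip}(h_x^{-1})\leq\tau^{-1}$ and $\mathrm{Lip}(h_{f^kx})\leq\mathfrak{r}_\delta(f^kx)^{-1}\leq\tau^{-1}e^{\delta k}$ produces $\tau^{-2}e^{(\delta+\sqrt{\delta})k}d(z_1,z_2)\leq\tau^{-2}e^{2\sqrt{\delta}k}d(z_1,z_2)$, the last inequality using $\delta\leq\sqrt{\delta}$ for small $\delta$.

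The main obstacle I expect is the bookkeeping in the induction: the growth factor $e^{\sqrt{\delta}k}$ produced by each application of \eqref{subexp} must be tamed simultaneously against $r_n$ and against the backward unstable contraction factor involving $\xi_n$, so that $|\tilde{f}^{(k)}_{x,\delta}(\bar{z}_i,w_i)|<\mathfrak{r}_\delta(f^kx)$ holds at every intermediate time and not only at the endpoints $k=0,n$. The exponents in the smallness hypotheses $r_n\ll e^{-n(\delta+\sqrt{\delta})}$ and $\xi_n\ll e^{-\delta n}$ look calibrated precisely so that this cancellation works uniformly for all $0\leq k\leq n$.
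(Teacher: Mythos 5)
Your proposal follows essentially the same route as the paper: use \cref{fakecs} together with the hypothesis on $\|D\phi^{\mathcal{F}}\|$ to keep all iterated leaves $\bigl(\alpha_3,\tfrac{3\delta}{1-e^{-\lambda+\sqrt{\delta}}}\bigr)$-mostly vertical, iterate \eqref{subexp}, verify inductively that the iterates remain in the domain of the (unextended) $\tilde{f}_{f^kx,\delta}$ by combining the forward subexponential growth with the backward unstable contraction from \cref{unstable}, and then push through the Lipschitz bounds on $h_{x,\delta}^{-1}$ and $h_{f^kx,\delta}$. The bookkeeping you anticipate as the main obstacle is exactly what the paper carries out, and your calibration of $r_n$ and $\xi_n$ against the competing growth/contraction factors matches the paper's computation.
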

	\begin{proof}
		By \cref{fakecs} and the condition on $\mathcal{F}$,
		we have for all $0 \leq k \leq n$
		\begin{equation*}
			|\pi^u(\tilde{f}^{(k)}_{x,\delta}(\overline{z}_1)) 
			- \pi^u(\tilde{f}^{(k)}_{x,\delta}(\overline{z}_2))|
			\leq \frac{3\delta}{1-e^{-\lambda+\sqrt{\delta}}}
			|\pi^{cs}(\tilde{f}^{(k)}_{x,\delta}(\overline{z}_1))
			- \pi^{cs}(\tilde{f}^{(k)}_{x,\delta}(\overline{z}_2))|.
		\end{equation*}
		Therefore, we may inductively apply \cref{subexpchart} obtaining for $0 \leq k < n$,
		\begin{equation}
			\label{subexpcharts1}
			|\tilde{f}^{(k)}_{x,\delta}(\overline{z}_1) - \tilde{f}^{(k)}_{x,\delta}(\overline{z}_2)|
			\leq e^{k\sqrt{\delta}}|\overline{z}_1 - \overline{z}_2|.
		\end{equation}
		Since $w_i$ are arbitrary,
		we have the same estimate picking some $w \in \mathbb{R}^{cs}$ such that $\overline{z} = \eta^{cs,n,\delta}_{\mathcal{F},x,\overline{z}}(w)$.
		That is, for $i=1,2$
		\begin{equation}
			\label{subexpchartsz}
			|\tilde{f}^{(k)}_{x,\delta}(\overline{z}_i) - \tilde{f}^{(k)}_{x,\delta}(\overline{z})|
			\leq e^{k\sqrt{\delta}}|\overline{z}_i-\overline{z}|.
		\end{equation}
		Recall that we extended $\tilde{f}_{x,\delta}$ to the entirety of $\mathbb{R}^d$.
		Therefore, the information from the dynamics of $f$ can only be obtained in its original domain.
		We consequently need for $i=1,2$ and $0\leq k \leq n$,
		\begin{equation*}
			f^k(z_i) \in h_{f^kx,\delta}(\text{domain}(\tilde{f}_{f^kx,\delta})).
		\end{equation*}
		By (3) of \cref{LyapCharts}
		and \eqref{SlowExp}, it's enough to show that for every $0 \leq k \leq n$,
		\begin{equation}
			\label{insidedomain}
			|\tilde{f}^{(k)}_{x,\delta}(\overline{z}_i)| < e^{-\delta k}\tau.
		\end{equation}
		By \cref{unstable} and \eqref{subexpchartsz},
		\begin{equation*}
			\begin{split}
				\label{subexpMpf1}
				|\tilde{f}^{(k)}_{x,\delta}(\overline{z}_i)|
				\leq & |\tilde{f}^{(k)}_{x,\delta}(\overline{z}_i) 
				-\tilde{f}^{(k)}_{x,\delta}(\overline{z})|
				+|\tilde{f}^{(k)}_{x,\delta}(\overline{z})| \\
				\leq &
				e^{k\sqrt{\delta}}|\overline{z}_i - \overline{z}|
				+ e^{(-\lambda+\sqrt{\delta})(n-k)}\xi_n.
			\end{split}
		\end{equation*}
		We may estimate the first term above by the following
		\begin{equation*}
			\begin{split}
				|\overline{z}-\overline{z}_i|
				\leq &
				|\eta^{cs,n,\delta}_{\mathcal{F},x,\overline{z}}(w_i)-\eta^{cs,n,\delta}_{\mathcal{F},x,\overline{z}}(w)|+|w_i-w| \\
				\leq &
				\left(\frac{3\delta}{1-e^{-\lambda+\sqrt{\delta}}}+1\right)
				|w_i-w| \\
				\leq &
				\left(\frac{3\delta}{1-e^{-\lambda+\sqrt{\delta}}}+1\right)
				(|w_i|+|w|) \\
				\leq &
				\left(\frac{3\delta}{1-e^{-\lambda+\sqrt{\delta}}}+1\right)
				(r_n+e^{(-\lambda+\sqrt{\delta})n}\xi_n).
			\end{split}
		\end{equation*}
		By \eqref{fakecscondition}, there exists $n_0>0$ such that
		for $n>n_0$ and $0\leq k \leq n$, 
		\begin{equation*}
			e^{k(\sqrt{\delta}+\delta)}
			\left(\frac{3\delta}{1-e^{-\lambda+\sqrt{\delta}}}+1\right)
				(r_n+e^{(-\lambda+\sqrt{\delta})n}\xi_n)
				+ e^{(-\lambda+\sqrt{\delta})(n-k)}e^{\delta k}\xi_n < \tau.
		\end{equation*}
		This implies \eqref{insidedomain} for all $0 \leq k \leq n$.
		Therefore, we may write for each such $k$,
		\begin{equation*}
			\tilde{f}^{(k)}_{x,\delta} 
			= h_{f^kx,\delta}^{-1} \circ f^k \circ h_{x,\delta}.
		\end{equation*}
		Applying \eqref{subexpcharts1},
		(2) of \cref{LyapCharts}
		and \eqref{SlowExp},
		\begin{equation*}
			\begin{split}
				d(f^kz_1,f^kz_2)
				\leq & \tau^{-1}e^{k\delta}
				|\tilde{f}^{(k)}_{x,\delta}(h_{x,\delta}^{-1}(z_1))
				- \tilde{f}^{(k)}_{x,\delta}(h_{x,\delta}^{-1}(z_2))| \\
				\leq & \tau^{-1} e^{k(\delta+\sqrt{\delta})}
				|h_{x,\delta}^{-1}(z_1) 
				- h_{x,\delta}^{-1}(z_2)| \\
				\leq & \tau^{-2}e^{(\delta+\sqrt{\delta})k}d(z_1,z_2).
			\end{split}
		\end{equation*}
		This proves the Lemma.
	\end{proof}
	The following estimate is essentially contained in the proof of Proposition 6.4 in \cite{ExpMix}.
    \begin{lemma}
    \label{UsefulJac}
    There exists $\alpha_7>0$ satisfying the following:
    Let $x \in \text{LyapReg}(\delta)$,
    $\overline{z},\overline{z}' \in \mathbb{R}^d$ 
    and for $k\geq0$, denote by $\overline{z}_k = \tilde{f}^{(k)}_x(\overline{z})$
    and $\overline{z}'_k = \tilde{f}^{(k)}_x(\overline{z}')$.
    Let $n \in \mathbb{N}$ and suppose that for $0 \leq k \leq n$ we have 
    	$|\overline{z}_k|,|\overline{z}'_k| < 1$ and $\overline{z}_k,\overline{z}'_k \in \text{domain}(\tilde{f}_{f^kx})$.
    Let $\overline{W},\overline{W}' \subset \mathbb{R}^d$ be horizontal submanifolds with $\overline{z} \in \overline{W}$ and $\overline{z}' \in \overline{W}'$.
    Let $L,L' \colon \mathbb{R}^u \to \mathbb{R}^{cs}$ be linear maps such that $\text{graph}(L) = T_{\overline{z}}\overline{W}$ and $\text{graph}(L') = T_{\overline{z}'}\overline{W}'$.
    Then,
    \begin{align*}
        &\left|\log \text{Jac}(D_{\overline{z}}\tilde{f}_x^{(n)}|_{T_{\overline{z}}\overline{W}}) - 
        \log \text{Jac}(D_{\overline{z}'}\tilde{f}_x^{(n)}|_{T_{\overline{z}'}\overline{W}'})\right| \\
        &\leq C\sum_{k=0}^{n-1}\delta|\overline{z}_k-\overline{z}_k'|^{\alpha}
        + e^{(-\lambda+\sqrt{\delta})k}\|L-L'\|
        +6\delta \sum_{i=1}^k e^{(-\lambda+\sqrt{\delta})(k-i)}|\overline{z}_i-\overline{z}'_i|^{\alpha_7}
    \end{align*}
	\end{lemma}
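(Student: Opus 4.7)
The natural approach is to use the chain rule to decompose the $n$-step Jacobian as a telescoping sum of one-step log-Jacobians, and then control each term by (i) the distance $|\overline{z}_k - \overline{z}_k'|$ between the base points of iteration, and (ii) the angular distance $\|L_k - L_k'\|$ between the evolved tangent subspaces, where $L_k$ (resp.\ $L_k'$) denotes the linear map with $\text{graph}(L_k) = T_{\overline{z}_k}\overline{W}_k$ for $\overline{W}_k = \tilde{f}^{(k)}_x(\overline{W})$. Writing
\begin{equation*}
    \log \text{Jac}(D_{\overline{z}}\tilde{f}_x^{(n)}|_{T_{\overline{z}}\overline{W}}) = \sum_{k=0}^{n-1} \log \text{Jac}(D_{\overline{z}_k}\tilde{f}_{f^k x,\delta}|_{\text{graph}(L_k)}),
\end{equation*}
and subtracting the analogous sum for $\overline{z}'$, the triangle inequality reduces the problem to estimating each difference $|\log \text{Jac}(D_{\overline{z}_k}\tilde{f}_{f^kx}|_{\text{graph}(L_k)}) - \log \text{Jac}(D_{\overline{z}'_k}\tilde{f}_{f^kx}|_{\text{graph}(L_k')})|$.

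Each per-step difference splits via an intermediate term into two pieces. The first piece changes the base point but keeps the tangent space; it is bounded by $C \delta |\overline{z}_k - \overline{z}_k'|^{\alpha_2}$ (and hence by $C\delta|\overline{z}_k-\overline{z}_k'|^\alpha$, adjusting constants) using property (5) of \cref{LyapCharts}, which gives $\text{H\"ol}_{\alpha_2}(D\tilde{f}_{f^kx,\delta}) \leq \delta$, together with the fact that $E \mapsto \log|\text{Jac}(Df_x|_E)|$ is Lipschitz on the set of subspaces that remain bounded-slope graphs (the hypothesis $|\overline{z}_k|,|\overline{z}_k'|<1$ and the horizontal graph assumption keep the slopes bounded). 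The second piece fixes the base point and varies the tangent subspace, giving a contribution proportional to $\|L_k - L_k'\|$. This produces the first term of the asserted bound and reduces everything to controlling $\|L_k-L_k'\|$.

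For the tangent-subspace propagation, I would use the graph-transform $\Gamma^u_{x,z}$ defined analogously to \cref{lemmaA5}, but pushing forward graphs of maps $\mathbb{R}^u \to \mathbb{R}^{cs}$ by $D_z\tilde{f}_{x,\delta}$. Because the unstable direction expands by at least $e^{\lambda - \sqrt{\delta}}$ while the cs direction grows by at most $e^{\sqrt{\delta}}$, the same argument as in \cref{lemmaA5} yields
\begin{equation*}
    \|\Gamma^u_{f^kx, \overline{z}_k}(L_k) - \Gamma^u_{f^kx,\overline{z}'_k}(L_k')\| \leq e^{-\lambda+\sqrt{\delta}}\|L_k-L_k'\| + 6\delta|\overline{z}_k-\overline{z}_k'|^{\alpha_7}
\end{equation*}
for some $\alpha_7 > 0$ (likely $\alpha_7 = \alpha_2$ or a slightly smaller exponent accounting for the graph-transform regularity). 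Iterating this one-step bound starting from $L_0 = L$, $L_0' = L'$ gives
\begin{equation*}
    \|L_k - L_k'\| \leq e^{(-\lambda+\sqrt{\delta})k}\|L-L'\| + 6\delta \sum_{i=1}^{k} e^{(-\lambda+\sqrt{\delta})(k-i)}|\overline{z}_i - \overline{z}'_i|^{\alpha_7},
\end{equation*}
which supplies the remaining two terms in the stated inequality. Summing over $0 \leq k \leq n-1$ and absorbing absolute constants completes the proof.

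\textbf{Main obstacle.} The delicate point is not the structure of the estimate but ensuring that throughout the iteration the tangent subspaces remain genuine graphs with controlled slope, so that the Lipschitz dependence of $\log |\text{Jac}(Df|_E)|$ on $E$ (in the graph coordinates) has a uniform constant independent of $k$. This is guaranteed by the assumption that $\overline{z}_k, \overline{z}_k' \in \mathcal{N}_1(0)\cap \text{domain}(\tilde{f}_{f^kx})$ combined with the expansion of the unstable direction forcing the pushforward graphs to have decreasing slope, but one must verify this compatibility carefully and choose $\alpha_7$ as the minimum of $\alpha_2$ and whatever Hölder exponent arises from the graph-transform argument.
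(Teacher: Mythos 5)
Your proposal is correct and follows essentially the same route as the paper: chain rule to telescope the log-Jacobian, a per-step decomposition into a Hölder-of-derivative piece (via property (5) of \cref{LyapCharts}) plus a tangent-subspace piece, and a graph-transform iteration to control $\|L_k-L_k'\|$. The paper simply compresses the per-step decomposition into a citation of Lemma 2.7 of \cite{ExpMix} (bounding the $k$-th term by $N(K')(\|D_{\overline{z}_k}\tilde{f}_{x_k}-D_{\overline{z}'_k}\tilde{f}_{x_k}\|+\|L_k-L'_k\|)$ after first verifying $\|D_z\tilde{f}_x\|,\|D_z\tilde{f}_x^{-1}\|\leq K'$) and cites Lemma A.10 of \cite{ExpMix} directly for the iterated tangent-subspace bound you rederive from the graph-transform one-step estimate.
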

	
	\begin{proof}
		Denote by $\overline{W}_k = \tilde{f}^{(k)}_x\overline{W}$ and $\overline{W}'_k = \tilde{f}^{(k)}_x\overline{W}'_k$.
		Since
		$\overline{z}_k,\overline{z}'_k \in \text{domain}(\tilde{f}_{f^kx})$,
		then 
		$\tilde{f}^{(n)}_x(\overline{z}) = \tilde{f}_{f^{n-1}x}(\overline{z}_{n-1}) \circ ...\circ \tilde{f}_x(\overline{z})$ and similarly for $\overline{z}'$.
		By the chain rule, we wish to estimate
		\begin{equation}		
		\label{Jacest1}	
        \sum_{k=0}^{n-1}
        \left| \log \text{Jac}(D_{\overline{z}_k} \tilde{f}_{x_k}|_{T_{\overline{z}_k}\overline{W}_k}) - \log \text{Jac}(D_{\overline{z}'_k} \tilde{f}_{x_k}|_{T_{\overline{z}'_k}\overline{W}'_k})\right|
		\end{equation}
		For $x \in \text{LyapReg}(\delta)$ and $z \in \mathcal{N}_{\mathfrak{r}(x)}(0) \subset \mathbb{R}^d$ by (A.1) of Lemma A.3 of \cite{ExpMix} there exists $K>0$ such that
		\begin{equation*}
			\|D_0\tilde{f}_x\|,\|D_0\tilde{f}_x\| \leq K.
		\end{equation*}	
		 By (5) of \cref{LyapCharts},
		\begin{equation*}
			\|D_z\tilde{f}_x\|, \|D_z\tilde{f}_x\|^{-1} 
			\leq \delta|z|^{\alpha_2} + K \leq K+1 = K'.
		\end{equation*}
		Let $L_k,L_k' \colon \mathbb{R}^u \to \mathbb{R}^{cs}$ be linear maps such that $\text{graph}(L_k) = T_{\overline{z}_k}\overline{W}$ and $\text{graph}(L'_k) = T_{\overline{z}_k}\overline{W}'$.
		By Lemma 2.7 of \cite{ExpMix}, the $k$-th term in \eqref{Jacest1} is bounded by
		\begin{equation}			
		\label{Jacest2}
		N(K')(\|D_{\overline{z}_k}\tilde{f}_{x_k} - D_{\overline{z}'_k}\tilde{f}_{x_k}\|
			+ \|L_k - L'_k\|).
		\end{equation}
		The first term of \eqref{Jacest2} is bounded using (5) of \cref{LyapCharts}
		\begin{equation*}
			\|D_{\overline{z}_k}\tilde{f}_{x_k} -  D_{\overline{z}'_k}\tilde{f}_{x_k}\|
			\leq \delta |\overline{z}_k-\overline{z}'_k|^{\alpha}.
		\end{equation*}
		For the second term of \eqref{Jacest2}, we use Lemma A.10 of \cite{ExpMix} to obtain
		\begin{equation*}
			\|L_k-L'_k\|
			\leq e^{(-\lambda+\sqrt{\delta})k}\|L-L'\|
			+ 6\delta
			\sum_{i=1}^k e^{(-\lambda+\sqrt{\delta})(k-i)}|\overline{z}_i-\overline{z}'_i|^{\alpha_7}.
		\end{equation*}
		Putting these estimates together proves the Lemma.
	\end{proof}

	\subsection{SRB measures}
    \label{SRBsection}
    
    We say that a $\mu$-measurable partition of $M$ $\{W(x)\}_{x \in M}$ is \emph{subordinate to the unstable foliation} if for $\mu$-almost every $x \in M$,
    \begin{enumerate}
    	\item $W(x) \subset W^u(x)$;
    	\item $W(x)$ contains a neighborhood of $x$ in the submanifold topology of $W^u(x)$.
    \end{enumerate}

	Let $W(x)$ be a $\mu$-measurable partition
	subordinate to the unstable foliation.
	Let $\{\mu_{W(x)}\}_{x \in M}$ be a disintegration of $\mu$ with respect to the $W$ partition.

	We say that an invariant measure $\mu$ with at least one positive Lyapunov exponent is an \textit{SRB measure} (Sinai-Ruelle-Bowen) if for $\mu$-almost every $x$,
	$\mu_{W(x)} \ll m^u_x$,
	where $m^u_x$ is the measure induced by the Riemannian metric restricted to the immersed submanifold $W^u(x)$.
	In this case, we may write for $\mu$-almost every $x \in M$ and $A \subset M$ Borel measurable,
	\begin{equation}
		\label{SRB}
		\mu_{W(x)}(A) = \int_{A\cap W(x)} \rho_x(y) dm^u_x(y).
	\end{equation}
	notice that $\rho_x$ depends on the partition $W$.
	Despite that, it is known that for $y,z \in W(x)$,
	\begin{equation}
		\label{SRBdensity}
		\Delta_x(y,z) \coloneqq \frac{\rho_x(y)}{\rho_x(z)} = \frac{\prod_{n \geq 1}J^u(f^{-n}(z))}{\prod_{n \geq 1}J^u(f^{-n}(y))}.
	\end{equation}
    See (6.1) of \cite{LY1} for more details.

	\begin{lemma}
		\label{SRBdensityHol}
		Suppose $\mu$ is an SRB measure. 
		Let $\{W(x)\}_{x \in M}$ be a u-subordinate partition of $M$ and $\{\mu_{W(x)}\}_{x \in M}$ a disintegration with respect to that partition.
		Then for any $\tau >0$ small enough and $ \varepsilon >0$, 
		there exists $ \varepsilon_0 = \varepsilon_0( \varepsilon,\tau)>0$ such that
		for $\mu$-almost every $x \in \mathcal{P}_{\tau}$, if $\text{diam}(W(x)) < \varepsilon_0$,
		then for any $y \in W(x)$
		\begin{equation*}
			\rho_x(y) \sim_{(1+ \varepsilon)^{\frac{1}{2}}} \frac{1}{m^u_x(W(x))}.
		\end{equation*}
		In particular, for $A \subset W(x)$,
		\begin{equation*}
			\mu_{W(x)}(A) \sim_{1+ \varepsilon} \frac{m^u_x(A)}{m^u_x(W(x))}.
		\end{equation*}
	\end{lemma}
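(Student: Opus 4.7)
My plan is to derive both assertions from identity \eqref{SRBdensity} combined with \cref{JacHolderCor}. The point is that by the chain rule,
\[
\prod_{n=1}^N \frac{J^u(f^{-n}z)}{J^u(f^{-n}y)} = \frac{\text{Jac}(D_yf^{-N}|_{E^u(y)})}{\text{Jac}(D_zf^{-N}|_{E^u(z)})},
\]
so $\Delta_x(y,z) = \rho_x(y)/\rho_x(z)$ is exactly the $N \to \infty$ limit of this Jacobian ratio. Thus it suffices to bound the ratio uniformly in $N$ by a constant close to $1$ whenever $y,z$ lie in a sufficiently small piece of $W^u(x)$, and then normalize using the fact that $\int_{W(x)} \rho_x\,dm^u_x = 1$.

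To control the Jacobian ratio I would apply \cref{JacHolderCor} twice: once comparing the base point $x \in \mathcal{P}_\tau$ with $y \in W(x) \subset W^u(x)$, and once comparing $x$ with $z$. Given a target accuracy $\eta > 1$, which I choose so that $\eta^{2} = (1+\varepsilon)^{1/2}$, the corollary supplies a threshold $\xi = \xi(\eta,\tau) > 0$ such that $d(x,\cdot) < \xi$ on $W^u(x)$ forces each of the respective Jacobians to be within factor $\eta$ of $\text{Jac}(D_xf^{-N}|_{E^u(x)})$, uniformly in $N$. Taking $\varepsilon_0 \le \xi$, the diameter hypothesis (together with $x \in W(x)$) puts both $y$ and $z$ within $\xi$ of $x$, so combining the two comparisons and passing to the limit yields $\Delta_x(y,z) \in (\eta^{-2},\eta^{2}) = ((1+\varepsilon)^{-1/2},(1+\varepsilon)^{1/2})$.

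To finish I normalize. Writing $\rho_x(y) = \rho_x(z)\Delta_x(y,z)$ and integrating in $y$ over $W(x)$ gives $1 = \rho_x(z)\int_{W(x)} \Delta_x(y,z)\,dm^u_x(y)$, which forces $\rho_x(z)\,m^u_x(W(x))$ into $((1+\varepsilon)^{-1/2},(1+\varepsilon)^{1/2})$; this is the first assertion. The ``in particular'' statement for $\mu_{W(x)}(A)$ then follows by integrating the pointwise bound on $\rho_x$ over $A$, which already gives $\mu_{W(x)}(A) \sim_{(1+\varepsilon)^{1/2}} m^u_x(A)/m^u_x(W(x))$ and a fortiori $\sim_{1+\varepsilon}$. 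There is no genuine obstacle; the only point requiring attention is that the Jacobian estimate in \cref{JacHolderCor} is uniform in the number of iterates $N$, which is precisely what allows it to survive the limit defining $\Delta_x$.
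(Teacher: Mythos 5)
Your proof is correct, but it takes a genuinely different route from the paper's. The paper proves the first estimate by appealing to Theorem~13.1.2 of Barreira--Pesin, which gives H\"older continuity of $\Delta_x(\cdot,\cdot)$ on Pesin sets directly, and then normalizes via an intermediate-value argument (continuity of $\rho_x$ together with $\int_{W(x)}\rho_x\,dm^u_x=1$ produces a point $z$ with $\rho_x(z)=1/m^u_x(W(x))$). You instead derive the bound on $\Delta_x(y,z)$ internally, observing via the chain rule that the partial products of \eqref{SRBdensity} are exactly the Jacobian ratios controlled, uniformly in $N$, by \cref{JacHolderCor}, and you normalize by integrating the two-sided inequality $\Delta_x(y,z)\sim_{(1+\varepsilon)^{1/2}}1$ in $y$ over $W(x)$. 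What your route buys: it is self-contained within the paper's own lemmas rather than citing an external H\"older continuity theorem, and the integration step avoids the intermediate-value argument (and hence any implicit connectedness concern about $W(x)$). What the paper's route buys: brevity, since the cited theorem already packages exactly the H\"older estimate needed. Your accounting of constants is also slightly sharper -- you in fact obtain $\mu_{W(x)}(A)\sim_{(1+\varepsilon)^{1/2}}m^u_x(A)/m^u_x(W(x))$ before weakening to $\sim_{1+\varepsilon}$ -- and your observation that the uniformity in $N$ in \cref{JacHolderCor} is precisely what survives the limit is the right thing to flag.
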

	\begin{proof}
	By Theorem 13.1.2 of \cite{BarPes}, $\Delta(x,y)$ is H\"older continuous for $x \in \mathcal{P}_{\tau}$ and $y \in W^u(x)$.
	Therefore, there exists $C_{\tau}>0$ and $\alpha$ such that
	\begin{equation*}
		\left|1-\frac{\rho_x(y)}{\rho_x(x)}\right|
		= \left| \frac{\rho_x(x)}{\rho_x(x)}-\frac{\rho_x(y)}{\rho_x(x)}\right|
		\leq C_{\tau}d(x,y)^{\alpha} \leq C_{\tau}\text{diam}(W(x))^{\alpha}.
	\end{equation*}
		Making $\text{diam}(W(x))$ small enough, we have for $y \in W(x)$,
		\begin{equation*}
			\rho_x(y) \sim_{(1+ \varepsilon)^{\frac{1}{2}}} \rho_x(x).
		\end{equation*}
		Since $\{ \mu_{W(x)} \}_{x \in M}$ is a disintegration of $\mu$, then
		\begin{equation*}
			\label{isprob}
			1 = \mu_{W(x)}(W(x)) = \int_{W(x)} \rho_x(y) dm^u_x(y).
		\end{equation*}
		Since $\rho_x(y)$ is continuous for $y \in W^u(x)$, there must exist $z \in W(x)$ such that $\rho_x(z) = \frac{1}{m^u_x(W(x))}$.
		In particular,
		\begin{equation*}
			\frac{1}{m^u_x(W(x))} = \rho_x(z) 
			\sim_{(1+ \varepsilon)^{\frac{1}{2}}} \rho_x(x) 
			\sim_{(1+ \varepsilon)^{\frac{1}{2}}} \rho_x(y),
		\end{equation*}
		which proves the Lemma.
	\end{proof}

	\section{Constructing good boxes}
	\label{GoodBox}
    
	Let $f \colon M \to M$ be a $C^{1+\alpha}$ diffeomorphism and $\mu$ be an ergodic invariant probability measure with at least one positive Lyapunov exponent.
    Let $m$ be normalized volume of $M$.
    Recall that for $z$ in a full $\mu$-measure set of $M$, $h_z$ denotes the Lyapunov chart around $z$, constructed in \cref{LyapCharts}.
    
	We will construct a disjoint collection of exponentially small boxes covering a big portion of the space and such that their indicator functions are well $L_1$-approximated by Lipschitz functions with controlled Lipschitz norm ((P3) and (P4) respectively in \cref{goodbox}).
    
    If $\mu$ is an SRB measure, then they also present an approximate local product structure with respect to foliations in the center stable direction with good holonomies (see (P8) in \cref{goodbox}).
    These foliation will be constructed in \cref{fakecsconstructionsec}.

	\begin{lemma}[Cover by Good Boxes]
		\label{goodbox}
		Let $\ep>0$ and sequences $l_n,r_n,\xi_n$ satisfying 
        \begin{equation*}
            \xi^{1+\alpha}_n \ll l_n \ll r_n \ll \xi_n.
        \end{equation*}
        There exists $n_1 = n_1(l,r,\xi,\ep)$ and $\tau = \tau(\ep)$ such that for each $n>n_1$ we can find a family of disjoint sets $\{C_i\}_{i\in I_n}$ satisfying the following properties
		\begin{itemize}
			\item[(P1)] $C_i = h_{z_i}(\hat{C}_i)$ where $\hat{C}_i = \mathcal{N}_{(\xi,r)}(c_i)$ with $c_i \in \mathcal{N}_{2\tau^{-2}\xi}(0)$, for some $z_i \in \mathcal{P}_{\tau}$;
			\item[(P2)] There exists $D = D( \varepsilon)>0$ such that for all $i \in I_n$, $D\xi^{2d} \leq \mu(C_i)$;
			\item[(P3)] $\sum_i \mu(C_i) > 1-\ep$;
			\item[(P4)] $\hat{\mu}_i(\partial_l \hat{C}_i) \leq \varepsilon \hat{\mu}_i(\hat{C}_i)$ where $\hat{\mu}_i = (h^{-1}_{z_i})_* \mu$;
			\item[(P5)] $\mu(C_i \cap \mathcal{P}^c_{\tau}) \leq \varepsilon \mu(C_i)$;
			\item[(P6)] If $z \in h_{z_i}(\hat{C}_i\setminus \partial_l \hat{C}_i) \cap \mathcal{P}_{\tau}$, 
            then $h^{-1}_{z_i}(W^u_{\tau}(z)\cap C_i)$ is the graph of a $C^{1+\alpha}$ function $\eta^u_{z} \colon \mathbb{R}^u \to \mathbb{R}^{cs}$;
			\item[(P7)] Let $T^{cs}_i = h_{z_i}((c_i+\mathbb{R}^{cs})\cap \hat{C}_i)$
            and define
            \begin{equation*}
                T_i = T_i^{cs} \cap \underset{z \in h_{z_i}(\hat{C}_i\setminus\partial_l\hat{C}_i)\cap \mathcal{P}_{\tau}}{\bigcup} W^u_{\tau}(z).
            \end{equation*}
            For $x \in T_i$, there exists $z \in h_{z_i}(\hat{C}_i\setminus \partial_l \hat{C}_i) \cap \mathcal{P}_{\tau}$ such that
            $x \in W^u_{\tau}(z)$.
            Denote by $W_i(x) = W^u_{\tau}(z) \cap C_i$.
            Let
            \begin{equation*}
                \tilde{T_i} = \bigcup_{x \in T_i} W_i(x).
            \end{equation*}
            Then, $\mu(C_i \cap \tilde{T_i}^c) \leq \ep \mu(C_i)$;
			\item[(P8)] 
            Let $\{\mu_{W_i(x)}\}_{x \in T_i,i \in I_n}$ be a disintegration of $\mu$ with respect to the measurable partition $\{W_i(x)\}_{x \in T_i,i \in I_n}$.
            Let $\nu_i$ be the measure on $T_i$ induced by that partition, that is,
            \begin{equation*}
                \nu_i(B) = \mu\left(\bigcup_{x \in B}W_i(x)\right).
            \end{equation*}
			If $\mu$ is an SRB measure,
			and $\mathcal{F}^{cs}_i$ is a family of submanifolds $\mathcal{F}^{cs}_i(y) \subset C_i$, for $y \in Y_i$ some indexing set such that
			\begin{itemize}
				\item[(F1)] The induced foliation $\tilde{\mathcal{F}}^{cs}_i$ on $\mathcal{N}^u_{\xi}(c_j^u)\times \mathcal{N}^{cs}_r(c_j^{cs})$
            is made up of $(\alpha,1)$-mostly vertical leaves;
            \item[(F2)] There exists $\tilde{A} = \tilde{A}(\varepsilon,\tau)>0$ such that if $W_1,W_2 \subset C_j$
            $(\alpha,1)$-mostly horizontal submanifolds satisfying $\|D\eta_{W_1}\|,\|D\eta_{W_2}\| \leq \tilde{A}$, then denoting by $\pi^{\tilde{\mathcal{F}}}_{W_1,W_2}$
            the holonomy along $\tilde{\mathcal{F}}^{cs}_i$ from $W_1$ to $W_2$, for $x \in W_1 \cap \tilde{\mathcal{F}}$,
        \begin{equation*}
            \text{Jac}_x(\pi^{\tilde{\mathcal{F}}}_{W_1,W_2}) \sim_{1 + \varepsilon} 1.
        \end{equation*}
        \end{itemize}
			Then, for $A \subset C_i $ $\mathcal{F}^{cs}_i$-saturated and $\nu_i$-almost every $x \in T_i$, we have 
			
			\begin{equation*}
				\frac{\mu(A\cap\tilde{T}_i)}{\mu(\tilde{T}_i)} \sim_{1+\varepsilon} 
				\mu_{W(x)}(A)
			\end{equation*}
            and
            \begin{equation*}
                \frac{m(A)}{m(C_i)}
                \sim_{1+ \varepsilon} \mu_{W(x)}(A).
            \end{equation*}
			\end{itemize}

	\end{lemma}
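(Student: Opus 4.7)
The plan is to first build a disjoint cover of most of a Pesin set by Lyapunov-chart boxes at scale $\xi$ via a Besicovitch-type argument, then refine each box's radii by a Fubini/Markov argument to obtain the delicate boundary estimate (P4), and finally discard a small proportion of boxes on which the remaining properties fail, using Markov's inequality through \cref{AppendixLemma}. Fix $\tau = \tau(\varepsilon)$ small enough that $\mu(\mathcal{P}_\tau) > 1 - \varepsilon^{10}$ and so that \cref{LyapCharts}, \cref{unstablegraph}, \cref{changecoordinates}, \cref{SRBdensityHol}, and \cref{JacHolderCor} all hold on $\mathcal{P}_\tau$ with constants depending only on $\tau$. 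For $n$ large enough, the template boxes $h_z(\mathcal{N}_{(\xi,r)}(c))$ with $z \in \mathcal{P}_\tau$ and $c \in \mathcal{N}_{2\tau^{-2}\xi}(0)$ are well-defined; a Vitali/Besicovitch covering argument applied to the family centered at $c = 0$ extracts a pairwise disjoint subfamily with centers $z_i \in \mathcal{P}_\tau$ whose union has $\mu$-measure at least $1 - \varepsilon/3$, giving a first approximation to (P1) and (P3).

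For (P4), fix $z_i$ and consider the one-parameter family $C^s_i = h_{z_i}(\mathcal{N}_{(s,r)}(c_i))$ for $s \in [\xi/2, \xi]$, and similarly the family obtained by varying the cs-component of $c_i$. The annular regions $\partial_l \hat{C}^s_i$ for distinct $s$ cover each point with total $s$-measure at most $O(l)$, so Fubini gives
\[
\int_{\xi/2}^{\xi} \hat{\mu}_i(\partial_l \hat{C}^s_i)\, ds \lesssim l\, \hat{\mu}_i(\hat{C}^\xi_i),
\]
and Markov supplies a scale $s_i$ with $\hat{\mu}_i(\partial_l \hat{C}^{s_i}_i) < \varepsilon\, \hat{\mu}_i(\hat{C}^{s_i}_i)$, since $l_n \ll \xi_n$. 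An analogous argument using $l_n \ll r_n$ selects a shifted cs-center $c_i$ giving matching control on the cs-boundary. Redefining $C_i = h_{z_i}(\mathcal{N}_{(s_i,r)}(c_i))$ preserves disjointness and loses only an $O(l/\xi + l/r)$ fraction of coverage, so (P3) survives; (P2) is obtained by discarding any $C_i$ with $\mu(C_i) < D\xi^{2d}$, which removes a negligible fraction using a standard Pesin volume estimate. Now (P5) follows from \cref{AppendixLemma} with $A = \mathcal{P}_\tau^c \cap \bigcup C_i$; (P6) follows from \cref{unstablegraph} since both $z_i$ and $z$ lie in $\mathcal{P}_\tau$ at distance at most $2\tau^{-2}\xi$, which is smaller than the threshold of that lemma; (P7) follows by applying \cref{AppendixLemma} to the complement of $\tilde{T}_i$ in $C_i$, using (P4) to control the points whose unstable plaque fails to fully cross $C_i$.

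For (P8), assume $\mu$ is SRB. By \cref{SRBdensityHol} the conditional measure $\mu_{W_i(x)}$ is $(1+\varepsilon/10)$-equivalent to normalized unstable Lebesgue on $W_i(x)$ for $\nu_i$-almost every $x$. Since $A$ is $\mathcal{F}^{cs}_i$-saturated, the holonomy $\pi^{\tilde{\mathcal{F}}}_{W_i(x_1), W_i(x_2)}$ from (F2) restricts to a bijection $A \cap W_i(x_1) \to A \cap W_i(x_2)$ with Jacobian $\sim_{1+\varepsilon/10} 1$; combined with the density comparison this yields $\mu_{W_i(x_1)}(A) \sim_{1+\varepsilon} \mu_{W_i(x_2)}(A)$ for $\nu_i$-almost all $x_1, x_2 \in T_i$, and disintegrating $\mu(A \cap \tilde{T}_i) = \int \mu_{W_i(x)}(A)\, d\nu_i(x)$ gives the first estimate. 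The comparison with $m$ follows because $m$ itself disintegrates along the smooth unstable foliation of the Lyapunov chart with Jacobian close to $1$, and the fact that $W_i(x)$ is $(\alpha_4, C_0)$-mostly horizontal in the chart by \cref{unstable} together with (F2) allows us to replace the smooth transversal measure by $\nu_i$ up to a $(1+\varepsilon)$-factor. The main obstacle is balancing the refinement of box radii against the loss of coverage: the separation $l_n \ll r_n \ll \xi_n$ is precisely what makes the Fubini/Markov step in (P4) affordable, while $\xi_n^{1+\alpha} \ll l_n$ ensures that the Hölder errors in \cref{changecoordinates} and \cref{JacHolderCor} are absorbed within $l$, so that geometric quantities computed at the representative $z_i$ remain uniformly valid throughout $C_i \setminus \partial_l C_i$, which is essential for both (P7) and (P8).
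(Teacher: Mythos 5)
The high-level strategy (use Markov-type averaging to place box boundaries, then discard a small fraction of bad boxes) is the right instinct, but the proposal has several genuine gaps, and the paper's actual construction is structured quite differently to avoid them.

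\textbf{The covering step.} You propose a ``Vitali/Besicovitch covering argument'' to extract a pairwise disjoint family of boxes $h_{z_i}(\mathcal{N}_{(\xi,r)}(c_i))$ covering measure $1-\varepsilon/3$. This is not routine here. Besicovitch/Vitali covering theorems for a general Radon measure apply to Euclidean balls (or to families of bounded eccentricity); the boxes in question have aspect ratio $r_n/\xi_n \to 0$ and are oriented along the Lyapunov splitting, which rotates H\"older-continuously as the base point $z_i$ moves. A disjoint subfamily covering almost all of the mass cannot be extracted by a standard covering theorem without significant additional work to control the eccentricity and the drift of orientations. The paper sidesteps this entirely: it first builds a $D$-nice partition at macroscopic scale (\cref{Dnicelemma}), then refines inside a single chart with the shifting trick (\cref{shift}), so disjointness is automatic from the grid structure and no covering theorem is invoked.

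\textbf{The boundary estimate (P4) and the measure lower bound (P2).} Your Fubini/Markov step produces a radius $s_i$ with $\hat{\mu}_i(\partial_l\hat{C}_i^{s_i})\lesssim (l/\xi)\,\hat{\mu}_i(\hat{C}_i^{\xi})$, but (P4) requires $\hat{\mu}_i(\partial_l\hat{C}_i^{s_i})\leq\varepsilon\,\hat{\mu}_i(\hat{C}_i^{s_i})$, and for a general Borel measure $\hat{\mu}_i(\hat{C}_i^{s})$ can be vastly smaller than $\hat{\mu}_i(\hat{C}_i^{\xi})$ (e.g.\ if all mass sits in a thin shell near $|x|=\xi$). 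To make the argument close you must simultaneously enforce a \emph{lower} bound on each box's measure and run a second Markov argument against the count of boxes; this is exactly what items (2) and (3) of \cref{shift} do, via the auxiliary functions $g$ and $h$. Related to this, your route to (P2) --- ``discarding any $C_i$ with $\mu(C_i)<D\xi^{2d}$, which removes a negligible fraction using a standard Pesin volume estimate'' --- does not work for an SRB (generally singular) measure: there is no Pesin estimate giving a per-box lower bound. The discarded fraction is controlled by a volume-counting Markov inequality (since the number of boxes is at most $\mathrm{vol}(A)/\mathrm{vol}(A_i)$), not a density estimate. Likewise, the claim that shrinking $s$ from $\xi$ to some $s_i\in[\xi/2,\xi]$ ``loses only an $O(l/\xi+l/r)$ fraction of coverage'' is unjustified for an arbitrary measure.

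\textbf{The intermediate scale.} The paper runs the shifting trick twice --- first at scale $R_n=\sqrt{\xi_n}$ and then at scale $\xi_n$ --- precisely so that when it finally recenters the box on a Pesin point $z_{j,i}$ (\nameref{Step4}), the change-of-coordinates error $d(w_j,z_{j,i})^{\alpha_2}\cdot\xi_n\lesssim R_n^{\alpha_2}\xi_n$ is dominated by $l_n$, which is what keeps the recentered box nested between shrinkings and expansions of the original (\eqref{disjoint} and \eqref{P2}). You avoid the chart change by centering at $z_i\in\mathcal{P}_\tau$ from the start, but you pay for it with the covering difficulty above. This is a real trade-off, not a simplification.

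Your sketch of (P8) is essentially in the spirit of the paper (conditional measure $\approx$ normalized unstable Lebesgue via \cref{SRBdensityHol}, holonomy Jacobian close to $1$ via (F2), coarea-type disintegration of volume), and would likely work out with care. But the construction of the boxes themselves, which is the heart of the lemma, has the gaps above and would require you to essentially rediscover the $D$-nice partition and the two-scale shifting trick to be made rigorous.
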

	
	The remainder of this section will prove this lemma.
	
	\subsection{D-Nice partitions}

	\begin{definition}[D-Nice Partition]
		A \emph{nice partition} $\mathcal{Q}$ is a finite collection of disjoint open sets $ \{Q_1,...,Q_N\}$ such that for each $i$, $\partial Q_i$ is a finite union of embedded closed disks and $\bigcup_i \bar{Q_i} = M$. We denote by $\partial \mathcal{Q} = \bigcup_i \partial Q_i$.
		It is a \emph{$D$-Nice partition} if it also satisfies
		\[
			\mu(\partial_t \mathcal{Q}) \leq Dt
		\]
	\end{definition}
	
    The following lemma constructs $D$-Nice partitions from perturbations of nice partitions.
    A remark following Lemma 4.1 of \cite{OW98} establishes the existence of partitions with arbitrarily small atoms satisfying the above property of boundary control in any metric space.
    However, they are constructed differently than ours,
    as they aren't realized as perturbations of existing partitions.
    A detailed statement and proof in their context can be found in Corollary 5.2 of\cite{BernSubadd}.

	\begin{lemma}[Existence of $D$-nice partitions]
	\label{Dnicelemma}
		Let ${\mathcal{Q}} = \{Q_1,...,Q_N\}$ be a nice partition.  Given $\ep>0$, we may perturb each submanifold in $\partial{\mathcal{Q}}$ inside an $\ep$-neighborhood of itself such that the new collection of submanifold form a $D$-nice partition, for some $D>0$.
	\end{lemma}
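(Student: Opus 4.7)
The plan is to handle the boundary submanifolds one at a time, using a tubular neighborhood to parameterize perturbations and a standard Hardy–Littlewood / Fubini argument to select a perturbation whose tubular measure is linear in $t$.

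Fix one embedded closed codimension-$1$ disk $\Sigma \subset \partial \mathcal{Q}$. For $\eta$ small, the tubular neighborhood theorem produces a $C^1$ diffeomorphism $\phi \colon \Sigma \times (-\eta,\eta) \to \mathcal{N}_\eta(\Sigma)$ with $\phi(x,0) = x$ and with bounded bi-Lipschitz constant $L$. Let $\nu = (\pi_s)_* \phi_*^{-1}(\mu|_{\mathcal{N}_\eta(\Sigma)})$ be the projection of the pulled-back measure onto the normal coordinate $s \in (-\eta,\eta)$; this is a finite Borel measure with $\nu((-\eta,\eta)) \le 1$. If we set $\Sigma_s = \phi(\Sigma\times\{s\})$, then the bi-Lipschitz control gives
\begin{equation*}
    \mathcal{N}_t(\Sigma_s) \subset \phi\!\left(\Sigma \times (s-Lt, s+Lt)\right)
\end{equation*}
for all $t$ small, hence $\mu(\mathcal{N}_t(\Sigma_s)) \le \nu((s-Lt, s+Lt))$.

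The key selection step is the one-dimensional Hardy–Littlewood maximal inequality applied to $\nu$. For $\lambda>0$,
\begin{equation*}
    \text{Leb}\!\left\{s \in \mathbb{R} \;:\; \sup_{t>0} \frac{\nu((s-Lt, s+Lt))}{t} > \lambda\right\} \le \frac{C \, L}{\lambda} \nu(\mathbb{R}) \le \frac{CL}{\lambda}.
\end{equation*}
Choosing $\lambda = CL/\varepsilon$ (with $\varepsilon$ the allowed perturbation size from the lemma, which we may assume $\ll \eta$), the exceptional set has Lebesgue measure less than $\varepsilon$, so a good $s \in (-\varepsilon,\varepsilon)$ exists. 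The corresponding parallel disk $\Sigma_s$ then satisfies $\mu(\mathcal{N}_t(\Sigma_s)) \le D_\Sigma\, t$ for every $t>0$, with $D_\Sigma = CL/\varepsilon$. This is the perturbation we take for $\Sigma$.

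Repeating this selection independently for each of the finitely many disks $\Sigma_1,\dots,\Sigma_m$ in $\partial\mathcal{Q}$ and setting $D = \sum_j D_{\Sigma_j}$, the union bound gives $\mu(\partial_t \mathcal{Q}') \le \sum_j \mu(\mathcal{N}_t(\Sigma_j^{s_j})) \le Dt$ as required. The main (non-measure-theoretic) obstacle I anticipate is verifying that the perturbed boundary pieces still assemble into a nice partition: adjacent disks in the original $\partial\mathcal{Q}$ meet along lower-dimensional strata, and independent normal perturbations risk either pulling them apart or producing spurious intersections. To handle this, one perturbs first along the top-dimensional interiors and then uses transversality plus the fact that $\varepsilon$ is small compared to the geometric scale of the original partition to extend the perturbation continuously across the codimension-$2$ skeleton; the same Fubini selection on each stratum preserves the measure bound up to redefining $D$ by a constant factor depending only on $\mathcal{Q}$. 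Combined, this yields the desired $D$-nice perturbation.
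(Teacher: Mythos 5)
Your measure-theoretic selection step is correct, but you take a genuinely different route from the paper. Where you use a tubular-neighborhood chart together with the Hardy--Littlewood maximal inequality to find a level $s$ for which the doubled masses $\nu((s-Lt,s+Lt))$ are uniformly $O(t)$, the paper instead sets $\varphi(t) = \mu(\mathcal{N}_t(W))$, notes this is monotone and hence differentiable Lebesgue-almost everywhere, and picks $t_0<\varepsilon$ where $\varphi'(t_0)$ exists. From $\mu(\mathcal{N}_h(W'))\le \varphi(t_0+h)-\varphi(t_0-h)\le 2\varphi'(t_0)h$ for $h$ small (and a trivial bound for $h$ large) the paper extracts $D_W$ directly, without a tubular neighborhood or a maximal function. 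Your variant buys a more explicit, uniform bound $D_\Sigma = CL/\varepsilon$ over an entire positive-measure set of admissible perturbations, at the price of invoking the tubular neighborhood theorem and the maximal inequality; the paper's argument is more elementary and shorter. Both proofs then sum $D=\sum D_W$ over the finitely many pieces.

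Where there is a genuine gap is in the assembly step you flag at the end. Your plan---perturb the top-dimensional interiors first, then extend continuously across the codimension-2 skeleton---is underdetermined: the Fubini/maximal selection picks a single normal offset $s$ per disk, and a second, stratum-by-stratum selection along the lower skeleta constrains the already-chosen perturbation near the faces where adjacent disks meet, so it is not clear that a compatible choice exists simultaneously, nor that your linear bound survives the extension. The paper sidesteps this entirely with a one-line device: before perturbing, \emph{enlarge} each piece of $\partial\mathcal{Q}$ to a slightly larger embedded hypersurface so that the resulting finite family is in general position, then perturb each enlarged hypersurface (as a whole, to the level set $\psi_W^{-1}(t_0)$ of its distance function). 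Because the enlarged hypersurfaces are no longer glued along their boundaries, there is no matching problem; transversality is an open condition, so for $\varepsilon$ small the perturbed family is still transversal and cutting $M$ along it recovers a nice partition. You should either adopt this enlargement trick or spell out in detail why the stratified extension preserves both the partition structure and the linear boundary bound, since as written that part of your argument does not close.
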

	\begin{proof}
		$\partial \mathcal{Q}$ is a finite union of codimension 1 submanifolds, enlarge them a little so that they intersect transversally and let $W$ be one of them. 
        Define the function $\varphi(t) = \mu(\mathcal{N}_t(W))$.
        It is a monotone map on the interval $[0,diam M]$, therefore, it is differentiable Lebesgue almost everywhere.
        Let $\varepsilon$ be small enough so that $\psi_W(x) = d(x,W)$ is smooth on $\mathcal{N}_{ \varepsilon}(W)\setminus W$.
        Let $t_0 < \ep$ be such that $\varphi$ is differentiable at $t_0$ and let $C = \varphi'(t_0)$. Let $W' = \psi_W^{-1}(t_0) $, which is a submanifold by smoothness of $\psi_W$.
        Then, for $h>0$ small enough,
		\[
		\mu(\mathcal{N}_h(W')) \leq \varphi(t_0+h) - \varphi(t_0-h) 
		\]
		\[
		\leq \left[\frac{\varphi(t_0+h) - \varphi(t_0)}{h} + \frac{\varphi(t_0) - \varphi(t_0-h)}{h}\right] h \leq 2Ch.
		\] 
        Therefore,
		\[
		D_{W} \coloneqq \underset{h>0}{\sup} \frac{\mu(\mathcal{N}_h(W'))}{h} < \infty.
		\]
		Do the same procedure on each submanifold composing $\partial \mathcal{Q}$ and take $D = \sum_W D_W$. The new submanifolds form the boundary of a $D$-nice partition, if $ \varepsilon>0$ is small enough to guarantee that they still intersect transversally.
	\end{proof}

	\subsection{The shifting trick}

	The following lemma will be used many times. We therefore state it in a more general way.
	
	\begin{lemma}
	\label{shift}
		Let $A \subset \R^d$ be an open set with finite volume, $\nu$ a probability measure supported on $\bar{A}$ and let $S^{(1)},...,S^{(d)}>0$ be positive numbers.
		Denote by $S_M = \max \{S^{(1)},...,S^{(d)}\}$
		and $S_m = \min \{S^{(1)},...,S^{(d)}\}$.
		Given $\varepsilon >0$, if $s \leq \frac{\varepsilon^2}{8d}S_m$, then we can find pairwise disjoint boxes $A_i = \mathcal{N}_{S^{(1)}}(a^{(1)}_i)\times ... \times \mathcal{N}_{S^{(d)}}(a^{(d)}_i) \subset A$ satisfying
		\begin{enumerate}
			\item $1 - \nu(\partial_{3S_M} A) - \ep \leq \nu(\bigcup_i A_i) $;
			\item $\nu(\partial_s A_i) \leq \ep\nu(A_i)$;
			\item $ \frac{\varepsilon}{2} \frac{2^d\prod_{k=1}^d S^{(k)}}{vol(A)} \leq \nu(A_i)$.
		\end{enumerate}
	\end{lemma}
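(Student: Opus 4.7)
The plan is a standard shifting argument combined with two Markov-type eliminations. For each shift $t = (t_1,\dots,t_d) \in T := \prod_{k=1}^{d}[0,2S^{(k)}]$, consider the tiling of $\R^d$ by closed boxes of side lengths $2S^{(1)},\dots,2S^{(d)}$ whose corners lie on the lattice $\prod_k(2S^{(k)}\Z+t_k)$, and let $\mathcal{G}(t)$ denote those boxes that are contained in $A$. Each element of $\mathcal{G}(t)$ has exactly the form prescribed by the lemma, so it suffices to choose $t$ well and then discard a few bad boxes.

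Condition (1) is automatic for every $t$, even before any discarding: a point of $\R^d$ missed by $\bigcup\mathcal{G}(t)$ either lies in $\bar A\setminus A$, contributing at most $\nu(\partial A)$ since $\supp \nu\subset\bar A$, or lies in a grid box meeting $A^c$ and hence within $L^\infty$-distance $2S_M$ of $\partial A$. Either way, the point belongs to $\partial_{2S_M}A\subset \partial_{3S_M}A$.

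The main ingredient is a Fubini estimate handling (2). For fixed $x\in\R^d$, the set of $t_k\in[0,2S^{(k)}]$ for which the $k$-th coordinate of $x$ lies within $s$ of a face of its containing box has one-dimensional Lebesgue measure $2s$. A union bound over $k$ followed by Fubini yields
\begin{equation*}
\frac{1}{|T|}\int_T \nu\!\left(\bigcup_{A_i\in\mathcal{G}(t)} \partial_s A_i\right) dt \; \le \; \sum_{k=1}^d \frac{s}{S^{(k)}} \; \le \; \frac{ds}{S_m} \; \le \; \frac{\varepsilon^2}{8},
\end{equation*}
by the hypothesis on $s$, so we may fix a shift $t^*$ realizing a value at most $\varepsilon^2/8$.

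With $t^*$ fixed, the remaining cleanup is routine. Markov's inequality shows that the ``thick-boundary'' boxes $A_i\in\mathcal{G}(t^*)$ with $\nu(\partial_s A_i)>\varepsilon\nu(A_i)$ have total $\nu$-mass at most $\varepsilon/8$. There are at most $\mathrm{vol}(A)/\prod_{k}2S^{(k)}$ boxes in $\mathcal{G}(t^*)$, so the ``small'' ones violating (3) contribute total $\nu$-mass at most $\varepsilon/2$. Discarding both classes from $\mathcal{G}(t^*)$ leaves a family satisfying (2) and (3), and the $\nu$-loss in (1) is at most $\nu(\partial_{3S_M}A)+\varepsilon/8+\varepsilon/2\le \nu(\partial_{3S_M}A)+\varepsilon$. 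The only step with genuine content is the Fubini averaging above; the rest is bookkeeping, and no real obstacle is anticipated.
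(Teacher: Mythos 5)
Your route is the same as the paper's: thicken the grid, keep boxes inside $A$, and discard bad ones via two Markov-type eliminations. The only difference is that you average over continuous shifts via Fubini, whereas the paper pigeonholes over a discrete family of shifts (multiples of $2s$) coordinate by coordinate; that is essentially a cosmetic variation of the same idea.

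There is, however, a gap in the ``thick-boundary'' Markov step, and it is not merely bookkeeping. To conclude that the boxes with $\nu(\partial_s A_i)>\varepsilon\nu(A_i)$ have total mass at most $\varepsilon/8$, you would apply Markov to $g=\sum_i\frac{\nu(\partial_s A_i)}{\nu(A_i)}\mathbbm{1}_{A_i}$, which requires $\int g\,d\nu=\sum_i\nu(\partial_s A_i)\le\varepsilon^2/8$. Your Fubini estimate controls only $\nu\left(\bigcup_i\partial_s A_i\right)$, and the sum is not the union: the sets $\partial_s A_i$ overlap, since a point near a shared face lies in both neighboring boxes' boundary neighborhoods and a point near a codimension-$j$ corner lies in $2^j$ of them. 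Using the union bound alone one only gets $\sum_i\nu(\partial_s A_i)\le 2^d\,\nu\left(\bigcup_i\partial_s A_i\right)$, and the factor $2^d$ breaks the $\varepsilon$-budget in item (1) as soon as $d\ge 3$. The correct fix, staying within your framework, is to run Fubini on $\sum_i\nu(\partial_s A_i)$ directly rather than on the union: for fixed $x$ the expected multiplicity over shifts equals $\prod_k\bigl(1+\tfrac{s}{S^{(k)}}\bigr)-\prod_k\bigl(1-\tfrac{s}{S^{(k)}}\bigr)$, which is at most $3\sum_k\tfrac{s}{S^{(k)}}$ once $\sum_k\tfrac{s}{S^{(k)}}\le 1$, so a good shift gives $\sum_i\nu(\partial_s A_i)\le 3\varepsilon^2/8$ and the thick-box loss is still $O(\varepsilon)$. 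Alternatively one can follow the paper and work with $A_i\cap\partial_s A_i$, which are pairwise disjoint so that the sum coincides with a union; note though that this yields the conclusion $\nu(A_i\cap\partial_s A_i)\le\varepsilon\nu(A_i)$ for the surviving boxes, which is what the paper's displayed Markov computation actually establishes, rather than item (2) with the full two-sided neighborhood $\partial_s A_i$.
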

	\begin{proof}
		A box in $\R^d$ is defined by $2d$ sides. We will find d families of hyperplanes whose $s$-neighborhood have small measure. Consider the projection on the $k$-th coordinate $\pi_k \colon \R^d \to \R$ and $\nu_k = (\pi_k)_*\nu$. Define for $0 \leq j \leq \floor{\frac{S^{(k)}}{s}}-1$
		\[
		H_j^k = \bigcup_{l \in \Z} \mathcal{N}_s((2j+1)s)+2lS^{(k)} \subset \R.
		\]
		Notice that $\{H_j^k\}_j$ are pairwise disjoint.
		Since $\sum_{j=0}^{\floor{\frac{S^{(k)}}{s}}-1} \nu_k(H_j^k) \leq 1$ and we have $\floor{\frac{S}{s}}$ many terms, one of them should be less than $\frac{1}{
			\floor{\frac{S^{(k)}}{s}}} < \frac{s}{S^{(k)}-s}$. Pick such a set and call it $H^k$. Then $H = \bigcup_k\pi_k^{-1}(H^k)$ is a union of neighborhoods of size $s$ of hyperplanes and
			\begin{equation}
				\label{nuH}
				\nu(H) \leq \sum_{k=1}^d \frac{s}{S^{(k)}-s}.
			\end{equation}
			The connected components of the complements of these hyperplanes define boxes $(A_i)_{i \in I_1}$ of diameter $2S_M$.
			Let $I_2 = \{ i \in I_1 \ | \ A_i \subset A\}$.
			Note that
		\[
		\bar{A} \setminus \partial_{3S_M} A \subset \bigcup_{i \in I_2} A_i \cup \partial_s A_i.
		\]
		Therefore, by \eqref{nuH},
		\begin{equation}
			\label{eq1}
			1 - \nu(\partial_{3S_M} A) \leq \sum_{i \in I_2}\nu(A_i) + \sum_{k=1}^d \frac{s}{S^{(k)}-s}.
		\end{equation}
		Let 
		\[
		g = \sum_{i \in I_2} \frac{\nu(A_i\cap \partial_s A_i)}{\nu(A_i)} \mathbbm{1}_{A_i},
		\]
		where we're summing over the boxes of positive measure. 
		By Markov's inequality and \eqref{nuH},
		\begin{equation}
		\label{nuH2}
			\nu \left(x  \ \Big| \ a \leq g(x)\right) \leq a \int g d\nu
			\leq \frac{1}{a} \nu(H) \leq \frac{1}{a}\sum_{k=1}^d \frac{s}{S^{(k)}-s} \leq \frac{1}{a}d\frac{2s}{S_m}.
		\end{equation}
		where we are assuming without loss of generality that $s \leq \frac{1}{2}S_m$.
		Note that $a \leq g(x)$ is equivalent to $x \in A_i$ for some $i \in I_2$ satisfying $\nu(A_i)>0$ and $\nu(A_i\cap \partial_s A_i) \geq a\nu(A_i)$.
		Let
		\begin{equation*}
			I_3(a) = \left\{ i \in I_2  \ \Big| \   \nu(A_i \cap \partial_s A_i) < a\nu(A_i)\right\}.
		\end{equation*}
		Since
		\begin{equation*}
			\nu\left(\underset{i \in I_2}{\bigcup}A_i\right)
			-\nu\left(\underset{i \in I_2\setminus I_3(a)}{\bigcup}A_i\right) = 
			\nu\left(\underset{i \in I_3(a)}{\bigcup}A_i\right),
		\end{equation*}
		then by \eqref{eq1}, \eqref{nuH2} and assuming without loss of generality that $a<1$,
		\begin{equation}
		\label{I3a}
			1-\nu(\partial_{3S_M}A) - \frac{4d}{a}\frac{s}{S_m}
			\leq \nu\left(\underset{i \in I_3(a)}{\bigcup}A_i\right).
		\end{equation}
		Now let 
		\begin{equation*}
			h = \sum_{i \in I_3(a)} \frac{1}{\nu(A_i)}\mathbbm{1}_{A_i},
		\end{equation*}
		where we are summing over elements with positive measure.
		By Markov's inequality and using $vol(A_i) = 2^d\prod_{k=1}^d S^{(k)}$,
		\[
		\nu \left(x \ | \ \frac{1}{b} \leq h(x)\right) \leq b \int g d\nu \leq b \# I_2 \leq b \frac{vol(A)}{vol(A_i)}
		\leq b \frac{vol(A)}{2^d\prod_{k=1}^d S^{(k)}}.
		\]
		But $\frac{1}{b} \leq h(x) \iff 0< \nu(A_i) \leq b$ with $x \in A_i$ for some $i \in I_3(a)$. Therefore,
		\begin{equation}
		\label{nuH3}	
		\nu \left(x \ \Big| \ \frac{1}{b} \leq g(x)\right)  = \sum_{\nu(A_i)\leq b} \nu(A_i) \leq b \frac{ vol(A)}{2^d\prod_{k=1}^d S^{(k)}}.
		\end{equation}
		Let 
		\begin{equation*}
			I(a,b) = \{ i \in I_3(a) \ | \ b < \nu(A_i) \}.
		\end{equation*}
		By \eqref{I3a} and \eqref{nuH3},
		\begin{equation}
			\label{shifttrick}
			1-\nu(\partial_{3S_M}A) - \frac{4d}{a}\frac{s}{S_m}
			- b \frac{ vol(A)}{2^d\prod_{k=1}^d S^{(k)}}
			\leq \nu\left(\underset{i \in I(a,b)}{\bigcup} A_i\right).
		\end{equation}
		We finish the proof by taking $a = \varepsilon$ and
		$b = \frac{\varepsilon}{2} \frac{2^d\prod_{k=1}^d S^{(k)}}{vol(A)}$.
		Items (2) and (3) follow from the choice of $a$ and $b$ while item (1) follows from \eqref{shifttrick}, 
		since $s \leq \frac{\varepsilon^2}{8d}S_m $.	
	\end{proof}
	
	\subsection{Construction of good boxes}
    \label{sec:goodboxconstruct}
	We  now prove \cref{goodbox}.
	The construction will be carried out in 4 steps.
	In \nameref{Step1}, we fix a Pesin set and fix a $D$-Nice partition of the space such that each atom intersecting the Pesin set is contained in a Lyapunov chart, those are indexed by $k$.
    
	In \nameref{Step2}, we refine this partition in each chart using the \cref{shift} in order to get boxes of size $R$ small that have controlled measure of a neighborhood of their boundary, indexed by $j$.
    
	In \nameref{Step3}  we use the \cref{shift} once more to define covers by boxes a slightly larger than the desired size in each box of size $R$, indexing them by $i$. The problem with these boxes is that they all come from a single Lyapunov chart, while we want a chart for each box that we construct.
	This is done in \nameref{Step4}, where we show that we can take a chart for each box and the resulting construction is the desired cover, still indexed by $i$.
    The proof of the desired properties will follow right after.
	\subsubsection*{Step 1}
    \label{Step1}
	Let $\tau>0$ be such that $\mu(\mathcal{P}_{\tau}) > 1 - \ep^3$. Cover $\mathcal{P}_{\tau}$ with finitely many Lyapunov charts of size $\tau$, which we denote by $h_{w_k} \colon \R^u \times \R^{cs} \to M$ with $k \in K$. For each $k \in K$, take $Q_k \subset Im(h_{w_k})$ open disk such that  $\mathcal{P}_{\tau} \subset \bigcup_{k \in K} \bar{Q_k}$ and,
	by \cref{Dnicelemma}, we may assume that $\mathcal{Q} = \{Q_k\}_{k\in K} \cup \{\left(\bigcup_{k \in K} Q_k\right)^c$\} is a $D$-Nice partition, for some $D>0$.  We may also assume that each $Q_k$ has positive measure, otherwise we discard it from our collection.
	\subsubsection*{Step 2}
 \label{Step2}
	For each $k \in K$, define $\hat{Q}_k = h_{w_k}^{-1}(Q_k)\subset \R^d$ and $\nu_k = (h_{w_k}^{-1})_*(\mu_{Q_k})$ probability measure on $\R^d$ supported on $\overline{\hat{Q}_k}$.
    Let $R_n = \sqrt{\xi_n}$, which satisfies $\xi_n \ll R_n$. 
    Applying \cref{shift} for $s = 4\tau^{-1}\xi_n$, $S^{(1)}=...=S^{(d)} = R_n$ and $A = \hat{Q}_k$, for $n$ large enough, we get a family of disjoint boxes $\hat{A}_{k,j} = \hat{A}^{(n)}_{k,j} = \mathcal{N}_{R_n}(a_{k,j}) \subset \hat{Q}_k \subset\R^d$, $j \in J(k)$ satisfying
	\begin{enumerate}
		\item $1 - \nu_k(\partial_{3R_n} \hat{Q}_k) -\ep^2 \leq \nu_k(\bigcup_{j \in J(k)} \hat{A}_{k,j})$;
		\item $\nu_k(\partial_{4\tau^{-1}\xi_n} \hat{A}_{k,j}) \leq \ep^2 \nu_k(\hat{A}_{k,j})$;
		\item $\frac{ \varepsilon^2}{2} \frac{2^dR_n^d}{vol(\hat{Q}_k)} \leq \nu_k(\hat{A}_{k,j})$.
	\end{enumerate}
	Since $\mathcal{Q}$ is a $D$-nice partition and we have finitely many Lipschitz charts, then for $D' = D \max_k\{Lip(h_{w_k})\}$ we have 
	\[
		\mu(Q_k)\nu_k(\partial_{3R_n} \hat{Q}_k) \leq 3D'R_n.
	\]
	Therefore, multiplying the inequality in property (1) above by $\mu(Q_k)$, summing over $k$ and denoting by $A_{k,j} = h_{w_k}(\hat{A}_{k,j})$, we get
	\begin{equation}
		\label{1stPart}
		(1-\ep^3)(1-\ep^2) - 3D'(\#K)R_n \leq \sum_k\sum_{j \in J(k)} \mu(A_{k,j}).
	\end{equation}
	We relabel the boxes $A_{k,j}$ to just $A_j$ for $j \in J_1$. Define $J_2 = \{j \in J_1 \ | \ \mu(A_j\cap \mathcal{P}_{\tau}^c) \leq \ep^2\mu(A_j)\}$, then by Markov's inequality applied to the function
	\[
		g = \sum_{j \in J_1} \frac{\mu(A_j \cap \mathcal{P}_{\tau}^c)}{\mu(A_j)} \mathbbm{1}_{A_j}
	\]
	we get
	\[
	\sum_{j \notin J_2} \mu(A_j) \leq \ep.
	\]
	This implies
	\begin{equation}
		\sum_{j \in J_1} \mu(A_j) - \varepsilon \leq \sum_{j \in J_2} \mu(A_j). \label{As}
	\end{equation}
	For $j \in J_2$, pick $w_j \in A_j \cap \mathcal{P}_{\tau}$ and let $h_{w_j} \colon \R^u \times \R^{cs} \to M$  be the Lyapunov chart around $w_j$.
	\subsubsection*{Step 3}
 \label{Step3}
	For each $j\in J_2$, define $\hat{A}_j = h_{w_j}^{-1}(A_j) \subset \mathbb{R}^d$ and $\nu_j = (h_{w_j}^{-1})_*(\mu_{A_j})$ probability measure on $\mathbb{R}^d$ supported on $\overline{\hat{A}_j}$, similar to the previous step.
    Applying \cref{shift} for $s = 100l_n$,
    $S^{(1)}=...=S^{(u)} = \xi_n+50l_n$, $S^{(u+1)}=...=S^{(d)} = r_n+50l_n$ and $A = \hat{A}_j^{(n)}$, for each $n$ large enough, we get boxes $\hat{B}_{j,i} = \hat{B}_{j,i}^{(n)} = \mathcal{N}_{(\xi_n+50l_n,r_n+50l_n)}(b_{j,i})\subset \hat{A}_j$, $i \in I_1(j)$ satisfying
	\begin{enumerate}
		\item $1 - \nu_j(\partial_{3(\xi_n+50l_n)} \hat{A}^{(n)}_j)) - \ep \leq \nu_j\left(\bigcup_{i \in I_1(j)} \hat{B}^{(n)}_{j,i}\right)$;
		\item $\nu_j(\partial_{100l_n} \hat{B}_{j,i}) \leq \ep \nu_j(\hat{B}_{j,i})$;
		\item $ \frac{\varepsilon}{2} \frac{2^d(\xi_n+50l_n)^u(r_n+50l_n)^{cs}}{vol(\hat{A}_j^{(n)})} \leq \nu_j(\hat{B}_{j,i})$.
	\end{enumerate}
	By (2) of \eqref{LyapCharts} and the choice of $w_j \in \mathcal{P}_{\tau}$, $\text{Lip}(h^{-1}_{w_j})\leq \tau^{-1}$.
	Therefore, since $vol(A_j) = 2^dR_n^d$, then
	$vol(\hat{A}^{(n)}_j) \leq 2^d\tau^{-2}R_n^d$ and we have
	\begin{equation*}
		 \frac{\varepsilon\tau^2}{2}\frac{(\xi_n+50l_n)^u(r_n+50l_n)^{cs}}{R_n^d}\leq \frac{\varepsilon}{2} \frac{2^d(\xi_n+50l_n)^u(r_n+50l_n)^{cs}}{vol(\hat{A}_j^{(n)})}
		 \leq \nu_j(\hat{B}_{j,i}).
	\end{equation*}
	Denote by $B_{j,i} = h_{w_j}(\hat{B}_{j,i})$.
	Define for $j \in J_2$, $I_2(j) = \{i \in I_1(j) \ | \ \mu(B_{j,i} \cap \mathcal{P}_{\tau}^c) \leq \ep \mu(B_{j,i})\}$ which by the same argument as above satisfies
	\[
		\sum_{i \notin I_2(j)} \mu(B_{j,i} ) \leq \frac{1}{ \varepsilon} \mu(A_j\cap K^c) \leq \ep \mu(A_j).
	\]
	Multiplying item (1) above by $\mu(A_j)$, we get
	\begin{equation*}
		(1- 2\varepsilon)\mu(A_j) - \mu(h_{w_j}(\partial_{3(\xi_n+50l_n)} \hat{A}_j)) \leq \sum_{i \in I_2(j)} \mu(B_j,i).
	\end{equation*}
	Using (2) of \cref{LyapCharts} again, we have
	$h_{w_j}(\partial_{3(\xi_n+50l_n)} \hat{A}_j) \subset \partial_{3\tau^{-1}(\xi_n+50l_n)} A_j \subset \partial_{4\tau^{-1}\xi_n} A_j$
	for $n$ large enough.
	Using this fact and summing the above inequality over $j \in J_2$,
	\begin{equation}
		(1- 2\varepsilon - \varepsilon^2)\sum_{j \in J_2} \mu(A_j) \leq \sum_j\sum_{i \in I_2(j)} \mu(B_{j,i}). \label{tildes}
	\end{equation}
	\subsubsection*{Step 4}
 \label{Step4}
	For $i \in I_2(j)$, pick $z_{j,i} \in B_{j,i} \cap \mathcal{P}_{\tau}$ and $h_{z_{j,i}} \colon \R^u \times \R^{cs} \to M$ the Lyapunov chart around that point. We wish to pick an image of a box with respect to the $h_{z_{j,i}}$ having nice control for the measure of a neighborhood of its boundary. Let $\varphi_{j,i} = (h_{z_{j,i}})^{-1} \circ h_{w_j}$ and denote by $c_{j,i} = \varphi_{j,i}( b_{j,i})$. For now we're fixing $i,j$ and taking $n$ large, so we will omit them in our notation. We wish to show that
	\begin{equation}
		\partial_l\mathcal{N}_{(\xi,r)}(c) \subset \varphi(\partial_{100l}\mathcal{N}_{(\xi+50l,r+50l)}( b)). \label{P3}
	\end{equation}
	It is enough to show the following inclusions
	\begin{equation}
		\label{disjoint}
		\varphi^{-1}(\mathcal{N}_{(\xi+l,r+l)}(c)) \subset \mathcal{N}_{(\xi+2l,r+2l)}(b),
	\end{equation}
	\begin{equation}
		\label{P2}
		\varphi(\mathcal{N}_{(\xi-2l,r-2l)}( b)) \subset \mathcal{N}_{(\xi-l,r-l)}(c).
	\end{equation}
	Since $\varphi$ and $\varphi^{-1}$ have similar properties, we will just prove \eqref{P2}, while \eqref{disjoint} follows by the same argument.  If $x \in \mathcal{N}_{(\xi-2l,r-2l)}( b)$, then
	\begin{equation}
    \label{P2sol}
		d(\varphi(x),\varphi( b)) \leq \underset{z}{\sup} \|D_z \varphi\| d(x, b) \leq \underset{z}{\sup} \|D_z \varphi \| (\xi-2l)
	\end{equation}
	where the supremum is taken over $\mathcal{N}_{(\xi-2l,r-2l)} ( b)$. But $\varphi$ is given by
	\begin{equation*}
		\varphi = h_q^{-1} \circ h_p.
	\end{equation*}
	By \cref{changecoordinates},
    \begin{equation*}
        \|D_z\varphi\| \leq 1 + C(d_M(p,q)^{\alpha_2} + |z|).
    \end{equation*}
    Since $d_M(p,q) \leq \tau^{-1}R$,
	\begin{equation*}
		d(\varphi(x),\varphi(b)) \leq (1+C(\tau^{-1}R^{\alpha_2}+\xi))(\xi-2l).
	\end{equation*}
	Since $R^{\alpha_2}\xi \ll l$ and $\xi^2 \ll l$, then for $n$ large enough
	\begin{equation*}
		(1+C(\tau^{-1}R^{\alpha_2}+\xi))(\xi-2l) \leq \xi-l.
	\end{equation*}
	Therefore, \eqref{P2} follows from \eqref{P2sol} for $n$ large enough.
	Denote by $\hat{C}_{j,i} = \mathcal{N}_{(\xi,r)}(c_{j,i})$. They're disjoint by \eqref{disjoint}. 
 
    \subsection{Verifying properties}
    We now verify that the boxes constructed in \ref{sec:goodboxconstruct} satisfy properties (P1) to (P8) stated in \cref{goodbox}.

	\subsubsection*{Property (P1)}
    By (2) of \cref{LyapCharts},
	\begin{equation*}
	d(c_{j,i},0) \leq \tau^{-1}d(h_{z_{j,i}}(c_{j,i}),z_{j,i}) \leq \tau^{-2} d(b_{j,i},h_{w_j}^{-1}(z_{j,i})) \leq \tau^{-2}2\xi.
	\end{equation*}
	This means that $c_{j,i} \in \mathcal{N}_{2\tau^{-2}\xi}(0)$.

	 \subsubsection*{Property (P2)}
	 By \eqref{P2}, we have
	\begin{equation*}
		h_{w_j}(\hat{B}_{j,i} \setminus \partial_{2l} \hat{B}_{j,i}) \subset h_{w_{j,i}}(\hat{C}_{j,i}) = C_{j,i}.
	\end{equation*}
    Since $\mu(h_{w_j}(\mathcal{N}_{2l}\hat{B}_{j,i})) \leq \varepsilon\mu(B_{j,i})$, then
	\begin{equation}
		\label{CbigB}
		(1- \varepsilon)\mu(B_{j,i}) \leq \mu(C_{j,i}).	
	\end{equation}
	It follows from the third property of the construction of $B_{j,i}$ that
	\begin{equation*}
		\frac{ \varepsilon}{2} (\xi_m+50l_m)^d \leq \frac{\mu(B_{j,i})}{\mu(A_j)}.
	\end{equation*}
	Using the third property of the construction of $A_j$
    together with the definition of $\nu_k$, we have
	\begin{equation*}
		\frac{\text{min}\{\mu(Q_k) \ | \ k \in K \}}{\text{max} \{ vol(\hat{Q}_k) \ | \ k \in K\}}\frac{ \varepsilon^2}{2}R^d_m \leq \mu(A_j).
	\end{equation*}
    Combining with the above,
	\begin{equation*}
		\frac{ \varepsilon^3}{4}\frac{\text{min}_k \mu(Q_k)}{\text{max}_k vol(\hat{Q}_k)} R_m^d (\xi_m+50l_m)^d \leq \mu(B_{j,i}).
	\end{equation*}
	This implies (P2).

	\subsubsection*{Property (P3)}
	Summing over $j,i$ in \eqref{CbigB}, we get
	\begin{equation}
		(1- \varepsilon) \sum_{i,j} \mu(B_{j,i}) \leq \sum_{i,j} \mu(C_{j,i}). \label{Bs}
	\end{equation}
	Combining \eqref{1stPart},\eqref{As},\eqref{tildes} and \eqref{Bs}, we get (P3).

	\subsubsection*{Property (P4)}
	It follows from \eqref{P3} and the properties of $B_{j,i}$ that
	\begin{equation*}
		\mu(h_{z_{j,i}}(\partial_l \hat{C}_{j,i})) \leq \mu(h_{w_j}(\partial_{100l} \hat{B}_{j,i})) \leq \varepsilon \mu(B_{j,i}) \leq \frac{ \varepsilon}{1 - \varepsilon} \mu(C_{j,i}).
	\end{equation*}

	\subsubsection*{Property (P5)}
	Since $i \in I_2(j)$, we have by \eqref{disjoint}
	\begin{equation*}
		\mu(C_{j,i} \cap \mathcal{P}_{\tau}^c) \leq \mu(B_{j,i} \cap \mathcal{P}_{\tau}^c) \leq \varepsilon \mu(B_{j,i}) \leq \frac{ \varepsilon}{1- \varepsilon} \mu(C_{j,i}).
	\end{equation*}
	We now relabel the boxes to simply $C_i$ for $i \in I$.

	\subsubsection*{Property (P6)}
	Let $z \in h_{z_i}(\hat{C}_i\setminus \partial_l\hat{C}_i) \cap \mathcal{P}_{\tau}$.
    By \cref{unstablegraph},
	\begin{equation*}
		h_{z_i}^{-1}(W^u_{\tau}(h_{z_i}(z))) = \text{graph}( \eta^u_{z_i,z}),
	\end{equation*}
	where $\eta^u_{z_i,z} \colon \mathcal{N}^u_{\tau}(0) \to \mathbb{R}^{cs}$ is $C^{1+\alpha_4}$ with $\|\eta\|_{C^1+\alpha_4} \leq C = C(\tau)$.
    Since $z_i \in \mathcal{P}_{\tau}$, it also satisfies
    \begin{equation}
        \label{tanclose}
        \|D_w\eta^u_{z_i,z_i}-D_w\eta^u_{z_i,z}\| \leq C|(w,\eta^u_{z_i,z_i}w)-(w,\eta^u_{z_i,z}w)|^{\alpha_4},
    \end{equation}
    where $\text{graph}(\eta^u_{z_i,z_i}) = \tilde{W}^u(z_i)$.
    We wish to show that for all $x \in \mathcal{N}^u_{\xi}(\pi^u(c_i))$, $|\eta(x)- \pi^{cs}(c_i)| < r$.
    We have,
	\begin{equation}
    \label{distbound}
		|\eta^u_{z_i,z}(x)-\eta^u_{z_i,z}(x')| \leq \sup_{z \in \mathcal{N}_{\xi}(\pi^u(c_i))}\|D_z \eta\| 2\xi.
	\end{equation}
    Substituting $w = 0$ in \eqref{tanclose} and recalling that $\tilde{W}(z_i)$ is tangent to $\mathbb{R}^u$ at the origin, we get
    \begin{equation*}
        \|D_0\eta^u_{z_i,z}\| \leq C|\eta^u_{z_i,z}(0)|^{\alpha_4} \leq Cr^{\alpha_4}.
    \end{equation*}
	Using the bound on $\|\eta^u_{z_i,z}\|_{C^{1+\alpha_4}}$, we have
	\begin{equation*}
		\|D_z \eta^u_{z_i,z}\|
        \leq \|D_z\eta^u_{z_i,z} - D_0\eta^u_{z_i,z}\| + \|D_0\eta^u_{z_i,z}\|
        \leq C|z|^{\alpha_4} + Cr^{\alpha_4}
        \leq 2C\xi^{\alpha_4}.
	\end{equation*}
	Substituting the above in \eqref{distbound},
	\begin{equation*}
		|\eta(x)-\eta(x')| \leq 4C \xi^{1+\alpha_4}. 
	\end{equation*}
     The hypothesis says that for some $x'\in \mathcal{N}^u_{\xi -l}(\pi^u(c_i))$, $|\eta^u_{z_i,z}(x')-\pi^{cs}(c_i)| < r-l$.
     Therefore,
    \begin{equation*}
        |\eta(x)-\pi^{cs}(c_i)| \leq r-l + 4C \xi^{1+\alpha_4}.
    \end{equation*}
	We get the result as long as we have
	\begin{equation*}
		4C \xi^{1+\alpha_4} \leq l
	\end{equation*}
	which we have for $n$ large enough by the assumption on the sequences.

	\subsubsection*{Property (P7)}
	Notice that 
    \begin{equation*}
        C_i\cap \tilde{T}_i^c \subset \mathcal{P}_{\tau}^c \cup  h_{z_i}(\partial_l \hat{C}_i).
    \end{equation*}
    Property (P7) then follows by properties (P4) and (P5).

	\subsubsection*{Property (P8)}

	Since we will work with a fixed $i$, we'll omit it from the notation.
	By the definition of SRB measure, we may write for $A \subset C$
	\begin{equation}
		\label{SRBdecomp}
		\mu(A \cap \tilde{T}) = 
		\int_{T} \int_{W(x)} \rho_x(y)\mathbbm{1}_{A}(y) dm^u_x(y) d\nu(x) .
	\end{equation}
	By \cref{SRBdensityHol},
	and recalling that the size of $W(x)$ goes to $0$ as $\xi_n$ goes to $0$,
	we may take $n$ large enough so that 
	\begin{equation*}
		y \in W(x) \implies
		\rho_x(y) \sim_{(1+ \varepsilon)^{\frac{1}{4}}} \frac{1}{m^u_x(W(x))}.
	\end{equation*}
	Substituting the above in \eqref{SRBdecomp}, we get
	\begin{equation}
		\label{SRBdecomp2}
		\mu(A\cap \tilde{T}) \sim_{(1+ \varepsilon)^{\frac{1}{4}}} \int_T \frac{m^u_x(A)}{m^u_x(W(x))} d\nu(x).
	\end{equation}
	By \cref{unstablegraph}, increasing $n$ if necessary, the unstable manifolds satisfy the condition in (F2).
	Therefore, if we further assume that $A$ is $\mathcal{F}^{cs}$-saturated, then for any $x,x' \in T$ we have
	\begin{equation*}
		m^u_x(A) \sim_{(1+ \varepsilon)^{\frac{1}{4}}} m^u_{x'}(A).
	\end{equation*}
	For $n$ large enough, we also have
	\begin{equation*}
		m^u_x(W(x))  \sim_{(1+ \varepsilon)^{\frac{1}{4}}}m^u_{x'}(W(x')).
	\end{equation*}
	Combining the above with \eqref{SRBdecomp2}, we obtain
	\begin{equation*}
		\mu(A\cap \tilde{T}) \sim_{(1+ \varepsilon)^{\frac{3}{4}}}
		\nu(T)\frac{m^u_{x'}(A)}{m^u_{x'}(W(x'))}.
	\end{equation*}
	This proves the first part of (P8) since $\nu(T) = \mu(\tilde{T})$ and, by \cref{SRBdensityHol},
	\begin{equation*}
	\frac{m^u_{x'}(A)}{m^u_{x'}(W(x'))} \sim_{(1+ \varepsilon)^{\frac{1}{4}}} \mu_{W(x')}(A).
	\end{equation*}
    For the second part of (P8), let $u = \pi^{cs}\circ h_z^{-1} \colon C \to \mathbb{R}^{cs}$.
    Then by the coarea formula from differential geometry, for $\varphi \colon C \to \mathbb{R}$ an $L^1$ function:
    \begin{equation}
        \label{coarea}
        \int_C \varphi dm = \int_{\mathcal{N}^{cs}_r(c^{cs})}\int J_u(x)^{-1}\varphi(x)dm_{u^{-1}(y)}(x)dm_{\mathbb{R}^{cs}}(y),
    \end{equation}
    where $J_u(x) = \sqrt{\text{det}(D_xu)(D_xu)^*}$ is the Jacobian of $u$.
    The precise formula for $J_u$ is not important.
    But note that it is H\"older continuous and its H\"older constant only depends on that of $h_z^{-1}$, which is uniformly bounded on $\mathcal{P}_{\tau}$.
    Note also that, by (L3) of \cref{LyapCharts}, $J_u$ is uniformly bounded away from $0$ for $z \in \mathcal{P}_{\tau}$.
    Recall that $\text{diam}(C) \leq 2\tau^{-1}\xi_n$.
    Therefore, for $x,y\in C$,
    \begin{equation*}
        \left|\frac{J_u(x)}{J_u(y)}-1\right| \leq \underset{y \in C}{\sup}J_u(y)^{-1}\|J_u\|_{\text{H\"ol}} (2\tau^{-1}\xi_n)^{\alpha}.
    \end{equation*}
    Therefore, for $n$ large enough and $x,y \in C$
    \begin{equation}
        \label{Jucte}
        J_u(x) \sim_{(1+\varepsilon)^{\frac{1}{4}}} J_u(y).
    \end{equation}
    Now for $A \subset C$ $\mathcal{F}^{cs}$-foliated, using \eqref{coarea}, \eqref{Jucte} and the fact that $u^{-1}(y)$ is a $(\alpha,1)$-mostly horizontal submanifold satisfying the condition in (F2) for each $y \in \mathcal{N}^{cs}(c^{cs})$, we get
    \begin{equation}
        \label{Asat}
        m(A) \sim_{(1+\varepsilon)^{\frac{1}{2}}} J_u(z)^{-1}m_{\mathbb{R}^{cs}}(\mathcal{N}^{cs}_r(c^{cs}))m_{u^{-1}(c^{cs})}(A).
    \end{equation}
    In particular, since $C$ is saturated by vertical lines $h_z(\mathbb{R}^{cs})$, then for $n$ large enough
    \begin{equation}
        \label{Csat}
        m(C) \sim_{(1+\varepsilon)^{\frac{1}{2}}} J_u(z)^{-1}m_{\mathbb{R}^{cs}}(\mathcal{N}^{cs}_r(c^{cs}))m_{u^{-1}(c^{cs})}(C).
    \end{equation}
    Dividing \eqref{Asat} by \eqref{Csat} we get
    \begin{equation}
        \label{ACsat}
        \frac{m(A)}{m(C)} \sim_{1+\varepsilon} \frac{m_{u^{-1}(c^{cs})}(A)}{m_{u^{-1}(c^{cs})}(C)}.
    \end{equation}
    Now given $x \in T$ with $\mu_{W(x)}$ well defined, by \cref{SRBdensityHol},
    \begin{equation*}
        \mu_{W(x)} \sim_{1+\varepsilon} \frac{m^u_x(A)}{m^u_x(C)}.
    \end{equation*}
    Using once again that $A$ is $\mathcal{F}^{cs}$-saturated, $C$ is $h_z(\mathbb{R}^u)$-saturated, the fact that $u^{-1}(c^{cs})$ is $(\alpha,1)$-mostly horizontal and \eqref{ACsat} together with the above,
    \begin{equation*}
        \mu_{W(x)} \sim_{1+\varepsilon} \frac{m_{u^{-1}(c^{cs})}(A)}{m_{u^{-1}(c^{cs})}(C)} \sim_{1+\varepsilon}  \frac{m(A)}{m(C)}.
    \end{equation*}
    This shows the second part of (P8) and concludes the proof of \cref{goodbox}.

	\subsection{Mixing of exponentially small good boxes}

	In this subsection, we pass from exponentially mixing of Lipschitz functions to mixing of exponentially small good boxes from \cref{goodbox}.
    The crucial property from that lemma is (P4), since it allows us to $L^1$-approximate the indicator function of those good boxes by Lipschitz functions with controlled Lipschitz norm.
	\begin{proposition}
		\label{mixtubes}
		Suppose that $(f,\mu)$ is exponentially mixing with $\mu$ nonatomic.
		Let $ \varepsilon>0$ and $l_n,r_n,\xi_n,\overline{l}_n,\overline{r}_n,\overline{\xi}_n$
		be sequences of positive numbers converging to $0$ satisfying
        \begin{equation*}
            \xi_n^{1+\alpha} \ll l_n \ll r_n \ll \xi_n,
        \end{equation*}
        \begin{equation*}
             \overline{\xi}_n^{1+\alpha} \ll  \overline{l}_n \ll  \overline{r}_n \ll  \overline{\xi}_n
        \end{equation*}
        and
		\begin{equation}
			\label{ScsMix}
			e^{-\beta n} \ll \xi_n^{2d}\overline{\xi}_n^{2d}l_n\overline{l}_n.
		\end{equation}
		Then, there exists $n_0 \in \mathbb{N}$ and $ \varepsilon_0 >0$ such that if
		$\left(C_j\right)_{j \in J(n)}$ and $\left(\overline{C}_i\right)_{i \in I(n)}$ are good boxes from \cref{goodbox} 
		for the sequences $\xi,r,l$ and $\overline{\xi},\overline{r},\overline{l}$ respectively and value $ \varepsilon_0$, then
		for all $n> n_0$, all $i \in I(n)$ and $j \in J(n)$,
		\begin{equation*}
			\mu\left(C_j \cap f^{- n}\overline{C}_i\right)
   \sim_{1+ \varepsilon}
   \mu\left(C_j\right)
   \mu\left(\overline{C}_i\right).
		\end{equation*}
	\end{proposition}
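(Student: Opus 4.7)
The plan is to apply exponential mixing to Lipschitz approximations of the indicator functions $\mathbbm{1}_{C_j}$ and $\mathbbm{1}_{\overline{C}_i}$, and then use properties (P2) and (P4) of \cref{goodbox} to control the resulting errors relative to the sizes of the boxes. First I would construct, for each good box $C = h_z(\mathcal{N}_{(\xi,r)}(c))$, two Lipschitz functions $\varphi^\pm_C$ with $\varphi^-_C \leq \mathbbm{1}_C \leq \varphi^+_C$, where $\varphi^-_C$ equals $1$ on $h_z(\mathcal{N}_{(\xi - l, r - l)}(c))$ and vanishes outside $C$, while $\varphi^+_C$ equals $1$ on $C$ and vanishes outside $h_z(\mathcal{N}_{(\xi + l, r + l)}(c))$. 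These can be taken to be linear in the distance to the relevant boundary in the Lyapunov chart at $z$. Since $z \in \mathcal{P}_\tau$, by (L3) of \cref{LyapCharts} the chart $h_z$ is $\tau^{-1}$-bi-Lipschitz, so $\|\varphi^\pm_C\|_{\text{Lip}} \leq \tau^{-1}/l$ in the manifold metric.

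Next, using (P4) and the analogous bound coming from the fact that $\hat{C}$ sits well inside the slightly larger box used during the shifting trick construction in Step 3 of \cref{sec:goodboxconstruct}, I obtain $\int |\varphi^\pm_C - \mathbbm{1}_C|\,d\mu \leq \varepsilon_0 \mu(C)$ for both signs, so that $\int \varphi^\pm_C \, d\mu \sim_{1+O(\varepsilon_0)} \mu(C)$. Sandwiching $\mu(C_j \cap f^{-n}\overline{C}_i)$ between
\begin{equation*}
\int \varphi^-_{C_j} \cdot (\varphi^-_{\overline{C}_i} \circ f^n)\,d\mu
\quad \text{and} \quad
\int \varphi^+_{C_j} \cdot (\varphi^+_{\overline{C}_i} \circ f^n)\,d\mu,
\end{equation*}
and applying the exponential mixing hypothesis \eqref{H1} to each bound, yields
\begin{equation*}
\mu(C_j \cap f^{-n}\overline{C}_i)
= \mu(C_j)\mu(\overline{C}_i)\bigl(1 + O(\varepsilon_0)\bigr)
+ O\!\left(\frac{e^{-\beta n}}{l_n \overline{l}_n}\right).
\end{equation*}

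Finally, I would divide by $\mu(C_j)\mu(\overline{C}_i)$ and use the lower bound from (P2), namely $\mu(C_j) \geq D\xi_n^{2d}$ and $\mu(\overline{C}_i) \geq D\overline{\xi}_n^{2d}$, to bound the error term by
\begin{equation*}
\frac{C'\, e^{-\beta n}}{l_n \overline{l}_n \, \xi_n^{2d}\overline{\xi}_n^{2d}},
\end{equation*}
which tends to $0$ by the assumption \eqref{ScsMix}. Choosing $\varepsilon_0$ small in terms of $\varepsilon$ and $n_0$ so that both this error and the $O(\varepsilon_0)$ contribution are less than $\varepsilon/2$ yields the desired $\sim_{1+\varepsilon}$ comparison.

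The main technical obstacle is the $L^1$ approximation of $\mathbbm{1}_C$ by Lipschitz functions with the right norm: for a general (nonsmooth) Borel measure one cannot control mass near the boundary of $C$ without the boundary control (P4), which is precisely why that property was designed into the box construction. The rest of the argument is essentially the standard Margulis-style thickening argument; the subtlety is balancing the exponentially small Lipschitz constants $1/l_n, 1/\overline{l}_n$ against the exponentially small measures $\xi_n^{2d}, \overline{\xi}_n^{2d}$, and the hypothesis \eqref{ScsMix} is exactly what makes this balance work.
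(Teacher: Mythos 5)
Your proof is correct and follows essentially the same strategy as the paper's: sandwich the indicator functions of the boxes between Lipschitz approximations with Lipschitz norm $O(\tau^{-1}/l_n)$, use (P4) to control the $L^1$-error of the approximation, apply exponential mixing, and absorb the resulting error term via the lower bound on $\mu(C_i)$ from (P2) together with \eqref{ScsMix}. The paper organizes the estimate as a four-term multiplicative telescope rather than an additive sandwich, which is a cosmetic difference; one small simplification worth noting is that (P4) already controls the two-sided boundary neighborhood $\partial_l\hat{C}$, so the appeal to the Step 3 construction for the outer annulus of $\varphi^+_C$ is not actually needed.
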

	\begin{proof}
		We will omit $i$ and $j$ from the notation since they're fixed.
        By (P1) from \cref{goodbox}, we'll denote $C = h_z(\hat{C})$ and $\overline{C} = h_{\overline{z}}(\hat{\overline{C}})$,
        with $z,\overline{z} \in \mathcal{P}_{\tau}$.
		For $h,g$ Lipschitz functions approximating $\mathbbm{1}_C$ and $\mathbbm{1}_{\overline{C}}$ respectively, we have
		\begin{equation}
			\label{1stapprox}
			\frac{\mu(C \cap f^{- n}\overline{C} )}{\mu(C)\mu(\overline{C})}
			= \left(\frac{\mu(C \cap f^{ -  n}\overline{C})}{\mu(h g \circ f^{ - n})}\right)
			\left(\frac{\mu(h g \circ f^{ n})}{\mu(h)\mu(g)}\right)\left(\frac{\mu(h)}{\mu(C)}\right)\left(\frac{\mu(g)}{\mu(\overline{C})}\right).
		\end{equation}
		We'll use the above formula to prove that our desired quantity is close to 1.
        We'll first show that
        \begin{equation*}
            \frac{\mu(C \cap f^{- n}\overline{C} )}{\mu(C)\mu(\overline{C})} \leq 1+ \varepsilon.
        \end{equation*}
		Choose $h,g$ satisfying $\mathbbm{1}_C \leq h \leq \mathbbm{1}_{h_z(\hat{C} \cup \partial_l \hat{C})}$ and $\mathbbm{1}_{\overline{C}} \leq g \leq \mathbbm{1}_{h_{\overline{z}}(\hat{\overline{C}} \cup \partial_{\overline{l}} \hat{\overline{C}})}$.
		Notice that this implies that
		\begin{equation*}
			\mathbbm{1}_C \mathbbm{1}_{\overline{C}} \circ f^{ n} \leq h g \circ f^{n}.
        \end{equation*}
        Therefore,
        \begin{equation*}
            \frac{\mu(C \cap f^{- n}(\overline{C}))}{\mu(h g \circ f^{ n})} \leq 1,
		\end{equation*}
		which bounds the first term in \eqref{1stapprox}.
		For the second term, by construction of $h$ and $g$ and property (2) of \cref{LyapCharts},
        \begin{equation*}
            \|h\|_{\text{Lip}} \leq \frac{2\tau^{-1}}{l_n} \ \text{and} \ \|g\|_{\text{Lip}}\leq \frac{2\tau^{-1}}{\overline{l}_n}.
        \end{equation*}
        It follows from \eqref{H1} and (P2) from \cref{goodbox} that
        \begin{equation*}
            \left|\frac{\mu(hg\circ f^n)}{\mu(h)\mu(g)}-1\right| \leq 4C\tau^{-2}D^{-2}\frac{e^{-\beta n}}{\xi^{2d}_n\overline{\xi}^{2d}_nl_n\overline{l}_n}.
        \end{equation*}
		By \eqref{ScsMix}, the above goes to $0$ as $n$ goes to infinity.
		For the third term, since $h \leq \mathbbm{1}_{h_z(\hat{C} \cup \partial_l \hat{C})}$, then
        \begin{equation*}
            \mu(h) \leq \mu(C) + \mu(h_z(\partial_l\hat{C})).
        \end{equation*}
        By (P4) of \cref{goodbox}, we have
        \begin{equation*}
		  \frac{\mu(h)}{\mu(C)} \leq 1+ \varepsilon_0.
		\end{equation*}
        Choosing $\varepsilon_0$ small enough, we get the desired bound.
		The fourth term is analogous to the third, so we omit its bound.
		We'll now use \eqref{1stapprox} to show that
        \begin{equation*}
            \frac{1}{1+\varepsilon} \leq \frac{\mu(C \cap f^{- n}\overline{C} )}{\mu(C)\mu(\overline{C})}.
        \end{equation*}
		It is the same strategy but now choose $h,g$ satisfying
		$\mathbbm{1}_{h_z(\hat{C}\setminus\partial_l \hat{C})} \leq h \leq \mathbbm{1}_C$
		and $\mathbbm{1}_{h_{\overline{z}}(\hat{\overline{C}} \setminus \partial_{\overline{l}} \hat{\overline{C}})} \leq g \leq \mathbbm{1}_{\overline{C}}$.
		Notice that
		\begin{equation*}
			h g \circ f^{  n} \leq \mathbbm{1}_C \mathbbm{1}_{\overline{C}} \circ f^{  n}.
        \end{equation*}
        Therefore,
        \begin{equation*}
            1 \leq \frac{\mu(C \cap f^{-n}\overline{C})}{\mu(h g \circ f^{ n})},
		\end{equation*}
        which bounds the first term of \eqref{1stapprox}.
		Bounding the second term is similar to what was done above, but now we have
		$(1- \varepsilon_0)D\xi_n^{2d} \leq\mu(h)$ and $ (1- \varepsilon_0)D\overline{\xi}_n^{2d} \leq \mu(g)$.
		For the third term, since $\mathbbm{1}_{h_z(\hat{C}\setminus\partial_l \hat{C})} \leq h$, then
		\begin{equation*}	
			\mu(C) - \mu(h_z(\partial_l C))
			\leq \mu(h).
        \end{equation*}
        Once again by (P4) of \cref{goodbox},
        \begin{equation*}
	        1- \varepsilon_0 \leq \frac{\mu(h)}{\mu(C)}.
		\end{equation*}
		The fourth term is, once again, analogous.
	\end{proof}
    We also need the following version of the proposition in order to prove \cref{thmB}.
    \begin{proposition}
		\label{mixtubesB}
		Let $m$ be the normalized volume of $M$.
        Suppose that volume is almost exponentially mixing and let $\mu$ be the limit SRB measure.
		Let $ \varepsilon>0$ and $l_n,r_n,\xi_n,\overline{l}_n,\overline{r}_n,\overline{\xi}_n$
		be sequences of positive numbers converging to $0$ satisfying
        \begin{equation*}
            \xi_n^{1+\alpha} \ll l_n \ll r_n \ll \xi_n,
        \end{equation*}
        \begin{equation*}
             \overline{\xi}_n^{1+\alpha} \ll  \overline{l}_n \ll  \overline{r}_n \ll  \overline{\xi}_n
        \end{equation*}
        and
		\begin{equation*}
			e^{-\beta n} \ll \xi_n^{2d}\overline{\xi}_n^{2d}l_n\overline{l}_n.
		\end{equation*}
		Then, there exists $n_0 \in \mathbb{N}$ and $ \varepsilon_0 >0$ such that if
		$\left(C_j\right)_{j \in J(n)}$ and $\left(\overline{C}_i\right)_{i \in I(n)}$ are good boxes from \cref{goodbox} 
		for the sequences $\xi,r,l$ and $\overline{\xi},\overline{r},\overline{l}$ respectively and value $ \varepsilon_0$, then
		for all $n> n_0$, all $i \in I(n)$ and $j \in J(n)$,
		\begin{equation*}
			m\left(C_j \cap f^{- n}\overline{C}_i\right)
   \sim_{1+ \varepsilon}
   m\left(C_j\right)
   \mu\left(\overline{C}_i\right).
		\end{equation*}
	\end{proposition}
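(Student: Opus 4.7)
The plan is to imitate the four-factor decomposition from the proof of \cref{mixtubes}, with \eqref{H2} playing the role of \eqref{H1}. I would pick Lipschitz functions $h,g$ sandwiching $\mathbbm{1}_{C_j}$ and $\mathbbm{1}_{\overline{C}_i}$ from outside (for the upper bound) or inside (for the lower bound), supported in their $l_n$- and $\overline{l}_n$-thickenings inside the respective Lyapunov charts $h_{z_j},h_{\overline{z}_i}$. By (2) of \cref{LyapCharts} their Lipschitz norms are bounded by $2\tau^{-1}/l_n$ and $2\tau^{-1}/\overline{l}_n$. Then decompose
\begin{equation*}
\frac{m(C_j\cap f^{-n}\overline{C}_i)}{m(C_j)\mu(\overline{C}_i)} = \frac{m(C_j\cap f^{-n}\overline{C}_i)}{\int h\cdot g\circ f^n\,dm}\cdot\frac{\int h\cdot g\circ f^n\,dm}{m(h)\mu(g)}\cdot\frac{m(h)}{m(C_j)}\cdot\frac{\mu(g)}{\mu(\overline{C}_i)}
\end{equation*}
and check that each of the four factors is $1\pm O(\varepsilon)$ for both choices of approximants.

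The first factor is bounded by monotonicity, and the fourth is $1+O(\varepsilon)$ directly from (P4) of \cref{goodbox} applied to $\mu$ on $\overline{C}_i$. For the second factor I apply \eqref{H2}: its deviation from $1$ is at most $C\tau^{-2}e^{-\beta n}/(l_n\overline{l}_n m(h)\mu(g))$. Using $\mu(g)\geq (1-\varepsilon_0)D\overline{\xi}_n^{2d}$ from (P2), together with $m(C_j)\geq c\tau^d\xi_n^u r_n^{cs}$ (from the uniform Jacobian bound on $h_{z_j}$ provided by (L3) of \cref{LyapCharts}), this deviation is dominated by $e^{-\beta n}/(l_n\overline{l}_n\,\xi_n^u r_n^{cs}\,\overline{\xi}_n^{2d})$. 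The scale relation $r_n\gg \xi_n^{1+\alpha}$ with $\alpha<1$ forces $r_n^{cs}\gg \xi_n^{cs(1+\alpha)}\geq \xi_n^{u+2cs}$, hence $\xi_n^u r_n^{cs}\gg \xi_n^{2d}$, so the hypothesis $e^{-\beta n}\ll \xi_n^{2d}\overline{\xi}_n^{2d}l_n\overline{l}_n$ is strong enough to drive this factor to $1$.

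The genuinely new step is the third factor $m(h)/m(C_j)$: since (P4) controls only $\mu$-mass near $\partial C_j$, I must estimate $m$-mass near $\partial C_j$ by hand. Because $\hat{C}_j$ is the Euclidean product $\mathcal{N}^u_{\xi_n}(c_j^u)\times\mathcal{N}^{cs}_{r_n}(c_j^{cs})$, a direct computation gives
\begin{equation*}
\frac{\mathrm{vol}(\partial_{l_n}\hat{C}_j)}{\mathrm{vol}(\hat{C}_j)} \lesssim \frac{l_n}{\xi_n}+\frac{l_n}{r_n},
\end{equation*}
which tends to $0$ by $l_n\ll r_n\ll \xi_n$. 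Pushing forward by $h_{z_j}$, whose Jacobian is bounded above and below on $\mathcal{P}_\tau$ by (L3), the same bound (up to a constant depending only on $\tau$) holds for $m$, so $m(h)/m(C_j) = 1 + O(\varepsilon)$ for $n$ large. Combining all four factor estimates for both the outer and inner approximations yields the two-sided conclusion.

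The main obstacle is this third-factor estimate, because we have no analogue of (P4) for volume; \cref{goodbox} only provides boundary control in the measure used to construct the boxes, which here is $\mu$. Fortunately the boxes $C_j$ are rectangular in Lyapunov coordinates, so the volume estimate is elementary, and the rest of the argument is a direct transcription of the proof of \cref{mixtubes} with \eqref{H2} in place of \eqref{H1}.
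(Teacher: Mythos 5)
Your proposal is correct and matches the paper's own argument, which simply points back to the proof of \cref{mixtubes} with the same four-factor decomposition, replaces \eqref{H1} by \eqref{H2} for the middle factor, and remarks that the $L^1(m)$-approximation of $\mathbbm{1}_{C_j}$ "follows from standard estimates using volume." You have filled in the one detail the paper leaves implicit — the direct volume estimate on $\partial_{l_n}\hat{C}_j$ using the product structure of the box and the uniform Jacobian bound on $h_{z_j}$ from (L3) — and correctly identified it as the only genuinely new point relative to \cref{mixtubes}.
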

    It follows from the proof of \cref{mixtubes}, where
    instead of \eqref{1stapprox}, we use
    \begin{equation*}
        \frac{m(C \cap f^{- n}\overline{C} )}{m(C)\mu(\overline{C})}
			= \left(\frac{m(C \cap f^{ -  n}\overline{C})}{m(h g \circ f^{ - n})}\right)
			\left(\frac{m(h g \circ f^{ n})}{m(h)\mu(g)}\right)\left(\frac{m(h)}{m(C)}\right)\left(\frac{\mu(g)}{\mu(\overline{C})}\right)
    \end{equation*}
    and instead of \eqref{H1} we use \eqref{H2} when estimating the second term.
    The fact that $h$ $L^1(m)$-approximates the indicator function of $C$ follows from standard estimates using volume.

 \section{Construction of fake center stable manifolds}
 \label{fakecsconstructionsec}

We now wish to construct fake center stable foliations, similar to what is done in section 5 of \cite{ExpMix}.
In their paper, they pick a collection of good reference points $x$ and pull back the $h_{f^nx}(\mathbb{R}^{cs})$ foliation from time $n$ to time $0$ in a neighborhood of $x$.
The problem is that if $x$ and $x'$ are two good reference points close to each other, then they might define two different foliations in the intersections of their neighborhoods.
To overcome this problem, they create buffers between these neighborhoods. 
This comes at a price that the foliation will not be defined everywhere, but instead it will be defined on a set of measure close to $1$.
The smoothness of measure is heavily used for that step.

What we do instead is use the good boxes constructed in \cref{goodbox}.
In these boxes, we have the notion of center stable direction given by the reference point $z_j$ of the box.
Therefore, we pull back the $h_{z_j}(\mathbb{R}^{cs})$ foliation from time $n$ to time $0$ and intersecting it with good boxes at time $0$.
Since good boxes cover most of the space, the resulting foliation will also cover most of the space.

\begin{proposition}
	\label{fakecsconstruction}
    Suppose that $\mu$ is an ergodic SRB measure.
    Let $\varepsilon>0$ be small,
    $(\xi_n)_{n \in \mathbb{N}},(\overline{l}_n)_{n \in \mathbb{N}}, (\overline{r}_n)_{n \in \mathbb{N}},  (\overline{\xi}_n)_{n \in \mathbb{N}}$ be sequences of positive numbers
    satisfying
    \begin{equation}
    \begin{split}
	    \label{scaleslemma1}
        e^{-\beta n} 
        \ll r_n^{4d}l_n^2
        \ll (\xi_n)^{1+\alpha}
        \ll l_n
        \ll r_n
        \ll \xi_n \\
        \ll e^{2\sqrt{\delta}n}\xi_n
        \ll (\overline{\xi}_n)^{1+ \alpha}
        \ll \overline{l}_n
        \ll \overline{r}_n
        \ll \overline{\xi}_n
        \ll e^{- \delta n},
    \end{split}
    \end{equation}
    where $\delta = \varepsilon^{100^{100}}$.
    For $n$ large enough,
    let $\{C_j\}_{j \in J_n}$ and $\{\overline{C}_i\}_{i \in I_n}$ be good boxes from \cref{goodbox}
    for $(l,r,\xi,\varepsilon^{1000})$ and $(\overline{l},\overline{r},\overline{\xi},\varepsilon^{1000})$ respectively.
    For $i \in I_n$, let $\overline{\mathcal{F}}^{cs}_i$ be the foliation by vertical lines $h_{z_i}(\mathbb{R}^{cs})$
    on $\overline{C}_i$.
    Then, there exists $n_0 \in \mathbb{N}$ such that for $n>n_0$, there exists $J_n' \subset J_n$ and for $j \in J_n$,
    $I_n(j) \subset I_n$ satisfying 
    \begin{equation}
    \label{primesize}
        \mu\left(\underset{j \in J_n'}{\bigcup}C_j\right) > 1- \varepsilon^{80} \ \text{and} \ 
        \mu\left(\underset{i \in I_n(j)}{\bigcup}\overline{C}_i\right) > 1- \varepsilon^{3}
    \end{equation}
    such that, for $j \in J_n$, when breaking the following into its connected components:
    \begin{equation*}
        C_j \cap f^{-n}\left(\underset{i \in I_n(j)}{\bigcup} \overline{C}_i\right) = \underset{l \in L(j)}{\bigsqcup} G_l,
    \end{equation*}
    there exists $L'(j) \subset L(j)$ 
    satisfying $\mu_{C_j}(\bigcup_{l \in L'(j)}G_l) \geq 1-\varepsilon^2$ such that, for $x \in \bigsqcup_{l \in L'(j)}G_l$, denoting by
    \begin{equation*}
        \mathcal{F}^{cs,n}_j(x) = f^{-n} (\overline{\mathcal{F}}^{cs}_i(f^nx))\cap G_l,
    \end{equation*}
    then $\mathcal{F}^{cs,n}_j$ satisfies
     \begin{itemize}
        \item[(F1)] The induced foliation $\tilde{\mathcal{F}}^{cs,n}_j$ on $\mathcal{N}^u_{\xi}(c_j^u)\times \mathcal{N}^{cs}_r(c_j^{cs})$
        is made up of $(\alpha_3,O(\delta))$-mostly vertical leaves;
        \item[(F2)] 
        There exists $A = A(\varepsilon,\tau)>0$ such that if $W_1,W_2 \subset C_j$ are
        $(\alpha,1)$-mostly horizontal submanifolds satisfying $\|D\eta_{W_1}\|,\|D\eta_{W_2}\| \leq A$, then denoting by $\pi^{\tilde{\mathcal{F}}}_{W_1,W_2}$
        the holonomy along $\tilde{\mathcal{F}}^{cs,n}_j$ from $W_1$ to $W_2$, for $x \in W_1 \cap \tilde{\mathcal{F}}$,
        \begin{equation*}
            \text{Jac}_x(\pi^{\tilde{\mathcal{F}}}_{W_1,W_2}) \sim_{1 + \varepsilon} 1
        \end{equation*}
        \item[(F3)] For $x,y \in \mathcal{F}^{cs,n}_j(w)$ and $0 \leq k \leq n$,
        \begin{equation*}
            d(f^kx,f^ky) \leq \tau^{-2} e^{2\sqrt{\delta}k}d(x,y).
        \end{equation*}
    \end{itemize}
    We call such a foliation a $(\varepsilon,n)$-fake cs foliation.
\end{proposition}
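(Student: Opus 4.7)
The plan is to combine exponential mixing with the pullback construction from \cref{fakecs}. First I would apply \cref{mixtubes} (or \cref{mixtubesB} in the setting of \cref{thmB}) to the pair $\{C_j\}_{j \in J_n}$, $\{\overline C_i\}_{i \in I_n}$ at the chosen scales; the hypothesis $e^{-\beta n} \ll \xi_n^{2d}\overline\xi_n^{2d}l_n\overline l_n$ follows from \eqref{scaleslemma1}, and the accuracy parameter $\varepsilon^{1000}$ suffices. Combined with property (P3) for $\{\overline C_i\}$, the mixing estimate yields $\mu(C_j \cap f^{-n}\bigcup_i \overline C_i) \geq (1-\varepsilon^{500})\mu(C_j)$ for most $j$. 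Two successive applications of \cref{AppendixLemma} then produce the indices in \eqref{primesize}: the first removes those $j$ for which $f^n x$ frequently escapes $\bigcup_i \overline C_i$ or hits $\bigcup_i h_{z_i}(\partial_{\overline l_n}\hat{\overline C}_i)$, giving $J_n'$; the second, applied inside each remaining $C_j$, restricts to those $i$'s whose image $\overline C_i$ is entered deeply by $f^n C_j$, giving $I_n(j)$.

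Second, I would analyze the connected components of $C_j \cap f^{-n}(\bigcup_{i \in I_n(j)} \overline C_i)$. Each component $G_l$ is contained in $f^{-n}\overline C_{i(l)}$ for a specific $i(l)$. Let $L'(j)$ consist of those $l$ for which $f^n G_l$ is separated from $\partial \overline C_{i(l)}$ by at least $\overline l_n$ in the Lyapunov chart at $z_{i(l)}$; the bound $\mu_{C_j}(\bigsqcup_{l \in L'(j)} G_l) \geq 1 - \varepsilon^2$ is another consequence of the Markov argument above combined with property (P4) for $\overline C_{i(l)}$. This separation ensures that for any $x \in G_l$ with $l \in L'(j)$, the entire vertical cs leaf $\overline{\mathcal F}^{cs}_{i(l)}(f^n x)$ lies inside $\overline C_{i(l)}$, so its $f^{-n}$ pullback is a well-defined submanifold through $x$; intersecting with $G_l$ gives the leaf $\mathcal F^{cs,n}_j(x)$.

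Third, to verify (F1) and (F3), I would reexpress $\overline{\mathcal F}^{cs}_{i(l)}$ in the Lyapunov chart $h_{f^n z_j}$ rather than $h_{z_{i(l)}}$, so that \cref{fakecs} applies directly with base point $z_j$. Since $f^n z_j$ and $z_{i(l)}$ both lie in $\mathcal P_\tau$ at distance $O(\overline\xi_n)$, \cref{changecoordinates} together with \cref{graphid} show the transformed foliation consists of graphs with slope $O(\overline\xi_n^{\alpha_2} + \overline\xi_n)$, well below the threshold $\frac{\delta}{e^{-\lambda+\sqrt\delta}(1-e^{-\lambda+\sqrt\delta})}$ required by \cref{fakecs} and \cref{subexpM}. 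Pulling back via $\tilde f^{(n)}_{z_j,\delta}$, \cref{fakecs} yields an $(\alpha_3,O(\delta))$-mostly vertical foliation; restricting to $h_{z_j}^{-1}(C_j)$ gives $\tilde{\mathcal F}^{cs,n}_j$ and establishes (F1). Property (F3) is then immediate from \cref{subexpM}, whose hypotheses on $r_n, \xi_n, \overline\xi_n$ are verified by the scale conditions in \eqref{scaleslemma1}.

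The main obstacle is (F2). For $x \in W_1$ and $y = \pi^{\tilde{\mathcal F}}(x) \in W_2$, the images $f^n x, f^n y$ lie on the same vertical cs leaf of $\overline{\mathcal F}^{cs}_{i(l)}$, and by \eqref{fcommute} the whole orbits $f^k x, f^k y$ for $0 \leq k \leq n$ agree with their Lyapunov-chart iterates. In $h_{z_{i(l)}}$ coordinates at time $n$, both $f^n W_1$ and $f^n W_2$ are mostly horizontal submanifolds (by the graph-transform bound from \cref{subexpM} applied along the fake cs direction together with \cref{graphid}), so the holonomy between them along vertical lines has Jacobian within $1+\varepsilon$ of unity; by the chain rule this reduces $\log \text{Jac}_x \pi^{\tilde{\mathcal F}}$ to $\log \text{Jac}(D_x f^n|_{T_x W_1}) - \log \text{Jac}(D_y f^n|_{T_y W_2})$ up to $O(\varepsilon)$. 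I would then apply \cref{UsefulJac} in the chart at $z_j$: the three sums appearing there are each a geometric series of rate $e^{-\lambda + \sqrt\delta}$, with individual summands bounded using the subexponential growth $\tau^{-2}e^{2\sqrt\delta k}d(x,y)$ from (F3), the Hölder estimate on slopes from \cref{graphid}, and the bound $A = A(\varepsilon,\tau)$ chosen small enough to control $\|L-L'\|$. The delicate part is ensuring the series close up uniformly in $n$: this hinges on $\sqrt\delta$ being negligible compared to $\lambda$, which is guaranteed by $\delta = \varepsilon^{100^{100}}$.
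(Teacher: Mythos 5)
Your skeleton --- filter bad $j$'s, bad $i$'s, and bad components via \cref{AppendixLemma}, build the foliation by pulling a nearly-vertical foliation back through \cref{fakecs}, verify (F1) and (F3) via \cref{subexpM}, and verify (F2) via \cref{UsefulJac} --- does match the structure of the paper's proof. But there is a genuine gap in the third and fourth paragraphs: you propose to reexpress $\overline{\mathcal{F}}^{cs}_{i(l)}$ in the Lyapunov chart at $f^n z_j$ and then to apply \cref{fakecs}, \cref{subexpM}, and \cref{UsefulJac} with base point $z_j$. This requires $f^n z_j \in \mathcal{P}_\tau$ in order for \cref{changecoordinates} to give the small-slope estimate and for the chart $h_{f^n z_j}$ to be uniformly bi-Lipschitz, but the Pesin set $\mathcal{P}_\tau$ is not $f$-invariant, so $f^n z_j \in \mathcal{P}_\tau$ is unjustified and generally false. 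Indeed $\|L_{f^n z_j,\delta}^{\pm 1}\| \le \mathfrak{r}_\delta(f^n z_j)^{-1}$ can be as large as $\tau^{-1}e^{\delta n}$, and the constant in \cref{changecoordinates} (and the graph-transform bound in \cref{graphid}) would blow up with $n$.

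The fix, which is the key device in the paper, is to carry out the entire pullback in the chart at a point $w_l \in G_l$ chosen so that both $w_l \in \mathcal{P}_\tau$ and $f^n w_l \in \mathcal{P}_\tau$, and so that $f^n w_l$ lies in the $\overline{l}$-shrunk box $\overline{C}_{i(l)}'$. For such a $w_l$ to exist in every good component, the bad set $H$ against which you run \cref{AppendixLemma} to select $J_n'$, $I_n(j)$, and $L'(j)$ must contain $\mathcal{P}_\tau^c \cup f^{-n}\mathcal{P}_\tau^c$ in addition to $f^{-n}\bigl(\bigcup_i \overline{C}_i'\bigr)^c$; your first paragraph omits the Pesin-set terms. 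Once the foliation is defined in $\mathbb{R}^d$ by $\mathcal{F}(y) = h_{f^n w_l}^{-1}\circ h_{z_{i(l)}}\bigl(\mathbb{R}^{cs} + h_{z_{i(l)}}^{-1}\circ h_{f^n w_l}(y)\bigr)$ and pulled back by $\tilde{f}^{(n)}_{w_l}$, your arguments for (F1), (F3), and (F2) go through essentially as you describe them, with the reduction of $\log\mathrm{Jac}\,\pi^{\tilde{\mathcal{F}}}$ to the two Jacobian-ratio terms plus a near-identity cs-holonomy at time $n$ being exactly the paper's \eqref{Jacobianest}.

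A secondary, non-fatal point: your first paragraph invokes \cref{mixtubes} to bound $\mu_{C_j}\bigl(f^{-n}\bigcup_i \overline{C}_i\bigr)$ from below. The paper's proof of this proposition does not use mixing at all: the statement only assumes $\mu$ is an ergodic SRB measure, and the lower bound follows purely from $\mu(H)$ being small (via (P3), (P7), and the Pesin-set size) together with the Markov argument of \cref{AppendixLemma} applied to the conditional measures $\mu_{\tilde{T}_j}$. Your use of mixing is harmless under the paper's running hypotheses but is an unstated assumption in the proof of this particular proposition, and it is in any case superseded by the corrections above.
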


		\begin{proof}

		For $i \in I$, denote by
		\begin{equation*}
			\overline{C}_i' = 
			h_{\overline{z}_i}(\mathcal{N}_{(\overline{\xi},\overline{r})}
			(\overline{c}_i)
			\setminus
			\partial_{\overline{l}}
			\mathcal{N}_{(\overline{\xi},\overline{r})}(\overline{c}_i)).
		\end{equation*}
		By (P4) of \cref{goodbox}, it satisfies
		\begin{equation*}
			(1- \varepsilon_I)\mu(\overline{C}_i)
			\leq \mu(\overline{C}_i').
		\end{equation*}
		We'll denote by 
		\begin{equation*}
			H = H(n,\tau,(\overline{C_i})_{i \in I}) = 	
			\mathcal{P}_{\tau}^c
			\cup f^{-n}\mathcal{P}_{\tau}^c
			\cup f^{-n}\left(\underset{i \in I}{\bigcup} \overline{C}_i'\right)^c.
		\end{equation*}
		We may also assume that $\mu(H) < \varepsilon^{100}$ is extremely small,
		by making $\tau$ and $ \varepsilon_I$ small.

		\subsubsection*{Excluding bad j-boxes}
		\label{badjbox}

		We first want the sets $\tilde{T}_j$ to intersect $H$ on a small set.
		By (P7) of \cref{goodbox} we have
		\begin{equation*}
			(1- \varepsilon_I)^2 
			< (1- \varepsilon_I) \sum_{i \in I} \mu(C_i)
			< \sum_{i \in I} \mu(\tilde{T}_i)
		\end{equation*}
		suppose that $ \varepsilon_I < \varepsilon^{100}$, then
		\begin{equation}
			\label{tildesbig}
			1- \varepsilon^{99} 
			<\sum_j \mu(\tilde{T}_j)
		\end{equation}
		Let 
        \begin{equation}
        \label{Jprimedef}
            J_n'= \{ j \in J_n \ | \ \mu_{\tilde{T}_j}(H) \leq \varepsilon^{10}\}.
        \end{equation}
		By \cref{AppendixLemma},
		\begin{equation*}
			1- \varepsilon^{80} < 1- \varepsilon^{100} - \varepsilon^{90} \leq \mu\left(\bigcup_{j \in J_n'} \tilde{T}_j \right),
		\end{equation*}
		which verifies the first part of \eqref{primesize}.
		For $j \in J_n'$, denote by 
		\begin{equation}
		\label{Kjdef}
			K_j = \tilde{T}_j \cap H^c \cap h_{z_j}( \mathcal{N}_{((1- \varepsilon_1)\xi,r)}(c_j))
		\end{equation}
		By (P8) of \cref{goodbox},
		the fact that $h_{z_i}(\mathbb{R}^{cs} + y)$, $y \in \mathbb{R}^u$ defines a foliation on $C_i$
		with the properties on (P8) for $n$ large enough 
		(since each $W_i(x)$ is the graph of a H\"older function with uniformly bounded H\"older constant),
		\begin{equation}
			\label{Leb1}
			\begin{split}
				\frac{\mu(\tilde{T}_j \cap h_{z_j}( \mathcal{N}_{((1 - \varepsilon_1)\xi,r)}(c_j))^c)}{\mu(\tilde{T}_j)}
			\leq O( \varepsilon_1) 
			\end{split}
		\end{equation}
		where the constant depends only on $\tau$ and the dimension of the unstable direction.
		Notice that by (P7), the condition on $j \in J'$ and \eqref{Leb1}, for $ \varepsilon_1>0$ small enough
		\begin{equation}
			\label{Kj}
			(1- \varepsilon^8) \mu(C_j) \leq (1- \varepsilon^{9})\mu(\tilde{T}_j)
			\leq \mu(K_j).
		\end{equation}

		\subsubsection*{Excluding bad i-boxes}
		\label{badibox}

		For each $j \in J_n'$,
		we want the sets $f^{-n}\overline{C}_i$ to intersect $K_j$ in a large portion.
		Notice that by \eqref{Jprimedef} and (P7) of \cref{goodbox},
		\begin{equation*}
			(1-\varepsilon^{10})(1-\varepsilon^{1000})\mu(C_j) <
            (1- \varepsilon^{10})\mu(\tilde{T}_j) < \mu\left(C_j \cap \underset{i \in I_n}{\bigcup}f^{-n}\overline{C}_i\right)
		\end{equation*}
        \begin{equation*}
            \implies (1-\varepsilon^9) < \mu_{C_j}\left(\underset{i \in I_n}{\bigcup}f^{-n}\overline{C}_i\right).
        \end{equation*}
		Let
        \begin{equation}
        \label{Iprime}
        I_n(j) = \{ i \in I_n \ | \ (\mu_j)_{f^{-n}\overline{C}_i}
		(K_j^c) \leq \varepsilon^{5}\}.
        \end{equation}
		Then by \cref{AppendixLemma},
		\begin{equation*}
			1 - \varepsilon^3
			\leq 1- \varepsilon^{9} - \frac{ \varepsilon^{9}}{ \varepsilon^{5}}
			\leq\mu_{C_j}\left(\bigcup_{i \in I'(j)} f^{-n}\overline{C}_i\right).	
		\end{equation*}
		This verifies the second part of \eqref{primesize}.

		\subsubsection*{Excluding bad components}
		\label{badcomponents}
		For $ j \in J_n'$ and $i \in I_n(j)$, we break the following sets into its connected components
        \begin{equation*}
            C_j \cap f^{-n}\overline{C}_i 
		  = \bigsqcup_{l \in L(i,j)} G_l \ \text{and} \ 
        \bigcup_{i \in I'_n(j)}C_j \cap f^{-n}\overline{C}_i = 
        \bigsqcup_{l \in L(j)} G_l
        \end{equation*}
		Let 
		\begin{equation}
		\label{goodcomponents}
			L'(j) = \{ l \in L(j) \ | \ G_l \cap K_j \neq \emptyset\}.
		\end{equation}
        Denote by $\mathcal{G}(j) = \bigcup_{l \in L'(j)} G_l$.
		Notice that $l \in L(j) \setminus L'(j)$ means that $G_l \subset K_j^c$.
		Therefore, by \eqref{Iprime} and \eqref{Kj},
		\begin{equation*}
			1 - 2\varepsilon^3 \leq \mu_{C_j}(\mathcal{G}(j)).
		\end{equation*}

		\subsubsection*{Good components are contained in fake cs-foliations}
		\label{containedinfakecs}
		Now fix $j \in J'_n$, $l \in L'(j)$ and take $i \in I_n'(j)$ such that $G_l \subset C_j \cap f^{-n}\overline{C}_i$.
		By assumption, there exists $w_l \in K \cap G_l$. In particular,
		$w_l \in \mathcal{P}_{\tau}$ and $f^n(w_l) \in \mathcal{P}_{\tau}$.
		Consider the foliation on $\mathbb{R}^d$:
		\begin{equation*}
			\mathcal{F}(y) = h_{f^nw_l}^{-1}\circ h_{z_i} 
			(\mathbb{R}^{cs}+ h_{z_i}^{-1} \circ h_{f^nw_l}(y)).
		\end{equation*}
		By \cref{changecoordinates}, 
		it satisfies the hypothesis of \cref{fakecs} for $n$ large enough.
		Therefore, the foliation pulled back by $\tilde{f}^{(n)}_{w_l}$ is such that 
		each leaf is a graph of a $C^{1+\alpha_3}$ function
		\begin{equation*}
			\eta^{cs,n}_{\mathcal{F},w_l,y} \colon \mathbb{R}^{cs} \to \mathbb{R}^u.
		\end{equation*}
		Given $\overline{z} \in \tilde{W}_{\tau}^u(w_l)$ such that
		$\tilde{f}^{(n)}_{w_l}(\overline{z}) 
		\in h_{f^nw_l}^{-1} \circ h_{z_i} (\mathcal{N}_{\overline{\xi},\overline{r}}(c_i))$
		, we have
		\begin{equation*}
			|\tilde{f}^{(n)}_{w_l} \overline{z}| \leq 2\overline{\xi} \ll e^{-\delta n}
		\end{equation*}
		Given $w \in \mathcal{N}^{cs}_{2\overline{\xi}}(0) \subset \mathbb{R}^{cs}$,
		then $w$ and $\overline{z}$ satisfy \eqref{fakecscondition},
		for $n$ large enough.
		Therefore, by \cref{fcommute},
		\begin{equation*}
			\tilde{f}^{(n)}_{w_l}(\eta^{cs,n}_{\mathcal{F},w_l,\overline{z}}(w),w)
			= h^{-1}_{f^nw_l} \circ f^n \circ h_{w_l} (\eta^{cs,n}_{\mathcal{F},w_l,\overline{z}}(w),w).
		\end{equation*}
		This implies that for such $\overline{z}$,
		\begin{equation*}
			 h_{w_l}(\text{graph}(\eta^{cs,n}_{\mathcal{F},w_l,\overline{z}}|_
			{|w| < 2\overline{\xi}}))
			\subset f^{-n} \circ h_{f^nw_l}
			(\mathcal{F}(\tilde{f}^{(n)}_{w_l}\overline{z})).
		\end{equation*}
		Now given $z \in G_l$, it is in
		$f^{-n} \circ h_{f^nw_l}(\mathcal{F}(\tilde{f}^{(n)}_{w_l}\overline{z}))$
		for some $\overline{z}$ as above.
		Since $|\pi^{cs}(h_{w_l}^{-1}(z))| < 2\overline{\xi}$,
		\begin{equation*}
			z = h_{w_l}(\eta^{cs,n}_{\mathcal{F},w_l,\overline{z}}
		(\pi^{cs}(h_{w_l}^{-1}(z)),\pi^{cs}(h_{w_l}^{-1}(z))).
		\end{equation*}
		This shows that
		\begin{equation*}
			G_l \subset \underset{\overline{z}}{\bigcup} h_{w_l}
			(\text{graph}(\eta^{cs,n}_{\mathcal{F},w_l,\overline{z}}|_{\mathcal{N}_{2\overline{\xi}}(0)})).
		\end{equation*}
		Using \cref{changecoordinates} for $h^{-1}_{z_j} \circ h_{w_l}$,
        we have that
		$h_{z_j}^{-1}(G_l)$ is contained in the union of graphs of $C^{1+\alpha_3}$ functions 
		$\eta^{cs}_y \colon \mathbb{R}^{cs}\cap \mathcal{N}_r(c_j^{cs}) \to \mathbb{R}^u$,
		for $y \in W_i(f^n(w_l)) = W^u(f^n(w_l)) \cap \overline{C}_i$
		satisfying the following properties:
		\begin{enumerate}
			\item there exists $u_l \in \mathcal{N}_r(c_j^{cs})$  such that
				\begin{equation*}
					h_{z_j}(\eta^{cs}_{f^n(w_l)}(u_l),u_l) = w_l
				\end{equation*}
				and consequently, since $w_l \in K_j$,
				\begin{equation*}
				|\eta^{cs}_{f^n(w_l)}(u_l) - c_j^u| < (1- \varepsilon_1)\xi;
				\end{equation*}

			\item for each $y$, $\underset{w \in \mathbb{R}^{cs}}{\sup}\|D_w\eta_y^{cs}\|
				< C\frac{3 \delta}{1 - e^{-\lambda + \sqrt{\delta}}}$.
		\end{enumerate}
        This will show (F1).
		For $y \in W_i(f^nw_l)$, let $u_y \in \mathcal{N}_r{c_j^{cs}}$ be such that
		\begin{equation*}
			f^n(h_{w_l}(\eta^{cs}_y(u_y),u_y)) = y.
		\end{equation*}
		By \cref{unstable} and using that $f^n(w_l) \in \mathcal{P}_{\tau}$,
		\begin{equation*}
			|\eta^{cs}_y(u_y)-\eta^{cs}_{f^nw_l}(u_l)|
			\leq \tau^{-2} e^{(-\lambda+\sqrt{\delta})n}\overline{\xi}.
		\end{equation*}
		In particular, for $w \in \mathcal{N}^{cs}_r{c_j}$ and $y \in W_i(f^nw_l)$,
		\begin{equation*}
			\begin{split}
			|\eta_y^{cs}(w)-c_j^u| 
			\leq &|\eta_y^{cs}(w) - \eta_y^{cs}(u_y)| 
			+ |\eta_y^{cs}(u_y) - \eta_{f^nw_l}^{cs}(u_l)|
			+ |\eta_{f^nw_l}^{cs}(u_l) - c_j^u| \\
			\leq & C\frac{3 \delta}{1 - e^{-\lambda + \sqrt{\delta}}}r 
			+ \tau^{-2}e^{(-\lambda+\sqrt{\delta})n}\overline{\xi} 
			+ (1- \varepsilon_1)\xi.
			\end{split}
		\end{equation*}
		The value above is smaller than $\xi$, for $n$ large enough by \eqref{scaleslemma1}.
		This shows that the graphs are actually contained in
		$\mathcal{N}_{(\xi,r)}(c_j)$.

		\subsubsection*{Components fully cross $C_j$}
		For $z \in W^u(w_l)\cap f^{-n}(W^u(f^nw_l)\cap\overline{C}_i) = W^u_l$, denote by 
		\begin{equation*}
		\mathcal{F}^{cs}(z) = h_{w_l}(\text{graph}(\eta^{cs,n}_{\mathcal{F},w_l,h_{w_l}^{-1}(z)}))\cap C_j.
		\end{equation*}
		We showed above that
		\begin{equation}
		      \label{insidefols}
			G_l \subset \underset{z \in W^u_l}{\bigcup} \mathcal{F}^{cs}(z).
		\end{equation}
		We will now show equality. This will imply that $G_l$ fully crosses $C_j$,
		since each leaf is will be the graph of a function inside that box.
		If $z \in W^u_l$, then $z \in C_j$ and $f^n(z) \in \overline{C}_i$. Therefore, $z \in G_l$ for some $l$.
		Now for such $z$, let
		$\hat{\mathcal{F}}^{cs}(z)$ be the connected component of $C_j \cap f^{-n}(\mathcal{F}(f^n(z))\cap \overline{C}_i)$ that contains $z$.
		By construction, $\hat{\mathcal{F}}^{cs}(z) \subset \mathcal{F}^{cs}(z)$.
		If they're not equal, then there exists $q \in \mathcal{F}^{cs}(z)$ such that
		$f^n(q) \in \partial \overline{C}_i$. Then by \cref{subexpM} and \eqref{scaleslemma1}, for $n$ large enough
		\begin{equation*}
			d(f^nz,f^nq) \leq \tau^{-2}e^{2\sqrt{\delta}k}r_n < \tau^2\overline{l}_n.
		\end{equation*}
		However, if $w_l \in K$, then, $f^n(w_l) \notin \overline{C}_i'$. Therefore, we must have $\hat{\mathcal{F}}^{cs}(z) = \mathcal{F}^{cs,n}(z)$, which shows equality in \cref{insidefols}.
        (F3) follows from \cref{subexpM}.

		\subsubsection*{Fake cs holonomies}
		\label{fakecsholonomies}
        Let $j \in J'$ and
        $W_1,W_2 \subset C_j$ be $(\alpha,1)$-horizontal submanifolds.
        Let $\pi_j \colon W_1 \to W_2$ be the holonomy along $\mathcal{F}^{cs,n}_j$, wherever it is well defined.
        Let $x \in W_1$ such that $\pi(x)$ is well defined.
        Then $x \in G_l$, for some $l \in L'(j)$.
        Let $\tilde{\mathcal{F}}_l^{cs,n}$ be the foliation on $\mathbb{R}^d$ induced via $h_{w_l}$ and $\tilde{W}_i = h_{w_l}^{-1} (W_i)$, $i=1,2$.
        Let $\tilde{\pi}_l \colon \tilde{W}_1 \to \tilde{W}_2$ be the holonomy along $\tilde{\mathcal{F}}_l^{cs,n}$ leaves.
        It suffices to show that $\text{Jac}_{h_{w_l}^{-1}(x)}(\tilde{\pi}_l)$ is close to 1.
        
        Let $h_{w_l}^{-1}(x) = \overline{x}_1$, $\tilde{\pi}_l(\overline{x}_1) = \overline{x}_2$ and $\mathcal{F}^{cs}_l$ be the foliation on $\mathbb{R}^d$ obtained by pushing forward the $\mathbb{R}^{cs}$ foliation via $ h_{f^nw_l}^{-1} \circ h_{z_i}$. 
        Let also $\pi^{cs} \colon \tilde{f}_{w_l}^{(n)}(\tilde{W}_1) \to \tilde{f}_{w_l}^{(n)}(\tilde{W}_2)$ be the holonomy map along this foliation.
        Let $L_i \colon \mathbb{R}^u \to \mathbb{R}^{cs}$ be the linear map such that $\text{graph}(L_i) = T_{\overline{x}_i}\tilde{W}_i$, for $i=1,2$.
        
        Since $\tilde{\pi}_l = \tilde{f}^{(-n)}_{w_l} \circ \pi^{cs} \circ \tilde{f}_{w_l}^{(n)}$,
        \begin{equation}
        	\label{Jacobianest}
        	\text{Jac}_{\overline{x}_1}(\tilde{\pi}_l) = 
        	\text{Jac}_{\tilde{f}_{w_l}^{(n)}\overline{x}_1}(\pi^{cs})
       		\frac{\text{Jac}(D_{\overline{x}_1}\tilde{f}_{w_l}^{(n)}|_{T_{\overline{x}_1}\tilde{W}_1})}
       		{\text{Jac}(D_{\overline{x}_2}\tilde{f}_{w_l}^{(n)}|_{T_{\overline{x}_2}\tilde{W}_2})}.
        \end{equation}
        For the second term, we use \cref{UsefulJac} to bound the absolute value of its $\log$ by
        \begin{equation}
        \label{Jacobianest2}
        	C\sum_{k=0}^{n-1}\delta|\tilde{f}_{w_l}^{(k)}\overline{x}_1-\tilde{f}_{w_l}^{(k)}\overline{x}_2|^{\alpha}
        + e^{(-\lambda+\sqrt{\delta})k}\|L_1-L_2\|
        +6\delta \sum_{i=1}^k e^{(-\lambda+\sqrt{\delta})(k-i)}|\tilde{f}_{w_l}^{(i)}\overline{x}_1-\tilde{f}_{w_l}^{(i)}\overline{x}_2|^{\alpha_7}.
        \end{equation}
        By (F3) and (2) of \cref{LyapCharts},
        \begin{equation*}
        	|\tilde{f}_{w_l}^{(k)}\overline{x}_1-\tilde{f}_{w_l}^{(k)}\overline{x}_2| \leq \tau^{-6} r_n e^{2\sqrt{\delta}k}.
        \end{equation*}
        Let $A = \frac{\varepsilon^{100}}{C}(1-e^{-\lambda+\sqrt{\delta}})$.
        Since $W_1,W_2 \subset C_j$ are $(\alpha,1)$-horizontal whose graphs have derivatives bounded by $A>0$,
        then by \cref{graphid}, there exists $K>0$ such that $\tilde{W}_1,\tilde{W}_2$ are graphs of functions with derivative bounded by
        \begin{equation}
        \label{derivative}
        	\frac{\varepsilon^{100}}{C}(1-e^{-\lambda+\sqrt{\delta}})+4K\xi_n^{\alpha_2}.
        \end{equation}
        Therefore, for $n$ large $\|L_1-L_2\| \leq \frac{\varepsilon^{50}}{C}(1-e^{-\lambda+\sqrt{\delta}})$.
        Putting these all together, \eqref{Jacobianest2} is bounded by
        \begin{equation*}
        	C\sum_{k=0}^{n-1}\delta(\tau^{-6} r_n e^{2\sqrt{\delta}k})^{\alpha}
        + \frac{\varepsilon^{50}}{C}(1-e^{-\lambda+\sqrt{\delta}})e^{(-\lambda+\sqrt{\delta})k}
        +6\delta \sum_{i=1}^k e^{(-\lambda+\sqrt{\delta})(k-i)}(\tau^{-6} r_n e^{2\sqrt{\delta}k})^{\alpha_7}.
        \end{equation*}
        Therefore, there exists some constant $C' = C'(\tau)$ such that the above is bounded by
        \begin{equation*}
        	C'\left(\frac{e^{2\alpha\sqrt{\delta}n}-1}{e^{2\alpha\sqrt{\delta}}-1}r_n^{\alpha}   	
        	+ \frac{e^{2\alpha_7\sqrt{\delta}n}-1}{e^{2\alpha_7\sqrt{\delta}}-1}r_n^{\alpha_7}\right)
        	+ \varepsilon^{50}.
        \end{equation*}
        Since $r_n \ll e^{2\sqrt{\delta}n}$, for $n$ large enough the above is less than $\varepsilon^{40}$.
        
        For the first term of \eqref{Jacobianest},
        let $\varphi = h_{f^nw_l}^{-1}\circ h_{z_i}$ and $\pi^{cs}_i \colon \varphi^{-1}\circ \tilde{f}^{(n)}_{w_l}(\tilde{W}_1) \to \varphi^{-1}\circ \tilde{f}^{(n)}_{w_l}(\tilde{W}_2)$ be the holonomy along the $\mathbb{R}^{cs}$ foliation.
        Then, omitting at which point we're taking the jacobian,
        \begin{equation*}
        	\text{Jac}(\pi^{cs})
        	= \text{Jac}(\varphi|_{\varphi^{-1}\tilde{f}^{(n)}_{w_l}(\tilde{W}_1)})
        	\text{Jac}(\pi^{cs}_i)
        	\text{Jac}(\varphi^{-1}|_{\tilde{f}^{(n)}_{w_l}(\tilde{W}_1)}).
        \end{equation*}
        The jacobians of $\varphi$ and $\varphi^{-1}$ are close to $1$ for $n$ large by \cref{changecoordinates}.
        Let $\eta_{n,i}$ be the function whose graph is $\tilde{f}^{(n)}_{w_l}(\tilde{W}_i)$, for $i=1,2$.
        By Lemma A.10 of \cite{ExpMix} and \eqref{derivative},
        \begin{equation*}
        	\|D\eta_{n,i}\|
        	\leq e^{n(-\lambda +\sqrt{\delta})}\left(\frac{\varepsilon^{100}}{C}(1-e^{-\lambda+\sqrt{\delta}})+4K\xi_n^{\alpha_2}\right)
        	+ 2\delta\overline{\xi}_n^{\alpha_7}\frac{1}{ 
        	1-e^{ -\lambda+\sqrt{\delta} } }.
        \end{equation*}
        Therefore, for $n$ large enough, the jacobian of $\pi^{cs}_i$ is close to $1$ since the manifolds $\varphi^{-1}\tilde{f}^{(n)}_{w_l}(\tilde{W}_1)$ and $\varphi^{-1}\tilde{f}^{(n)}_{w_l}(\tilde{W}_2)$ are horizontal enough.
        This concludes the proof. 
        \end{proof}

	\section{From mixing to equidistribution}

	\subsection{Equidistribution of unstable manifolds along exponentially small boxes}	
	We will now also assume that $\mu$ is an SRB measure.
	We wish to show that equidistribution of thin tubes implies equidistribution of unstables. The following statement is analogous to the Main Proposition in \cite{ExpMix}.
\begin{lemma}
	\label{MainLemma}
    Suppose either that $f$ is exponentially mixing with respect to an SRB measure $\mu$
    or that volume is almost exponentially mixing and let $\mu$ be the limit SRB measure.
    Let $\varepsilon>0$ be small,
    $H_n \subset M$ be a sequence of subsets with $\mu(H_n) < \varepsilon^{1000}$,
    $(\xi_n)_{n \in \mathbb{N}},(\overline{l}_n)_{n \in \mathbb{N}}, (\overline{r}_n)_{n \in \mathbb{N}},  (\overline{\xi}_n)_{n \in \mathbb{N}}$ be sequences of positive numbers
    satisfying
    \begin{equation}
	    \label{scaleslemma}
        e^{-\beta n} 
        \ll \xi_n^{4d+2}
        \ll e^{2\sqrt{\delta}n}\xi_n
        \ll (\overline{\xi}_n)^{1+ \alpha}
        \ll \overline{l}_n
        \ll \overline{r}_n
        \ll \overline{\xi}_n
        \ll e^{- 2\sqrt{\delta} n}
    \end{equation}
    where $\delta = \varepsilon^{100^{100}}$.
    Let $\{\overline{C}_i\}_{i \in I_n}$ be good boxes from \cref{goodbox} for $(l,r,\xi,\varepsilon^{1000})$.
    Then, there exists $n_0 = n_0(\overline{l},\overline{r},\overline{\xi},\xi,\varepsilon)>0 $
    such that for each $n>n_0$, 
    there exists $\mathcal{K}_n \subset M$
    and a measurable u-subordinate partition $\{W_n(x)\}_{x \in T_n}$ of $\mathcal{K}_n$ indexed by a a family of transversals $T_n$ with
    \begin{enumerate}
	    \item[(U1)] $\mu(\mathcal{K}_n) > 1 - \varepsilon$;
	    \item[(U2)] $W_n(x) \subset W^u(x)$ is a piece of unstable manifold of size $\sim_{1+ \varepsilon} \xi_n$, for each $x \in T_n$;
	    \item[(U3)] For each $x \in T_n$, $\mu_{W_n(x)}(H_n) < \varepsilon$;
	\end{enumerate}
	such that if $\{\overline{C}_i\}_{i \in I_n}$ are $(\overline{l},\overline{r},\overline{\xi},\varepsilon^{1000})$-Nice boxes from \cref{goodbox}, then
	\begin{enumerate}
		\item[(E1)] For each $x \in T_n$, there exists $I^*_n(x) \subset I_n $ such that
        \begin{equation*}
            \mu\left(\underset{i \in I_n(x)}{\bigcup} \overline{C}_i\right) > 1 - \varepsilon
        \end{equation*}
    such that for  $i \in I^*_n(x)$,
        \begin{equation*}
            \mu_{W_n(x)}(f^{-n}\overline{C}_i) 
            \sim_{1+ \varepsilon} \mu(\overline{C}_i).
        \end{equation*}
    \end{enumerate}
 
\end{lemma}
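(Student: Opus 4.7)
The plan is to apply Lemma \cref{goodbox} at the intermediate scale $\xi_n$, together with auxiliary $l_n,r_n$ chosen so that $\xi_n^{1+\alpha}\ll l_n\ll r_n\ll \xi_n$ and so that both the mixing hypothesis $e^{-\beta n}\ll \xi_n^{2d}\overline{\xi}_n^{2d}l_n\overline{l}_n$ of Proposition \cref{mixtubes}/\cref{mixtubesB} and the scale conditions \eqref{scaleslemma1} of Proposition \cref{fakecsconstruction} hold; the ratio forced by $\xi_n^{4d+2}\ll\overline{\xi}_n^{1+\alpha}$ in \eqref{scaleslemma} leaves enough slack that such $l_n,r_n$ exist. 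This yields a family of good boxes $\{C_j\}_{j\in J_n}$ with transversals $T_j$ and unstable pieces $W_j(x)=W^u_\tau(z)\cap C_j$. On most $C_j$, Proposition \cref{fakecsconstruction} supplies $(\varepsilon,n)$-fake cs-foliations $\mathcal{F}^{cs,n}_j$, which in turn satisfy the hypotheses (F1)--(F2) of (P8) of Lemma \cref{goodbox}, giving an SRB local product structure.

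Concretely, let $J_n'\subset J_n$ be the subfamily retained by Proposition \cref{fakecsconstruction} and set $T_n=\bigsqcup_{j\in J_n'}T_j$, $W_n(x)=W_j(x)$ for $x\in T_j$, and $\mathcal{K}_n=\bigsqcup_{x\in T_n}W_n(x)$. Properties (U1) and (U2) follow from (P1)--(P3), (P6), (P7) of Lemma \cref{goodbox} together with \eqref{primesize}. For the equidistribution statement (E1), fix $j\in J_n'$. Proposition \cref{mixtubes} (or \cref{mixtubesB}) combined with (P7) gives
\[
\mu(\tilde{T}_j\cap f^{-n}\overline{C}_i)\sim_{1+\varepsilon^{500}}\mu(C_j)\mu(\overline{C}_i)
\]
for every $i\in I_n$. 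By Proposition \cref{fakecsconstruction}, the components of $C_j\cap f^{-n}\overline{C}_i$ that fail to be $\mathcal{F}^{cs,n}_j$-saturated have total $\mu_{C_j}$-mass $\leq\varepsilon^2$, so on the saturated part (P8) of Lemma \cref{goodbox} yields
\[
\mu_{W_n(x)}(f^{-n}\overline{C}_i)\sim_{1+\varepsilon}\frac{\mu(\tilde{T}_j\cap f^{-n}\overline{C}_i)}{\mu(\tilde{T}_j)}\sim_{1+\varepsilon}\mu(\overline{C}_i)
\]
for $\nu_j$-a.e.\ $x\in T_j$ and every $i$ for which the non-saturated part is relatively small. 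Two applications of Lemma \cref{AppendixLemma} --- one summing over $i\in I_n$ to discard boxes with too much non-saturation, and one summing over $x\in T_j$ --- produce, for $\nu_j$-most $x\in T_j$, an index set $I_n^*(x)\subset I_n$ with $\mu\bigl(\bigcup_{i\in I_n^*(x)}\overline{C}_i\bigr)>1-\varepsilon$ on which the equivalence above holds. A further Markov step exploiting $\mu(H_n)<\varepsilon^{1000}$ trims $T_n$ so that (U3) holds. The local constancy of $I_n^*(x)$ comes for free: both the foliation $\mathcal{F}^{cs,n}_j$ and the saturation structure depend only on the ambient box $C_j$, so $I_n^*(x)$ is in fact constant on each transversal $T_j$.

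The main technical obstacle is the bookkeeping of the nested Markov estimates: each invocation of Lemma \cref{AppendixLemma} costs a factor of the small exception set, so in order to conclude with $\varepsilon$-smallness at the final level of unstable pieces we must begin with the $\varepsilon^{1000}$-smallness of bad events at the level of good boxes (which is precisely the value for which \cref{goodbox} and \cref{fakecsconstruction} are invoked). A secondary subtlety is that we must handle the two regimes of the hypothesis --- genuine exponential mixing (Proposition \cref{mixtubes}) versus almost exponential mixing of volume (Proposition \cref{mixtubesB}) --- in a unified way; since both propositions conclude the same comparison $\mu(C_j\cap f^{-n}\overline{C}_i)\sim\mu(C_j)\mu(\overline{C}_i)$, the remainder of the argument is identical in the two cases.
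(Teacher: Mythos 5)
Your blueprint --- apply \cref{goodbox} at the fine scale $\xi_n$ to build boxes $C_j$, equip them with $(\varepsilon,n)$-fake cs-foliations via \cref{fakecsconstruction}, identify $W_n(x)$ with the unstable pieces $W_j(x)$, use \cref{mixtubes}/\cref{mixtubesB} for box-level equidistribution, and leverage (P8) plus \cref{AppendixLemma} to pass from box-level to unstable-piece-level equidistribution --- is indeed the paper's architecture. But there are two genuine problems.

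\textbf{First and most serious:} you assert that the two regimes can be treated identically because ``both propositions conclude the same comparison $\mu(C_j\cap f^{-n}\overline{C}_i)\sim\mu(C_j)\mu(\overline{C}_i)$.'' That is not what \cref{mixtubesB} says: in the almost-exponentially-mixing-volume regime the conclusion is $m(C_j\cap f^{-n}\overline{C}_i)\sim_{1+\varepsilon} m(C_j)\mu(\overline{C}_i)$, an estimate on \emph{volume}, not on $\mu$-measure. One cannot upgrade this to $\mu$-measure equidistribution of boxes, since in this regime $\mu$ itself is not known to mix exponentially. The paper therefore treats the two cases separately. In the volume case the set $I_n^*(j)$ is defined via $(m_{C_j})_{f^{-n}\overline{C}_i}(\mathcal{G}(j)^c)\le\varepsilon^{1/2}$, and the chain of comparisons starts from $m_{C_j}(\mathcal{G}(j))$ and passes through the \emph{second} conclusion of (P8), namely $\frac{m(A)}{m(C_j)}\sim_{1+\varepsilon}\mu_{W(x)}(A)$ for $\mathcal{F}^{cs}$-saturated $A$, to bridge from $m$-conditioning on boxes to $\mu$-conditioning on unstable leaves. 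Your sketch never invokes this bridge and in particular never explains how you go from a volume-measure mixing estimate to the target statement about $\mu_{W_n(x)}$.

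\textbf{Second, less serious:} you claim that the local-constancy assertion in (E1) ``comes for free'' because ``$I_n^*(x)$ is in fact constant on each transversal $T_j$,'' since the foliation depends only on $C_j$. That is false: the paper defines $I_n^*(x)$ by a threshold on $(\mu_{W(x)})_{f^{-n}\overline{C}_i}(\mathcal{G}(j,i)^c)$, which depends on $x$ through $\mu_{W(x)}$, so it is not constant on $T_j$. The paper's actual argument for local constancy uses that $f^{-n}\overline{C}_i$ and $\mathcal{G}(j,i)$ are open, that $x\mapsto W(x)$ varies continuously, and that $\mu_{W(x)}$ is approximately normalized Lebesgue on $W(x)$; one should not expect global constancy on $T_j$. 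This particular gap is repairable by swapping your claim for the paper's continuity argument, but it is not a free consequence of the fixed foliation.
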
    

		\begin{proof}
        Let $(l_n)_{n \in \mathbb{R}},
		(r_n)_{n \in \mathbb{R}}$ satisfying
		\begin{equation*}
			\xi^{1+\alpha} \ll l_n \ll r_n \ll \xi_n
		\end{equation*}
		and let $\{C_j\}_{j \in J_n}$
		be $(l,r,\xi, \varepsilon^{1000})$-nice boxes.
        Let
        \begin{equation*}
            \mathcal{K}_n^* = \bigcup_{j \in J_n'}\tilde{T}_j \ \text{and} \ T_n^* = \bigcup_{j \in J_n'} T_j. 
        \end{equation*}
        Just as (P8) of \cref{goodbox}, let $\{\mu_{W(x)}\}_{x \in T_n^*}$ be a disintegration of $\mu$ with respect to the measurable partition $\{W_n(x)\}_{x \in T_n^*}$ and $\nu$ be the measure induced on $T_n^*$.
        Note that $\mu(H_n) < \varepsilon^{1000}$ and, by (P7) of \cref{goodbox}, $\mu(K_n^*)>(1-\varepsilon^{1000})^2$. Let
        \begin{equation}
            \label{Tndef}
            T_n = \{x \in T_n^* \ | \ \mu_{W_n(x)}(H_n) < \varepsilon \}.
        \end{equation}
        Denote by $K_n = \bigcup_{x \in T_n} W_n(x)$. By \cref{AppendixLemma},
        \begin{equation*}
            \mu(K_n) > 1- \varepsilon.
        \end{equation*}
        The constructed $K_n$ along with the measurable partition defined satisfies properties (U1), (U2) and (U3).
		Let $j \in J_n'$.
        Now let $n$ be large so that \cref{fakecsconstruction} holds for the families $\{C_j\}_{j \in J_n}$ and $\{\overline{C}_i\}_{i \in I_n}$.
        Let
		\begin{equation*}
			C_j \cap f^{-n}(\overline{C}_i) = \bigsqcup_{l \in L(j,i)} G_l.
		\end{equation*}
        Recall the definition of $K_j$ in \eqref{Kjdef} and the construction of $L'(j)$ in \eqref{goodcomponents}.
        Let $\mathcal{G}(j) = \bigcup_{l \in L'(j)}G_l$ and for $i \in I_n(j)$, $\mathcal{G}(j,i) = \bigcup_{l \in L'(j,i)}G_l$, where $L'(j,i) = L'(j) \cap L(j,i)$. We will now separate the proof into the two cases in our hypothesis.
        \subsubsection*{SRB exponentially mixing}
        For $x \in T_n$ $\nu$-generic, let
        \begin{equation}
            \label{IstarA}
            I_n^*(x) = 
            \{ i \in I_n(j) \ | \ 
            (\mu_{W(x)})_{f^{-n}\overline{C}_i}(\mathcal{G}(j,i)^c) \leq \varepsilon \}.
        \end{equation}
        Since $\mu_{W(x)}(\mathcal{G}(j)^c) <  \varepsilon^2$ and 
        $\mu_{W(x)}\left(\bigcup_{i \in I_n(j)}f^{-n}\overline{C}_i\right) > 1-\varepsilon^2$, then
        \begin{equation}
            \label{IstarAbig}
            \mu_{W(x)}\left(\bigcup_{i \in I_n^*(x)}f^{-n}\overline{C}_i\right) > 1-\varepsilon.
        \end{equation}
        Note that $i \in I_n^*(x)$ means
        \begin{equation*}
            (1-\varepsilon)\mu_{W(x)}(f^{-n}\overline{C}_i) < \mu_{W(x)}(\mathcal{G}(j,i)) \leq \mu_{W(x)}(f^{-n}\overline{C}_i).
        \end{equation*}
        In particular, for $i \in I_n^*(x)$,
        \begin{equation}
            \label{Istarcond}
            \mu_{W(x)}(\mathcal{G}(j,i)) \sim_{1+2\varepsilon}\mu_{W(x)}(f^{-n}\overline{C}_i).
        \end{equation}
        Fix $i \in I_n^*(j)$.
		Notice that $l \in L(j)\setminus L'(j)$ implies that $G_l \subset K_j^c$.
		Therefore, by \eqref{Iprime},
			\begin{equation*}
				\mu_{C_j}\left(\underset{l \in L(j) \setminus L'(j)}{\bigcup}G_l
					\cap f^{-n}\overline{C}_i\right) 
					\leq \mu_{C_j}(f^{-n}\overline{C}_i) \varepsilon^5
			\end{equation*}
			\begin{equation*}
				\implies
				\mu\left(\underset{l \in L(j,i) \setminus L'(j)}{\bigcup}G_l\right) 
				\leq \mu(C_j\cap f^{-n}\overline{C}_i) \varepsilon^5.
			\end{equation*}
            Since $i \in I_n(j)$,
            \begin{equation}
            \label{saturatedGl3}
                \mu_{C_j}\left(\underset{l \in L'(j,i)}{\bigcup}G_l\right)
                \sim_{1+2\varepsilon} \mu_{C_j}(f^{-n}\overline{C}_i).
            \end{equation}
			Since $\mathcal{G}(j,i) = \bigcup_{l \in L'(i,j)}G_l$ is fake cs-saturated, then by (P8) of \cref{goodbox},
		\begin{equation}
        \label{equiGl}
			\mu_{\tilde{T}_j}(\mathcal{G}(j,i))
			\sim_{1+ \varepsilon} \mu_{W}(\mathcal{G}(j,i)).
		\end{equation}
        By (P7) of \cref{goodbox},
        \begin{equation*}
            \mu(C_j) \sim_{1+O(\varepsilon)} \mu(\tilde{T}_j).
        \end{equation*}
        Inverting the above and multiplying by $\mu(\mathcal{G}(j,i)\cap \tilde{T}_j)$, we obtain
        \begin{equation}
            \label{TjCjsame}
            \mu_{\tilde{T}_j}(\mathcal{G}(j,i)) \sim_{1+O(\varepsilon)} \mu_{C_j}(\mathcal{G}(j,i)\cap \tilde{T}_j).
            \end{equation}
        Using \eqref{Iprime} once more and recalling that $\tilde{T}_j^c \subset K_j^c$, we get
        \begin{equation*}
            (\mu_{C_j})_{f^{-n}\overline{C}_i}(\tilde{T}_j) \sim_{1+O(\varepsilon)} 1.
        \end{equation*}
        Combining the above with \eqref{saturatedGl3}, we get
        \begin{equation*}
            (\mu_{C_j})_{f^{-n}\overline{C}_i}(\mathcal{G}(j,i)\cap \tilde{T}_j) \sim_{1+O(\varepsilon)} 1.
        \end{equation*}
        Therefore,
        \begin{equation}
            \label{saturatedGl4}
            \mu_{C_j}(\mathcal{G}(j,i)\cap \tilde{T}_j) \sim_{1+O(\varepsilon)} \mu_{C_j}(f^{-n}(C_i)).
        \end{equation}
        By \cref{mixtubes},
		\begin{equation}
			\label{CjCimix}
			\mu_{C_j}(f^{-n}\overline{C}_i)
			\sim_{1+ \varepsilon} \mu(\overline{C}_i).
		\end{equation}
        Finally, combining \eqref{Istarcond},  \eqref{equiGl}, \eqref{TjCjsame}, \eqref{saturatedGl4},  and  \eqref{CjCimix} we get
        \begin{equation*}
            \mu_{W(x)}(f^{-n}\overline{C}_i) \sim_{1+O(\varepsilon)} \mu(\overline{C}_i).
        \end{equation*}
        Summing over $i \in I_n^*(x)$ and using \eqref{IstarAbig},
        \begin{equation*}
            \mu\left(\bigcup_{i \in I_n^*(x)} \overline{C}_i\right)
            \sim_{1+O(\varepsilon)} \mu_{W(x)}\left(\bigcup_{i \in I_n^*(x)}f^{-n}\overline{C}_i\right) \sim_{1+2\varepsilon} 1.
        \end{equation*}
        This shows (E1) and concludes the proof for exponentially mixing SRB measures.
        \subsubsection*{Volume almost exponentially mixing}
        Let
        \begin{equation}
            \label{Istar}
            I_n^*(j) = 
            \{ i \in  I_n(j) \ | \ (m_{C_j})_{f^{-n}(\overline{C}_i)}(\mathcal{G}(j)^c)\leq \varepsilon^{\frac{1}{2}}\}.
        \end{equation}
        We have
        \begin{align*}
            m_{C_j}(\mathcal{G}(j)) & \sim_{1+\varepsilon^{999}} \mu_{W(x)}(\mathcal{G}(j)), & \text{by (P8) of \cref{goodbox},} \\
            & \sim_{1+ \varepsilon^{999}} \mu_{\tilde{T}_j}(\mathcal{G}(j)), & \text{by (P8) of \cref{goodbox},} \\
            & \sim_{1+O(\varepsilon)}\mu_{C_j}(\mathcal{G}(j)\cap \tilde{T}_j), & \text{by \eqref{TjCjsame},} \\
            & \sim_{1+O(\varepsilon)}\mu_{C_j}\left(f^{-n}\bigcup_{i \in I_n(j)}\overline{C}_i\right), & \text{by \eqref{saturatedGl4},}\\
            & \sim_{1+\varepsilon^2} 1, & \text{by \cref{fakecsconstruction}}.
        \end{align*}
        Since $m_{C_j}(\mathcal{G}(j)) \leq m_{C_j}\left(\bigcup_{i \in I_n(j)}f^{-n}\overline{C}_i\right) \leq 1$, then we also have
        $m_{C_j}\left(\bigcup_{i \in I_n(j)}f^{-n}\overline{C}_i\right) \sim_{1+O(\varepsilon)} 1$.
        By \cref{AppendixLemma},
        \begin{equation}
            \label{IstarBbig1}
            m_{C_j}\left(\bigcup_{i \in I_n^*(j)}f^{-n}\overline{C}_i\right) \sim_{1+O(\varepsilon^{\frac{1}{2}})} 1.
        \end{equation}
        For $x \in T_n \cap T_j$ $\nu$-generic, let
        \begin{equation}
            \label{IstarB2}
            I_n^*(x) = 
            \{ i \in I_n^*(j) \ | \ 
            (\mu_{W(x)})_{f^{n}\overline{C}_i}(\mathcal{G}(j,i)^c) \leq \varepsilon^{\frac{1}{4}}\}.
        \end{equation}
        By \eqref{IstarBbig1} and \cref{AppendixLemma},
        \begin{equation}
            \label{IstarBbig2}
            \mu_{W(x)}\left(\bigcup_{i \in I_n^*(x)}f^{-n}\overline{C}_i\right) \sim_{1+O(\varepsilon^{\frac{1}{4}})}1.
        \end{equation}
        If $i \in I_n^*(x)$, then
        \begin{equation}
            \label{IstarBstep1}
            \mu_{W(x)}(f^{-n}\overline{C}_i) \sim_{1+O(\varepsilon^{\frac{1}{4}})} \mu_{W(x)}(\mathcal{G}(j,i)).
        \end{equation}
        Since $\mathcal{G}(j,i)$ is $\mathcal{F}^{cs,n}$-saturated, then applying (P8) of \cref{goodbox},
        \begin{equation}
            \label{IstarBstep2}
            \mu_{W(x)}(\mathcal{G}(j,i))\sim_{1+ \varepsilon}m_{C_j}(\mathcal{G}(j,i))  .
        \end{equation}
        Since $i$ is also in $I_n^*(j)$, then
        \begin{equation}
        \label{IstarBstep3}
            m_{C_j}(\mathcal{G}(j,i)) \sim_{1+O(\varepsilon^{\frac{1}{2}})} 
            m_{C_j}(f^{-n}\overline{C}_i).
        \end{equation}
        By \cref{mixtubesB},
        \begin{equation}
        \label{IstarBstep4}
            m_{C_j}(f^{-n}\overline{C}_i) \sim_{1+O(\varepsilon)} \mu(C_i).
        \end{equation}
        Putting together \eqref{IstarBstep1}, \eqref{IstarBstep2}, \eqref{IstarBstep3} and \eqref{IstarBstep4}, we get
        \begin{equation*}
            \mu_{W(x)}(f^{-n}\overline{C}_i) \sim_{1+O(\varepsilon^{\frac{1}{4}})} \mu(\overline{C}_i).
        \end{equation*}
        Now \eqref{IstarBbig2} and the above imply that
        \begin{equation*}
            \mu\left(\bigcup_{i \in I_n^*(x)}\overline{C}_i\right) \sim_{1+O(\varepsilon^{\frac{1}{4}})} 1.
        \end{equation*}
        This, together with the previous estimate, implies (E1) and finishes the proof.
        \end{proof}

		\subsection{Equidistribution of unstable manifolds along thin sets}

		We now pass from equidistribution of unstables along exponentially small boxes from the previous section 
		to equidistribution along thin fake center stable saturated subsets of those exponentially small boxes.

		\begin{lemma}
		\label{MainLemma2}
        Suppose either that $f$ is exponentially mixing with respect to an SRB measure $\mu$
    or that volume is almost exponentially mixing and let $\mu$ be the limit SRB measure.
    Let $\varepsilon>0$ and $(\xi_m)_{m \in \mathbb{N}}$ be a sequence of positive numbers satisfying
\begin{equation}
\label{scales52}
    e^{-\frac{\beta}{4d+2}m}
    \ll \xi_m
    \ll e^{-(\delta+2\sqrt{\delta})m},
\end{equation}
where $\delta = \varepsilon^{100^{100}}$.
Then, there exists $n_0 = n_0(\xi,\varepsilon)>0 $
    such that for each $n>n_0$, 
    there exists $\mathcal{K}_n \subset M$,
    a measurable u-subordinate partition $\{W_n(x)\}_{x \in T_n}$ of $\mathcal{K}_n$ and 
    a finite family of pairwise disjoint subsets $\{B_j\}_{j \in J_n}$ of $M$ satisfying
    \begin{enumerate}
	    \item[(U1)] $\mu(\mathcal{K}_n) > 1 - \varepsilon$;
	    \item[(U2)] $W_n(x) \subset W^u(x)$ is a piece of unstable manifold of size $\sim_{1+ \varepsilon} \xi_{\floor{\varepsilon n}}$, for each $x \in T_n$;
			\item[(B1)]  for $x \in T_{n}$,
				there exists $J_{n}'(x) \subset J_{n}$ such that
				\begin{equation*}
					\mu\left(\underset{i \in J_{n}'(x)}{\bigcup}B_i\right) 
					> 1 - \varepsilon
				\end{equation*}
			and for $j \in J_{n}'(x)$,
			\begin{equation}
				\label{EE}
				\mu_{W_{n}(x)}(f^{-\floor{\varepsilon n}}B_j)
				\sim_{1+\varepsilon} \mu(B_j).
			\end{equation}
			\item[(B2)]  for $j \in J_{n}$, $x,y \in B_j$ and $0 \leq k \leq n$,
				\begin{equation*}
				d(f^kx,f^ky) < \varepsilon.
			\end{equation*}
		\end{enumerate}
		\end{lemma}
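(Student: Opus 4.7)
The plan is to apply \cref{MainLemma} at the intermediate time $m = \floor{\varepsilon n}$, then inside each good box $\overline{C}_i$ produced by it carve out thin fake-cs-saturated slabs along the unstable direction; these slabs will be the $B_j$. Property (B1) will follow from equidistribution of unstables along $\overline{C}_i$ together with a crossing argument (since $f^m W_n(x)$ is much longer than the unstable size $\overline{\xi}_m$ of $\overline{C}_i$), while (B2) comes from the thinness of the slabs combined with a fake cs foliation on each $\overline{C}_i$ that controls iteration up to the \emph{full} time $n$.

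I first pick outer scales $(\overline{l}_m, \overline{r}_m, \overline{\xi}_m)$ satisfying the conditions of \cref{MainLemma} with equidistribution time $m$, with the extra requirement $\overline{\xi}_m \ll e^{-2\sqrt{\delta}n}$; since $\delta = \varepsilon^{100^{100}} \ll \beta \varepsilon$, this is consistent with $\overline{\xi}_m \gg (e^{2\sqrt{\delta}m}\xi_m)^{1/(1+\alpha)}$ enforced by \cref{MainLemma}. I also fix an unstable slab width $s_n = \tfrac{\varepsilon}{2}e^{-\Lambda n}$ with $\Lambda$ an upper bound on $\log\|Df\|_{C^0}$. I apply \cref{goodbox} at the outer scales to obtain $\{\overline{C}_i\}_{i\in I}$, set $H_n = f^{-m}\bigl(\bigcup_i \partial_{\overline{l}_m}\overline{C}_i\bigr)$ (so that $\mu(H_n) < \varepsilon^{1000}$ by $f$-invariance of $\mu$ and (P4)), and apply \cref{MainLemma} with these scales and this $H_n$. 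This gives $\mathcal{K}_n, T_n, \{W_n(x)\}_{x\in T_n}$ satisfying (U1), (U2), together with $I_n^\ast(x) \subset I$ such that $\mu\bigl(\bigcup_{i \in I_n^\ast(x)}\overline{C}_i\bigr) > 1 - \varepsilon$ and $\mu_{W_n(x)}(f^{-m}\overline{C}_i) \sim_{1+\varepsilon} \mu(\overline{C}_i)$ for $i \in I_n^\ast(x)$, and additionally $\mu_{W_n(x)}(H_n) < \varepsilon$.

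Next I apply \cref{fakecsconstruction} with pullback time $n$, using the $\overline{C}_i$ as inner boxes and a fresh family of larger outer boxes at scale $\overline{\xi}_n'$ with $(e^{2\sqrt{\delta}n}\overline{\xi}_m)^{1/(1+\alpha)} \ll \overline{\xi}_n' \ll e^{-\delta n}$; consistency again uses $\delta \ll \beta \varepsilon$. This produces, on most of each $\overline{C}_i$, a fake cs foliation $\mathcal{F}^{cs,n}_i$ whose leaves satisfy (F3) for $0 \le k \le n$: $d(f^k y, f^k z) \le \tau^{-2}e^{2\sqrt{\delta}k}d(y,z)$. Inside each such $\overline{C}_i$, I partition the foliated portion into $\mathcal{F}^{cs,n}_i$-saturated slabs $B_j$ of unstable width $s_n$, discarding slabs meeting $\partial\overline{C}_i$ and the $\overline{C}_i$'s on which the fake cs is not well defined; the total discarded mass is $O(\varepsilon)$. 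For (B2), two points $y,z \in B_j$ can be joined by an unstable segment of length $\le s_n$ and a fake cs segment of length comparable to $\overline{r}_m$, so by (F3) and the $e^{\Lambda k}$ unstable expansion, for $0 \le k \le n$
\[
d(f^k y, f^k z) \le e^{\Lambda n}s_n + \tau^{-2}e^{2\sqrt{\delta}n}\overline{r}_m < \varepsilon.
\]

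For (B1), fix $x \in T_n$ and $i \in I_n^\ast(x)$. The image $f^m W_n(x)$ has unstable length at least $e^{(\lambda - \sqrt{\delta})m}\xi_m \gg \overline{\xi}_m$, and the bound $\mu_{W_n(x)}(H_n) < \varepsilon$ lets me discard the $O(\varepsilon)$-mass of $f^m W_n(x) \cap \overline{C}_i$ landing in $\partial_{\overline{l}_m}\overline{C}_i$. The remaining components of $f^m W_n(x) \cap \overline{C}_i$ fully traverse $\overline{C}_i$ in the unstable direction by (P6) of \cref{goodbox}, so each meets every slab $B_j \subset \overline{C}_i$ in a sub-piece of unstable length within $(1 \pm \varepsilon)s_n$. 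The Jacobian estimate \cref{JacHolderCor} together with \cref{SRBdensityHol} yield $\mu_{W_n(x)}(f^{-m}B_j) \sim_{1+O(\varepsilon)} (s_n/\overline{\xi}_m)\mu(\overline{C}_i)$, while the SRB local product structure (P8) of \cref{goodbox}, applied to the $\mathcal{F}^{cs,n}_i$-saturated $B_j$, gives $\mu(B_j) \sim_{1+\varepsilon}(s_n/\overline{\xi}_m)\mu(\overline{C}_i)$; hence $\mu_{W_n(x)}(f^{-m}B_j) \sim_{1+O(\varepsilon)}\mu(B_j)$. Collecting these good slabs across $i \in I_n^\ast(x)$ and applying \cref{AppendixLemma} yields $J_n'(x)$ with $\mu\bigl(\bigcup_{j\in J_n'(x)}B_j\bigr) > 1 - \varepsilon$. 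The principal obstacle is the crossing argument: quantitatively establishing that most components of $f^m W_n(x) \cap \overline{C}_i$ fully traverse the box requires (P6) for horizontality, the $H_n$ trick for boundary effects, and careful composition of the Jacobian, H\"older, and local-product errors; the extravagant choice $\delta = \varepsilon^{100^{100}}$ is what keeps all accumulated errors at the level $O(\varepsilon)$ rather than degrading to $O(\varepsilon^{1/k})$.
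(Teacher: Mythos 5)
Your proposal follows essentially the same route as the paper's proof of \cref{MainLemma2}: apply \cref{MainLemma} at the intermediate time $\floor{\varepsilon n}$, pull back a fake cs foliation from the full time $n$ via \cref{fakecsconstruction}, partition each $\overline{C}_i$ into $\mathcal{F}^{cs,n}_i$-saturated slabs of unstable width $\lesssim \varepsilon^2\sup\|Df\|^{-n}$, deduce (B2) from slab thinness plus the subexponential cs growth (F3), and deduce (B1) from the $\floor{\varepsilon n}$-time equidistribution, full crossing of unstable components through the box, and the SRB local product structure (P8). The one minor deviation is your choice of $H_n$: the paper additionally folds $\mathcal{P}_\tau^c \cup f^{-\floor{\varepsilon n}}\mathcal{P}_\tau^c$ into it so that each good component carries a reference point $w_l$ with $w_l$ and $f^{\floor{\varepsilon n}}w_l$ both Pesin-regular (needed to run \cref{JacHolderCor} and the holonomy estimates), but since you invoke exactly those estimates you would discover this when filling in details.
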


		\begin{proof}
        By \eqref{scales52}, we may find $(\overline{l}_m)_{m \in \mathbb{N}}, (\overline{r}_m)_{m \in \mathbb{N}},  (\overline{\xi}_m)_{m \in \mathbb{N}}$ satisfying \eqref{scaleslemma}. Denote by $n' = \floor{\varepsilon n}$.
        For $n'$ large enough, let $\{\overline{C}_i\}_{i \in I_{n'}}$ be good boxes from \cref{goodbox} for $(\overline{l}_{n'},\overline{r}_{n'},\overline{\xi}_{n'},\varepsilon^{2000})$.
        Let $H_{n'} = \mathcal{P}_{\tau}^c \cup f^{-n'}\mathcal{P}_{\tau}^c \cup f^{-n'}\left(\underset{i \in I_{n'}}{\bigcup}\tilde{T}_i\right)$.
        It satisfies $\mu(H_n) < \varepsilon^{1000}$ for $\tau>0$ small enough and $n$ large enough.
        By \cref{MainLemma}, for $n' = \floor{\varepsilon n}$ large enough, there exists $\tilde{\mathcal{K}}_{n'} \subset M$,
        a measurable u-subordinate partition
        $\{\tilde{W}_{n'}(x)\}_{x \in \tilde{T}_{n'}}$
        of $\tilde{\mathcal{K}}_{n'}$ satisfying (U1) to (U3) of \cref{MainLemma}
        and $I^*_{n'}(x) \subset I_{n'}$ satisfying (E1) of \cref{MainLemma}.
        Suppose we have chosen $\overline{l},\overline{r}$ and $\overline{\xi}$ also satisfying
        \begin{equation*}
            e^{-\beta m}
            \ll \overline{r}_{\floor{\varepsilon m}}^{4d}\overline{l}_{\floor{\varepsilon m}}^2
            \ll (\overline{\xi}_{\floor{\varepsilon m}})^{1+\alpha}
            \ll \overline{l}_{\floor{\varepsilon m}} \ \text{and} \
            e^{2\sqrt{\delta}m}\overline{\xi}_{\floor{\varepsilon m}} 
            \ll e^{-\delta m},
        \end{equation*}
        which is possible since $\delta$ is much smaller than $\varepsilon$.
        Then, we can find sequences of positive numbers $(\overline{\overline{l}}_m)_{m \in \mathbb{N}},
        (\overline{\overline{r}}_m)_{m \in \mathbb{N}}$ and $(\overline{\overline{\xi}}_m)_{m \in \mathbb{N}}$ satisfying
        \begin{equation*}
            e^{2\sqrt{\delta}m}\overline{\xi}_{\floor{\varepsilon m}}
        \ll (\overline{\overline{\xi}}_m)^{1+ \alpha}
        \ll \overline{\overline{l}}_m
        \ll \overline{\overline{r}}_m
        \ll \overline{\overline{\xi}}_m
        \ll e^{- \delta m}.
        \end{equation*}
        By \cref{fakecsconstruction}, for $n$ large enough and decreasing $I_{n'}$ if necessary,
        we may construct a $(\varepsilon,n)$-fake cs foliation on the boxes $\overline{C}_i$ for $i \in I_{n'}$.
			Let $\mathcal{F}^{cs,n}_i$ denote such a fake cs foliation on $\overline{C}_i$.
            Denote also $\mathcal{K}_n = \tilde{\mathcal{K}}_{n'}$, $T_n = \tilde{T}_{n'}$ and for $x \in T_n$, $W_n(x) = \tilde{W}_{n'}(x)$.
            Properties (U1) and (U2) follow from \cref{MainLemma}.
			For $x \in T_n$ and $i \in I^*_{n'}(x)$,
			decompose the following into its connected components
			\begin{equation*}
				W(x) \cap f^{-n'}\overline{C}_i
				= \underset{l \in L_x(i)}{\bigcup} G_l.
			\end{equation*}
			Define the following families of good and bad components
			\begin{align*}
				L^G_x(i) &= \{ l \in L_x(i) \ | \  G_l \cap H^c \neq \emptyset\} \ \text{and}\\
				L^B_x(i) &= \{ l \in L_x(i) \ | \ G_l \subset H \}.
			\end{align*}
			By (U3) of \cref{MainLemma},
			\begin{equation*}
				\mu_{W(x)}\left(\underset{i \in I^*_{n'}(x)}{\bigcup}
					\underset{l \in L^B_x(i)}{\bigcup} G_l\right)
					\leq \mu_{W(x)}(H) < \varepsilon^2.
			\end{equation*}
			By (E1) of \cref{MainLemma},
			\begin{equation*}
				\mu_{W(x)}\left(\underset{i \in I^*_{n'}(x)}{\bigcup}
					f^{-n'}\overline{C}_i\right) > \frac{1- \varepsilon}{1+ \varepsilon}.
			\end{equation*}
			Denote by 
			\begin{equation*}
				\label{goodls}
				I_n(x) = \left\{
					i \in I^*_{n'}(x) \ | \
					(\mu_{W(x)})_{f^{-n'}\overline{C}_i}
					\left(
					\underset{l \in L^B_x(i)}{\bigcup} G_l\right)
				< \varepsilon \right\}.
			\end{equation*}
			By \cref{AppendixLemma} and the two previous estimates,
			\begin{equation}
				\label{mostlgood}
				\frac{1}{1+4\varepsilon}
				\leq
				\frac{1- \varepsilon}{1+ \varepsilon} - \varepsilon
				\leq \mu_{W(x)}\left(\underset{i \in I_n(x)}{\bigcup}
					f^{-n'}\overline{C}_i\right).
			\end{equation}
            By (E1) of \cref{MainLemma}, 
            \begin{equation*}
                \sum_{i \in I_n} \mu(\overline{C}_i) \sim_{1+ \varepsilon}
                \sum_{i \in I_n} \mu_{W(x)}(f^{-n'}\overline{C}_i).
            \end{equation*}
            Therefore, by \eqref{mostlgood},
            \begin{equation}
            \label{Inxgood}
                \sum_{i \in I_n(x)}\mu(\overline{C}_i) \sim_{(1+4\varepsilon)^2} 1.
            \end{equation}
			Note that if $i \in I_n(x)$, then
			\begin{equation*}
				\label{mostlgood2}
				\mu_{W(x)}\left(\underset{l \in L^B_x(i)}{\bigcup}G_l\right)
				< \varepsilon \mu_{W(x)}(f^{-n'}\overline{C}_i).
			\end{equation*}
			This implies that
			\begin{equation}
				\label{mostlgood3}
				\mu_{W(x)}\left(\underset{l \in L^G_x(i)}{\bigcup}G_l\right)
                \sim_{(1+\varepsilon)^2} \mu_{W(x)}(f^{-n'}\overline{C}_i).
			\end{equation}
			Now fix $i \in I_n(x)$.
            For each $l \in L^G_x(i)$, take $w_l \in H_{n'}^c \cap G_l$.
			Define
			\begin{equation*}
				L^{\partial}_x(i) = \{ l \in L^G_x(i) \ | \ G_l \cap \partial W(x) \neq \emptyset\}.
			\end{equation*}
			If $l \in L^{\partial}_x(i)$, then take $x_l \in G_l\cap \partial W(x)$.
			Since $f^{n'}w_l \in \mathcal{P}_{\tau}$,
			given any other $y \in G_l$, we have 
			\begin{equation*}
				f^{n'}y,f^{n'}x_l \in W^u(f^{n'}w_l).
			\end{equation*}
            By \cref{unstable}, this implies that
			\begin{equation*}
				d(y,x_l) \leq \tau^{-1} e^{(- \lambda+\sqrt{\delta})n'}.
			\end{equation*}
			Therefore,
			\begin{equation*}
				\underset{l \in L^{\partial}_x(i)}{\bigcup}G_l \subset \partial_{\tau^{-1}e^{(-\lambda + \sqrt{\delta})n'}} W(x).
			\end{equation*}
            Since $\mu$ is an SRB measure, then
			\begin{equation*}
		          \mu_{W(x)}\left(\underset{l \in L^{\partial}_x(i)}{\bigcup}G_l\right) = O(e^{(-\lambda + \sqrt{\delta})n'}(\overline{\xi}_{n'})^{-u}).
			\end{equation*}
			By \eqref{mostlgood3},
			\begin{equation*}
				\frac{1}{(1+\varepsilon)^2}
				< \frac{\mu_{W(x)}\left(\underset{l \in L^G_x(i)\setminus L^{\partial}_x(i)}{\bigcup}G_l\right)}{\mu(\overline{C}_i)} 
				+ \frac{\mu_{W(x)}\left(\underset{l \in L^{\partial}_x(i)}{\bigcup}G_l\right)}{\mu(\overline{C}_i)} 
				< (1+\varepsilon)^2.
			\end{equation*}
			By (P2) of \cref{goodbox}, $D\overline{\xi}^{2d}_{n'} \leq \mu(\overline{C}_i)$. Therefore,
			\begin{equation*}
				\frac{\mu_{W(x)}\left(\underset{l \in L^{\partial}_x(i)}{\bigcup}G_l\right)}{\mu(\overline{C}_i)} 
				\leq \frac{1}{D\overline{\xi}_{n'}^{2d}}O(e^{(-\lambda + \sqrt{\delta})n'}(\overline{\xi}_{n'})^{-u}) \rightarrow 0.
			\end{equation*}
			Therefore, taking $n$ larger if necessary
			and denoting by $L^*_x(i) = L^G_x(i) \setminus L^{\partial}_x(i)$, we may assume that
			\begin{equation}
				\label{fullycross}
				\mu_{W(x)}\left(\underset{l \in L^*_x(i)}{\bigcup}G_l\right) \sim_{(1+\varepsilon)^3} \mu(\overline{C}_i).
			\end{equation}
			Take $\overline{x}_i \in \overline{C}_i \cap \mathcal{P}_{\tau}$.
			Recall that $W^u(\overline{x}_i) \cap \mathcal{F}_i^{cs,n}$ 
			is an open set in $W^u(\overline{x}_i)$ such that
			\begin{equation*}
				m^u_{\overline{x}_i}(\mathcal{F}_i^{cs,n})
				\sim_{1 + \varepsilon} m^u_{\overline{x}_i}(\overline{C}_i).
			\end{equation*}
			Therefore, we can find $k$ pairwise disjoint subsets of $ W^u(\overline{x}) \cap \mathcal{F}_i^{cs,n}$ denoted by
			$\{\hat{B}_{i,j}\}_{j \in J_n(i)}$ 
			with diameter at most $ \varepsilon^2\underset{x \in M}{\sup} \|D_xf\|^{-n}$
			and all of the same $m^u_{\overline{x}_i}$ measure such that
			\begin{equation}
				\label{Bsati}
				m^u_{\overline{x}_i}\left(\underset{j \in J_n(i)}{\bigcup}
					\hat{B}_{i,j}\right)
					\sim_{1 + \varepsilon}
					m^u_{\overline{x}_i}(\mathcal{F}^{cs,n}_i)
					\sim_{1+ \varepsilon}
					m^u_{\overline{x}_i}(\overline{C}_i).
			\end{equation}
			Define
			\begin{equation*}
				B_{i,j} = \underset{y \in \hat{B}_{i,j}}{\bigcup} \mathcal{F}^{cs,n}_i(y) \cap \tilde{T}_i.
			\end{equation*}
			These will be the desired collection of pairwise disjoint sets for the lemma.
			In order to verify (B2),
			let $z_1,z_2 \in B_{i,j}$ such that $z_v \in \mathcal{F}^{cs,n}_i(y_v)$,
			for $v =1,2$.
            Then using (F3) of \cref{fakecsconstruction} and the hypothesis on the diameter of the sets $\hat{B}_{i,j}$, for $0 \leq k \leq n$,
			\begin{align*}
				d(f^kz_1,f^kz_2) 
				& \leq d(f^kz_1,f^ky_1) + d(f^ky_1,f^ky_2) + d(f^ky_2,f^kz_2) \\
				& \leq 2\tau^{-2}\overline{r}_ne^{2 \sqrt{\delta}k} + \varepsilon^2 \underset{x \in M}{\sup} \|D_xf\|^{ k-n} \\
				& \leq 2\tau^{-2} \overline{r}_n e^{2 \sqrt{\delta}n} + \varepsilon^2 < \varepsilon,
			\end{align*}
			for $n$ large enough since $\overline{r}_n \ll e^{-2\sqrt{\delta} n}$.
			To begin verifying (B1), we have
			\begin{align*}
				\mu\left(\underset{j \in J_n(i)}{\bigcup} B_{i,j}\right)
				& \sim_{1+ \varepsilon} \mu(\tilde{T}_i)
				\mu_{\overline{W}(\overline{x}_i)}
				\left(\underset{j \in J_n(i)}{\bigcup}\hat{B}_{i,j}\right) 
				&  \text{by (P8) of \cref{goodbox}} \\
				& \sim_{1 + \varepsilon} 
				\mu(\tilde{T}_i) \frac{1}
				{m^u_{\overline{x}_i}(\overline{W}(\overline{x}_i))}
				m^u_{\overline{x}_i}\left( 
					\underset{j \in J_n(i)}{\bigcup}\hat{B}_{i,j}\right)
				&  \text{by \cref{SRBdensityHol}} \\
				& \sim_{(1 + \varepsilon)^2} 
				\mu(\tilde{T}_i)
				& \text{by \eqref{Bsati}} \\
				& \sim_{1 +  \varepsilon} \mu(\overline{C}_i)
				& \text{by (P7) of \cref{goodbox}}.
			\end{align*}
			Therefore,
			\begin{equation}
				\label{BsCi}
				\mu\left(\underset{j \in J_n(i)}{\bigcup} B_{i,j}\right)
				\sim_{(1+\varepsilon)^5}
				\mu(\overline{C}_i).
			\end{equation}
			Summing over $i \in I_n(x)$, by \eqref{Inxgood} we get
			\begin{equation}
				\label{Bbig}
				\mu\left(\underset{i \in I_n'(x)}{\bigcup}
					\underset{j \in J_n(i)}{\bigcup} B_{i,j}\right)
				\sim_{(1+ \varepsilon)^5}
				\sum_{i \in I_n(x)} \mu(\overline{C}_i)
				\sim_{1+ \varepsilon} 1.
			\end{equation}
			We will now verify the equidistribution condition of $W_n(x)$, that is, the second part of (B1).
			For $x \in T_n$ and $i \in I_n(x)$, let
			\begin{equation*}
				B^{x}_{i,j} = B_{i,j} \setminus \underset{l \in L^{\partial}_x(i)}{\bigcup}W^u(f^{n'}w_l).
			\end{equation*}
            Notice that since unstable manifolds have $\mu$ measure $0$, then $\mu(B_{i,j})=\mu(B_{i,j}^x)$.
            Note also that for $x \in T_n$,
            \begin{equation*}
            	W(x)\cap f^{-n'}B_{i,j}^x = 
            	\bigcup_{l \in L^*_x(i)}G_l \cap f^{-n'}B_{i,j}^x 
            	\bigcup_{l \in L^{\partial}_x(i)}G_l \cap f^{-n'}B_{i,j}^x
            	\bigcup_{l \in L^B_x(i)}G_l \cap f^{-n'}B_{i,j}^x.
            \end{equation*}
            But, we removed the components intersecting $G_l$ for $l \in L^{\partial}_x(i)$ and for $l \in L^B_x(i)$, $f^{n'}G_l \cap \tilde{T}_i = \emptyset$. Therefore,
            \begin{equation}
            	\label{ignoreL}
            	W(x)\cap f^{-n'}B_{i,j}^x =
            	\bigcup_{l \in L^*_x(i)}G_l \cap f^{-n'}B_{i,j}^x.
            \end{equation}
			For $j \in J_n(i)$, $B_{i,j}^x$ is, up to union of finitely many unstable manifolds, the intersection of a $\mathcal{F}_i^{cs,n}$-saturated set with $\tilde{T}_i$.
            Therefore, by (P8) of $\cref{goodbox}$
			\begin{equation*}
				\mu(B^x_{i,j}) 
				\sim_{1+ \varepsilon}
				\mu(\tilde{T}_i) \mu_{\overline{W}(\overline{x}_i)}
				(\hat{B}_{i,j}).
			\end{equation*}
			In partiular, for $j,j' \in J_n(i)$,
			\begin{equation}
				\label{Bsaresame}
				\mu(B^x_{i,j})
				\sim_{(1+ \varepsilon)^4} \mu(B^x_{i,j'}).
			\end{equation}
			If $l \in L_x^*(i)$, then $W^u(f^{n'}w_l)$ is a $(\alpha,1)$-mostly horizontal submanifold.
            Therefore, by (F2) of \cref{fakecsconstruction} and \eqref{Bsati}
			\begin{equation*}
				m^u_{f^{n'}w_l}\left(\underset{j \in J_n(i)}{\bigcup}B^x_{i,j}\right)
				\sim_{(1+ \varepsilon)^4}
				m^u_{f^{n'}w_l}(\overline{C}_i).
			\end{equation*}
			Increasing $n$ if necessary, by \cref{JacHolderCor} and using that $w_l \in \mathcal{P}_{\tau}$,we may assume that for $ y \in \overline{W}(f^{n'}w_l)$
			\begin{equation}
			\label{JacClose}
				\text{Jac}(D_y(f^{-n'}|_{E^u(y)}))
				\sim_{1+\varepsilon} \text{Jac}(D_{f^{n'}w_l}(f^{-n'}|_{E^u(f^{n'}w_l)})).
			\end{equation}
			Therefore,
			\begin{align*}
				m^u_x\left(G_l\cap f^{-n'}
				\left(\underset{j \in J_n(i)}{\bigcup}B^x_{i,j}\right)
				\right) 
				& = \int_{\overline{W}(f^{n'}w_l)\cap (\cup_{j \in J(i)} B^x_{i,j})}
				\text{Jac}(D_y(f^{-n'}|_{E^u(y)})) dm^u_{f^{n'}w_l}(y) \\
				& \sim_{1+ \varepsilon}
				\text{Jac}(D_{f^{n'}w_l}(f^{-n'}|_{E^u(f^{n'}w_l)})) m^u_{f^{n'}w_l}
				\left( \underset{j \in J_n(i)}{\bigcup}B^x_{i,j}\right)\\
				& \sim_{(1+ \varepsilon)^4}
				\text{Jac}(D_{f^{n'}w_l}(f^{-n'}|_{E^u(f^{n'}w_l)})) m^u_{f^{n'}w_l}
				(\overline{C}_i)  \\
				& = \int_{\overline{W}(f^{n'}w_l)} \text{Jac}(D_{f^{n'}w_l}(f^{-n'}|_{E^u(f^{n'}w_l)})) dm^u_{f^{n'}w_l}(y) \\
				& \sim_{1+\varepsilon} \int_{\overline{W}(f^{n'}w_l)} \text{Jac}(D_y(f^{-n'}|_{E^u(y)})) dm^u_{f^{n'}w_l}(y) \\
				& =
				m^u_x(f^{-n'}\overline{W}(f^{n'}w_l)) = m^u_x(G_l).
			\end{align*}
			Therefore,
			dividing the above by $m^u_x(W(x))$ and using \cref{SRBdensityHol}, we get that for $l \in L^*_x(i)$,
			\begin{equation*}
				\mu_{W(x)}\left(G_l\cap f^{-n'}
				\left(\underset{j \in J_n(i)}{\bigcup}B^x_{i,j}\right)
				\right) 
				\sim_{(1+ \varepsilon)^8} \mu_{W(x)}(G_l).
			\end{equation*}
			In particular, summing the above over $l \in L^*_x(i)$, using \eqref{ignoreL} and \eqref{fullycross},
			\begin{equation}
				\label{mostGl}
				\mu_{W(x)}\left(\bigcup_{j \in J_n(i)}f^{-n'}B^x_{i,j}\right)
				\sim_{(1+\varepsilon)^{11}}
				\mu(\overline{C}_i)
			\end{equation}
			For $l \in L^*_x(i)$, $j, j' \in J_n(i)$,
			\begin{align*}
				m^u_x(G_l \cap f^{-n'}B^x_{i,j}) & = \int_{B^x_{i,j}} \text{Jac}(D_y(f^{-n'}|_{E^u(y)})) dm^u_{f^{n'}w_l}(y),
								& \text{since} \ f^{n'}G_l = \overline{W}(f^{n'}w_l) \\
								& \sim_{1+ \varepsilon} \text{Jac}(D_{f^{n'}w_l}(f^{-n'}|_{E^u(y)})) m^u_{f^{n'}w_l}(B^x_{i,j}),
							 	& \text{by \eqref{JacClose}} \\
								& \sim_{1 + \varepsilon} \text{Jac}(D_{f^{n'}w_l}(f^{-n'}|_{E^u(y)}))  m^u_{\overline{x}_i}(\hat{B}_{i,j}),
								& \text{by (F2) of \cref{fakecsconstruction}} \\
								& = \text{Jac}(D_{f^{n'}w_l}(f^{-n'}|_{E^u(y)}))  m^u_{\overline{x}_i}(\hat{B}_{i,j'}),
								& \text{by construction of} \  \hat{B}_{i,j} \\
								& \sim_{1+ \varepsilon} \text{Jac}(D_{f^{n'}w_l}(f^{-n'}|_{E^u(y)})) m^u_{f^{n'}w_l}(B^x_{i,j'}),
							 	& \text{by (F2) of \cref{fakecsconstruction}}  \\
								& \sim_{1 + \varepsilon} \int_{B^x_{i,j'}} \text{Jac}(D_y(f^{-n'}|_{E^u(y)})) dm^u_{f^{n'}w_l}(y),
								& \text{by \eqref{JacClose}} \\
								& = m^u_x(G_l \cap f^{-n'}B^x_{i,j'}),
								& \text{since} \ f^{n'}G_l = \overline{W}(f^{n'}w_l).
			\end{align*}
			Therefore,
			\begin{align*}
				\mu_{W(x)}(f^{-n'}B^x_{i,j}) & \sim_{1 + \varepsilon} \frac{1}{m^u_x(W(x))} m^u_x(f^{-n'}B^x_{i,j}), & \text{by \cref{SRBdensityHol}} \\
							  & =  \frac{1}{m^u_x(W(x))} \sum_{l \in L_x^*(i)} m^u_x(G_l \cap f^{-n'}B^x_{i,j}),
							  & \text{by \eqref{ignoreL}} \\
							  & \sim_{(1 + \varepsilon)^4} \frac{1}{m^u_x(W(x))} \sum_{l \in L^*_x(i)} m^u_x(G_l \cap f^{-n'}B^x_{i,j'}),
							  & \text{by above estimates} \\
							  & = \frac{1}{m^u_x(W(x))} m^u_x(f^{-n'}B^x_{i,j'}),
							  & \text{by \eqref{ignoreL}} \\
							  & \sim_{1+ \varepsilon} \mu_{W(x)}(f^{-n'}B^x_{i,j'}),
							  & \text{by \cref{SRBdensityHol}}.
			\end{align*}
			This means that
			\begin{align*}
				k \mu_{W(x)}(f^{-n'}B^x_{i,j_0}) & \sim_{(1+ \varepsilon)^6} \sum_{j \in J(i)} \mu_{W(x)}(f^{-n'}B^x_{i,j}),
								& \text{by above estimates} \\
								& =  \mu_{W(x)}\left(\underset{j \in J(i)}{\bigcup}f^{-n'}B^x_{i,j} \right) \\ 
							       & \sim_{(1+\varepsilon)^{11}} \mu(\overline{C}_i),
							       & \text{by \eqref{mostGl}}\\
							       & \sim_{(1+ \varepsilon)^5} \sum_j \mu(B^x_{i,j}),
							       & \text{by \eqref{BsCi}}\\
							       & \sim_{(1+ \varepsilon)^4} k \mu(B^x_{i,j_0}),
							       & \text{by \eqref{Bsaresame}}.
			\end{align*}
			Therefore,
			\begin{equation}
				\label{Bjxequi}
				\mu_{W(x)}(f^{-n'}B^x_{i,j_0}) \sim_{(1+\varepsilon)^{26}} \mu(B_{i,j_0}).
			\end{equation}
			We want the above to hold for $B_{i,j}$ instead of $B_{i,j}^x$.
			In order to simplify notation, rename the indices to not include $i$ anymore.
			For $j\in J_n(x)$, let
			\begin{equation*}
				f^{-n'}B_j = f^{-n'}B_j^x \cup \Delta_j.
			\end{equation*}
			We may write
			\begin{equation*}
				\mu_{W(x)}(f^{-n'}B_j) = \mu_{W(x)}(f^{-n'}B_j^x) + \delta_j,
			\end{equation*}
			where $\mu_{W(x)}(\Delta_j) = \delta_j$. Then by \eqref{Bbig} and \eqref{Bjxequi}
			\begin{align*}
				\frac{1}{(1+ \varepsilon)^{32}} + \sum_{j \in J_n(x)} \delta_j
				& \leq \frac{1}{(1+ \varepsilon)^{26}}\sum_{j \in J_n(x)} \mu(B_j) + \sum_{j \in J_n(x)} \delta_j \\
				& \leq \sum_{j \in J_n(x)} \mu_{W(x)}(f^{-n'}B_j^x) + \sum_{j \in J_n(x)} \delta_j \leq 1.
			\end{align*}
			Denote by 
			\begin{equation*}
				\Delta = \underset{j \in J_n(x)}{\bigcup}\Delta_j.
			\end{equation*}
			Then,
			\begin{equation*}
				\mu_{W(x)}(\Delta) \leq 1 - \frac{1}{(1+ \varepsilon)^{32}} \ \text{and}
				\ \mu_{W(x)}\left(\underset{j \in J_n(x)}{\bigcup}f^{-n'}B_j\right) > \frac{1}{(1+ \varepsilon)^{26}}.
			\end{equation*}
			By \cref{AppendixLemma},
			denoting by
			\begin{equation*}
				\label{Jprime}
				J'_n(x) = \{ j \in J_n(x) \ | \ \mu_{W(x)}(\Delta_j) < \sqrt{ \varepsilon} \mu_{W(x)}(f^{-n'}B_j) \},
			\end{equation*}
			we have
			\begin{equation}
				\label{Jprimebig}
					 \frac{1}{(1+ \varepsilon)^{26}} - \frac{(1+ \varepsilon)^{32}-1}{\sqrt{ \varepsilon}(1+ \varepsilon)^{32}}
				\leq \mu_{W(x)}\left(\underset{j \in J'_n(x)}{\bigcup}f^{-n'}B_j\right).
			\end{equation}
			Notice that the left hand side of the above goes to $1$ as $ \varepsilon$ goes to $0$.
			Now the condition for $j \in J_n'(x)$ allows us translate equidistribution on $B_j^x$ to $B_j$. Indeed,
			\begin{equation*}
				\mu_{W(x)}(f^{-n'}B_j^x)
				\leq \mu_{W(x)}(f^{-n'}B_j) 
				\leq (1+\sqrt{ \varepsilon})\mu_{W(x)}(f^{-n'}B_j^x).
			\end{equation*}
			Therefore,
			\begin{equation}
				\label{BjsameBjx}
				\mu_{W(x)}(f^{-n'}B_j) \sim_{1+ \sqrt{ \varepsilon}} \mu_{W(x)}(f^{-n'}(B_j^x)).
			\end{equation}
			Combining \eqref{BjsameBjx} with \eqref{Bjxequi}, we get
			\begin{equation}
				\label{Bequi}
				\mu_{W(x)}(f^{-n'}B_j) \sim_{(1+ \sqrt{\varepsilon})^{27}} \mu(B_j).
			\end{equation}
			And combining \eqref{Bequi} with \eqref{Jprimebig},
			\begin{equation*}
				\frac{1}{(1+ \sqrt{ \varepsilon})^{27}} \left(\frac{1}{(1+ \varepsilon)^{26}} - \frac{(1+ \varepsilon)^{32}-1}{\sqrt{ \varepsilon}(1+ \varepsilon)^{32}}\right)
				\leq \mu\left(\underset{j \in J'_n(x)}{\bigcup} B_j\right).
			\end{equation*}
			These show (B1) and concludes the proof of the Lemma.
				\end{proof}

	\section{From equidistribution of unstables to Bernoulli}
	\label{sec:Exponential Equidistribution implies Bernoulli}
	This section is dedicated to proving the following Proposition.

	\begin{proposition}
		\label{EBB}
		Let $M$ be a compact Riemannian manifold,
		$f \colon M \to M$ be a $C^{1+ \alpha}$ diffeomorphism 
		and $\mu$ be an $f$-invariant, nonatomic, Borel, ergodic probability measure
		with at least one positive Lyapunov exponent.
		Suppose that
		for every $ \varepsilon >0$,
		there exists a sequence $\xi_n$ of positive numbers
		converging to $0$
		and $n_0 \in \mathbb{N}$ such that 
		for all $n > n_0$, there exists $K_n \subset M$,
		a measurable partition $\{W_n(x)\}_{x \in T_n}$ of $K_n$,
		a disintegration $\{\mu_{W(x)}\}_{x \in T_n}$ of the measure $\mu$ subordinate to the partition $\{W_n(x)\}_{x \in T_n}$
		and a finite family of pairwise disjoint subsets $\{B_i\}_{i \in I_n}$ of $M$ satisfying:
		\begin{enumerate}
			\item[(U1)] $\mu(K_n) > 1- \varepsilon$;
			\item[(U2)] $W_n(x)$ is a piece of unstable manifold 
				of size $\sim_{1 + \varepsilon}\xi_n$, for each $x \in T_n$;
			\item[(B1)]  for $x \in T_n$,
				there exists $I_n(x) \subset I_n$ such that
				\begin{equation*}
					\mu\left(\underset{i \in I_n(x)}{\bigcup}B_i\right) 
					> 1 - \varepsilon
				\end{equation*}
			and for $i \in I_n(x)$,
			\begin{equation}
				\label{EE}
				\mu_{W_n(x)}(f^{-\floor{\varepsilon n}}B_i)
				\sim_{1+\varepsilon} \mu(B_i).
			\end{equation}

			\item[(B2)]  for $i \in I_n$, $x,y \in B_i$ and $0 \leq k \leq n$,
				\begin{equation*}
				d(f^kx,f^ky) < \varepsilon;
			\end{equation*}
		\end{enumerate}
		Then $(f,\mu)$ is Bernoulli.
		
	\end{proposition}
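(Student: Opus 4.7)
The plan is to verify the Ornstein–Weiss very weak Bernoulli criterion, i.e.\ to apply \cref{vwBimpliesB} together with \cref{vwBnice}. First, using \cref{Dnicelemma}, I will produce a sequence of regular partitions $\mathcal{P}_k$ of $M$ whose diameters tend to $0$; since $\mu$ is nonatomic this sequence converges to the point partition, so it is enough to show that each $\mathcal{P}_k$ is very weak Bernoulli via \cref{vwBnice}. Fix $k$ and $\varepsilon > 0$ and pick an auxiliary $\delta \ll \varepsilon$. I then invoke the hypothesis of \cref{EBB} with $\delta$ in place of $\varepsilon$ to obtain, for every $n > n_0$, the data $K_n$, $\{W_n(x)\}_{x\in T_n}$ and $\{B_j\}_{j\in I_n}$ satisfying (U1)--(B2). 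I set $N = \tilde N = n_0 + 1$, and for an atom $A$ of $\bigvee_{i=N}^{N'}f^i\mathcal{P}_k$ and $S \geq \tilde N$ I will choose the free parameter $n = n(S) = S$, writing $n' = \floor{\delta n}$.

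The map $\theta = \theta(N,S,A)\colon (A,\mu_A) \to (X,\mu)$ will be built by coupling through the $B_j$'s: on each slice $A \cap f^{-n'}B_j$ with $j$ in a good subset $I_n(A) \subset I_n$, I will take $\theta$ to be a measure preserving identification with $f^{-n'}B_j$ (which exists by the isomorphism theorem for atomless Lebesgue spaces), and I will extend $\theta$ arbitrarily on the remaining piece of $A$. For $x \in A \cap f^{-n'}B_j$ with $j \in I_n(A)$, both $f^{n'}x$ and $f^{n'}\theta(x)$ lie in $B_j$, so (B2) forces $d(f^{n'+i}x, f^{n'+i}\theta(x)) < \delta < \varepsilon$ for every $i \in [0,n]$. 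With $n = S$ and $n' \leq \delta S$, the window $[n', n'+n]$ intersected with $\{0,\ldots,S-1\}$ has length at least $S(1-\delta)$, which gives \eqref{close}. That $\theta$ is $\varepsilon$-almost measure preserving will reduce to the atomic equidistribution claim: for a $(1-\varepsilon)$-fraction of atoms $A$ there exists $I_n(A) \subset I_n$ with $\mu(\bigcup_{j\in I_n(A)}B_j) > 1-\varepsilon$ and
\[
\mu_A(f^{-n'}B_j) \sim_{1+\varepsilon} \mu(B_j) \qquad (j\in I_n(A)).
\]

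The hardest step will be proving this atomic equidistribution. Hypothesis (B1) gives the analogous statement on unstable leaves via $\mu_{W(x)}$, but $\mu_A$ is a conditional on a future-refining atom rather than on an unstable leaf. My plan is a Fubini/disintegration argument built on the given disintegration $\{\mu_{W(x)}\}$ of $\mu|_{K_n}$ along $\{W_n(x)\}_{x\in T_n}$: writing
\[
\mu(A \cap f^{-n'}B_j \cap K_n) = \int_{T_n} \mu_{W(x)}(A\cap f^{-n'}B_j)\,d\nu(x),
\]
with $\nu$ the transverse factor measure, I will apply \cref{AppendixLemma} twice---once to the outer integral to discard atoms $A$ which either meet too much of $K_n^c$ or cross too many leaves on which (B1) fails, and once leafwise to discard indices $j$ for which the leafwise proportionality is bad. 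This will produce the claim with total error $O(\sqrt{\delta})$, which is absorbed by $\delta \ll \varepsilon$. The delicate point is the mismatch of scales between the fixed $\mathcal{P}_k$ and the $n$-dependent leaves $W_n(x)$: for $n$ large enough, the regularity of $\mathcal{P}_k$ makes boundary contributions of $\mathcal{P}_k$-atoms negligible on a typical leaf, so a typical atom $A$ of $\bigvee_{i=N}^{N'}f^i\mathcal{P}_k$ inherits the equidistribution of the typical leaves it meets. Once this atomic equidistribution is established, verifying the remaining conditions of \cref{vwBnice} is routine bookkeeping.
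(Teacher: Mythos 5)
Your overall strategy coincides with the paper's (verify \cref{vwBnice} with a $D$-nice partition, after establishing $u$-saturation of the future partition so that $\mu_A$ disintegrates along unstable leaves), but your construction of the $\varepsilon$-almost measure preserving map $\theta$ takes a genuinely different route, and it is worth comparing.

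The paper builds $\theta$ \emph{leaf-by-leaf}: it first chooses a measure-preserving map $\theta_A\colon (T_A,\nu_{T_A}) \to (T_n,\nu_{T_n})$ on the transverse factor, and then, for each $x$, a map $\theta_{x,\theta_A x}\colon W_n(x) \to W_n(\theta_A x)$. The latter is the content of \cref{matching}, which pairs the slices $W_n(x)\cap f^{-n'}B_i$ with $W_n(x')\cap f^{-n'}B_i$ for $i$ in a common good index set $I_n(x)\cap I_n(x')$. This uses \emph{only} the leafwise equidistribution (B1), and never needs to know $\mu_A(f^{-n'}B_i)$. Because the map is built on $K_A = \bigcup_{x\in T_A}W_n(x)$, the sharper (multiplicative) case of \cref{unstdisint} applies to $\theta^{-1}B$, and the bookkeeping closes cleanly.

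Your proposal instead matches \emph{boxes}: you want to identify $A\cap f^{-n'}B_j$ with $f^{-n'}B_j$, which requires the atomic equidistribution $\mu_A(f^{-n'}B_j)\sim_{1+\varepsilon}\mu(B_j)$. You correctly flag this as the hardest step, but the difficulty is sharper than your sketch acknowledges. The obstruction is that the $B_j$ are \emph{exponentially small} (they are built from good boxes at scale $\overline{\xi}_n$), while the error term $\mu_A^{\text{error}}$ from \cref{unstdisint} is only bounded \emph{absolutely} by $O(\varepsilon)$ on sets not contained in $K_A$, and the "bad leaf" set $\{x : j\notin I_n(x)\}$ is controlled only in $\nu_{T_A}$-measure and not in terms of $\mu(B_j)$. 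A single Fubini + Markov pass controls the $\nu_{T_A}$-fraction of bad leaves and the total leakage, but on the bad leaves $\mu_{W(x)}(f^{-n'}B_j)$ can be as large as $1 \gg \mu(B_j)$, so the naive estimate gives only an additive error of order $\sqrt{\delta}$, which swamps $\mu(B_j)$. The rescue is possible but is a \emph{two-sided} argument that is not spelled out: one first gets the lower bound $\int_{T_A}\mu_{W(x)}(f^{-n'}B_j)\,d\nu_{T_A}\geq (1-O(\sqrt{\delta}))\mu(B_j)$ for most $j$ directly from (B1), and then extracts the upper bound by combining that lower bound with the global constraint $\sum_j\int_{T_A}\mu_{W(x)}(f^{-n'}B_j)\,d\nu_{T_A}\leq 1$ and $\sum_j\mu(B_j) > 1-\delta$, followed by another Markov pass in the weight $\mu(B_j)$. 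This works, but costs you a power of $\delta$ (you end with $\sim_{1+\delta^{1/4}}$ rather than $\sim_{1+\delta}$), and it is precisely the step the paper engineers around by matching leaves rather than boxes.

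In short: your route is correct and can be pushed through, but the atomic equidistribution requires a nontrivial lower-plus-upper Markov argument that is missing from the sketch, and the paper's leaf-matching via \cref{matching} is the more economical design because it never asks for an atomic estimate at the $\mu(B_j)$ scale.
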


	\subsection{Almost u-saturation}

	Given $\ep,\xi>0$ we say that a partition $\mathcal{Q}$ is \emph{$(\ep,\xi)$-u-saturated} (see section 4 of \cite{ExpMix}) if there exists $E \subset M$ with $\mu(E)>1- \varepsilon$ such that
	\begin{equation*}
		x \in E \implies \mathcal{W}^u_{\xi}(x) \subset \mathcal{Q}(x).
	\end{equation*}
	The following is a version of Lemma 4.2 of \cite{ExpMix} for any Borel measure. 
	The proof is exactly the same and we use the D-Nice condition to verify that $\sum_k \mu(\partial_{c^{-k}} \mathcal{Q}) \leq D \sum_k c^{-k} < \infty$.
	\begin{lemma}
		\label{usat}
		Let $\mathcal{Q}$ be a $D$-Nice partition. Given $\ep>0$, there exists $\xi = \xi( \varepsilon) >0$ and $n_4 = n_4( \varepsilon,\xi)$ such that for every $n_4 \leq N_1 \leq N_2$, the partition $\bigvee_{i=N_1}^{N_2} f^i(\mathcal{Q})$ is $( \varepsilon,\xi)$ u-saturated.
	\end{lemma}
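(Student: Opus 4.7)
The plan is to follow the standard approach of controlling the bad set by summing geometric series of boundary neighborhoods, using the $D$-Nice assumption. The key intuition: if $y \in W^u_\xi(x)$, then by the unstable manifold contraction (Pesin), $d(f^{-i}y, f^{-i}x) \leq Cc^{-i}\xi$ for some $c = e^{\lambda - \sqrt{\delta}} > 1$, provided $x$ lies in a Pesin set $\mathcal{P}_\tau$ with $\tau$ controlling the constant. For $y$ to lie in the same atom of $\bigvee_{i=N_1}^{N_2} f^i \mathcal{Q}$ as $x$, we need $f^{-i}x$ and $f^{-i}y$ to land in the same atom of $\mathcal{Q}$ for every $i \in [N_1, N_2]$; a sufficient condition is $f^{-i}x \notin \partial_{Cc^{-i}\xi} \mathcal{Q}$ for all such $i$.

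First, I would fix $\tau = \tau(\varepsilon) > 0$ so that $\mu(\mathcal{P}_\tau) > 1 - \varepsilon/3$, giving uniform constants $C = C(\tau)$ in the unstable contraction estimate \eqref{UnstableContraction}. Next, I would define the bad set
\begin{equation*}
    E^{\text{bad}}_{N_1} = \mathcal{P}_\tau^c \cup \bigcup_{i \geq N_1} f^{i}\!\left(\partial_{Cc^{-i}\xi}\,\mathcal{Q}\right),
\end{equation*}
and estimate its measure using $f$-invariance together with the $D$-Nice condition:
\begin{equation*}
    \mu(E^{\text{bad}}_{N_1}) \leq \tfrac{\varepsilon}{3} + \sum_{i \geq N_1} \mu\!\left(\partial_{Cc^{-i}\xi}\mathcal{Q}\right) \leq \tfrac{\varepsilon}{3} + DC\xi \sum_{i \geq N_1} c^{-i}.
\end{equation*}
The right-hand tail converges because $c > 1$, so I can first pick $\xi = \xi(\varepsilon)$ small enough that the full series $DC\xi \sum_{i \geq 0} c^{-i}$ is at most $\varepsilon/3$ (this fixes $\xi$), and then choose $n_4 = n_4(\varepsilon, \xi)$ large enough that the tail from $N_1 \geq n_4$ is bounded by $\varepsilon/3$. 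Then $\mu(E^{\text{bad}}_{N_1}) < \varepsilon$ for all $N_1 \geq n_4$.

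Finally, I would set $E = (E^{\text{bad}}_{N_1})^c$ and verify the saturation property. For $x \in E$ and $y \in W^u_\xi(x)$, the contraction estimate gives $d(f^{-i}x, f^{-i}y) < Cc^{-i}\xi$ for every $i \geq N_1$; since $x \notin f^i(\partial_{Cc^{-i}\xi}\mathcal{Q})$ we have $f^{-i}x \notin \partial_{Cc^{-i}\xi}\mathcal{Q}$, which forces $f^{-i}x$ and $f^{-i}y$ into the same atom of $\mathcal{Q}$, and hence $x$ and $y$ into the same atom of $\bigvee_{i=N_1}^{N_2} f^i \mathcal{Q}$ (for any $N_2 \geq N_1$). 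This gives $W^u_\xi(x) \subset \mathcal{Q}(x)$ on $E$, as required. The main obstacle is essentially bookkeeping: making sure the contraction constant $C$ is uniform on a chosen Pesin set (handled by fixing $\tau$ first) and verifying that the choice of $\xi$ depends only on $\varepsilon$ while $n_4$ is allowed to depend on $\xi$ as well — the decoupling works because the geometric series sum $\sum c^{-i}$ is finite, so small $\xi$ alone yields small total measure, and then the tail can be made arbitrarily small by pushing $N_1$.
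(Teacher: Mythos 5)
Your proposal is correct and follows essentially the same route as the paper, which simply references Lemma 4.2 of \cite{ExpMix} and notes that the $D$-Nice condition supplies the summability $\sum_k \mu(\partial_{c^{-k}}\mathcal{Q}) \leq D\sum_k c^{-k} < \infty$ — precisely the geometric-series estimate you carry out. One small redundancy: once $\xi$ is chosen so that $DC\xi\sum_{i\geq 0}c^{-i} < \varepsilon/3$, the bound already holds for every $N_1 \geq 0$, so the additional choice of $n_4$ to shrink the tail is superfluous (though harmless).
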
	
	
	The following corollary is crucial for proving Bernoullicity from equidistribution of unstables.

\begin{corollary}
	\label{unstdisint}
    Let $\varepsilon>0$, 
    $\mathcal{Q}$ be a $D$-Nice partition of $M$,
    $K_n$ be a sequence of subsets of $M$ and
    $\{W_n(x)\}_{x \in T_n}$ be a sequence of u-subordinate partitions 
    of $K_n \subset M$ satisfying
    \begin{equation*}
        \mu(K_n) > 1- \varepsilon^2 \ \text{and} 
        \ |W_n(x)| \sim_{1 + \varepsilon} \xi_n, \ \forall x \in T_n
    \end{equation*}
    with $\xi_n$ converging to $0$.
    Then, there exists $N_0,n_0>0$ such that
    for $N_0 \leq N_1 \leq N_2$,
    $n> n_0$ and 
    $\varepsilon$-almost every atom 
    $A \in \bigvee_{i=N_1}^{N_2}f^i(\mathcal{Q})$,
    there exists $T_A = T_A(n) \subset T_n$ such that
    for all $B \subset M$,
    \begin{equation}
        \label{atomdisint}
        \mu_A(B) = \int_{T_A} \mu_{W_n(x)}(B) d\nu_{T_A}(x) + \mu^A_{\text{error}}(B),
    \end{equation}
    where $\mu^A_{\text{error}}$ is a signed measure on $M$ with
    \begin{equation*}
       \underset{B \subset M}{\sup} |\mu^A_{\text{error}}(B)| \leq 5\varepsilon.
    \end{equation*}
    Moreover, if $B \subset \underset{x \in T_A}{\bigcup} W_n(x)$, then
    \begin{equation*}
	    |\mu^A_{\text{error}}(B)| \leq 2 \varepsilon \int_{T_A} \mu_{W_n(x)}(B)d\nu_{T_A}(x).
    \end{equation*}
\end{corollary}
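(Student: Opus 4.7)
The plan is to combine \cref{usat} with the given u-subordinate partition. First I would apply \cref{usat} to $\mathcal{Q}$ with parameter $\varepsilon^{2}$, obtaining $\xi = \xi(\varepsilon^{2}) > 0$ and $N_{0}$ such that for every $N_{0} \le N_{1} \le N_{2}$ there is a set $E \subset M$ with $\mu(E) > 1 - \varepsilon^{2}$ satisfying $\mathcal{W}^{u}_{\xi}(x) \subset \hat{\mathcal{Q}}(x)$ for $x \in E$, where $\hat{\mathcal{Q}} \coloneqq \bigvee_{i=N_{1}}^{N_{2}} f^{i}(\mathcal{Q})$. Choose $n_{0}$ large enough that $(1+\varepsilon)\xi_{n} < \xi$ for $n > n_{0}$; then every piece $W_{n}(x)$ meeting $E$ is contained in a single $\hat{\mathcal{Q}}$-atom, since $W_{n}(x)$ has unstable diameter at most $(1+\varepsilon)\xi_{n}<\xi$. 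Hence, setting
\begin{equation*}
T_{A} \coloneqq \{\, x \in T_{n} \ | \ W_{n}(x) \subset A\,\}, \qquad C_{A} \coloneqq \bigsqcup_{x \in T_{A}} W_{n}(x) \subset A
\end{equation*}
for each atom $A$ of $\hat{\mathcal{Q}}$, one obtains the key inclusion $A \cap K_{n} \cap E \subset C_{A}$. Measurability of $T_{A}$ follows from measurability of the partition $\{W_{n}(x)\}$ and of $A$.

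Let $\nu$ be the factor measure on $T_{n}$ associated with the disintegration $\{\mu_{W_{n}(x)}\}$ of $\mu|_{K_{n}}$, and define the probability $\nu_{T_{A}} \coloneqq \nu|_{T_{A}}/\nu(T_{A})$. Since $\mu((K_{n} \cap E)^{c}) \le 2\varepsilon^{2}$, \cref{AppendixLemma} applied to the partition $\hat{\mathcal{Q}}$ of $M$ discards atoms of total $\mu$-mass at most $2\varepsilon$, after which every remaining atom $A$ satisfies $\mu_{A}((K_{n} \cap E)^{c}) < \varepsilon$; for these atoms $\nu(T_{A}) = \mu(C_{A}) \in ((1-\varepsilon)\mu(A), \mu(A)]$. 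By the definition of disintegration, for any Borel $B \subset M$,
\begin{equation*}
\int_{T_{A}} \mu_{W_{n}(x)}(B)\,d\nu_{T_{A}}(x) \;=\; \frac{\mu(B \cap C_{A})}{\nu(T_{A})},
\end{equation*}
so the signed error $\mu^{A}_{\mathrm{error}}(B) \coloneqq \mu_{A}(B) - \int_{T_{A}} \mu_{W_{n}(x)}(B)\,d\nu_{T_{A}}(x)$ decomposes as
\begin{equation*}
\mu^{A}_{\mathrm{error}}(B) = \mu(B \cap C_{A})\left(\tfrac{1}{\mu(A)} - \tfrac{1}{\nu(T_{A})}\right) + \tfrac{\mu((A \setminus C_{A})\cap B)}{\mu(A)}.
\end{equation*}
Using $|1/\mu(A) - 1/\nu(T_{A})| = (\mu(A)-\nu(T_{A}))/(\mu(A)\nu(T_{A})) \le \varepsilon/\nu(T_{A})$ together with $\mu(B \cap C_{A}) \le \nu(T_{A})$, the first summand is bounded by $\varepsilon$; the second by $\mu_{A}((K_{n} \cap E)^{c}) < \varepsilon$; in total $|\mu^{A}_{\mathrm{error}}(B)| < 2\varepsilon \le 5\varepsilon$. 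When $B \subset C_{A}$ the second summand vanishes identically, and the first reduces to $\mu(B)|1/\mu(A) - 1/\nu(T_{A})| \le \varepsilon\, \mu(B)/\nu(T_{A}) = \varepsilon \int_{T_{A}} \mu_{W_{n}(x)}(B)\,d\nu_{T_{A}}(x) \le 2\varepsilon \int_{T_{A}} \mu_{W_{n}(x)}(B)\,d\nu_{T_{A}}(x)$, which is the moreover clause.

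The main subtlety is that the u-saturation set $E$ is not in general a union of $W_{n}$-atoms, so one cannot declare $x \in T_{A}$ by a pointwise condition such as $x \in A \cap E$; the set-theoretic condition $W_{n}(x) \subset A$ is the correct one, and u-saturation is precisely what promotes a pointwise inclusion $z \in A \cap K_{n} \cap E$ to the containment of the entire $W_{n}$-atom through $z$ inside $A$. Once this is set up, the rest is a bookkeeping computation balancing $\mu(A)$ against $\nu(T_{A})$, which differ by at most an $\varepsilon$-fraction on good atoms.
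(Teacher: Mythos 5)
Your proof is correct and follows essentially the same route as the paper: apply \cref{usat} with a squared parameter, use Markov to discard atoms whose conditional measure concentrates outside $K_n \cap E$, identify a sub-collection $T_A$ of $W_n$-atoms contained in $A$, and bound the error from replacing $\mu_A$ by the normalized disintegration over $T_A$. The one notable (but cosmetic) difference is your choice $T_A = \{x : W_n(x) \subset A\}$, which is a priori larger than the paper's $T_A = \{x : W_n(x) \cap E_A \neq \emptyset\}$; both satisfy the crucial inclusion $A \cap K_n \cap E \subset C_A \subset A$, but yours makes $C_A$ slightly larger and lets you organize the error into two terms rather than the paper's three, yielding a somewhat sharper constant ($2\varepsilon$ vs.\ $5\varepsilon$).
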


\begin{proof}
    By \cref{usat},
    there exists $\xi >0$ and $N_0>0$ such that
    if $N_0 \leq N_1 \leq N_2$, we may separate the atoms of $\bigvee_{i=N_1}^{N_2} f^i(\mathcal{Q})$ into two disjoint collections
    \begin{equation*}
        \bigvee_{i=N_1}^{N_2} f^i(\mathcal{Q}) = G \sqcup B
    \end{equation*}
    such that  
    for $A \in G$, there exists $E_A \subset A$ satisfying
    \begin{equation*}
        \mu_A(E_A^c) < \varepsilon \ \text{and} \ 
        x \in E_A \implies W^u_{\xi}(x) \subset A
    \end{equation*}
    while
    \begin{equation*}
        \mu\left(\bigcup_{A \in B}A\right) < \varepsilon.
    \end{equation*}
    Let $n_0$ be such that
    \begin{equation}
        \label{unstsmall}
        n > n_0 \implies 2(1 + \varepsilon)\xi_n < \xi.
    \end{equation}
    Fix $n>n_0$, which we'll omit from the notation whenever possible.
    Notice that
    \begin{equation*}
        \mu\left(\bigcup_{A \in G} E_A\right)
        > (1- \varepsilon)\sum_{A \in G} \mu(A)
        >(1- \varepsilon)^2 > 1-3 \varepsilon.
    \end{equation*}
    Let $G' = \{ A \in G \ | \ \mu_{E_A}(K^c) \leq \varepsilon\}$
    then by \cref{AppendixLemma},
    \begin{equation*}
       1- 4 \varepsilon 
       < 1- 3\varepsilon -\frac{\varepsilon^2}{\varepsilon}
       < \mu\left(\bigcup_{A \in G'}E_A\right)
    \end{equation*}
    which means that $4 \varepsilon$-almost every atom is in $G'$.
    For $A \in G'$,
    define 
    \begin{equation*}
        T_A = \{ x \in T \ | \ W(x) \cap E_A \neq \emptyset\}.
    \end{equation*}
    Given $B \subset M$, we may write
    \begin{equation}
    \begin{split}
        \mu(A\cap B) & = \nu(T_A)
        \int_{T_A}\mu_{W(x)}(A \cap B)d\nu_{T_A}(x)  \\
        & +\left(\int_{T\setminus T_A}\mu_{W(x)}(A\cap B)d\nu(x) 
        + \mu(A \cap B \cap K^c)\right).
    \end{split}
\label{TAdisint}
\end{equation}
    By \eqref{unstsmall} and the property of $E_A$,
    $x \in T_A $ implies $ W(x) \subset A$. In particular,
    \begin{equation*}
	    \mu_{W(x)}(A \cap B) = \mu_{W(x)}(B), \ \text{for} \ x \in T_A.
    \end{equation*}
    Since $\mu_{W(x)}(A) = 1$ for $\nu$-almost every $x \in T_A$,
    \begin{equation*}
        \nu(T_A) = \int_{T_A}\mu_{W(x)}(A) d\nu(x) \leq \mu(A).
    \end{equation*}
    On the other hand, if $A \in G'$, then
    \begin{equation*}
        (1- \varepsilon)^2 \mu(A)
        \leq (1- \varepsilon)\mu(E_A)
        \leq \int_{T_A} \mu_{W(x)}(E_A)d\nu(x)
        \leq \nu(T_A).
    \end{equation*}
    Combining the two above estimates, we get
    \begin{equation}
    \label{numuA}
        \frac{\nu(T_A)}{\mu(A)} = 1 + \varepsilon_A, \ \text{where} \ 
        |\varepsilon_A| < 2\varepsilon.
    \end{equation}
    Dividing \eqref{TAdisint} by $\mu(A)$ and using \eqref{numuA},
    \begin{equation}
    \begin{split}
        \mu_A(B) & = \int_{T_A} \mu_{W(x)}(B) d\nu_{T_A}(x) 
                 + \varepsilon_A\int_{T_A}\mu_{W(x)}(B)d\nu_{T_A}(x)\\
                &+\frac{1}{\mu(A)} \int_{T\setminus T_A} \mu_{W(x)}(A \cap B) d\nu(x) 
                 +\frac{1}{\mu(A)} \mu(A\cap B \cap K^c).
    \end{split}
    \label{errorAdecomp}
    \end{equation}
    Denote by
    \begin{equation*}
        \mu^A_{\text{error}}(B) = 
        \mu_A(B) - \int_{T_A}\mu_{W(x)}(B) d\nu_{T_A}(x),
    \end{equation*}
    which is the last three terms on  \eqref{errorAdecomp}.
    For the first error term we have
    \begin{equation*}
        \underset{B \subset M}{\sup}
        \left| \varepsilon_A
        \int_{T_A} \mu_{W(x)}(B)d\nu_{T_A}(x) \right|
        \leq |\varepsilon_A| < 2 \varepsilon.
    \end{equation*}
    For the second, 
    since $x \in T\setminus T_A$ implies that  $W(x)\cap A \subset A\setminus E_A$, then
    \begin{equation*}
        \underset{B \subset M}{\sup}
        \left| \frac{1}{\mu(A)} \int_{T\setminus T_A} \mu_{W(x)}(A \cap B) d\nu(x) \right|
        \leq \frac{1}{\mu(A)} \mu(A\setminus E_A) < \varepsilon.
    \end{equation*}
    For the last term, we once again use that $A \in G'$:
    \begin{align*}
        \underset{B \subset M}{\sup} \left|\frac{1}{\mu(A)}
        \mu(A\cap B \cap K^c) \right|
        & \leq \frac{\mu(A \cap K^c)}{\mu(A)} \\
        &= \frac{\mu(E_A\cap K^c) + \mu(A\setminus E_A \cap K^c)}{\mu(A)} \\
        & \leq \varepsilon\frac{\mu(E_A)}{\mu(A)} + \varepsilon
        \leq 2 \varepsilon.
    \end{align*}
    Combining these 3 estimates together, we get
    \begin{equation*}
        \underset{B \subset M}{\sup}
        |\mu^A_{\text{error}}(B) | < 5 \varepsilon.
    \end{equation*}
    If $B \subset \underset{x \in T_A}{\bigcup}W_n(x)$, notice that when computing $\mu^A_{\text{error}}(B)$
    we just need to take the first error term into consideration, which proves the corollary.
\end{proof}

	\subsection{Matching between unstables}

	\begin{lemma}
		\label{matching}
		Suppose that the hypothesis of \cref{EBB} holds. 
		Given $ \varepsilon>0$, there exists $n_0 \in \mathbb{N}$ such that for $n>n_0$
		and $x,x' \in T_n$, there is a measure preserving map
		\begin{equation*}
			\theta_{x,x'} \colon (W_n(x),\mu_{W_n(x)}) \to (W_n(x'),\mu_{W_n(x')})
		\end{equation*}
		and a subset $\hat{W}_n(x) \subset W_n(x)$ with $\mu_{W_n(x)}(\hat{W}_n(x)) > 1 - \varepsilon$
		such that for $\floor{ \varepsilon n} \leq k \leq n$ and $z \in \hat{W}_n(x)$,
		\begin{equation*}
			d(f^kz,f^k\theta_{x,x'}(z)) < \varepsilon.
		\end{equation*}
	\end{lemma}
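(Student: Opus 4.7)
The strategy is standard: use the equidistribution furnished by (B1) to match up pieces of $W_n(x)$ and $W_n(x')$ that land in the same atom $B_i$ at time $\lfloor \varepsilon n \rfloor$, and then invoke (B2) to conclude that matched points stay $\varepsilon$-close for all subsequent iterates up to time $n$. The whole argument is a bookkeeping exercise once the hypothesis is set up correctly.

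First I would fix an auxiliary $\varepsilon' = \varepsilon/10$, apply the hypothesis of \cref{EBB} at that smaller level, and take $n$ large enough that (B1), (B2), and the size estimate in (U2) all hold for $\varepsilon'$. Set $I = I_n(x) \cap I_n(x')$, so $\mu(\bigcup_{i \in I} B_i) > 1 - 2\varepsilon'$. For $i \in I$ define the disjoint measurable pieces
\begin{equation*}
U_i = W_n(x) \cap f^{-\lfloor \varepsilon n \rfloor}(B_i), \qquad U_i' = W_n(x') \cap f^{-\lfloor \varepsilon n \rfloor}(B_i).
\end{equation*}
By \eqref{EE},
\begin{equation*}
\mu_{W_n(x)}(U_i) \sim_{1+\varepsilon'} \mu(B_i) \sim_{1+\varepsilon'} \mu_{W_n(x')}(U_i'),
\end{equation*}
so in particular both $\sum_{i \in I} \mu_{W_n(x)}(U_i)$ and $\sum_{i \in I} \mu_{W_n(x')}(U_i')$ are at least $1 - O(\varepsilon')$.

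For each $i \in I$ set $a_i = \min(\mu_{W_n(x)}(U_i),\mu_{W_n(x')}(U_i'))$. Using that the conditionals on unstable pieces are atomless (which holds in our applications, since under the hypotheses of \cref{thmA} and \cref{thmB} the measure $\mu$ is SRB so $\mu_{W_n(x)}$ is absolutely continuous with respect to unstable Lebesgue) together with the isomorphism of atomless Lebesgue spaces of equal mass, choose $\tilde{U}_i \subset U_i$ and $\tilde{U}_i' \subset U_i'$ of measure $a_i$ and a measure-preserving bijection $\theta_{x,x'}|_{\tilde{U}_i}\colon \tilde{U}_i \to \tilde{U}_i'$. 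Extend $\theta_{x,x'}$ to a measure-preserving bijection of the residual sets $W_n(x)\setminus \bigcup_i \tilde{U}_i$ and $W_n(x')\setminus \bigcup_i \tilde{U}_i'$ (which have equal total mass). Take $\hat{W}_n(x) = \bigcup_{i \in I} \tilde{U}_i$; the estimates above give
\begin{equation*}
\mu_{W_n(x)}(\hat{W}_n(x)) \geq \sum_{i \in I} \tfrac{\mu(B_i)}{1+\varepsilon'} \geq \tfrac{1-2\varepsilon'}{1+\varepsilon'} > 1 - \varepsilon.
\end{equation*}

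For $z \in \hat{W}_n(x)$, by construction there is $i \in I$ with $z \in \tilde{U}_i$ and $\theta_{x,x'}(z) \in \tilde{U}_i'$, so both $f^{\lfloor \varepsilon n \rfloor}z$ and $f^{\lfloor \varepsilon n \rfloor}\theta_{x,x'}(z)$ lie in the same $B_i$. Applying (B2) to this pair gives $d(f^{\lfloor \varepsilon n \rfloor + j}z, f^{\lfloor \varepsilon n \rfloor + j}\theta_{x,x'}(z)) < \varepsilon'$ for $0 \leq j \leq n$, which after reindexing $k = \lfloor \varepsilon n \rfloor + j$ covers the required range $\lfloor \varepsilon n \rfloor \leq k \leq n$. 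There is no genuine obstacle: the only mildly delicate point is the atomlessness of $\mu_{W_n(x)}$, which is automatic in the SRB setting where the lemma is ultimately applied, and the rest is simply arranging the constants so that a few $O(\varepsilon')$'s sum to less than $\varepsilon$.
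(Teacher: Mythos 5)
Your proof is correct and follows essentially the same route as the paper: intersect $I_n(x)$ with $I_n(x')$, carve out subsets of the fibers $W_n(\cdot)\cap f^{-\lfloor\varepsilon n\rfloor}B_i$ of matched mass, build the matching via the isomorphism of atomless Lebesgue spaces, and invoke (B2) to control the orbit distance on $[\lfloor\varepsilon n\rfloor,n]$. The only cosmetic difference is that the paper truncates each fiber to mass exactly $\frac{1}{1+\varepsilon}\mu(B_i)$ on both sides rather than to the minimum $a_i$, and the paper simply asserts nonatomicity of $\mu_{W_n(x)}$ just as you do.
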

	\begin{proof}
		Denote by $n' = \floor{ \varepsilon n}$.
		For $x \in T_n$ and  $i \in I(x)$, using \eqref{EE} and the fact that $\mu_{W_n(x)}$ is nonatomic, there exists
		\begin{equation}
			\label{Bhati}
			\hat{B}_i(x) \subset W_n(x) \cap f^{-n'}B_i, \ \text{with} \ 
			\mu_{W_n(x)}(\hat{B}_i) = \frac{1}{1 + \varepsilon}\mu(B_i).
		\end{equation}
		Let $I_n(x,x') = I_n(x) \cap I_n(x')$. By (B3) in \cref{EBB},
		\begin{equation}
			\label{Ixxprime}
			\mu\left(\underset{i \in I_n(x,x')}{\bigcup} B_i\right) > 1- 4 \varepsilon.
		\end{equation}
		For $i \in I(x,x')$, since $\hat{B}_i(x)$ and $\hat{B}_i(x')$ are Lebesgue spaces of the same measure,
		there exists a measure preserving map $\hat{B}_i(x) \to \hat{B}_i(x')$, which defines a measure preserving map
		\begin{equation*}
			\theta_{x,x'} \colon 
			\left(\underset{i \in I(x,x')}{\bigcup} \hat{B}_i(x), \mu_{W_n(x)}\right) \to
			\left(\underset{i \in I(x,x')}{\bigcup} \hat{B}_i(x'), \mu_{W_n(x')}\right). 
		\end{equation*}
		By \eqref{Bhati} and \eqref{Ixxprime},
		\begin{equation*}
			\frac{1-4 \varepsilon}{1+ \varepsilon} < \mu_{W_n(x)}\left(\underset{i \in I(x,x')}{\bigcup} \hat{B}_i(x)\right).
		\end{equation*}
		Therefore, denoting by $\hat{W}_n(x) = \bigcup_{i \in I(x,x')} \hat{B}_i(x)$ we verify the second condition of the lemma.
		Since $\mu_{W_n(x)}$ and $\mu_{W_n(x')}$ are both probability measures,
		we may define $\theta_{x,x'}$ from $W_n(x)\setminus\hat{W}_n(x) \to W_n(x')\setminus\hat{W}_n(x)$
		to be measure preserving, thus proving the lemma.
	\end{proof}

	\subsection{Proof of \cref{EBB}}

	We just need to verify the conditions of \cref{vwBnice}.
	Let $\mathcal{P}$ be a D-nice partition, which is also regular. Given $ \varepsilon >0$, let $N_0>0, n_0$ be large 
	so that \cref{unstdisint} and \cref{EBB} are satisfied for $N_0<N_1<N_2$ and $n>n_0$.
	Let $A$ be an atom of $ \bigvee_{i=N_1}^{N_2} f^i(\mathcal{P})$ satisfying
	\begin{equation*}
		\mu_A(B) = \int_{T_A} \mu_{W_n(x)}(B) d\nu_{T_A} + \mu^A_{\text{error}}(B)
	\end{equation*}
	as in \cref{unstdisint}, which is satisfied by $ \varepsilon$-almost every atom.
	Since $(T_A,\nu_{T_A})$ and $(T,\nu_T)$ are Lebesgue spaces of the same measure,
	then we can define $\theta_A \colon (T_A,\nu_{T_A}) \to (T,\nu_{T})$ measure preserving.
	Let $K_A = \bigcup_{x \in T_A} W_n(x)$.
	Using \cref{matching}, define
	$\theta \colon K_A \to K_n$ by, for $z \in W_n(x)$ with $x \in T_A$,
	\begin{equation*}
		\theta(z) = \theta_{x,\theta_Ax}(z).
	\end{equation*}
	For $B \subset K_n$, using the proof of \cref{unstdisint}
	\begin{align*}
		\mu_A(\theta^{-1}B)  
		& =(1+ \varepsilon_A) \int_{T_A} \mu_{W_n(x)}(\theta_{x,\theta_Ax}^{-1}(B)) d\nu_{T_A}(x) \\
		& =(1+ \varepsilon_A) \int_{T_A} \mu_{W_n(\theta_Ax)}(B) d\nu_{T_A}(x) \\
		& =(1+ \varepsilon_A) \int_{T_n} \mu_{W_n(y)}(B) d\nu_T(y) \\
		& =(1+ \varepsilon_A) \mu(B).
	\end{align*}
	Therefore, $\theta$ on $K_A$ is $10 \varepsilon$-measure preserving, since $ |\varepsilon_A| < 2 \varepsilon$.
	On top of that, for $z \in K_A$ and $n' \leq k \leq n$,
	\begin{equation*}
		d(f^kz,f^k\theta z) < \varepsilon.
	\end{equation*}
	Since $\mu_A(A \setminus K_A) < \varepsilon$, we may define $\theta$ to be anything on that set.
	This verifies the conditions of \cref{vwBnice} and concludes the proof.

	\subsection{Proof of \cref{thmA} and \cref{thmB}}
	We are finally ready to prove the main theorems.
	Suppose that either that $\mu$ is an exponentially mixing SRB measure or volume is almost exponentially mixing and let $\mu$ be the limit SRB measure.
	Since in both cases $\mu$ is ergodic and nonatomic, then we just need to verify the conditions of \cref{EBB}.
	
	Let $\varepsilon>0$ and $\delta = \varepsilon^{100^{100}}$. By \cref{MainLemma2}, we can find $\xi_n \rightarrow 0$ and, for $n$ large enough, subsets $K_n \subset M$, a $u$-measurable partition $\{W_n(x)\}_{x \in T_n}$ of $K_n$ and a family of pairwise disjoint subsets $\{B_j\}_{j \in J_n}$ satisfying (U1),(U2),(B1) and (B2).
	Therefore, $(f,\mu)$ is Bernoulli.

	\appendix

	\section{SRB measures for skew products}
	\label{appendix}
	

	Let $f \colon M \to M$ be a $C^{1+\alpha}$ diffeomorphism of a compact manifold preserving an SRB measure $\mu$,
	$G$ be a Lie group acting on $N$ preserving volume $m$
	and $\tau \colon M \to G$ be smooth.
	Define the skew product system by
	\begin{align*}
		F  \colon M \times N &\to M \times N \\
		(x,y) &\mapsto (f(x),\tau(x)y).
	\end{align*}
	$F$ preserves the measure $\nu = \mu \otimes m$.
	Let us assume that $\nu$ is ergodic.
	
	\begin{proposition}
	\label{SRBskew}
		The measure $\nu$ is an SRB measure for $F$.
	\end{proposition}
	
	\begin{proof}
		Since $(M,f,\mu)$ is a factor of $(M\times N,F,\nu)$, then it has positive entropy.
		In particular, $F$ has positive Lyapunov exponents $\nu$-almost everywhere.
		
		We first claim that for $\nu$-almost every $(x,y) \in M \times N$, 
		$\pi (W^u_{F}(x,y)) \subset W^u_f(x)$.
		Indeed, for $(x',y') \in W^u_F(x,y)$,
		\begin{equation*}
			\underset{n \rightarrow \infty}{\limsup}
			\frac{1}{n}\log d_M(f^{-n}x,f^{-n}x') \leq 
			\underset{n \rightarrow \infty}{\limsup}\frac{1}{n}\log d_{M\times N}(F^{-n}(x,y),F^{-n}(x',y'))<0.
		\end{equation*}
		
		Let $\{W_f(x)\}_{x \in M}$ be a $\mu$ measurable partition of $M$ subordinate to the unstable foliation on $M$ and consider the $\nu$ measurable partition $\mathcal{W}_f = \{W_f(x) \times N\}_{x \in M}$ of $M\times N$.
		By the above claim, we may consider a u-subordinate partition $\mathcal{W}_{F} = \{W_F(x,y)\}_{(x,y)\in M\times N}$ of $M \times N$ that refines the partition $\mathcal{W}_f$.
		More precisely,
		$W_F(x,y) \subset W_f(x)$ for $\nu$-almost every $(x,y)$.
		Therefore, to disintegrate $\nu$ along $\mathcal{W}_F$, we may first disintegrate along $\mathcal{W}_f$ and then along $\mathcal{W}_F$.		
		Note that for $\mu$-almost every $x \in M$,
		\begin{equation*}
			\nu_{W_f(x)\times N} = \mu_{W_f(x)} \times m.
		\end{equation*}
		But since $\mu$ is SRB, then $\mu_{W_f(x)} \ll m_{W_f(x)}$.
		Therefore,
		\begin{equation}
		\label{verticalvol}
			\nu_{W_f(x) \times N} \ll m_{W_f(x)} \times m = m_{W_f(x)\times N}.
		\end{equation}
		Now for $\mu$-almost every $x$, we wish to disintegrate $\nu_{W_f(x)\times N}$ along the partition $\mathcal{W}_F$ restricted to the submanifold $W_f(x) \times N$ and show that $\nu_{W_F(x,y)} \ll m_{W_F(x,y)}$, for $\nu_{W_f(x)\times N}$-almost every $(x,y)$.
		But \eqref{verticalvol} implies that we are disintegrating a smooth measure on $W_f(x)\times N$ along $\mathcal{W}_F$.
		By standard arguments, the proposition follows from the fact that the unstable foliation has absolutely continuous holonomies between transversal submanifolds inside of $W_f(x)\times N$ (see section 8.6 of \cite{BarPes}).

	\end{proof}

	\bibliographystyle{alpha}  
	\bibliography{dynamics_refs}

\end{document}